\def\BState{\State\hskip-\ALG@thistlm}
\newtheorem{thm}{Theorem}[section]
\newtheorem{prop}[thm]{Proposition}
\newtheorem{cor}[thm]{Corollary}
\newtheorem{lem}[thm]{Lemma}
\newtheorem{conj}[thm]{Conjecture}
\newtheorem*{claim}{Claim}
\newtheorem{mainthm}{}
\newtheorem*{mainthmB}{Theorem B}
\newtheorem*{conv}{Convention}
\theoremstyle{definition}
\newtheorem{defn}[thm]{Definition}
\theoremstyle{remark}
\newtheorem*{quest}{Question}
\newtheorem*{rem}{Remark}
\newtheorem{example}[thm]{Example}
\newtheorem*{examples}{Examples}
\newcommand{\isom}{\mbox{Isom}} 
\newcommand{\Gx }{\mathscr{G} (G, S)}
\newcommand{\pGf}{\partial_\lambda{G}}
\newcommand{\pX}{\partial{\mathrm X}}
\newcommand{\pXf}{\partial_\lambda{X}}
\newcommand{\PS}[1]{\mathcal P_{#1}(s)}
\newcommand{\e}[1]{\omega(#1)}
\newcommand{\ax}{\mathrm{Ax}}
\newcommand{\diam }[1]{{\textbf{diam}\big(#1\big)}}
\newcommand{\proj}{\textbf{d}^\pi}
\newcommand{\len }{\ell}
\DeclarePairedDelimiter\ceil{\lceil}{\rceil}
\newtheoremstyle{query}%
{}{}
{\color{red}}
{}
{\sffamily\bfseries}{:}{12pt}
{}
\theoremstyle{query}
\begin{document}

\title[Statistically convex-cocompact actions]{Statistically convex-cocompact actions of groups with contracting elements}

\author{Wen-yuan Yang}

\address{Beijing International Center for Mathematical Research (BICMR), Beijing University, No. 5 Yiheyuan Road, Haidian District, Beijing, China}
 
\email{yabziz@gmail.com}
\thanks{}


\subjclass[2000]{Primary 20F65, 20F67}

\date{July 2, 2017}

\dedicatory{}

\keywords{contracting elements, critical exponent, convex-compactness, purely exponential growth, growth tightness}

\begin{abstract}
This paper presents a study of the asymptotic geometry of groups with contracting elements, with   emphasis on a subclass of \textit{statistically convex-cocompact} (SCC) actions. The class of SCC actions includes relatively hyperbolic groups,   CAT(0) groups with rank-1 elements  and mapping class groups, among others. We exploit an extension lemma to prove that a group with SCC actions contains  large free sub-semigroups,  has purely exponential growth and contains a class of \textit{barrier-free} sets with a growth-tight property.  Our study produces new results and recovers existing ones  for many interesting groups through a unified and  elementary approach. 
\end{abstract}

\maketitle

\setcounter{tocdepth}{1} \tableofcontents

\section{Introduction}
\subsection{Background}
Suppose that a group $G$ admits a proper and  isometric action on a proper 
geodesic metric space $(\mathrm Y, d)$. The group $G$ is assumed to be \textit{non-elementary}: there is no finite-index subgroup isomorphic to the integers   $\mathbb Z$ or to the trivial group.  The goal of this paper is to study   the asymptotic geometry of the group action  in the presence of a contracting element.   Through a  unified approach, we shall present a series of applications  to the following   classes of groups  with contracting elements:
  
\begin{itemize}
\item 
${\mathbb{Hyp}}=\big\{$ a group $G$ acts properly and cocompactly on  a  $\delta$-hyperbolic   space $(\mathrm Y, d)$ $\big\}$. See \cite{Gro}, \cite{GH} for general references.

\item 
$\mathbb{{RelHyp}}=\big\{$ a relatively hyperbolic group $G$ acts on  a Cayley graph $(\mathrm Y, d)$   with respect to a generating set $S$  $\big\}$. See \cite{Bow1}, \cite{Osin}, and \cite{DruSapir}. 
 
\item 
$\mathbb{{Floyd}}=\big\{$ a group $G$ with non-trivial Floyd boundary acts on  a Cayley graph $(\mathrm Y, d)$   with respect to a generating set $S$ $\big\}$. See \cite{Floyd}, \cite{Ka}, \cite{Ge1}, \cite{GePo2}. 

\item 
$\mathbb{CAT_0^1}=\big\{$ a group $G$ acts properly and cocompactly on a CAT(0) space $(\mathrm Y, d)$ with rank-1 elements   $\big\}$. See \cite{BF2}， \cite{Ballmann}, \cite{BriHae}. 

\item 
$\mathbb{{GSC}}=\big\{$ a $\mathrm{Gr}'(1/6)$-labeled graphical small cancellation group $G$  with finite components  labeled by a finite set $S$ acts on the   Cayley graph $(\mathrm Y, d)$  with respect to the generating set $S$  $\big\}$. See  \cite{ACGH},  \cite{Gro4}.

\item
$\mathbb{{Mod}}=\big\{$ the mapping class group $G$ of a closed orientable surface with genus greater than two acts on   Teichm\"{u}ller space $(\mathrm Y, d)$ equipped with the Teichm\"{u}ller metric  $\big\}$. See \cite{FLP},    \cite{FMbook}.

\end{itemize}


Let us give a definition of a contracting element (cf. Definition \ref{ContrDefn}). First, a subset $X$ is called \textit{contracting} if any metric ball disjoint with $X$ has a uniformly bounded projection to $X$ (cf. \cite{Minsky}, \cite{BF2}).  An element $g \in G$ of infinite order is \textit{contracting} if for some basepoint $o\in \mathrm Y$, an orbit $\{g^n\cdot o: n\in \mathbb Z \}$ is contracting, and the map $n\in \mathbb Z \to h^no\in \mathrm Y$ is a quasi-isometric embedding. The prototype of a contracting element is  a hyperbolic isometry on hyperbolic spaces, but more interesting examples are furnished by the following:
\begin{itemize}
\item
hyperbolic elements in $\mathbb{{RelHyp}}$ and  $\mathbb{{Floyd}}$, cf. \cite{GePo4}, \cite{GePo2};
\item
rank-1 elements in  $\mathbb{CAT_0^1}$, cf. \cite{Ballmann}, \cite{BF2};
\item
certain infinite order elements in  $\mathbb{GSC}$, cf. \cite{ACGH};
\item
pseudo-Anosov elements in  $\mathbb{{Mod}}$, cf. \cite{Minsky}.
\end{itemize}
 
We shall demonstrate in  \textsection \ref{SSProper} that  a proper action with a contracting element already  admits interesting consequences.  
With appropriate hypothesis, more information can be obtained on a subclass of \textit{statistically convex-cocompact actions} (SCC) to be discussed in \textsection \ref{SSSCC}. This is a central concept of the paper since    it allows to generalize   dynamical aspects of the theory of convex-cocompact Kleinian groups to the above list ($\mathbb{Hyp}-\mathbb{Mod}$) of groups.

Following the groundbreaking work of Masur and Minsky \cite{MinM} \cite{MinM2}, the study of mapping class groups from the point of view of geometric group theory has received much attention. Indeed, one of the motivations behind the present study is the application of the approach presented here to  $\mathbb {Mod}$. Most   mapping class groups are not relatively hyperbolic, cf. \cite{BDM}. Nevertheless, their action on Teichm\"{u}ller spaces exhibits very interesting negative behavior, thanks to  the following two facts from Teichm\"{u}ller geometry:
\begin{enumerate}
\item
Minsky's result that a pseudo-Anosov element is contracting \cite{Minsky};
\item
the fact that the group action of mapping class groups on Teichm\"{u}ller space is SCC, which follows from a deep theorem of Eskin, Mirzakhani and Rafi \cite[Theorem 1.7]{EMR}, as observed in \cite[Section 10]{ACTao}.
\end{enumerate} 
This study therefore considers some applications in $\mathbb {Mod}$. We emphasize that most of arguments, once SCC actions are provided,  are completely general without appeal to specific theory of groups   ($\mathbb{Hyp}-\mathbb{Mod}$) under consideration.

We are now formulating the setup of main questions to be addressed in next subsections. 
Fixing a basepoint $o \in \mathrm Y$, one expects to read off information from the growth of orbits in the ball of radius $n$:
$$N(o, n):=\{g\in G: d(o, go)\le n\}.$$ For instance, a celebrated theorem of Gromov in
\cite{Gro2} says that  the class of  virtually nilpotent groups is characterized by the polynomial growth of $N(o, n)$ in Cayley graphs. Most groups that one encounters  in fact admit  exponential growth: for instance, thanks to the existence of non-abelian free subgroups. This is indeed the case in our study, where a well-known ping-pong game played by contracting elements gives rise to many free subgroups. In this regard, an  asymptotic quantity called the \textit{critical exponent} (also called the \textit{growth rate}) is associated with the growth function. In practice, it is useful to consider the critical exponent $\e \Gamma$ for a \textit{subset} $\Gamma \subset G$:
\begin{equation}\label{criticalexpo}
\e \Gamma = \limsup\limits_{n \to \infty} \frac{\log \sharp(N(o, n)\cap \Gamma)}{n},
\end{equation}
which is independent of the choice of $o \in \mathrm Y$. The following alternative definition of $\e \Gamma$ is very useful in counting problems:
$$\limsup_{n\to \infty}\frac{\log \sharp \left(A(o, n, \Delta)\cap \Gamma\right)}{n} = \e \Gamma,$$
where we consider the annulus $$A(o, n, \Delta)=\{g\in G: |d(o, go)-n|\le\Delta\}$$
for $\Delta>0$.

The remainder of this introductory section is based  on consideration of the following questions:
\begin{quest}
\begin{enumerate}
\item
When is the critical exponent $\e \Gamma$ (\ref{criticalexpo}) a true limit? Can the value $\e \Gamma$ be realized by some geometric subgroup?
\item
Under what conditions does a group action have so-called \textit{purely exponential growth}, i.e.,  there exists a constant $\Delta>0$ such that 
$$
\sharp A(o, n, \Delta) \asymp \exp( \e G n) \quad ?
$$
\item
Which subsets $X$ in $G$ are \textit{growth-tight}: $\e X<\e G$? This shall admit several applications to genericity problems studied in a subsequent paper \cite{YANG11}.   
\end{enumerate}
\end{quest}
 

\subsection{Large free semigroups} \label{SSProper} At  first glance, it seems somewhat of a leap of faith to anticipate that a general and non-trivial result can be obtained for   a proper group action   with a contracting element. Therefore, our first objective is to convince the reader that it is indeed  fruitful to work with this aim in mind. We are  going to introduce two general and interesting results that recover  many existing results in a unified manner. The first group of results concerns the existence of large free semi-subgroups in various interesting classes of groups.

To be concrete, let us motivate our discussion by considering the class of Schottky groups among   discrete groups in $\isom(\mathbb H^n)$, called \textit{Kleinian groups} in the literature. By definition,  a Schottky group is a free, convex-cocompact, Kleinian group in $\isom(\mathbb H^n)$.

A seminal work of Phillips and Sarnak \cite{PhiSarnak} showed that the critical exponent  of any \textit{classical} Schottky group in $\isom(\mathbb H^n)$ is uniformly bounded away from $n-1$ for $n> 3$, with the case $n=3$  being established later by Doyle \cite{Doyle}.  In   $n$-dimensional quaternionic hyperbolic spaces, a well-known result of Corlette \cite{Corlette}  implies that  the critical exponent of any subgroup in lattices is at most $4n$, uniformly different from the value $4n+2$   for lattices.  Recently, Bowen \cite{Bowen2} proved an extension of Phillips and Sarnak's work in higher even dimensions, showing that the critical exponent  of any free discrete group in $\isom(\mathbb H^{2n})$ for $n\ge 2$ is uniformly bounded away from $2n-1$. However,   he  also proved in \cite{Bowen1} that the $\pi_1$ of a closed hyperbolic 3-manifold contains a free subgroup with critical exponent arbitrarily close to $2$. Let us close this   discussion by mentioning a very recent result of Dahmani, Futer and Wise \cite{FutWise}  that for free groups $\mathbb F_n$ ($n\ge 2$) there exists a sequence of finitely generated subgroups with critical exponents tending to $\log(2n-1)$. Hence, an intriguing question arises concerning the conditions under which there exists a  gap of  critical exponents for free subgroups of   ambient groups. Although this question remains unanswered for  free groups, if we consider   the class of free semigroups, then we are indeed able to obtain a general result.

\begin{mainthm}[large free semigroups]\label{LargeFreeThm}
Let $G$ admit a proper action on a geodesic   space $(\mathrm Y, d)$ with a contracting element.   Fix a basepoint $o\in \mathrm Y$.  Then there exists a sequence of    free semigroups $\Gamma_n \subset G$ such that 
\begin{enumerate}
\item
$
\e {\Gamma_n} < \e G \text{ but } \e {\Gamma_n} \to \e G
$
as $n\to \infty$. 
\item
The standard Cayley graph of $\Gamma_n$ is sent by a natural orbital map to a quasi-geodesic tree rooted at $o$ such that each branch is a contracting quasi-geodesic.
\end{enumerate}
\end{mainthm}
\noindent The property in Assertion (1) will be referred to as the \textit{large free semigroup} property.  

The notion of quasi-convexity have been studied in the literature, some of which we shall consider in the sequel. Lets first introduce the strong one. A  subset $X$ is called \textit{$\sigma$-quasi-convex} for a function $\sigma: \mathbb R_+ \to \mathbb R_+$ if, given $c \ge
1$,  any $c$-quasi-geodesic with endpoints in $X$ lies in the neighborhood $N_{\sigma(c)}(X)$. It is clear that this notion of quasi-convexity is preserved up to a finite Hausdorff distance. A subset $\Gamma$ of $G$ is called \textit{$\sigma$-quasi-convex} if  the set $\Gamma\cdot o$ is  $\sigma$-quasi-convex   for some (or any) point $o \in \mathrm Y$.

We usually speak of a \textit{purely contracting} sub-semigroup $\Gamma$   if every non-trivial element is  contracting. Thus, the theorem can be rephrased as follows:
\begin{cor}
Any proper action on a geodesic   space with contracting element has  purely contracting, quasi-convex,  large free semigroups. 
\end{cor}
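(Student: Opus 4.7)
The plan is to extract all three claimed properties directly from Theorem A, since the corollary is really a repackaging of that theorem. I first invoke Theorem A to produce the sequence of free semigroups $\Gamma_n \subset G$; assertion (1) gives verbatim the large free semigroup property $\e{\Gamma_n} < \e G$ with $\e{\Gamma_n} \to \e G$.

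To verify that $\Gamma_n$ is purely contracting, I use assertion (2). For any non-trivial $w \in \Gamma_n$, the forward powers $\{w^k : k \ge 1\}$ label a single infinite ray in the standard Cayley graph of the free semigroup $\Gamma_n$, and hence the orbit points $\{w^k \cdot o : k \ge 1\}$ lie along a single branch of the quasi-geodesic tree, which is a contracting quasi-geodesic ray. Since $w$ acts as an isometry of $(\mathrm Y,d)$ translating $w^k \cdot o$ to $w^{k+1}\cdot o$, extending by the inverse $w^{-1}\in G$ gives a bi-infinite contracting quasi-geodesic orbit $\{w^k\cdot o : k \in \mathbb Z\}$, which is exactly the condition for $w$ to be a contracting element of $G$.

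For quasi-convexity of $\Gamma_n \cdot o$, I argue that any two orbit points $g_1 \cdot o$ and $g_2\cdot o$ are joined by the path obtained by concatenating the two contracting branches from $o$ to $g_1\cdot o$ and $o$ to $g_2 \cdot o$ supplied by assertion (2). Since each branch is a contracting quasi-geodesic with uniform constants, the Morse/stability property of contracting quasi-geodesics forces any other $c$-quasi-geodesic with the same endpoints to fellow-travel this broken path within a uniformly bounded neighborhood depending only on $c$. This gives a function $\sigma$ witnessing $\sigma$-quasi-convexity of $\Gamma_n \cdot o$.

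The main obstacle is ensuring that the broken concatenation through $o$ is itself a uniform quasi-geodesic, independently of the choice of $g_1, g_2 \in \Gamma_n$. This is where the \emph{tree} structure in assertion (2) is essential rather than just the contraction of individual branches: the branches meeting at $o$ (and at any intermediate vertex) fan out with uniform constants, so a standard concatenation criterion for paths built from contracting segments applies uniformly and produces a quasi-geodesic with constants depending only on the data of Theorem A. With this in hand, the Morse property closes the argument and yields the three asserted properties simultaneously.
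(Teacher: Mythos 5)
Your reduction to Theorem \ref{LargeFreeThm} is the right move for assertion (1), and your treatment of ``purely contracting'' is close in spirit to the paper's: there, for a non-trivial $w=\Phi(W)$ one observes that the \emph{bi-infinite} path labeled by $\{w^k:k\in\mathbb Z\}$ is a $(D,\tau,L,\Delta)$-admissible path, hence contracting by Proposition \ref{saturation}, and a quasi-geodesic by Proposition \ref{admissible}. Your version via assertion (2) only hands you the forward half-orbit (which moreover is a coarsely dense \emph{subset} of a branch, so you need Proposition \ref{Contractions}(2) to transfer contraction to it), and the passage from $\{w^k o:k\ge 0\}$ to the $\mathbb Z$-orbit demanded by the definition of a contracting element is asserted rather than proved; it is true, but the clean route is to apply the admissible-path criterion to the bi-infinite labeled path directly, as the paper does.

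The quasi-convexity argument contains a genuine gap. The concatenation of the two branches $[o,g_1o]$ and $[o,g_2o]$ through the root is \emph{not} a uniform quasi-geodesic: take $g_1=s^n$ and $g_2=s^nt$ for generators $s,t$ of $\Gamma_n$. Then $d(g_1o,g_2o)\le d(o,to)$ is bounded independently of $n$, while the broken path through $o$ has length comparable to $2n$, so it violates the quasi-geodesic inequality for every uniform constant and the Morse/stability property cannot be applied to it. The obstruction is not how branches ``fan out at $o$'' but that two vertices sharing a long common prefix are close to each other and far from the root. The path one must use is the image of the \emph{tree geodesic}, namely $\overline{[g_0o,g_1o]}\cdot[g_0o,g_2o]$ where $g_0$ is the longest common prefix of $g_1,g_2$ in the free semigroup; the actual content to supply is that this concatenation is again a $(D,\tau)$-admissible path --- the conditions \textbf{(BP)} and \textbf{(LL2)} at the branching vertex $g_0o$, which hold because the first letters of $g_1$ and $g_2$ after $g_0$ are distinct, $R$-separated alphabet elements chosen via Lemma \ref{largesemifree}. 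Once that is checked, Proposition \ref{admissible} makes the concatenation a uniform quasi-geodesic that $\epsilon$-fellow-travels any geodesic between its endpoints, and since its vertices lie within a bounded distance of $\Gamma_n\cdot o$, quasi-convexity follows (with arbitrary $c$-quasi-geodesics handled by the quasi-convexity of the contracting saturation). This admissible-path verification at the branch point is precisely the step your shortcut through the root replaces incorrectly.
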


Construction of large quasi-geodesic trees can be traced back at least to the work of Bishop and Jones \cite{BishopJ} on the Hausdorff dimension of limit sets of Kleinian groups. Generalizing earlier results of  Patterson \cite{Patt} and Sullivan \cite{Sul}, they constructed such trees to give a lower bound on the Hausdorff dimension   in a very general setting. Later, their construction was   implemented for discrete groups acting on $\delta$-hyperbolic spaces by Paulin \cite{Paulin}.

Except the growth-tightness property, Mercat \cite{mercat} has proved the other properties of $\Gamma_n$ in \ref{LargeFreeThm} for (semi)groups acting properly on a hyperbolic space.  It should be noted here that all of these constructions  make   essential use of Gromov's hyperbolicity and its consequences, whence a direct generalization \`a la Bishop and Jones fails in non-hyperbolic spaces, such as  Cayley graphs of a relatively hyperbolic group. In this relative case, the present author \cite{YANG7} has introduced the notion of a ``partial'' cone, which allows one to construct  the desired trees by iterating partial cones.  This approach has been  applied to large quotients and Hausdorff dimensions  in \cite{YANG7}  and \cite{PYANG}.  Nevertheless, the current construction given in \textsection \ref{Section3} follows more closely the construction \`a la Bishop and  Jones.

Our method here relies only on the existence of a contracting element, which is usually thought of as a hyperbolic direction, a very partial negative curvature in a metric space. This sole assumption allows   our result to be applied to many more interesting class of groups: see the list at the beginning of this paper.   In the interests of clarity and brevity, we   mention  below some corollaries that we believe to be of particular interest.

\begin{thm}[$\mathbb {RelHyp}$]
A relatively hyperbolic group $ G\in \mathbb{RelHyp}$ has quasi-convex, large free  sub-semigroups. 
\end{thm}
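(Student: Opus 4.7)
The plan is to derive this statement as an immediate instance of the corollary stated right after Theorem~A, which asserts that any proper action on a proper geodesic space with a contracting element admits purely contracting, quasi-convex, large free sub-semigroups. The task therefore reduces to verifying, for an arbitrary $G \in \mathbb{RelHyp}$, that the defining action on its Cayley graph meets those two hypotheses.

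First I would note that the Cayley graph $\mathrm Y$ of $G$ with respect to the given finite generating set $S$ is a proper geodesic metric space, and that $G$ acts on $\mathrm Y$ properly by isometries via left multiplication. The standing non-elementarity hypothesis then guarantees the existence of an infinite-order hyperbolic element in $G$, that is, an element of infinite order not conjugate into any peripheral subgroup; this is a standard output of Osin's framework in \cite{Osin}.

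Next I would invoke the result of Gerasimov--Potyagailo \cite{GePo4}, explicitly listed among the examples of contracting elements at the start of the paper, to conclude that any such hyperbolic element acts as a contracting element on $\mathrm Y$ in the sense used here. With a contracting element in hand, the hypotheses of Theorem~A (and of its subsequent corollary) are met, and one directly reads off a sequence of free sub-semigroups $\Gamma_n \subset G$ which are quasi-convex and which satisfy $\omega(\Gamma_n) < \omega(G)$ together with $\omega(\Gamma_n) \to \omega(G)$, as required.

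The main obstacle, if any, is not in the argument itself but rather in the supporting folklore about relatively hyperbolic groups upon which it leans: the existence of hyperbolic elements in the non-elementary setting and their identification as contracting. Both are essentially quoted from the literature, so the proof amounts to a one-line translation of Theorem~A into the relatively hyperbolic setting, and nothing specific to $\mathbb{RelHyp}$ beyond those two inputs enters the argument.
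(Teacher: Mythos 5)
Your proposal is correct and matches the paper's intent exactly: the theorem is stated as an immediate specialization of Theorem~A (and the corollary following it) to $G \in \mathbb{RelHyp}$, using precisely the fact — cited in the paper's list of examples to \cite{GePo4} — that hyperbolic elements of a non-elementary relatively hyperbolic group are contracting for the word metric. The paper gives no separate proof of this theorem beyond the remark that it does not follow from Mercat's hyperbolic-space result, so your one-line derivation is the intended argument.
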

This result does not follow from Mercat's theorem, since the Cayley graphs of a relatively hyperbolic group are not  $\delta$-hyperbolic. Further applications of this corollary to Hausdorff dimension will be given elsewhere; cf. \cite{PYANG}.
\\ 

\paragraph{\textbf{The class of irreducible subgroups}} Given a proper action of $G$ on a geodesic metric space $(\mathrm Y, d)$, it is natural to look at an  \textit{irreducible}  subgroup $\Gamma$  which, by definition, contains at least two independent contracting elements (cf. \textsection \ref{SScontracting}). Equivalently, it is the same as a non-elementary subgroup  with at least one contracting element.  

In the context of mapping class groups, a sufficiently large subgroup was  studied first by McCarthy and Papadopoulos \cite{McPapa}, as an analog of non-elementary subgroups of Kleinian groups. By definition, a \textit{sufficiently large} subgroup is  one with at least two independent pseudo-Anosov elements, so  it coincides with the notion of an irreducible subgroup by  Lemma \ref{pAContr}. We refer the reader to \cite{McPapa} for a detailed discussion. The interesting examples include convex-cocompact  subgroups  \cite{FarbMosher}, the handlebody group, among many others. Hence, we deduce the following from \ref{LargeFreeThm}:
 
\begin{thm}[$\mathbb{Mod}$]
Consider a sufficiently large subgroup $\Gamma$ of $ G\in \mathbb{Mod}$. Then there exists a sequence of purely pseudo-Anosov, free semi-subgroups $\Gamma_n \subset \Gamma$ such that 
$$
\e {\Gamma_n} < \e \Gamma \text{ but } \e {\Gamma_n} \to \e \Gamma
$$
as $n\to \infty$. 
\end{thm}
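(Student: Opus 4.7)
The plan is a direct reduction to Theorem A applied to the restricted action $\Gamma \curvearrowright \mathrm Y$ on Teichm\"uller space. First I would check that Theorem A applies to $\Gamma$ itself rather than merely to the ambient $G$. As a subgroup of the mapping class group $G$, the group $\Gamma$ inherits a proper isometric action on $\mathrm Y$. By definition of \emph{sufficiently large}, $\Gamma$ contains at least two independent pseudo-Anosov elements, and by Minsky's theorem each such element acts as a contracting isometry for the Teichm\"uller metric. Hence $\Gamma \curvearrowright \mathrm Y$ is a proper isometric action with a contracting element, so the hypotheses of Theorem A are met.

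Next I would invoke Theorem A together with its Corollary for the action $\Gamma \curvearrowright \mathrm Y$. This directly produces a sequence of purely contracting, quasi-convex, free sub-semigroups $\Gamma_n \subset \Gamma$ satisfying $\e{\Gamma_n} < \e{\Gamma}$ and $\e{\Gamma_n} \to \e{\Gamma}$ as $n \to \infty$. Note that applying Theorem A to $\Gamma$ (and not only to $G$) is what allows the upper bound to be $\e{\Gamma}$ rather than $\e{G}$, which matters since $\Gamma$ may be a proper subgroup.

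The only remaining step is to upgrade ``purely contracting'' to ``purely pseudo-Anosov''. This is supplied by Lemma \ref{pAContr}, which identifies the contracting elements of a mapping class group (for its action on Teichm\"uller space) with the pseudo-Anosov elements. Applying this elementwise to every non-trivial element of each $\Gamma_n$ yields the stated conclusion.

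I do not expect a serious obstacle: the main content is really that the contracting element required by Theorem A is built into the definition of sufficiently large subgroup, and the translation ``contracting $\Leftrightarrow$ pseudo-Anosov'' for elements of $G \in \mathbb{Mod}$ is exactly Lemma \ref{pAContr}. Once Theorem A is in hand, the argument is a short bookkeeping translation.
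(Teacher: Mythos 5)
Your proposal is correct and matches the paper's own derivation: the theorem is stated as a direct consequence of Theorem A applied to the proper action of the sufficiently large (equivalently, irreducible) subgroup $\Gamma$ on Teichm\"uller space, with Lemma \ref{pAContr} supplying the identification of contracting elements with pseudo-Anosov elements in both directions. No further comment is needed.
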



\subsection{Statistically convex-cocompact actions}\label{SSSCC}
In this subsection, we focus on a subclass of proper actions akin to a cocompact action in a statistical sense.     By abuse of language, a geodesic between two sets $A$ and $B$ is a geodesic $[a, b]$ between $a\in A$ and $b\in B$.

A significant part of this paper is concerned with  studying  a \textbf{statistical} version of convex-cocompact actions. Intuitively, the reason that an action fails to be convex-cocompact is the existence of \textit{a concave region} formulated as follows. Given constants $0\le M_1\le M_2$, let $\mathcal O_{M_1, M_2}$ be the set  of elements $g\in G$ such that there exists some geodesic $\gamma$ between $B(o, M_2)$ and $B(go, M_2)$ with the property that the interior of $\gamma$ lies outside $N_{M_1}(Go)$; see Figure \ref{figure1}. 

\begin{figure}[htb] 
\centering \scalebox{0.6}{
\includegraphics{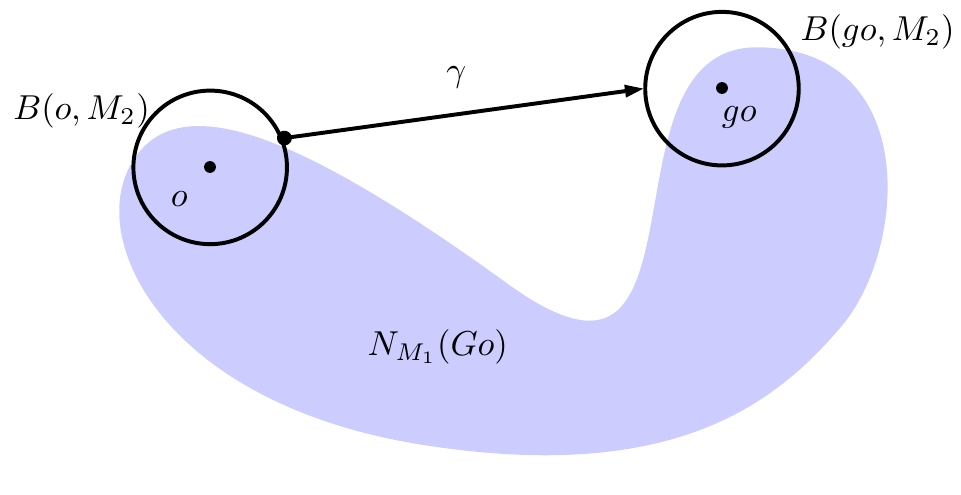} 
} \caption{Definition of $\mathcal O_{M_1, M_2}$} \label{figure1}
\end{figure}

\begin{defn}[statistically convex-cocompact action]\label{StatConvex}
If there exist two positive constants  $M_1, M_2>0$ such that $\e {\mathcal O_{M_1, M_2}} < \e G$, then the action of $G$ on $\mathrm Y$ is called \textit{statistically convex-cocompact (SCC)}.  

\end{defn}

\begin{rem}
\begin{enumerate}
\item
This underlying concept   was introduced by Arzhantseva  et al. in \cite{ACTao} as a generalization of a parabolic gap condition due to Dal'bo  et al. \cite{DOP}. We propose the particular terminology used here since  we shall prove (in a forthcoming work) that a SCC action with contracting elements is statistically hyperbolic, a  notion introduced earlier by Duchin  et al. \cite{DLM}.

\item (about parameters)
We have chosen two parameters $M_1, M_2$ (differing from \cite{ACTao})  so that the definition is flexiable and easy to verify.  In practice, however, it is  enough to take $M_1=M_2$ without losing anything, since $\mathcal O_{M_2, M_2} \subset \mathcal O_{M_1, M_2}$. Henceforth, we set $\mathcal O_M:=\mathcal O_{M, M}$ for ease of notation.  
\item
Moreover, when the SCC action contains a contracting element,  the definition will be proven independent of the basepoint in Lemma \ref{SCCBasepoint}.
\end{enumerate}
\end{rem}

The list of groups in ($\mathbb {Hyp}- \mathbb {Mod}$) all admit SCC actions on the interesting spaces. Here we emphasize some prototype examples motivating the notion of SCC actions. 
\begin{examples}
\begin{enumerate}
\item
Any proper and cocompact action on a geodesic metric space.  In this case, $\mathcal O_M$ is empty.
 
\item
The class of relatively hyperbolic groups with parabolic gap property (cf. \cite{DOP}); in this case, the set $\mathcal O_M$ is the union of finitely many parabolic groups, up to a finite Hausdorff distance. 
\item
The action of mapping class groups on Teichm\"{u}ller spaces is SCC (cf. \cite{ACTao}).  
\end{enumerate}
\end{examples}

Analogous to irreducible subgroups in a proper action,  it appears to be interesting to study the class of \textit{statistically convex-cocompact subgroups} (SCC subgroups) in a given SCC action. This should be regarded as a generalization of convex-cocompact subgroups   studied in some groups.

The notion of convex-cocompact subgroups in $\mathbb {Mod}$ was introduced by Farb and Mosher \cite{FarbMosher}.  This   is conceived  as an analog of the well-studied class of convex-cocompact Kleinian groups, with applications to surface group extensions.  We believe that SCC subgroups are a useful generalization of their notion from a dynamical point of view. Moreover,  the notion of SCC subgroups is strictly bigger than that of convex-cocompact subgroups in $\mathbb {Mod}$:
 
\begin{prop}[cf. \ref{ModSubSCC}]
In mapping class groups, there exist, free and non-free, non-convex-cocompact subgroups which admit    SCC actions on Teichm\"{u}ller spaces. 
\end{prop}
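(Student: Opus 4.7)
The plan is to use arithmetic Veech groups to furnish both the free and the non-free examples. First, recall the Farb--Mosher characterization of convex-cocompactness in $\mathrm{Mod}(S)$: any convex-cocompact subgroup must be purely pseudo-Anosov, so any subgroup that contains a Dehn twist (or any reducible element of infinite order) automatically fails to be convex-cocompact. It therefore suffices, for each example, to exhibit such a ``reducible obstruction'' and then verify that the action on Teichm\"uller space is SCC.

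Let $q$ be a square-tiled holomorphic quadratic differential on $S$ and let $V=\mathrm{SL}(q)\leq \mathrm{Mod}(S)$ be its affine Veech group. By the classical theorem of Gutkin--Judge, $V$ is commensurable with $\mathrm{PSL}_2(\mathbb{Z})$, hence a non-cocompact lattice in $\mathrm{PSL}_2(\mathbb{R})$ containing both parabolic multi-twists and hyperbolic (pseudo-Anosov) elements. The group $V$ preserves the Teichm\"uller disk $\mathbb{H}^2_q\subset \mathrm{Teich}(S)$, which is isometrically embedded and geodesically convex for the Teichm\"uller metric. The key reduction is from SCC on $\mathrm{Teich}(S)$ to SCC on the Fuchsian model $\mathbb{H}^2_q$: given $o\in \mathbb{H}^2_q$, geodesic convexity places every Teichm\"uller geodesic between points of $V\cdot o$ inside the disk, while the isometric embedding gives $N_{M}^{\mathrm{Teich}}(V\cdot o)\cap \mathbb{H}^2_q = N_M^{\mathbb{H}^2_q}(V\cdot o)$. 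Hence the critical exponent $\omega(V)$ and, up to a bounded additive adjustment of the parameters $M_1,M_2$ absorbing the ``ball slop'' in Definition \ref{StatConvex}, the concave region $\mathcal O_{M_1,M_2}\cap V$ agree with their Fuchsian counterparts. The classical parabolic gap theorem of Dal'bo--Otal--Peign\'e \cite{DOP} for non-cocompact Fuchsian lattices then yields $\omega(\mathcal O_{M_1,M_2}\cap V)<\omega(V)$, so $V$ acts SCC on Teichm\"uller space. Since $V$ has torsion (e.g. the order-$2$ and order-$3$ elements inherited from $\mathrm{PSL}_2(\mathbb{Z})$) and contains Dehn multi-twists, it provides the non-free, non-convex-cocompact example.

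For the free example, pass to a torsion-free finite-index subgroup $F\leq V$; such an $F$ exists by Selberg's lemma and is free because non-cocompact Fuchsian lattices are virtually free. Suitable powers of the parabolic multi-twists remain in $F$, so $F$ is still not convex-cocompact. The SCC property transfers from $V$ to $F$ because both the critical exponent and the concave region are preserved up to bounded multiplicative comparison via a finite coset transversal, and $\omega(F)=\omega(V)$ for finite-index inclusions. The main obstacle is the basepoint-ball bookkeeping in the disk reduction: one must verify that geodesics whose endpoints lie in $B(o,M_2)$ (and hence possibly off the disk) can, after enlarging $M_1$, be replaced by genuine disk geodesics without altering which $g$ lie in $\mathcal O_{M_1,M_2}$. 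This is routine given the isometric embedding and convexity of $\mathbb{H}^2_q$, but it is the only non-formal ingredient in the argument.
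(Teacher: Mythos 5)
Your approach is genuinely different from the paper's and, while appealing, has a real gap at the step you yourself flag as ``the only non-formal ingredient.''

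The paper produces examples of the form $\langle p^n\rangle \star K$ ($p$ pseudo-Anosov, $K$ a torsion-free reducible subgroup) via the free product combination criterion (Proposition \ref{FreeproductComb}). The verification of SCC there is not formal: one works with admissible paths with respect to the contracting system $\{gE(p)o\}$, uses bounded projections, and explicitly shows that the concave region $\mathcal O_C$ lies in $\hat K$. In contrast, you work with an arithmetic Veech group $V$ preserving a totally geodesic Teichm\"uller disk $\mathbb H^2_q$ and try to reduce SCC for the $V$-action on all of $\mathrm{Teich}(S)$ to the classical parabolic gap for the Fuchsian action on $\mathbb H^2_q$ via DOP.

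The gap is in that reduction. Let $\mathcal O^{\mathrm{disk}}_{M_1,M_2}$ denote the concave region computed intrinsically in $\mathbb H^2_q$ (disk geodesics, disk balls, disk neighborhood of $Vo$), and $\mathcal O^{\mathrm{amb}}_{M_1,M_2}$ the concave region in the Definition \ref{StatConvex} sense for the action on $\mathrm{Teich}(S)$. Total geodesity and the isometric embedding do give $N_M^{\mathrm{amb}}(Vo)\cap \mathbb H^2_q = N_M^{\mathrm{disk}}(Vo)$ and hence $\mathcal O^{\mathrm{disk}}_{M_1,M_2}\subset \mathcal O^{\mathrm{amb}}_{M_1,M_2}$, but you need the \emph{opposite} control: you must show that allowing ambient geodesics, with endpoints in ambient $M_2$-balls that leave the disk, does not enlarge the concave region beyond what the disk picture already accounts for. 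Since $\mathcal O_{M_1,M_2}$ is defined by an existential quantifier over geodesics, every ambient geodesic that strays from $N_{M_1}(Vo)$ adds an element; a priori there is no reason for such geodesics to fellow-travel the disk. Teichm\"uller space is neither hyperbolic nor CAT(0), and a Teichm\"uller disk is not strongly contracting in the sense of Definition \ref{ContrDefn} (it has flat/parabolic directions and is two-dimensional), so neither the thin-triangle argument implicit in hyperbolic geometry nor the contraction machinery of this paper controls ambient geodesics between points merely \emph{near} the disk. Enlarging $M_1$ does not help: that only shrinks $\mathcal O_{M_1,M_2}$ for the disk geodesics, but one still must bound the contribution of geodesics that leave a bounded neighborhood of the disk entirely. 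This is precisely the sort of statement for which, for the full mapping class group, one invokes Eskin--Mirzakhani--Rafi \cite{EMR}; you invoke nothing analogous for the disk. Until that estimate is supplied, the claim that the Veech group acts SCC on Teichm\"uller space is unproven, and with it the proposition. The remaining ingredients (Gutkin--Judge commensurability, $F\le V$ torsion-free and free of finite index, inheritance of the reducible obstruction, $\omega(F)=\omega(V)$) are fine, but they all rest on the missing reduction.
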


It is an interesting question to determine  to which extent a SCC subgroup generalizes the notion of a geometrically finite subgroup in the following classes of groups:
\begin{enumerate}
\item
In hyperbolic groups, does there exist a SCC subgroup which is not quasiconvex?

\item
In relatively hyperbolic groups, does there exist a SCC subgroup which is not relative quasiconvex?  The point is that whether the concave region $\mathcal O_M$ is always coming from the union of finitely many parabolic subgroups.

\end{enumerate}

The second main result of this paper concerns the quantitative  behavior of the orbital growth function. A group action has so-called \textit{purely exponential growth} if  there exists $\Delta>0$ such that 
$$
\sharp A(o, n, \Delta) \asymp \exp( \e G n)
$$
for  any $n\ge 1$. This property admits several interesting applications, for instance, to  statistical hyperbolicity \cite{DLM} \cite{OYANG}, to counting conjugacy classes and automatic structures \cite{AntCio}, and  to  the finiteness of Bowen--Margulis--Sullivan measure \cite{Roblin}\cite{YANG8}.

\begin{mainthm}[exponential growth] \label{StatisticThm}
Let $G$ admit a proper action on a geodesic   space $(\mathrm Y, d)$ with a contracting element. Then the following holds:
\begin{enumerate}
\item
The critical exponent is a true limit:
$$
\e \Gamma = \lim\limits_{n \to \infty} \frac{\log \sharp(N(o, n)\cap \Gamma)}{n}.
$$
\item
For some $\Delta>0$, we have
$$
\sharp A(o, n, \Delta) \prec \exp( \e G n)
$$ 
for any $n\ge 1$.
\item
If the action is SCC, then $G$ has  purely exponential growth.
 
\end{enumerate}
\end{mainthm}
\begin{rem}
Assertion (1) was established by Roblin \cite{Roblin2} for CAT($-1$) spaces, whose method used conformal density in a crucial way. For Kleinian groups, the proof of Assertion (2)  via the use of  Patterson--Sullivan measures is well known,  with the most general form being due to Coornaert \cite{Coor} for a discrete group acting on $\delta$-hyperbolic spaces.

In \cite{Roblin}, Roblin showed the equivalence of purely exponential growth and finiteness of Bowen--Margulis measure for discrete groups on CAT($-1$) spaces. In a coarse setting, we gave in \cite{YANG8} a characterization of purely exponential growth  in the setting of cusp-uniform actions on $\delta$-hyperbolic spaces. 

We emphasize that our elementary proof  does not use the machinery  of Patterson--Sullivan theory and, more importantly,  it is valid in a very general setting. 

\end{rem}

First of all, we give some  corollaries to the class of an irreducible subgroup of the first two Assertions in the theorem.  

The following application appears to be new even in $\mathbb {Hyp}$,   which  was proved recently for free groups by Olshanskii \cite{Ol2}. Indeed, an infinite subgroup of a hyperbolic group always contains a hyperbolic element that is contracting.
\begin{cor} 
Assume that $G$ acts properly on a geodesic   space $(\mathrm Y, d)$. Then the limit  
$$\lim\limits_{n \to \infty} \frac{\log \sharp(N(o, n)\cap \Gamma)}{n}$$
exists for any irreducible  subgroup $\Gamma$.
\end{cor}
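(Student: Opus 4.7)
The plan is to reduce the statement directly to Assertion (1) of Theorem B, applied to the subgroup $\Gamma$ in place of $G$. The key observation is that the hypotheses of Theorem B are intrinsic to the pair (acting group, metric space) and transfer without any work from $G$ to any subgroup that still contains a contracting element.

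First I would unwind the definitions. By the definition recalled in \textsection\ref{SSSCC} (the paragraph preceding the statement), an irreducible subgroup $\Gamma \leq G$ is one containing at least two independent contracting elements; in particular, $\Gamma$ contains at least one contracting element $g$. The restriction of the $G$-action on $(\mathrm Y, d)$ to $\Gamma$ is still an isometric action, and since $G$ acts properly on $\mathrm Y$, so does $\Gamma$. Next, I would observe that the property of an element being contracting depends only on its orbit $\{g^n\cdot o : n \in \mathbb Z\}$ as a subset of $\mathrm Y$ and on the quasi-isometric embedding $n \mapsto g^n\cdot o$; neither of these features is sensitive to the ambient group in which $g$ is viewed. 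Consequently, $g$ is a contracting element for the $\Gamma$-action on $\mathrm Y$ as well.

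Having verified that $\Gamma$ acts properly on the geodesic metric space $(\mathrm Y, d)$ and contains a contracting element, I would simply invoke Assertion (1) of Theorem B with $\Gamma$ playing the role of $G$ to conclude
\[
\e \Gamma \;=\; \lim_{n\to\infty} \frac{\log \sharp(N(o,n)\cap \Gamma)}{n},
\]
which is exactly the desired statement.

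There is no real obstacle here; the only thing to be careful about is to confirm that ``contracting element'' is a property intrinsic to $(g,\mathrm Y,o)$ and not to $(g,G,\mathrm Y,o)$, so that Theorem B may be genuinely applied to the subgroup $\Gamma$ and not merely to the whole group $G$. Once this is noted, the corollary is immediate.
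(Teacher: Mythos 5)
Your proposal is correct and is exactly the paper's intended argument: the corollary is obtained by applying Assertion (1) of \ref{StatisticThm} to the subgroup $\Gamma$ itself, after noting that $\Gamma$ acts properly on $(\mathrm Y,d)$ and that the contracting property of an element is intrinsic to its orbit in $\mathrm Y$, hence persists when the ambient group is shrunk to $\Gamma$. Nothing further is needed.
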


Moreover, any irreducible subgroup $\Gamma$ has an upper exponential growth function as well:
$$
\sharp (A(o, n, \Delta) \cap \Gamma) \prec \exp( \e \Gamma n)
$$ 
for some $\Delta>0$. Specializing to the class of mapping class groups, the notion of an irreducible subgroup coincides with a sufficiently large subgroup.    Therefore, we obtain the following result which appears to be new:
\begin{cor}[$\mathbb {Mod}$]
For any sufficiently large subgroup $\Gamma$ of $G\in \mathbb {Mod}$, we have  
$$
\sharp (A(o, n, \Delta) \cap \Gamma) \prec \exp( \e \Gamma n)
$$ 
for some $\Delta>0$.
\end{cor}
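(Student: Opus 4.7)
The plan is to deduce this corollary directly from Assertion (2) of Theorem B, applied not to $G$ itself but to the subgroup $\Gamma$ acting on Teichm\"uller space. The setup needed to invoke Theorem B for $\Gamma$ is twofold: the action of $\Gamma$ on $\mathrm Y$ must be proper, and it must contain a contracting element. Properness is inherited from the properness of the $G$-action, so the only substantive point is to exhibit a contracting element in $\Gamma$.

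Here is where the hypothesis that $\Gamma$ is sufficiently large enters. By the definition recalled in \textsection\ref{SSProper}, a sufficiently large subgroup of a mapping class group contains at least two independent pseudo-Anosov elements; by Lemma \ref{pAContr} (Minsky's theorem), every pseudo-Anosov element acts as a contracting isometry on Teichm\"uller space with the Teichm\"uller metric. In particular, $\Gamma$ is an irreducible subgroup in the sense of \textsection\ref{SSProper}, and its restricted action on $(\mathrm Y, d)$ satisfies the hypotheses of \ref{StatisticThm}.

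Applying Assertion (2) of Theorem B to this action yields a constant $\Delta>0$ and a universal multiplicative constant such that
$$
\sharp \bigl(A(o, n, \Delta) \cap \Gamma\bigr) \prec \exp\bigl(\e \Gamma \cdot n\bigr)
$$
for every $n\ge 1$, which is precisely the claim. The only thing one has to be mildly careful about is that $\Gamma$ need not act cocompactly or SCC on any natural subspace, but Theorem B (2) does not require SCC --- it only needs a proper action together with a contracting element, so no further work is necessary. I expect no genuine obstacle: the corollary is essentially a dictionary translation between ``irreducible subgroup of a group in $\mathbb{Mod}$'' and ``sufficiently large subgroup,'' combined with the fact that Theorem B is stated and proved for any proper action with a contracting element, hence applies verbatim to the restricted action of $\Gamma$.
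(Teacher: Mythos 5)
Your proposal is correct and follows the same route as the paper: the paper obtains this corollary precisely by observing that a sufficiently large subgroup is an irreducible subgroup (by the identification of pseudo-Anosov elements with contracting elements via Lemma \ref{pAContr}), that the restricted action of $\Gamma$ on Teichm\"uller space is proper and contains a contracting element, and then applying Assertion (2) of \ref{StatisticThm} (i.e., Proposition \ref{expUBD}(2)) to the $\Gamma$-action. Your observation that SCC is not needed for Assertion (2), only properness plus a contracting element, is exactly the point the paper relies on.
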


As a matter of fact, \ref{StatisticThm} gives an elementary and unified proof of the following class of  groups with purely exponential growth, which were established by different methods:
\begin{enumerate}
\item
hyperbolic groups (Coornaert \cite{Coor}); 
\item
groups acting on CAT($-1$) space, with finite Bowen--Margulis--Sullivan measure (Roblin \cite{Roblin});
\item
fundamental groups of compact rank-1 manifolds (Kneiper \cite{Kneiper2});
\item
relatively hyperbolic groups, with the word metric \cite{YANG7} and the  hyperbolic metric \cite{YANG8};
\item
mapping class groups, with the Teichm\"{u}ller metric (Athreya  et al. \cite{ABEM}).
\end{enumerate}
 
Let us  comment   on the last of these.   Our general methods work for mapping class groups as well as further other applications, with the only assumption that the action of mapping class groups   on Teichm\"{u}ller spaces (with Teichm\"{u}ller metric) is SCC with contracting elements.  On the other hand, we need point out that computation of the precise value of the critical exponent, the entropy of Teichm\"{u}ller geodesic flows  which is $6g-6$, is out of scope of this approach.

In addition to recovering the above well-known results,    the following new classes of groups with purely exponential growth have been established as direct consequences of Theorem \ref{StatisticThm}: 

\begin{thm}\label{PEGrowthExamp}
The following class of groups have purely exponential growth:
\begin{enumerate}
\item
The action on their Cayley graphs of  $\mathrm{Gr}'(1/6)$-labeled graphical small cancellation groups   with finite components  labeled by a finite set $S$;
\item
CAT(0) groups with rank-1 elements;
\item  
the action on the Salvetti complex of right-angled Artin groups that are not direct products;
\item
the action on the Davis complex of a right-angled Coxeter group that is not virtually a direct product of non-trivial groups.
\end{enumerate}
\end{thm}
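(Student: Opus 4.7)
The plan is to invoke Theorem \ref{StatisticThm}(3) in each of the four cases. Its hypotheses are (i) a proper action on a geodesic space, (ii) the existence of a contracting element, and (iii) the action is SCC. In every item of Theorem \ref{PEGrowthExamp} the action considered is in fact proper and \emph{cocompact}: a group acts on its own Cayley graph properly cocompactly by construction for (1); a CAT(0) group by definition acts properly cocompactly on a CAT(0) space for (2); and a right-angled Artin, respectively Coxeter, group acts properly cocompactly on its Salvetti, respectively Davis, complex in (3) and (4). Consequently (i) holds, and by the first example following Definition \ref{StatConvex} the SCC condition (iii) is automatic: once $M$ exceeds the diameter of the quotient, the whole space lies in $N_M(Go)$, so $\mathcal O_M$ is empty and $\e{\mathcal O_M}<\e G$ trivially.

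It therefore suffices to produce a contracting element in each case. For (1) this is exactly the content of the work of Arzhantseva-Cashen-Gruber-Hume \cite{ACGH} cited in the introduction under $\mathbb{GSC}$: in a $\mathrm{Gr}'(1/6)$-labeled graphical small cancellation group, suitable infinite order elements act as contracting isometries on the Cayley graph. For (2) the hypothesis supplies a rank-1 element of the CAT(0) space, and rank-1 isometries are contracting by Bestvina-Fujiwara \cite{BF2}, as already noted under $\mathbb{CAT_0^1}$.

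Cases (3) and (4) are the only ones requiring an extra argument, namely the extraction of a rank-1 element from the non-decomposability hypothesis. For a right-angled Artin group $A_\Gamma$, the Salvetti complex splits as a non-trivial metric product precisely when the defining graph is a non-trivial join, i.e.\ precisely when $A_\Gamma$ is a direct product of standard subgroups. Under the negation of this condition, the rank-rigidity theory for CAT(0) cube complexes (Caprace-Sageev) combined with the Behrstock-Charney type criteria for RAAGs produces a rank-1 element acting on the Salvetti complex. The right-angled Coxeter case is entirely parallel, using the Davis complex together with the hypothesis that the group is not virtually a direct product; the standard cube-complex rank-rigidity again supplies a rank-1 isometry. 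In each case the resulting rank-1 element is contracting by the argument of (2), and Theorem \ref{StatisticThm}(3) then delivers purely exponential growth.

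The only real obstacle in this program is the clean invocation of a rank-1 element in (3) and (4); the reduction from the algebraic hypothesis (no direct product decomposition) to the geometric one (existence of a rank-1 isometry of the cube complex) must be cited carefully, but requires no new ingredient beyond the known cube-complex literature. Granting this, the entire theorem follows from Theorem B of the present paper with no further calculation.
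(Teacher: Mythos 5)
Your proposal is correct and follows essentially the same route as the paper: cocompactness makes $\mathcal O_M$ empty so the action is SCC, and then one only needs a contracting element in each case before invoking Theorem \ref{StatisticThm}(3). The one place where your citations diverge is case (4), where the paper extracts the rank-1 element not from Caprace--Sageev rank rigidity but from the divergence characterization (Behrstock--Hagen--Sisto: linear divergence iff virtually a direct product, combined with Charney--Sultan: rank-1 elements iff superlinear divergence), which cleanly bridges the algebraic hypothesis to the geometric one that you correctly flag as the delicate step.
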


The items (3) and (4) are two important classes of CAT(0) groups with rank-1 elements. A detailed proof   is  given in the subSection \ref{ProofPEGrowthExamp}.

\subsection{Tools: Growth-tightness and Extension Lemma}
As the title indicates, we are now going to describe   two basic tools  throughout this study. The first is a growth-tightness theorem   adressing     the question that which \textit{subsets} are growth-tight.   

Let us first give some historical background on the notion of growth-tightness. It was introduced by Grigorchuk and de la Harpe in \cite{GriH} for a \textit{group}:  roughly speaking, a group is called growth-tight if the growth rate strictly decreases under taking quotients.  Their main motivation is perhaps that if,  {for some generating set, the growth rates of a growth-tight group achieve an infimum, called the \textit{entropy} (the entropy realization problem), then the group is Hopfian. In practice, it appears to be quite difficult to solve the entropy realization problem, whereas the Hopfian nature of a group is relatively easy to  establish (for instance as a consequence of residual  finiteness).} So conversely, Sambusetti \cite{Sam2} constructed the first examples of groups with unrealized entropy: indeed, he showed the growth-tightness of a free product of any two  groups if it is not $\cong \mathbb Z_2 \star \mathbb Z_2$, so his examples are free products of non-Hopfian groups.
 
Since their introduction, the property of growth-tightness was then established by Arzhantseva and Lysenok \cite{AL} for hyperbolic groups, by Sambusetti \cite{Sam2}  \cite{Sam3}  for free products and cocompact Kleinian groups, and by Dal'bo, Peign\'e,   Picaud and Sambusetti \cite{DPPS} for geometrically  finite Kleinian groups with parabolic gap property. The present author \cite{YANG6} realized their most arguments  in a broad setting and showed growth-tightness of groups with non-trivial Floyd boundary, subsequently generalizing the result to any group acting properly and cocompactly on a geodesic metric space with a contracting element. This result  was   achieved   independently and simultaneously  by Arzhantseva et. al \cite{ACTao}, who also proved the very interesting result that  the action of mapping class groups on Teichm\"{u}ller space is SCC (in our terminology) and   is growth-tight in the   sense of Grigorchuk and de la Harpe. 

The next main theorem of this study provides a class of growth-tight sets called \textit{barrier-free elements}. With a basepoint $o$ fixed, an element $h\in G$ is called \textit{$(\epsilon, M, g)$-barrier-free} if there exists an \textit{$(\epsilon, g)$-barrier-free} geodesic $\gamma$ with $\gamma_-\in B(o, M)$ and $\gamma_+\in B(ho, M)$:  there exists no $t\in G$ such that $d(t\cdot o, \gamma), d(t\cdot go, \gamma)\le \epsilon$. We refer to Definition \ref{barriers} for more details. 

\begin{mainthm}[Growth-tightness] \label{GrowthTightThm}
Suppose that $G$ has a  SCC action on a geodesic space $(\mathrm Y, d)$ with a contracting element. Then there exist  constants $\epsilon, M>0$ such that for any given $g \in G$, we have
$$
\e {\mathcal V_{\epsilon, M, g}} < \e G
$$
where $\mathcal V_{\epsilon, M, g}$ denotes the set of $(\epsilon, M, g)$-barrier-free elements.   
\end{mainthm}

\begin{rem}(about the constants $\epsilon, M$)
Any constant $M$ satisfying Definition \ref{StatConvex} works here. By Lemma \ref{concaveRegion}, the constant $M$ can be chosen as large as possible in applications. The constant $\epsilon=\epsilon(M, \mathbb F)$ depends on the choice of a contracting system $\mathbb F$ (Convention \ref{ConvExtensionLemma}) by Lemma \ref{injective}. The bigger the constant $\epsilon>0$, the smaller   the set $\mathcal V_{\epsilon, M, g}$ is.
\end{rem}

It is easy to see that this result generalizes all the above-mentioned results for growth-tightness of groups: 
\begin{cor}[=\ref{growthtightcor}]
Under the same assumption as in \ref{GrowthTightThm}, we have 
$$
\e {\bar G} < \e G,
$$
for any quotient $\bar G$ of $G$ by an infinite normal subgroup $N$, where $\e {\bar G}$ is computed with respect to the proper action of $\bar G$ on $\mathrm Y/N$ endowed with the quotient metric defined by $\bar d(Nx, Ny):=d(Nx, Ny)$.
\end{cor}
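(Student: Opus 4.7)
The plan is to reduce the Corollary to Theorem \ref{GrowthTightThm} by choosing a good set of coset representatives whose orbital counting matches that of $\bar G$ and which lie in $\mathcal V_{\epsilon, M, g}$ for a suitable $g \in N$. For each coset $hN \in G/N$, pick a representative $h^{*} \in hN$ minimizing $d(o, h^{*} \cdot o)$ and let $T \subset G$ denote the resulting transversal. Since the quotient metric satisfies $\bar d(\bar o, \bar h \bar o) = \inf_{n \in N} d(o, hn \cdot o)$, the infimum is attained by $h^{*}$; this gives
\[
\sharp\{\bar h \in \bar G : \bar d(\bar o, \bar h \bar o) \le n\} \;=\; \sharp\{t \in T : d(o, to) \le n\},
\]
so that $\e{\bar G} = \e T$, and it suffices to bound $\e T$.

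Let $\epsilon, M > 0$ be the constants supplied by Theorem \ref{GrowthTightThm}. Since $N$ is infinite and acts properly on $\mathrm Y$, one can fix $g \in N$ with $d(o, go) > 4\epsilon$. The key claim is the inclusion $T \subset \mathcal V_{\epsilon, M, g}$; granted this, Theorem \ref{GrowthTightThm} yields $\e{\bar G} = \e T \le \e{\mathcal V_{\epsilon, M, g}} < \e G$, as desired. To prove the claim, suppose for contradiction that some $t \in T$ fails to be $(\epsilon, M, g)$-barrier-free. Negating the definition applied to the geodesic $\gamma$ from $o$ to $to$ furnishes $s \in G$ such that both $so$ and $sgo$ lie within $\epsilon$ of $\gamma$, at respective nearest points $p, q \in \gamma$; up to replacing $g$ by $g^{-1}$, we may assume $p$ precedes $q$ along $\gamma$. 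Set $w := s g^{-1} s^{-1}$, which belongs to $N$ by normality, so $wt \in tN$.

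The identity $d(so, wt \cdot o) = d(sgo, to)$ follows directly from $G$ acting by isometries, and hence
\[
d(o, wt \cdot o) \le d(o, so) + d(sgo, to) \le d(o, p) + d(q, to) + 2\epsilon.
\]
Since $\gamma$ is geodesic and $d(p, q) \ge d(so, sgo) - 2\epsilon = d(o, go) - 2\epsilon$, we also have $d(o, to) \ge d(o, p) + d(q, to) + d(o, go) - 2\epsilon$. Subtracting yields $d(o, to) - d(o, wt \cdot o) \ge d(o, go) - 4\epsilon > 0$, contradicting the minimality of $t$ within $tN$. The main obstacle in executing this plan is the construction of this ``short-circuiting'' element $w \in N$: namely, identifying the correct conjugate of $g^{-1}$ whose left-multiplication onto $t$ produces a strictly shorter representative of $tN$, and checking that the estimate behaves uniformly in the ordering of $p$ and $q$ along $\gamma$. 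Once this geometric step is in hand, the Corollary is a formal consequence of the quotient-counting identity together with Theorem \ref{GrowthTightThm}.
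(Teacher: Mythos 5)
Your proposal is correct and follows essentially the same route as the paper: shortest coset representatives realize $\e{\bar G}$, a long element $g\in N$ with $d(o,go)>4\epsilon$ is fixed, and any barrier $s$ on $[o,to]$ is converted via normality into the element $w=sg^{\pm1}s^{-1}\in N$ producing a strictly shorter representative of $tN$, so the transversal lies in $\mathcal V_{\epsilon,M,g}$ and Theorem \ref{GrowthTightThm} applies. Your $w$ is exactly the paper's $\hat g^{-1}$ (where $tg=\hat g t$), and your explicit handling of the ordering of $p$ and $q$ is a point the paper's computation passes over silently.
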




We shall consider two applications to the growth-tightness of a weakly quasi-convex subgroup. A subset $X$ is called  \textit{weakly $M$-quasi-convex} for a constant $M>0$ if for any two points $x,y$ in $X$, there exists a geodesic $\gamma$ between $x$ and $y$ such that $\gamma \subset N_M(X)$. A subgroup $\Gamma$ of $G$ is called \textit{weakly quasi-convex} if  the set $\Gamma\cdot o$ is  weakly quasi-convex   for some point $o \in \mathrm Y$. A sample application of \ref{GrowthTightThm} establishes the following.

\begin{thm}[=\ref{wqcGTight}] \label{weakqconvexity}
Let $G$ admit a SCC action on a geodesic   space $(\mathrm Y, d)$ with a contracting element. Then any weakly quasiconvex subgroup $\Gamma$ of infinite index  is growth-tight.
\end{thm}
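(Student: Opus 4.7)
The plan is to deduce the theorem directly from Theorem \ref{GrowthTightThm} by exhibiting $\Gamma$ as a subset of a barrier-free set $\mathcal{V}_{\epsilon, M, g}$ for a suitable choice of $g\in G$. Since the Remark following Theorem \ref{GrowthTightThm} allows us to enlarge $M$ at will, I would first fix $M$ large enough to serve simultaneously as the constant in Definition \ref{StatConvex} and to dominate the weak quasi-convexity constant of $\Gamma$; the associated barrier constant $\epsilon=\epsilon(M,\mathbb F)$ is then determined accordingly.

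The main step is an \emph{unrolling} of a hypothetical barrier. For $\gamma\in\Gamma$, weak quasi-convexity of $\Gamma$ supplies a geodesic $\alpha_\gamma$ from $o$ to $\gamma o$ contained in $N_M(\Gamma\cdot o)$. Suppose some $t\in G$ provides an $(\epsilon,g)$-barrier along $\alpha_\gamma$, so that both $to$ and $tgo$ lie within $\epsilon$ of $\alpha_\gamma$, hence in $N_{M+\epsilon}(\Gamma\cdot o)$. Writing $t=\gamma_1 b$ and $tg=\gamma_2 b'$ with $\gamma_1,\gamma_2\in\Gamma$ and $b,b'$ in the finite set $B_G(o,M+\epsilon):=\{s\in G: d(so,o)\le M+\epsilon\}$, and eliminating $t$, I arrive at
\[
g\in b^{-1}\Gamma b'.
\]
Consequently $\alpha_\gamma$ is $(\epsilon,g)$-barrier-free, and $\gamma\in\mathcal{V}_{\epsilon,M,g}$, as soon as $g$ lies outside the finite union
\[
\mathcal{E}\;:=\;\bigcup_{b,b'\in B_G(o,M+\epsilon)}b^{-1}\Gamma b'.
\]

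Existence of such a $g$ follows from B.~H.~Neumann's classical lemma on coset coverings: each $b^{-1}\Gamma b'$ is a left coset of the conjugate subgroup $b^{-1}\Gamma b$, which inherits infinite index from $\Gamma$, and no group can be covered by finitely many cosets of infinite-index subgroups; therefore $\mathcal{E}\neq G$. Choosing any $g\in G\setminus\mathcal{E}$ I obtain $\Gamma\subseteq\mathcal{V}_{\epsilon,M,g}$, whence Theorem \ref{GrowthTightThm} yields
\[
\e \Gamma\;\le\;\e{\mathcal{V}_{\epsilon,M,g}}\;<\;\e G,
\]
as required. I expect the only genuine obstacle to be the barrier-unrolling step, where one must keep the constants $M$ and $\epsilon$ coordinated with the weak quasi-convexity of $\Gamma$ so that the barrier constraint on $t$ really does translate into membership in a coset of the form $b^{-1}\Gamma b'$; the rest is parameter bookkeeping, an appeal to Neumann's lemma, and a direct invocation of Theorem \ref{GrowthTightThm}.
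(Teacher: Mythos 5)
Your proof is correct and follows essentially the same route as the paper: both arguments unroll a hypothetical barrier via weak quasi-convexity to show that $\Gamma\subset\mathcal V_{\epsilon,M,g}$ whenever $g$ avoids a finite union of cosets of infinite-index conjugates of $\Gamma$ (the paper writes this union as $F\cdot\Gamma\cdot F$ with $F$ the ball of radius $M+\epsilon$), then invoke Neumann's coset-covering lemma to find such a $g$ and conclude by Theorem \ref{GrowthTightThm}. The only nitpick is that $b^{-1}\Gamma b'$ is a right coset of $b^{-1}\Gamma b$ (or a left coset of $(b')^{-1}\Gamma b'$), not a left coset of $b^{-1}\Gamma b$, but this does not affect the appeal to Neumann's lemma.
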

\begin{rem} [on the proof]
Roughly speaking, the  strategy when utilizing \ref{GrowthTightThm} is to find interesting sets with certain negative  or non-negative curvatures. These sets will be ``barrier-free'', thanks to the exclusivity of negative curvature and non-negative curvature.  We refer the reader to Theorem \ref{wqcGTight} and Lemma \ref{SCCBasepoint}. This idea turns out to be very fruitful, which we shall pursue in a subsequent paper \cite{YANG11}.
\end{rem}

The first corollary considers the class of convex-cocompact subgroups in $\mathbb {Mod}$. Free convex-cocompact subgroups exist in abundance (\cite[Theorem 1.4]{FarbMosher}), but one-ended ones are unknown at present.  Although we cannot determine whether a given mapping class group has any large free convex-cocompact subgroups (cf. \ref{LargeFreeThm}), the \textit{growth-tightness} part is indeed true:
 
\begin{cor}[=  \ref{FMtight}]
Any convex-cocompact subgroup $\Gamma$ in  $G\in \mathbb {Mod}$ is growth-tight: $\e \Gamma < \e G.$  
\end{cor}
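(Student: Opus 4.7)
The plan is to reduce this corollary to Theorem \ref{weakqconvexity} (that is, Theorem \ref{wqcGTight}), by verifying its three hypotheses for a convex-cocompact subgroup $\Gamma$ of a mapping class group $G \in \mathbb{Mod}$ acting on Teichm\"uller space $(\mathrm Y,d)$.

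First, I would record that the ambient action of $G$ on $\mathrm Y$ fits the framework of Theorem \ref{weakqconvexity}: by Minsky \cite{Minsky}, every pseudo-Anosov element is contracting, and by the observation of Arzhantseva--Cashen--Tao \cite[Section 10]{ACTao} combined with Eskin--Mirzakhani--Rafi \cite[Theorem 1.7]{EMR}, the action is SCC. This has already been noted in \textsection\ref{SSSCC}, so the point is simply to invoke it.

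Next, I would verify that $\Gamma$ is weakly quasi-convex. By the definition of Farb--Mosher \cite{FarbMosher}, a convex-cocompact subgroup $\Gamma$ has an orbit $\Gamma \cdot o \subset \mathrm Y$ which is quasi-convex in the Teichm\"uller metric: any Teichm\"uller geodesic between two points of $\Gamma \cdot o$ stays within a uniformly bounded neighborhood $N_M(\Gamma \cdot o)$. This is stronger than weak $M$-quasi-convexity in the sense required by Theorem \ref{weakqconvexity}, so this hypothesis is immediate.

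Finally, I would check that $\Gamma$ has infinite index in $G$. By Farb--Mosher, any convex-cocompact subgroup of a mapping class group is word-hyperbolic, so in particular cannot contain a subgroup isomorphic to $\mathbb Z^2$. On the other hand, if $\mathrm Y$ is the Teichm\"uller space of a closed orientable surface of genus at least two, then $G$ contains $\mathbb Z^2$ subgroups (generated by Dehn twists about two disjoint non-isotopic simple closed curves), and any finite index subgroup of $G$ inherits such a $\mathbb Z^2$ by intersection. Hence $\Gamma$ must have infinite index in $G$, and Theorem \ref{weakqconvexity} applies to give $\e{\Gamma} < \e{G}$.

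I expect the main obstacle to be merely bookkeeping of the Teichm\"uller-geometric input, since once weak quasi-convexity and the infinite-index property are in hand, growth-tightness is a direct corollary of the general Theorem \ref{weakqconvexity}; no additional analytic or combinatorial work is required at this stage.
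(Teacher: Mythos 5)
Your proposal is correct and follows essentially the same route as the paper: both reduce the corollary to Theorem \ref{wqcGTight} by citing Farb--Mosher for weak quasi-convexity of the orbit and the SCC property of the ambient action from \cite{ACTao}. The only divergence is in verifying infinite index, where the paper argues that a convex-cocompact subgroup is purely pseudo-Anosov (so a finite-index one would force every element of $G$ to have a pseudo-Anosov power, contradicting the existence of Dehn twists), whereas you argue via word-hyperbolicity excluding $\mathbb Z^2$ while finite-index subgroups of $G$ must contain $\mathbb Z^2$; both justifications are valid and of comparable length.
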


The next corollary applies to the class of \textit{cubulated} groups which acts properly on a non-positively curved cubical complex. Recall that a subgroup is \textit{cubically convex} if it acts cocompactly on a convex subcomplex. The following therefore answers positively \cite[Problem 9.7]{FutWise}.

\begin{cor}[=\ref{cubconvex}]
Suppose that a group   $G$ acts properly and   cocompactly on a $\mathrm{CAT}(0)$ cube complex $\mathrm Y$ such that $\mathrm Y$ does not decompose as a product. Then any weakly quasi-convex subgroup of infinite index in $G$ is growth-tight. In particular, any  cubically convex subgroup   is growth-tight if it is of infinite index.
\end{cor}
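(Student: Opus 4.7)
The plan is to reduce this corollary to Theorem \ref{weakqconvexity}, so the task splits into two pieces: verifying that the hypotheses of \ref{weakqconvexity} hold for the cube complex action (SCC together with a contracting element), and verifying that a cubically convex subgroup is weakly quasi-convex.

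First I would address the SCC and contracting-element hypotheses. Since $G$ acts properly and cocompactly on $\mathrm Y$, the concave region $\mathcal O_M$ is empty for $M$ larger than the diameter of a fundamental domain (this is exactly the prototype example listed after Definition \ref{StatConvex}), so trivially $\e{\mathcal O_M}=-\infty<\e G$ and the action is SCC. For the contracting element, I would invoke the rank rigidity theorem for CAT(0) cube complexes of Caprace and Sageev: under a proper cocompact action, if $\mathrm Y$ does not split as a non-trivial product, then $G$ contains a rank-1 element $g\in G$. By the work cited in the list $\mathbb{CAT_0^1}$ (Ballmann, Bestvina--Fujiwara), a rank-1 axis in a CAT(0) space is a contracting subset, so $g$ is a contracting element in the sense of the paper. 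Thus the action falls under the hypothesis of \ref{weakqconvexity}, and every weakly quasi-convex subgroup of infinite index is growth-tight.

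For the "in particular" clause, I would verify that a cubically convex subgroup $H\le G$ is weakly quasi-convex. By definition $H$ acts cocompactly on a convex subcomplex $C\subseteq \mathrm Y$. Choose the basepoint $o\in C$ and let $D$ be the diameter of an $H$-fundamental domain in $C$. For any $h_1,h_2\in H$, both $h_1o$ and $h_2o$ lie in $C$, so the CAT(0) geodesic $[h_1o,h_2o]$ is contained in $C$ by convexity, hence in the $D$-neighborhood of $Ho$. In a CAT(0) cube complex the CAT(0) geodesic stays within uniformly bounded distance of any combinatorial geodesic between the same endpoints, so after absorbing this quasi-isometry constant into the parameter $M$ one obtains weak quasi-convexity of $H\cdot o$. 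Applying the first part then yields growth-tightness whenever $[G:H]=\infty$.

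The main obstacle is really only the existence of a contracting element: the rest is a formal unpacking of definitions and an immediate appeal to \ref{weakqconvexity}. One should take some care that the flavor of "contracting" used in the CAT(0) cube complex literature (rank-1, Morse, contracting projection) agrees with Definition \ref{ContrDefn}, but this is already acknowledged in the paper's setup of the class $\mathbb{CAT_0^1}$, so no new work is required.
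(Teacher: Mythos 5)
Your proposal is correct and follows essentially the same route as the paper: invoke the Caprace--Sageev rank rigidity theorem to obtain a rank-1 (hence contracting) element, observe that SCC is automatic for a cocompact action since $\mathcal O_M$ is empty, and apply Theorem \ref{wqcGTight}. Your explicit verification that a cubically convex subgroup is weakly quasi-convex is a detail the paper leaves implicit, but it is the right check and adds nothing that changes the structure of the argument.
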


The second tool is a so-called \textit{extension lemma}, which is a simple but quite useful consequence of the existence of a contracting element. To illustrate this, we emphasize that proofs of \ref{LargeFreeThm}, \ref{StatisticThm}, and \ref{GrowthTightThm} are constructed via repeated applications of extension lemmas.

For convenience, we state here   a simplified version and refer the reader to \textsection \ref{extensionlemSec} for other versions.
\begin{lem}[Extension Lemma] 
There exist $\epsilon_0>0$ and a set $F$ of three elements in $G$ with the following property. For any two $g, h \in G$, there exists $f \in F$ such that $g\cdot f\cdot h$ is almost a geodesic: $$|d(o, g  f  h \cdot o) - d(o, go)-d(o, ho)| \le \epsilon_0.$$
\end{lem}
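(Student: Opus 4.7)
My plan is to build $F$ from large powers of three ``independent'' contracting elements and to use a pigeonhole argument on projections to the corresponding quasi-axes. Start from the given contracting element $h_0$ with quasi-axis $L_0 = \{h_0^k o : k \in \mathbb Z\}$. Using the non-elementary hypothesis on $G$, produce two further contracting elements $h_1, h_2$ (for instance, conjugates $t_j h_0 t_j^{-1}$ for suitable $t_j \in G$) whose quasi-axes $L_1, L_2$ are pairwise transverse to $L_0$: any two of the three axes have uniformly bounded coarse intersection, and each is contracting with a common constant $C$. This is a standard elementary ping-pong construction.

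For an integer $N$ to be fixed below, set $F := \{f_0, f_1, f_2\}$ with $f_i = h_i^N$. Each segment $[o, f_i o]$ fellow-travels a long subarc of $L_i$ of length at least $N \tau_i - O(1)$, where $\tau_i = d(o, h_i o)$. The target inequality
$$|d(o, g f_i h o) - d(o, g o) - d(o, h o)| \le \epsilon_0$$
is equivalent, after absorbing the bounded quantity $\max_i d(o, f_i o)$ into $\epsilon_0$, to saying that the concatenation $\gamma_i := [o, g o] * g[o, f_i o] * g f_i [o, h o]$ is a $(1, \epsilon_0)$-quasi-geodesic. A standard Morse-type local-to-global argument, based on the contracting property of the translated axis $g L_i$, shows that this holds provided the projections $\pi_{L_i}(g^{-1} o)$ and $\pi_{L_i}(h o)$ each lie within a bounded neighborhood of $o$ on $L_i$; geometrically, the two side-geodesics must not backtrack along $g L_i$.

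The core of the argument is then the following pigeonhole statement: for any single point $z \in \mathrm Y$, the projection $\pi_{L_i}(z)$ can lie far from $o$ on $L_i$ for at most one index $i \in \{0,1,2\}$. Indeed, if $\pi_{L_i}(z)$ were far from $o$, the contracting property would force the geodesic $[o, z]$ to fellow-travel a long subarc of $L_i$, and the bounded coarse intersection of $L_j$ ($j \ne i$) with $L_i$ then forces $\pi_{L_j}(z)$ to stay within a uniform bound of $o$. Applying this separately to $z = g^{-1} o$ and $z = h o$, each point forbids at most one index, so at least one $i \in \{0,1,2\}$ works simultaneously for both, and $f = f_i$ is the desired choice.

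\textbf{Main obstacle.} The delicate step is quantifying the transversality of the three axes sharply enough that a single threshold (for ``far from $o$'' along each $L_i$) and a single value of $N$ work uniformly in $g, h$; this is exactly the role played by fixing a contracting system $\mathbb F$ as in Convention \ref{ConvExtensionLemma}, which then determines $\epsilon_0$ in terms of $C$, the projection-bound constant, and $\max_i d(o, f_i o)$.
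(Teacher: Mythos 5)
Your proposal is correct and takes essentially the same approach as the paper's proof. Both build $F$ from long elements associated to three pairwise-independent contracting axes, both rest on the identical pigeonhole observation that for any point $z$ at most one of the three axes can see a large projection of $[o,z]$ (applied to $z=g^{-1}o$ and $z=ho$, which is the paper's Lemma \ref{SeparationClaim}), and both close with the same admissible-path / Morse local-to-global fellow-traveling argument (the paper's Proposition \ref{admissible}).
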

\begin{rem}
This result is best illustrated for free groups with standard generating sets. In this case, we choose $F=\{a, b, a^{-1}\}$ and $\epsilon_0=1$. To the best of our knowledge,  this result was first proved by Arzhantseva and Lysenok \cite[Lemma 3]{AL} for hyperbolic groups, and reproved later in \cite[Lemma 4.4]{Gran} and \cite[Lemma 2.4]{Gouezel}. In  joint work with  Potyagailo \cite{PYANG}, we have proved a version for relatively hyperbolic groups. The proof generalizes to the current setting with more advanced versions. 
\end{rem}

To finish this introductory section, we compare with the study of acylindrically hyperbolic groups   formulated in \cite{Osin6}. By a result of Sisto \cite{Sisto} (see also \cite[Appendix]{YANG6}),  the existence of a contracting element produces a hyperbolically embedded subgroup in the sense of \cite{DGO}. Thus, they  all belong to the category of  acylindrically hyperbolic groups,  which are studied previously in various guises in   \cite{BF2}, \cite{BBF}, \cite{Hamen}, and \cite{DGO}  and  in a continually growing body of literature.  The emphasis of our study is, however, on understanding the asymptotic geometry of these groups,    relying on their concrete actions rather than on the actions as tools.   
 
Furthermore, the reader should distinguish our definition of a contracting element from others in the literature  (e.g., \cite{Beh2}, \cite{Sisto}): the projection in definition is meant to be a \textbf{nearest point} projection, whereas some authors   take a more flexible one. As a consequence, a contracting element in their sense is a quasi-isometric invariant, whereas this is not the case for our definition, as shown by a recent example in \cite{ACGH}. However, it is this definition which brings the extension lemma into play.  
\\ 

\paragraph{\textbf{The structure of  this paper}} As a prerequisite, \textsection \ref{Section2} introduces the  extension lemma and gives two immediate applications to the positive density of contracting elements (cf. Proposition \ref{Positivedensity}), the finite depth of dead-ends (cf. Proposition \ref{deadend}). The next Sections \ref{Section3},   \ref{Section4}, and \ref{Section5}, which  are mutually independent, contains respectively the proofs of  \ref{LargeFreeThm},  \ref{GrowthTightThm} and \ref{StatisticThm}.    In the final  \textsection\ref{Section6}, we give a way to construct non-convex-compact SCC actions.

This, the first of a series of papers, lays the foundation for the further study of groups with contracting elements. In a subsequent paper \cite{YANG11}, we make use of the results established here to investigate the genericity of contracting elements in groups ($\mathbb {Hyp}$ - $\mathbb {Mod}$).

\ack
The author is grateful  to Jason Behrstock for pointing out Theorem \ref{Behrstock},   Laura Ciobanu,   Thomas Koberda,  Leonid Potyagailo, Mahan Mj and Dani Wise for helpful conversations.  Thanks also go to Lewis Bowen for pointing out an error in the argument in proving  (stronger) \ref{LargeFreeThm}.  Ilya  Gehktman gave him valuable feedback  and   shared with him lots of  knowledge in  dynamics of Teichm\"{u}ller geometry.

\section{An extension lemma}\label{Section2}
This section introduces the basic tool: an extension lemma,  in which a notion of a contracting element plays an important role. We first fix some notations and conventions.

\subsection{Notations and conventions}\label{ConvSection}
Let $(\mathrm Y, d)$ be a proper geodesic metric space. Given a point $y \in Y$ and a subset $X \subset \mathrm Y$,
let $\pi_X(y)$ be the set of points $x$ in $X$ such that $d(y, x)
= d(y, X)$. The \textit{projection} of a subset
$A \subset \mathrm Y$ to $X$ is then $\pi_X(A): = \cup_{a \in A} \pi_X(a)$.

Denote $\proj_X(Z_1, Z_2):=\diam{\pi_X({Z_1\cup Z_2})}$, which is the diameter of the projection of the union $Z_1\cup Z_2$ to $X$. So $d_X^\pi(\cdot, \cdot)$ satisfies the triangle inequality
$$
d_X^\pi(A, C) \le d_X^\pi(A, B) +d_X^\pi(B, C).
$$ 


We always consider a rectifiable path $\alpha$ in $\mathrm Y$ with arc-length parameterization.  Denote by $\len (\alpha)$ the length
of $\alpha$, and by $\alpha_-$, $\alpha_+$ the initial and terminal points of $\alpha$ respectively.   Let $x, y \in \alpha$ be two points which are given by parameterization. Then $[x,y]_\alpha$ denotes the parameterized
subpath of $\alpha$ going from $x$ to $y$. We also denote by $[x, y]$ a choice of a geodesic in $\mathrm Y$ between $x, y\in\mathrm Y$.  
\\
\paragraph{\textbf{Entry and exit points}} Given a property (P), a point $z$ on $\alpha$ is called
the \textit{entry point} satisfying (P) if $\len([\alpha_-, z]_\alpha)$ is
minimal   among the points
$z$ on $\alpha$ with the property (P). The \textit{exit point} satisfying (P) is defined similarly so that $\len([w,\alpha_+]_\alpha)$ is minimal.

A path $\alpha$ is called a \textit{$c$-quasi-geodesic} for $c\ge 1$ if the following holds 
$$\len(\beta)\le c \cdot d(\beta_-, \beta_+)+c$$
for any rectifiable subpath $\beta$ of $\alpha$.

Let $\alpha, \beta$ be two paths in $\mathrm Y$. Denote by $\alpha\cdot \beta$ (or simply $\alpha\beta$) the concatenated path provided that $\alpha_+ =
\beta_-$.

Let $f, g$ be real-valued functions with domain understood in
the context. Then $f \prec_{c_i} g$ means that
there is a constant $C >0$ depending on parameters $c_i$ such that
$f < Cg$.  The symbols    $\succ_{c_i}  $ and $\asymp_{c_i}$ are defined analogously. For simplicity, we shall
omit $c_i$ if they are   universal constants.
\subsection{Contracting property}
\begin{defn}[Contracting subset]\label{ContrDefn}
Let $\mathcal {QG}$ denote a preferred collection of quasi-geodesics in
$\mathrm Y$. For given $C\ge 1$, a subset $X$ in $\mathrm Y$ is called $C$-\textit{contracting} with respect to $\mathcal {QG}$ if for any quasi-geodesic $\gamma \in \mathcal {QG}$ with $d(\gamma,
X) \ge C$, we have
$$\proj_{X} (\gamma)  \le C.$$
 A
collection of $C$-contracting subsets is referred to
as a $C$-\textit{contracting system} (with respect to
$\mathcal {QG}$).
\end{defn}
 
\begin{example} \label{examples} We note the following examples in various contexts.
\begin{enumerate}
\item
Quasi-geodesics and quasi-convex subsets are contracting with respect
to the set of all quasi-geodesics in hyperbolic spaces. 
\item
Fully quasi-convex subgroups (and in particular, maximal parabolic
subgroups) are contracting with respect to the set of all
quasi-geodesics in relatively hyperbolic groups (see Proposition
8.2.4 in \cite{GePo4}).
\item
The subgroup generated by a hyperbolic element is contracting  with
respect to the set of all quasi-geodesics in groups with non-trivial
Floyd boundary.  This is described in \cite[Section 7]{YANG6}.
\item
Contracting segments in CAT(0)-spaces in the sense of in
 Bestvina and  Fujiwara are contracting here with respect to the set of
geodesics (see Corollary 3.4 in \cite{BF2}).
\item
The axis of any pseudo-Anosov element is contracting relative to geodesics by   Minsky \cite{Minsky}.
\item
Any finite neighborhood of a contracting subset is still contracting
with respect to the same $\mathcal {QG}$.
\end{enumerate}
\end{example}

\begin{conv}\label{ContractingConvention}
In view of Examples \ref{examples}, the preferred collection
$\mathcal {QG}$ in the sequel will always be the set of all
geodesics in $\mathrm Y$.   
\end{conv}


In fact, the
contracting notion is equivalent to the following one considered by  Minsky \cite{Minsky}. A proof  given in \cite[Corollary 3.4]{BF2} for CAT(0) spaces is valid in  the general case. Despite this equivalence, we always work with the above definition of the contracting property. 

\begin{lem}\label{CharaContraction}
A subset $X$ is  contracting in $\mathrm Y$ if and only if any open ball $B$ missing $X$ has a uniformly bounded projection to $X$.   
\end{lem}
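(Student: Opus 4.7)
The plan is to prove both implications directly using the triangle inequality and the reverse triangle inequality, mirroring the CAT(0) argument of \cite[Corollary 3.4]{BF2} while taking care that all its steps survive in a general proper geodesic space.

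First I would prove $(a)\Rightarrow(b)$. Suppose $X$ is $C$-contracting. Given an open ball $B(y_0,r)$ missing $X$, set $R=d(y_0,X)\ge r$. For $y\in B$ I consider the geodesic $[y_0,y]$ of length $<r\le R$. By the reverse triangle inequality, for every $w\in[y_0,y]$ one has $d(w,X)\ge R-d(y_0,w)$. If $d(y_0,y)\le R-C$, then the geodesic $[y_0,y]$ stays at distance $\ge C$ from $X$, so the contracting hypothesis gives $d(\pi_X(y_0),\pi_X(y))\le C$. Otherwise, let $z\in[y_0,y]$ with $d(y_0,z)=R-C$: the above case applies to $[y_0,z]$, giving $d(\pi_X(y_0),\pi_X(z))\le C$. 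Since $d(z,y)=d(y_0,y)-(R-C)<r-R+C\le C$ and $d(z,X)\ge C$, two applications of the triangle inequality bound $d(\pi_X(y),\pi_X(z))$ by a constant depending only on $C$, yielding a uniform bound on the diameter of $\pi_X(B)$.

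Next I would prove $(b)\Rightarrow(a)$. Assume every open ball disjoint from $X$ has projection of diameter at most $D$, and set $C$ larger than $D$. Let $\gamma$ be a geodesic with $d(\gamma,X)\ge C$ and take $y_1,y_2\in\gamma$. For any $y\in\gamma$ the open ball $B(y,d(y,X))$ is disjoint from $X$, so $\pi_X$ of it has diameter at most $D$. When $d(y_1,y_2)<d(y_1,X)$, both endpoints lie in such a ball and $d(\pi_X(y_1),\pi_X(y_2))\le D$. In the general case one iterates: choose an intermediate point $w\in[y_1,y_2]_\gamma$ with $d(y_1,w)$ just less than $d(y_1,X)$, obtain $d(\pi_X(y_1),\pi_X(w))\le D$, and repeat from $w$. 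Using that $d(\cdot,X)$ is $1$-Lipschitz along $\gamma$, one can arrange that the iteration produces a uniformly bounded number of essentially distinct projection steps, so that the total projection remains bounded in terms of $C$ alone.

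The main obstacle lies in the backward direction: a naive chaining of the Minsky ball bound across successive tangent balls yields a projection estimate that grows with the length of $\gamma$. The right way is to exploit the tangent balls $B(y,d(y,X))$ with $y\in\gamma$ --- which can be arbitrarily large when $\gamma$ is far from $X$ --- so that a single such ball already engulfs a long sub-arc of $\gamma$, rather than using balls of fixed radius. This is the core content of the argument in \cite{BF2} and it transfers to the present setting because it relies only on the triangle inequality and the existence of geodesics between any two points.
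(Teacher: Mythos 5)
Your overall strategy (Minsky's ball characterization versus the projection definition, argued by chaining tangent balls) is the standard one, but in each direction the decisive step is asserted rather than proved, and as stated each assertion fails. In the forward direction, the claim that ``two applications of the triangle inequality'' bound $d(\pi_X(y),\pi_X(z))$ from $d(y,z)<C$ and $d(z,X)\ge C$ is not valid: the triangle inequality only gives $d(\pi_X(y),\pi_X(z))\le d(y,X)+d(y,z)+d(z,X)$, and both $d(y,X)$ and $d(z,X)$ can be of order $R=d(y_0,X)$, which is unbounded. Nearest-point projection onto an arbitrary subset is not coarsely Lipschitz (think of a two-point set), so this step genuinely requires the contracting hypothesis, e.g.\ via Proposition \ref{Contractions}.\ref{1Lipschitz}. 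Alternatively, restructure as follows: enlarge $B$ so that its closure meets $X$ at some $x_0\in\pi_X(y_0)$ with $d(y_0,x_0)=R$; if $[y_0,y]$ avoids $N_C(X)$ apply contraction directly, and otherwise the entry point $u$ of $[y_0,y]$ into $N_C(X)$ satisfies $d(u,x_0)\le 2C$ (contraction applied to $[y_0,u]$), hence $d(y_0,u)\ge R-2C$; since $d(y_0,y)<R$ this forces $d(u,y)\le 2C$, so $d(y,X)\le 3C$ and only now does the triangle inequality close the estimate.

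The more serious gap is in the backward direction, where the uniform bound on the number of chaining steps --- the crux of the lemma --- is postulated but not obtained. The remedy you offer (``a single tangent ball already engulfs a long sub-arc'') fails precisely in the critical case where $\gamma$ stays at distance comparable to $C$ from $X$ over a long stretch: there every tangent ball has radius about $C$, the greedy chain has about $\len(\gamma)/C$ steps, and chaining gives a bound growing linearly in $\len(\gamma)$. The missing idea is a bootstrap. Writing $r=d(\gamma_-,X)$, $s=d(\gamma_+,X)$ and $d$ for the diameter of $\pi_X(\gamma)$, the triangle inequality through projection points gives $\len(\gamma)=d(\gamma_-,\gamma_+)\le r+d+s$; the first and last tangent balls $B(\gamma_-,r)$ and $B(\gamma_+,s)$ absorb initial and terminal sub-arcs of lengths essentially $r$ and $s$, so the middle sub-arc has length at most $d$ up to an additive constant and is crossed in at most $d/C+O(1)$ steps, each moving the projection by at most $D$. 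This yields the self-improving inequality $d\le (D/C)\,d+O(D)$, and choosing $C\ge 2D$ forces $d\le O(D)$. Without this step (or an equivalent device, which is exactly the content of \cite[Corollary 3.4]{BF2}) the backward implication is not established.
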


We collect a few properties that will be used often later on. The proof   is straightforward applications of contracting property, and is left to the interested reader.
\begin{prop}\label{Contractions}
Let $X$ be a contracting set.
\begin{enumerate}
\item
(Quasi-convexity) \label{qconvexity}  $X$ is \textit{$\sigma$-quasi-convex} for a function $\sigma: \mathbb R_+ \to \mathbb R_+$: given $c \ge
1$,  any $c$-quasi-geodesic with endpoints in $X$ lies in the
neighborhood $N_{\sigma(c)}(X)$. 
\item
(Finite neighborhood) \label{nbhd}  Let $Z$ be a set with finite Hausdorff distance to $X$. Then $Z$ is contracting.

There exists $C>0$ such that the following hold:

\item 
\label{Equivalence}
For any   geodesic segment $\gamma$, the following holds
$$|\proj_X(\{\gamma_-, \gamma_+\}) - \proj_X(\gamma)|\le C.$$

\item
(1-Lipschitz projection) \label{1Lipschitz} $\proj_X(\{y, z\})\le d(y, z)+C$.

\item
(Projection point) \label{ProjPoint} Let $\gamma$ be a geodesic segment such that $\gamma_-\in X$, and  $x\in X$ be a projection point of $\gamma_+$ to $X$. Then $d(x, \gamma) \le C$.
\item
(Coarse projections) \label{CoarseProj} For any two points $x\in X, y \notin X$, we have $$|d(x, \pi_X(y)) - \big(d(x, y)-d(y, X)\big)|\le C.$$  
\end{enumerate}
\end{prop}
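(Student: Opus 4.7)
The plan is to derive all six assertions from the defining contracting property together with its ball formulation (Lemma \ref{CharaContraction}), plus one preliminary observation that I would record first: for any point $y \in \mathrm Y$, the open ball $B(y, d(y,X))$ is disjoint from $X$, so Lemma \ref{CharaContraction} bounds the diameter of its projection — and hence of the set $\pi_X(y)$ itself — by a universal constant $C_0$. This lets me work with single representative projection points up to an additive $C_0$ error throughout.

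For (1), I would argue by contradiction: given a $c$-quasi-geodesic $\alpha$ with endpoints in $X$, if some interior point $p$ has $d(p,X) > R$ for $R \gg C_0$, then a subpath $\alpha' \subset \alpha$ around $p$ lies entirely outside $N_{C_0}(X)$. Covering $\alpha'$ by finitely many open balls of radius $C_0$ disjoint from $X$ and summing the bounded-projection contributions from Lemma \ref{CharaContraction} shows that the projection of $\alpha'$ to $X$ has diameter of order $\len(\alpha')$. Since the endpoints of $\alpha$ lie in $X$ (hence equal their own projections), the $c$-quasi-geodesic inequality caps $\len(\alpha')$ in terms of $c$, producing the required function $\sigma(c)$. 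Assertion (2) is immediate: if $Z$ has Hausdorff distance $D$ from $X$, any geodesic at distance $\ge C_0 + D$ from $Z$ is at distance $\ge C_0$ from $X$, so its projection to $X$ is bounded, and projections transfer to $Z$ up to an additive $2D$.

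For (3) and (4) I would carry out a first-entry/last-exit analysis. Given a geodesic $\gamma$, let $u, v$ be its first entry and last exit points in $N_{C_0}(X)$ (if any; otherwise the contracting property applies directly to $\gamma$). The subgeodesics $[\gamma_-, u]$ and $[v, \gamma_+]$ stay at distance $\ge C_0$ from $X$, so each has projection diameter $\le C_0$; meanwhile $\pi_X(u)$ and $\pi_X(v)$ lie within $2C_0$ of $u, v$ by the preliminary observation. Chaining triangle inequalities along $\gamma_- \to u \to v \to \gamma_+$ then shows that every point of $\gamma$ projects within an additive constant of $\pi_X(\{\gamma_-, \gamma_+\})$, giving (3). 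Applying (3) to $\gamma = [y,z]$ and using $d(u,v) \le d(y,z)$ yields (4). For (5), the same analysis specialized at the endpoint $\gamma_- \in X$ shows that the projection $x$ of $\gamma_+$ lies within $2C_0$ of the first exit point $u \in \gamma$ out of $N_{C_0}(X)$, whence $d(x, \gamma) \le d(x, u) \le 2C_0$. Finally, (6) follows by applying (5) to a geodesic from $x \in X$ to $y$: the projection of $y$ lies within an additive constant of that geodesic, and splitting the geodesic at the nearest point to $\pi_X(y)$ produces the estimate $|d(x, \pi_X(y)) - (d(x,y) - d(y,X))| \le C$ via two triangle inequalities.

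The only real obstacle I anticipate is bookkeeping: each step inflates $C_0$ by a small additive amount, and one must aggregate these into a single uniform constant $C$ valid in all six assertions. The geometric content is minimal — the entire argument is driven by the ball characterization of contracting together with triangle inequalities along geodesics, and no new ideas beyond Definition \ref{ContrDefn} and Lemma \ref{CharaContraction} are needed.
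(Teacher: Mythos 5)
The paper gives no argument for this proposition (it is ``left to the interested reader''), so there is nothing to compare routes with; your overall architecture --- the ball characterization of Lemma \ref{CharaContraction} plus a first-entry/last-exit analysis along geodesics --- is the standard one, and your treatments of assertions (2)--(6) go through with only routine adjustments (in (5) one should take the \emph{last} exit point of $\gamma$ from $N_C(X)$ rather than the first, but this is cosmetic).

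There is, however, a genuine gap in your argument for assertion (1), and it is not a matter of bookkeeping. With your notation, $C_0$ is simultaneously the radius of the covering balls and the bound on the projection of each ball. Covering the excursion $\alpha'$ (the maximal subpath outside $N_{C_0}(X)$) therefore takes $N\approx \len(\alpha')/C_0$ balls, each contributing up to $C_0$ to the projection, so the covering only yields $\proj_X(\alpha')\le \len(\alpha')+C_0$ --- an upper bound with proportionality constant $1$. Feeding this into $d(a,b)\le 2C_0+\proj_X(\alpha')$ (where $a,b$ are the endpoints of $\alpha'$) and the quasi-geodesic inequality gives $\len(\alpha')\le c\,d(a,b)+c\le c\,\len(\alpha')+3cC_0+c$, which is vacuous for every $c\ge 1$: no cap on $\len(\alpha')$, hence no bound on $d(p,X)$, results. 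The covering argument only ``wins'' when the per-ball projection cost is a small fraction of the spacing between ball centers, so the ball radius --- and hence the threshold defining the excursion $\alpha'$ --- must be taken of order $cC_0$ rather than $C_0$. Concretely, with threshold $r_0=3cC_0$ one gets $\proj_X(\alpha')\le \len(\alpha')/(3c)+C_0$, whence $\len(\alpha')\le c(2r_0+\proj_X(\alpha'))+c$ forces $\len(\alpha')\le 9c^2C_0+2cC_0+2c$, and then $d(p,X)\le \len(\alpha')+r_0$ for every $p$ on the excursion. (Alternatively, one can chain maximal balls of radius $d(\cdot,X)$ along $\alpha'$ and use that the distance to $X$ at most doubles from one center to the next; the quasi-geodesic inequality then bounds the \emph{number} of balls by $O(c)$, and the doubling bounds the height of the excursion.) Either way $\sigma(c)$ genuinely grows with $c$, which your fixed-radius setup cannot produce.
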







In most cases, we are interested in a  contracting system with a \textit{$\mathcal R$-bounded intersection} property for a function $\mathcal R: \mathbb R_{\ge 0}\to \mathbb R_{\ge 0}$ if the
following holds
$$\forall X\ne X'\in \mathbb X: \;\diam{N_r (X) \cap N_r (X')} \le \mathcal R(r)$$
for any $r \geq 0$. This property is, in fact, equivalent to a \textit{bounded intersection
property} of $\mathbb X$:  there exists a constant $B>0$ such that the
following holds
$$\proj_{X'}(X) \le B$$
for $X\ne X' \in \mathbb X$. See \cite{YANG6} for further discussions.
 
\begin{rem}
Typical examples include sufficiently separated quasi-convex subsets
in hyperbolic spaces, and parabolic cosets in relatively hyperbolic
groups (see \cite{DruSapir}).
\end{rem}

\subsection{Admissible paths}
The notion of an admissible path is defined relative to   a  contracting system $\mathbb X$ in $\mathrm Y$. Roughly speaking, an admissible path can be thought of as a
concatenation of quasi-geodesics which travels alternatively near
contracting subsets and leave them in an orthogonal way. 


\begin{defn}[Admissible Path]\label{AdmDef}
Given $D, \tau \ge 0$ and a function $\mathcal R: \mathbb R_{\ge 0} \to \mathbb R_{\ge 0}$, a path $\gamma$  is called \textit{$(D,
\tau)$-admissible} in  $\mathrm Y$, if  the path $\gamma$ contains a sequence of disjoint geodesic subpaths $p_i$ $(0\le i\le n)$ in this order, each associated to a contracting subset $X_i \in \mathbb X$, with the following   called \textit{Long Local} and \textit{Bounded Projection} properties:
\begin{enumerate}

\item[\textbf{(LL1)}]
Each $p_i$ has length bigger than  $D$, except that  $(p_i)_- =\gamma_-$ or $(p_i)_+=\gamma_+$,

\item[\textbf{(BP)}]
For each $X_i$,  we have 
$$
\proj_{X_i}((p_{i})_+,(p_{i+1})_-)\le \tau
$$
and 
$$
\proj_{X_i}((p_{i-1})_+, (p_{i})_-)\le \tau
$$
when $(p_{-1})_+:=\gamma_-$ and $(p_{n+1})_-:=\gamma_+$ by convention.

\item[\textbf{(LL2)}]
Either $X_i \ne X_{i+1}$ has $\mathcal R$-bounded intersection or 
$d((p_i)_+, (p_{i+1})_-)>D$,


\end{enumerate}
\paragraph{\textbf{Saturation}} The collection of $X_i \in \mathbb X$ indexed as above, denoted by $\mathbb X(\gamma)$, will be referred to as contracting subsets for $\gamma$. The union of all $X_i \in \mathbb X(\gamma)$ is called the \textit{saturation} of $\gamma$. 
\end{defn}

The set of endpoints of $p_i$ shall be refered to as the \textit{vertex set} of $\gamma$. We call $(p_{i})_-$ and $(p_{i})_+$ the corresponding \textit{entry vertex} and \textit{exit vertex}   of $\gamma$ in $X_i$. (compare with entry and exit points in subSection \ref{ConvSection}) 

\begin{rem}
In \cite{YANG6}, an admissible path is defined so that the subpath between  $p_{i}$ and $p_{i+1}$ is a geodesic and $\proj_{X_i}[(p_{i+1})_-, (p_{i})_+]\le \tau$. By Proposition \ref{Contractions}.\ref{Equivalence},  this condition is equivalent to ($\mathbf{BP}$). Up to replace them by corresponding geodesics, we obtain a notion of admissible path originally defined in \cite{YANG6}. We allow a non-geodesic path so  it is easier to verify \textbf{(BP)}.  
\end{rem}

\begin{rem}[Bounded intersection]
In most applications, the contracting system relative to which we consider admissible paths has bounded intersection. Hence, it suffices to show that $X_i$ and $X_{i+1}$ are distinct in the verification of Condition (\textbf{LL2}). 
\end{rem}


By definition, a sequence of points $x_i$ in a path $\alpha$   is called \textit{linearly ordered} if $x_{i+1}\in [x_i, \alpha_+]_\alpha$ for each $i$. 

\begin{defn}[Fellow travel]\label{Fellow}
Assume that $\gamma = p_0 q_1 p_1 \cdots q_n p_n$ is a $(D, \tau)$-admissible
path, where each $p_i$ has two endpoints in $X_i \in \mathbb X$. The paths $p_0, p_n$ could be trivial. 

Let
$\alpha$ be a path such that $\alpha_- = \gamma_-,
\alpha_+=\gamma_+$. Given $\epsilon >0$, the path $\alpha$ \textit{$\epsilon$-fellow travels} $\gamma$ if there exists a sequence of linearly ordered points $z_i,
w_i$ ($0 \le i \le n$) on $\alpha$ such that  
$d(z_i, (p_{i})_-) \le \epsilon, \;d(w_i, (p_{i})_+) \le \epsilon.$
\end{defn}

The basic fact  is that a ``long" admissible path is a quasi-geodesic.
 
\begin{prop}\label{admissible} 
Let $C$ be the contraction constant of $\mathbb X$. For any $\tau>0$, there are constants $B=B(\tau), D=D(\tau), \epsilon = \epsilon(\tau), c = c(\tau)>0$ such that the following
holds.

Let $\gamma$ be a $(D, \tau)$-admissible path and $\alpha$ a geodesic  between $\gamma_-$ and
$\gamma_+$. Then
\begin{enumerate}
\item
For    a contracting subset $X_i \in \mathbb X(\gamma)$ with $0 \le i \le
n$,
$$\proj_{X_i}(\beta_1) \le B,\; \proj_{X_i}(\beta_2)  \le B$$ where $\beta_1 =[\gamma_-, (p_i)_-]_\gamma, \beta_2 =[(p_i)_+, \gamma_+]_\gamma$. 
\item
$\alpha \cap N_C(X) \ne \emptyset$ for every $X\in \mathbb X(\gamma)$.
\item
$\alpha$  $\epsilon$-fellow travels $\gamma$. In particular,  $\gamma$ is a $c$-quasi-geodesic.
\end{enumerate}
\end{prop}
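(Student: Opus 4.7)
\emph{Overall strategy.} My plan is to establish assertion (1) first as the core technical estimate, then derive (2) and (3) as consequences. I would prove the bounded projection bound by induction on $n$, using the contracting property together with conditions \textbf{(BP)} and \textbf{(LL2)}. Decompose $\beta_1 = [\gamma_-, (p_i)_-]_\gamma$ into its geodesic pieces $p_j$ and intermediate subpaths $q_j$, and bound the $X_i$-projection of each piece separately. For a piece $p_j \subset X_j$ with $j<i$: if $X_j \neq X_i$, the $\mathcal R$-bounded intersection property gives $\proj_{X_i}(X_j) \le B_0$; if $X_j$ repeats $X_i$, the alternative in \textbf{(LL2)} supplies a long intervening $q$-subpath that keeps the relevant geometry at distance $\ge C$ from $X_i$, letting the contracting property take over. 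For the intermediate subpath $q_j$, condition \textbf{(BP)} bounds the $X_{j-1}$- and $X_j$-projections of its endpoints by $\tau$, and one propagates this to a bound on $\proj_{X_i}(q_j)$ by a bottleneck argument: either $q_j$ stays at distance $\ge C$ from $X_i$ (contracting gives $\le C$), or $q_j$ meets $N_C(X_i)$, in which case the $\mathcal R$-bounded intersection of $X_i$ with $X_{j-1}$ or $X_j$ forces the contact close to an endpoint whose $X_i$-projection is already controlled. Summing via the triangle inequality for $\proj$ and choosing $D = D(\tau)$ sufficiently large yields $\proj_{X_i}(\beta_1) \le B(\tau)$; the estimate for $\beta_2$ is symmetric.

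\emph{Deriving (2) and (3).} Granted (1), part (2) follows quickly. Since $(p_i)_- \in X_i \cap \beta_1$, Proposition \ref{Contractions}(\ref{ProjPoint}) places every point of $\pi_{X_i}(\gamma_-)$ within $B$ of $(p_i)_-$, and symmetrically $\pi_{X_i}(\gamma_+)$ lies within $B$ of $(p_i)_+$; these projections are thus separated by at least $\len(p_i) - 2B > D - 2B$. Taking $D$ with $D - 2B > 2C$, the contracting property would force $\proj_{X_i}(\alpha) \le C$ if $\alpha$ missed $N_C(X_i)$, contradicting the separation; hence $\alpha \cap N_C(X_i) \neq \emptyset$. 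For (3), let $z_i, w_i$ be the entry and exit points of $\alpha$ in $N_C(X_i)$; by Proposition \ref{Contractions}(\ref{ProjPoint}) their $X_i$-projections lie within $C$ of themselves, and by (1) also within $B$ of $(p_i)_-, (p_i)_+$, yielding $d(z_i,(p_i)_-), d(w_i,(p_i)_+) \le \epsilon$ for $\epsilon=\epsilon(\tau)$. This is the required $\epsilon$-fellow-traveling. The $c$-quasi-geodesic conclusion then follows by length counting: $\len(p_i) \le \len([z_i,w_i]_\alpha) + 2\epsilon$, and (using the remark after Definition \ref{AdmDef} to replace each $q_j$ by a geodesic) $\len(q_j) \le \len([w_{j-1},z_j]_\alpha) + 2\epsilon$, so $\len(\gamma) \le \len(\alpha) + O(n\epsilon) \le c \cdot d(\gamma_-, \gamma_+) + c$ for $c = c(\tau)$.

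\emph{Main obstacle.} The hardest step is the inductive projection bound (1). The technical difficulty lies in simultaneously controlling (i) possible repetitions of contracting subsets at non-adjacent indices, (ii) the fact that only the endpoints of each $q_j$ are directly controlled by \textbf{(BP)}, and (iii) ensuring that projection contributions do not accumulate with $n$. All three are managed by choosing $D$ large enough to dominate $C$, $\mathcal R(C)$, and $\tau$, so that every configuration that could ruin a projection bound is forced to create a geometric separation which the contracting property immediately converts into a uniform estimate. Once $D$ and $B$ are calibrated this way, the remaining assertions (2) and (3) are essentially formal consequences.
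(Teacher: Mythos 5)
Your approach matches the paper's: the paper itself only sketches the proof, citing \cite[Proposition~3.3, Corollary~3.7]{YANG6} for the inductive argument behind assertion~(1), and then derives (2) and (3) as formal consequences exactly as you do (separation of the projections of $\gamma_\pm$ on $X_i$ by more than the contracting constant forces $\alpha$ to enter $N_C(X_i)$; then the entry/exit points of $\alpha$ in $N_C(X_i)$ are close to the entry/exit vertices of $\gamma$ in $X_i$). Two small bookkeeping remarks on your part~(3): the passage from assertion~(1) to a bound on $d(z_i,(p_i)_-)$ needs one extra application of the contracting property on the subsegment $[\alpha_-,z_i]_\alpha$ (which misses $N_C(X_i)$) before appealing to (1), giving $\epsilon=2C+B$ rather than $C+B$; and one should note that the resulting points $z_0,w_0,z_1,w_1,\dots$ are linearly ordered on $\alpha$, which again follows from the separation of the $X_i$-projections once $D$ is large. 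Neither affects the substance, and your invocations of Proposition~\ref{Contractions}.\ref{ProjPoint} are in places where the much weaker facts ``$(p_i)_-\in X_i$'' and ``$d(z_i,X_i)\le C$'' already suffice.
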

\begin{proof}[Sketch of the proof]
The content of this proposition was proved in \cite[Proposition 3.3]{YANG6}. The constant $D>0$ is taken to be sufficently large but independent of $n$, and the first statement was proved by induction on $n$ as \cite[Corollary 3.7]{YANG6}. Assuming Assertion (1), the second and third statements follow as consequences. For instance, if $D>2B+C$, then we must have $\alpha \cap N_C(X) \ne \emptyset$ by the contracting property. Moreover, we can set $\epsilon:=2C+B$.  We refer the interested reader to \cite[Section 3]{YANG6} for more details. 
\end{proof}

The next result generalizes \cite[Lemma 4.4]{YANG6} by a similar proof. The main use of this lemma (the second statement) is to construct the following type of paths in verifying that an element is contracting.

\begin{defn}\label{uniformadmissible}
Let $L, \Delta>0$. With notations in definition of a $(D, \tau)$-admissible path $\gamma$, if the following holds $$|d((p_{i+1})_-, (p_{i})_+)-L|\le \Delta$$ for each $i$, we say that $\gamma$ is a \textit{$(D, \tau, L, \Delta)$-admissible} path.  
\end{defn}

\begin{prop}\label{saturation}
Assume that $\mathbb X$ has bounded intersection in   admissible paths considered in the following statements.
For any $\tau>0$ there exists $D=D(\tau)>0$ with the following properties.
\begin{enumerate}
\item
For any $L,\Delta>0$, there exists $C=C(L, \Delta)>0$ such that the saturation of a $(D, \tau, L, \Delta)$-admissible path is  $C$-contracting.  
\item
For any $L,\Delta, K>0$, there exists $C=C(L, \Delta, K)>0$  such that if the entry and exit vertices of a $(D, \tau, L, \Delta)$-admissible path $\gamma$ in each $X\in \mathbb X(\gamma)$ has distance bounded above by $K$, then $\gamma$ is $C$-contracting.
\end{enumerate}
\end{prop}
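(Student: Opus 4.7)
The plan is to reduce both parts to the contracting criterion of Lemma \ref{CharaContraction} (bounded projection of balls), using Proposition \ref{admissible} as the workhorse. Fix $D=D(\tau)$ provided by that proposition so that $(D,\tau)$-admissible paths are quasi-geodesics satisfying the fellow-travel and bounded projection conclusions. The uniform spacing $|d((p_{i+1})_-, (p_i)_+) - L| \le \Delta$ ensures that large index gaps between consecutive $X_i$ translate into quantitative length gaps along $\gamma$.

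For part (1), set $\Sigma := \bigcup_i X_i$. Given a ball $B = B(z, r)$ disjoint from $\Sigma$, I want to bound $\diam{\pi_{\Sigma}(B)}$ uniformly in a constant $C(L,\Delta)$. Take $u, v \in B$ with projections $u' \in \pi_{X_i}(u)$, $v' \in \pi_{X_j}(v)$ and $i \le j$. When $i = j$, the contracting property of $X_i$ bounds $d(u', v')$ directly. When $i < j$, I would build an auxiliary admissible path $\beta$ from $u'$ to $v'$: take the subpath of $\gamma$ from $(p_i)_-$ to $(p_j)_+$ and replace the first and last geodesic pieces by $[u', (p_i)_+]$ and $[(p_j)_-, v']$ respectively. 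The bounded projection condition $(\mathbf{BP})$ at these new endpoints is preserved because $B$ is disjoint from each $X_k$, so the contracting property of $X_k$ yields a uniform bound on $\diam{\pi_{X_k}(B)}$. Concatenating with $[u,u']$ and $[v',v]$ — whose projections to $X_i, X_j$ are again bounded by the same property — produces a $(D, \tau')$-admissible path from $u$ to $v$, with $\tau'$ depending only on $\tau$ and the contraction constant. By Proposition \ref{admissible}(3) this path is a quasi-geodesic of length at least $(j-i)(L - \Delta)$; since $d(u,v) \le 2r$, the index gap $j - i$ is forced to be bounded, and summing the $(\mathbf{BP})$ constants along the chain bounds $d(u', v')$.

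For part (2), under the additional assumption $d((p_i)_-, (p_i)_+) \le K$, I would show $\gamma$ itself is contracting. Since each $p_i$ is a geodesic of length $\le K$ between points of $X_i$, we have $p_i \subset N_K(X_i)$; together with $\ell(q_i) \le L + \Delta$, this shows $\gamma$ stays within $K + L + \Delta$ of the vertex set of $\gamma$, which is contained in $\Sigma$. Given a ball $B$ disjoint from $\gamma$, I would run the chain argument of part (1) projecting onto $\gamma$ rather than $\Sigma$: a projection point of $u \in B$ onto $\gamma$ lies within $K$ of a vertex, which in turn is in some $X_i$. The bounded index-gap estimate from part (1), together with the $K$-bound on each $p_i$, then yields the desired uniform bound on $\diam{\pi_\gamma(B)}$ in terms of $L$, $\Delta$, and $K$.

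The main obstacle I anticipate is justifying that $(\mathbf{BP})$ still holds at the new endpoints $u', v'$ of $\beta$. This is where the \emph{ball} hypothesis is essential: for a generic geodesic disjoint from $\Sigma$ one does not get uniform projection bounds to each $X_k$, but a ball disjoint from $\Sigma$ is in particular disjoint from each $X_k$, so the contracting property of $X_k$ supplies the needed bound. A secondary issue is uniformity of the quasi-geodesic and fellow-travel constants; these depend on $\tau' = \max(\tau, C)$ and are handled by Proposition \ref{admissible}, which provides them as functions of $\tau'$ once $D$ is fixed.
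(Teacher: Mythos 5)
Using the ball criterion from Lemma \ref{CharaContraction} is a legitimate alternative to the paper's direct geodesic argument, and you correctly identify the key ingredient: a ball $B$ disjoint from $\Sigma$ has a uniformly bounded projection to each individual $X_k$ by the contracting property. But the pivotal step of your part (1) has a genuine gap. Comparing $\len(\beta)\ge (j-i)(L-\Delta)$ against the quasi-geodesic upper bound $\len(\beta)\le c\,d(u,v)+c\le 2cr+c$ only yields $j-i\lesssim r$, and $r$ is arbitrary, so this does not produce the uniform bound on the index gap that the ball criterion requires. Moreover, even a uniform bound on $j-i$ would not suffice: the intermediate geodesic pieces $p_k$ have length $>D$ but no \emph{upper} bound, so the presence of even a single intermediate piece makes $d(u',v')$ uncontrollable; ``summing the $(\textbf{BP})$ constants along the chain'' bounds projections, not distances.

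The paper avoids both problems by proving something stronger: there is \emph{no} intermediate piece at all. After bounding $d(z,z')$ and $d(w,w')$ (where $z,w$ are projections of the geodesic endpoints to the saturation and $z',w'$ are the nearby exit/entry vertices of $\gamma$), it assumes toward a contradiction that some $p$ with $\len(p)>D$ lies between $z'$ and $w'$, and derives $\len(p)=d(p_-,p_+)\le 4B+11C_0<D$ by a chain of projections onto the associated $X$: projections of $\gamma$-subpaths (Proposition \ref{admissible}), of $Z$ and $W$ (bounded intersection), of $\alpha$ (contracting property, since $\alpha$ misses $N_C(X)$), and of the short connectors $[z,\alpha_-]$, $[\alpha_+,w]$ controlled via the minimality of $z$ and $w$. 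Every term is independent of the length of $\alpha$, so the bound is uniform. The uniform spacing $|d((p_{i+1})_-,(p_i)_+)-L|\le\Delta$ is then used only once consecutiveness is known, to get $d(z',w')\le L+\Delta$, not to count pieces. This projection-chain contradiction is the idea your proof is missing; the length-comparison argument needs to be replaced by it (your part (2) sketch inherits the same gap through its reliance on part (1)).
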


\begin{proof}
Consider a $(D, \tau, L, \Delta)$-admissible path $\gamma=p_0q_1p_1\cdots
p_{n-1} q_n$, where $p_i$ are geodesics with two endpoints in $X_i\in \mathbb X(\gamma)$ and 
$q_i$ are geodesics. Denote by $C_0$ the contraction constant for all $X_i$. For $\tau>0$, let $D=D(\tau), B=B(\tau)$ provided by Proposition \ref{admissible}.  

\textbf{(1).} Denote the saturation  $A:=\bigcup_{X_i\in \mathbb X(\gamma)} X_i$. Consider a geodesic $\alpha$ such that $$\alpha \cap
N_{C}(A) =\emptyset$$ where the constant $C>C_0$ (given below in (\ref{CValue})) depend $L,\Delta$ and $C_0$. Let $z \in Z, w \in W$ be projection
points of $\alpha_-, \alpha_+$ respectively to $A$, where $Z, W\in \mathbb X(\gamma)$ appear on $\gamma$ in this order. Without loss of generality,
assume that $d(z,w)= {\proj_A(\alpha)}$. The purpose of the proof is to prove $d(z,w)\le C$. 

If $Z=W$, the conclusion then follows from the contracting property of $Z$. So, assume that $Z\ne W$ below. Since $\mathbb X$ has bounded projection, there exists a constant, for simplicity, the same $B>0$ from Proposition \ref{admissible} such that 
\begin{equation}\label{bddprojEQ}
\proj_{X}(X') \le B
\end{equation}
for $X\ne X'\in \mathbb X$. 
 
We first observe that 
\begin{equation}\label{bddintEQ}
\max\{\diam{[z,\alpha_-]\cap
N_{C_0}(X)}, \;\diam{[w,\alpha_+]\cap
N_{C_0}(X)}\} \le C_0
\end{equation} for any $X\in \mathbb X(\gamma)$. Indeed, consider the entry point of $[z,\alpha_-]$ in $N_{C_0}(X)$ which is $C_0$-lcose to $X \subset A$, so the exit point must be within a $C_0$-distance to the entry point, since $z$ is a shortest point in $A$ to $\alpha_-$.  The (\ref{bddintEQ}) thus follows.

Let $z'\in Z$ and $w'\in W$ be the corresponding exit and entry \textbf{vertices} (cf. Definition \ref{AdmDef}) of $\gamma$ in $Z$ and $W$.  

Applying (\ref{bddintEQ}) to $X=W$, we see $\proj_W([w,\alpha_+])\le 5C_0$ by the contracting property. 
By assumption, $\alpha \cap N_{C_0}(Z)=\emptyset$ and then ${\proj_{Z}(\alpha)}\le C_0$.  Using (\ref{bddprojEQ}), we obtain: 
$$
\begin{array}{lll}
d(z, z') &\le  {\proj_{Z}(\alpha)} +
 {\proj_{Z}([w,\alpha_+])}+ \proj_{Z}(W) +  {\proj_{Z}([z', w']_\gamma)}\\
 &\le 6C_0 + 2B,
 \end{array} 
$$
where   $\proj_{Z}([z', w']_\gamma)\le B$ follows by Proposition \ref{admissible}. 
Proceeding similarly, we obtain that $$d(w, w')\le 6C_0 + 2B.$$ 

In order to bound the distance $d(z, w)$, it remains to prove that the subpath $[z', w']_\gamma$ between $z', w'$ in $\gamma$ contains no other vertices than $z', w'$.  Indeed, by definition of $\gamma$ being a $(D, \tau, L,\Delta)$-admissible path, we have $|d(z', w')-L|\le \Delta$. Consequently, we obtain
\begin{equation}\label{CValue}
d(z, w) \le d(z', w')+d(z,z')+d(w,w')\le   C:=4(3C_0 + B)+L+\Delta,
\end{equation} 
concluding the proof of the assertion (1). 

Assume, by way of contradiction, that $[z', w']_\gamma$ contains a geodesic segment $p$ associated to a contracting set $X\in \mathbb X(\gamma)$. By definition, $\len(p)>D.$ It is obvious that $Z\ne X\ne W$.  

On the other hand, noting that $\alpha$ lies outside $N_\epsilon(A)$ and $X\subset A$, we have $\alpha\cap N_{C_0}(X)=\emptyset$ for $\epsilon>C_0$ so $\proj_{X}(\alpha)\le C_0$. By (\ref{bddintEQ}),  we see ${\proj_{X}([z,\alpha_-])}, \proj_X([w,\alpha_+])\le 5C_0$. Thus the length of $p$ gets bounded as follows:
$$
\begin{array}{lll}
d(p_-, p_+)&\le {\proj_{X}([p_-, z']_\gamma)} + {\proj_{X}(Z)} +
 {\proj_{X}([z,\alpha_-])}\\
 & \;+ \proj_{X}(\alpha) +  {\proj_{X}([\alpha_+, w])}+{\proj_{X}(W)}+{\proj_{X}([w', z]_\gamma)}\\
 &\le 4B+11C_0,
\end{array} 
$$
where $\proj_{X}([p_-, z']_\gamma, {\proj_{X}([w', z]_\gamma)} \le B$ by Proposition \ref{admissible} (1), and ${\proj_{X}(Z)}, {\proj_{X}(W)} \le B$ by (\ref{bddprojEQ}).  Consequently, this gives a contradiction, by further setting 
\begin{equation}\label{DValue}
D>4B+11C_0,
\end{equation} 
whence the proof of (1) is complete.

\textbf{(2).} We follow the same line as \textbf{(1)}: the notation $A$ now denotes the set of vertices of $\gamma$. Note also that $z$ and $w$ are vertices of $\gamma$. The main difference comes from treating the case that $Z=W$:  the contracting property now follows by the assumption that $d(z, w)$ is uniformly bounded  by $K$. So the contraction constant $C$   depends on $K$ as well. We leave the details to the interested reader. 
\end{proof}

Before passing to further discussions, let us  introduce a few ways to manipulate existing admissible paths to produce new ones. 

\textbf{Subpath:} let $\gamma$ be a $(D, \tau)$-admissible path. An \textit{admissible subpath} $\beta$ is a subpath between two vertices in $\gamma$. It is clear that, an admissible subpath is $(D, \tau)$-admissible.
 
\textbf{Concatenation:} let $\alpha, \beta$ be two $(D, \tau)$-admissible paths. Suppose the last contracting subset associated to the geodesic segment $p$ of $\beta$ is the same as the first contracting subset  to the geodesic segment  $q$ of $\gamma$. A new path $\gamma$ can be thus formed by concatenating paths $$\gamma:=[\alpha_-, p_-]_{\alpha}   [p_-, q_+] [q_+, \beta_+]_{\beta}.$$ 
A useful observation is that if $d(p_-, q_+)>D$, then $\gamma$ is $(D, \tau)$-admissible. 
\\

\paragraph{\textbf{Path label convention}}
We conclude this subsection with the following terminology, which is designed to be consistent with the common one -- paths labeling by words in  Cayley graphs. 

Let $\{g_i\in G: 1\le i\le n; \; n\le \infty\}$ be a (possibly  infinite) sequence of elements. Fixing a basepoint $o\in X$, we plot a sequence of  points $x_i=h_i\cdot o$ $(i=0, \cdots n)$ where $h_i=g_0\cdots g_{i-1}$ with $g_0=1$, and then connect $x_i, x_{i+1}$ by a geodesic to define a piecewise geodesic path $\gamma$.  We call such a path $\gamma$ \textit{labeled by}  the sequence $\{g_i\}$, and $x_i$ the \textit{vertices} of $\gamma$. If $n$ is finite, the path $\gamma$ is also said to be \textit{labeled} by the (product form of) element $g_1g_2\cdots g_n$, it \textit{represents} the element $g_1 g_2\cdots g_n$ in $G$. .

Very often, we need to write the path $\gamma$ explicitly as follows
$$
\gamma:=\path{g_1}\path{g_2}\cdots \path{g_{n-1}}\path{g_n}\cdots
$$
where $\path{g_i}$ denotes a choice of a geodesic between $x_i, x_{i+1}$.

By abuse of language, we say that any translate of a path $\gamma$ by an element in $G$ is also \textit{labeled} by $\{g_i\}$.  

By definition, a  \textit{labeled} path by a bi-infinite sequence of elements is defined   as  the union of two labeled paths by  one-sided infinite sequence of elements.  

\subsection{Contracting subgroups}\label{SScontracting}
We first setup a few definitions. An infinite subgroup $H$ in $G$ is
called \textit{contracting} if for some (hence any by Proposition \ref{Contractions}.\ref{nbhd}) $o \in \mathrm Y$, the
subset $Ho$ is contracting in $\mathrm Y$. In fact, we usually deal with a contracting subgroup $H$ with \textit{bounded intersection}: the collection of subsets
$$\{gH\cdot o: g\in G\}$$ is a contracting system with bounded intersection
in $\mathrm Y$. (In \cite{YANG6}, a contracting subgroup $H$ with bounded intersection was called strongly contracting.)

\begin{lem}\label{normalizer}
Assume that $G$ acts properly on $(\mathrm Y, d)$.
If $H$ is a contracting subgroup, then $[N(H): H]< \infty$, where $N(H)$ is the normalizer of $H$ in $G$. In particular, $N(H)$ is contracting.
\end{lem}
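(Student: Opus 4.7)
The plan is to bound uniformly the Hausdorff distance $D_g := d_{\mathrm{Haus}}(Ho, g\cdot Ho)$ over $g \in N(H)$ and then invoke properness to deduce finiteness of the index. First I would observe that for $g\in N(H)$ the identity $gH = Hg$ gives $g\cdot Ho = Hgo$, and a direct computation using that $H$ acts by isometries shows $D_g = d(go, Ho)$: for every $h\in H$, $d(hgo, Ho) = d(go, Ho)$ (translate by $h^{-1}$), and symmetrically every point of $Ho$ is within $d(go, Ho)$ of $Hgo$. Within each coset $Hg$, I replace $g$ by some $h_0 g\in Hg$ so that $d(o, go) = D_g$; equivalently $o\in \pi_{Ho}(go)$, and the same computation then gives $ho\in \pi_{Ho}(hgo)$ for every $h\in H$.

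The key step is the uniform bound $\sup_{g\in N(H)} D_g < \infty$. Let $C$ be the contraction constant of $Ho$. Since the action is proper, $\{h\in H : d(o, ho)\le 2C\}$ is finite, so as $H$ is infinite there exists $h\in H$ with $L := d(o, ho) > 2C$; fix such an $h$. For any $g\in N(H)$ normalized as above, consider the geodesic $\gamma = [go, hgo]$, which has length $L$ because $h$ acts isometrically. Its endpoint projections to $Ho$ include $o$ and $ho$, so $\proj_{Ho}(go, hgo) \ge d(o, ho) = L > 2C$; Proposition \ref{Contractions}.\ref{Equivalence} then gives $\proj_{Ho}(\gamma) > C$, and by the defining contracting property $\gamma$ must meet $N_C(Ho)$. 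Choosing a point $p \in \gamma$ with $d(p, Ho) \le C$ gives $d(go, p), d(p, hgo) \ge D_g - C$, so $L = \len(\gamma) \ge 2(D_g - C)$, yielding $D_g \le C + L/2 =: R$.

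With $D_g \le R$ uniformly, each coset of $H$ in $N(H)$ admits a representative lying in $Go \cap B(o, R)$, which is finite by properness; after dividing by the finite stabilizer $\mathrm{Stab}(o)$ this gives $[N(H):H] < \infty$. For the ``in particular'' statement, $N(H)\cdot o = \bigcup_{g\in N(H)} go \subset N_R(Ho)$ has finite Hausdorff distance from $Ho$, so $N(H)\cdot o$ is contracting by Proposition \ref{Contractions}.\ref{nbhd}. The main subtlety is the key step: one must use the projection-diameter form of contraction (Proposition \ref{Contractions}.\ref{Equivalence}) rather than just endpoint projections, and properness is essential both to extract the displacement $L>2C$ and to pass from the uniform distance bound back to a cardinality bound.
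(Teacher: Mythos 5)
There is a genuine gap at the key step. The claim that the geodesic $\gamma=[go,hgo]$ has length $L=d(o,ho)$ ``because $h$ acts isometrically'' is false. An isometry $h$ gives $d(ho,hgo)=d(o,go)$, not $d(go,hgo)=d(o,ho)$; applying $g^{-1}$ shows $\len(\gamma)=d(go,hgo)=d\bigl(o,(g^{-1}hg)\cdot o\bigr)$, the displacement at $o$ of the conjugate $g^{-1}hg\in H$, which depends on $g$ and is not controlled by $L$. Indeed the triangle inequality gives $\len(\gamma)\le 2D_g+L$, so your (correct) lower bound $\len(\gamma)\ge 2(D_g-C)$ yields only the tautology $2(D_g-C)\le 2D_g+L$ and gives no uniform bound on $D_g$. (If you instead take $\gamma=[go,gho]$, you do get $\len(\gamma)=L$, but then the endpoint projections to $Ho$ are $o$ and $(ghg^{-1})o$, which need not be far apart, and the argument that $\gamma$ must enter $N_C(Ho)$ breaks.) The rest of your write-up — the computation $D_g=d(go,Ho)$, the normalization putting $o\in\pi_{Ho}(go)$, the verification $ho\in\pi_{Ho}(hgo)$, the deduction that $\gamma=[go,hgo]$ meets $N_C(Ho)$, and the ``in particular'' clause — is all correct.

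The missing ingredient, which the paper's proof supplies, is the quasi-convexity of the translate $gHo$ (Proposition \ref{Contractions}.\ref{qconvexity}). Let $p$ be the first point of $\gamma$ lying in $N_C(Ho)$. Applying your projection argument to the initial segment $[go,p]_\gamma$, which stays outside $N_C(Ho)$ until its endpoint, gives $\proj_{Ho}\bigl([go,p]_\gamma\bigr)\le C$; since $o\in\pi_{Ho}(go)$ and a nearest point of $p$ in $Ho$ lies within $C$ of $p$, this yields $d(o,p)\le 2C$. On the other hand $\gamma$ has both endpoints $go,hgo$ in $gHo$, so quasi-convexity forces $d(p,gHo)\le\sigma$ for a uniform constant $\sigma$. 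Since $d(o,gHo)=d(go,Ho)=D_g$ (using $gHo=Hgo$), combining gives $D_g\le d(o,p)+d(p,gHo)\le 2C+\sigma$, which is the uniform bound you were after. The paper reaches the same conclusion in a slightly different packaging: it picks a geodesic of length $\ge 2D+C$ with endpoints in $gHo$ (variable length rather than your fixed one), shows it must meet $N_C(Ho)$ while lying in $N_C(gHo)$ by quasi-convexity, concludes $N_C(gHo)\cap N_C(Ho)\ne\emptyset$, and then extracts a coset representative $hg\in N(o,2C)$ via the normalizer relation. In either formulation, quasi-convexity of $gHo$ is the input your draft omits and cannot do without.
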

\begin{proof}
Let $g\in N(H)$ so $gH=Hg$. It follows that $gH\cdot o\subset N_D(H\cdot o)$ for $D:=d(o, go)$. Let $C$ be the contraction constant of $Ho$, and also satisfy Proposition \ref{Contractions}.\ref{qconvexity} such that any geodesic with two endpoints in $gHo$ lies in $N_C(gHo)$. 

Since $gHo$ is  unbounded, choose a geodesic $\gamma$ of length $\ge 2D+C$ with two endpoints in $gHo\subset N_D(H\cdot o)$. By contracting property, $\gamma$ has to intersect $N_C(Ho)$: if not, we have $\proj_{Ho}(\gamma)\le C$ and thus $\len(\gamma)\le \proj_{Ho}(\gamma) +d(\gamma_-, Ho)+d(\gamma_+, Ho)\le C+2D$, a contradiction. Together with  $\gamma\subset N_C(gHo)$, we obtain $N_C(gHo)\cap N_C(Ho) \ne \emptyset:$  there exists $h \in H$ such that $hg\in N(o, 2C)$. The finiteness of the set $N(o, 2C)$ shows that $[N(H): H]< \infty$. The finite neighborhood of a contracting set is contracting by Proposition \ref{Contractions}.\ref{nbhd}, so $N(H)$ is a contracting subgroup as well. 
\end{proof}


An  element $h \in G$ is called  
\textit{contracting} if the subgroup $\langle h \rangle$ is contracting, and the orbital map
\begin{equation}\label{QIEmbed}
n\in \mathbb Z\to h^no \in \mathrm Y
\end{equation}
is a quasi-isometric embedding.  The set of contracting elements is preserved under conjugacy.

Given  a contracting subgroup $H$, define a group $E(H)$  as
follows:
\begin{equation}\label{Ehdefn}
E(H):=\{g \in G: \exists r >0, gHo \subset N_r(Ho) \;  \text{and} \;  Ho \subset N_r(gHo)  \}.
\end{equation}

For a contracting element $h$, the structure of  $E(h):=E(\langle h\rangle)$ could be made   precise as follows.

\begin{lem}\label{elementarygroup}
Assume that $G$ acts properly on $(\mathrm Y, d)$. For a contracting element $h$,  the following statements hold:
\begin{enumerate}
\item
$[E(h): \langle h \rangle] <\infty$, and $E(h)$ is a contracting subgroup with bounded intersection. 
\item
$
E(h)=\{g\in G: \exists n > 0,\; (gh^ng^{-1}=h^n)\; \lor\;  (gh^ng^{-1}=h^{-n})\}.
$
\end{enumerate}
\end{lem}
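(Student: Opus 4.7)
The plan is to prove (2) first, since its proof also supplies (1). Denote the right-hand set in (2) by $F$. For the inclusion $F \subseteq E(h)$, I would rewrite $g\langle h\rangle o$ as the finite union $\bigcup_{r=0}^{n-1} \langle h^{\pm n}\rangle g h^r o$; since $k \mapsto h^k o$ is a quasi-isometric embedding, each summand lies at uniformly bounded Hausdorff distance from $\langle h\rangle o$, forcing $g \in E(h)$. For the reverse inclusion, given $g \in E(h)$ pick $\phi : \mathbb Z \to \mathbb Z$ so that $h^{\phi(n)} o$ is an approximate nearest point to $g h^n o$ in $\langle h\rangle o$. The quasi-isometric embedding property makes $\phi$ a quasi-isometry of $\mathbb Z$, hence at bounded distance from some $n \mapsto \epsilon n + c$ with $\epsilon \in \{\pm 1\}$, $c \in \mathbb Z$. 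Then every element $h^{-\epsilon n - c} g h^n$ moves $o$ a bounded amount; properness makes this a finite set, so pigeonhole yields $n_1 \ne n_2$ with the same image, unpacking to $g h^{n_2 - n_1} g^{-1} = h^{\epsilon(n_2 - n_1)}$, that is, $g \in F$.

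The case $n = 0$ of the same pigeonhole shows every $g \in E(h)$ lies in $\langle h\rangle \cdot N(o, r)$ for a uniform $r$, so $E(h)$ is covered by finitely many left cosets of $\langle h\rangle$, yielding $[E(h):\langle h\rangle] < \infty$. That $E(h)$ is a subgroup is routine from the isometric invariance of Hausdorff distance under $G$; its contracting property then follows from Proposition \ref{Contractions}.\ref{nbhd}, since $E(h) o$ lies at bounded Hausdorff distance from the contracting set $\langle h\rangle o$.

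For bounded intersection, I would argue by contradiction: suppose two distinct cosets $g_1 E(h)$, $g_2 E(h)$ give orbits with arbitrarily large coarse intersection. Translating by $g_1^{-1}$ and absorbing the finite Hausdorff discrepancy between $E(h) o$ and $\langle h\rangle o$ into the thickening, it suffices to show that a large coarse intersection of $\langle h\rangle o$ and $g\langle h\rangle o$ forces $g \in E(h)$. Such an intersection yields many pairs $(n_i, m_i)$ with $h^{-n_i} g h^{m_i} \in N(o, r)$, a set finite by properness; pigeonhole produces a coincidence and hence a relation $g h^A g^{-1} = h^B$ with $A, B$ nonzero integers. The main obstacle is that $|A|$ need not equal $|B|$, so $g$ does not directly land in the set $F$ of (2). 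I would circumvent this by mirroring the $F \subseteq E(h)$ step in this asymmetric setting: decompose $g\langle h\rangle o = \bigcup_{r=0}^{|A|-1} \langle h^B\rangle g h^r o$ and match each $h^m o$ with some $h^{kB} g h^r o$ via $k \approx m/B$, using the quasi-isometric embedding to control the error. This places $g \in E(h)$, contradicting the two original cosets being distinct, and completes the proof of (1).
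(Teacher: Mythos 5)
The reverse inclusion $E(h)\subseteq F$ in your proof of (2) has a genuine gap. You assert that $\phi$ is ``a quasi-isometry of $\mathbb Z$, hence at bounded distance from some $n\mapsto \epsilon n+c$''. That general principle is false: $n\mapsto 2n$ is a quasi-isometry of $\mathbb Z$ not at bounded distance from any map of slope $\pm 1$. Since $n\mapsto h^no$ is only a $(\lambda,c)$-quasi-isometric embedding, your $\phi$ is only a $(\lambda^2,c')$-quasi-isometry, so its coarse slope could a priori be anything in $[\lambda^{-2},\lambda^{2}]$. What the pigeonhole honestly gives (applied to $h^{-\phi(n)}gh^{n}\in N(o,r)$) is a relation $gh^{A}g^{-1}=h^{B}$ with $A,B\neq 0$ but possibly $|A|\neq|B|$ --- exactly the obstacle you yourself flag in the bounded-intersection paragraph. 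For statement (2), unlike for bounded intersection, this obstacle cannot be sidestepped, because the conclusion literally requires $|A|=|B|$. The paper closes it by iterating the relation to $g^{l}h^{A^{l}}g^{-l}=h^{B^{l}}$ and comparing $d(o,h^{B^{l}}o)\asymp |B|^{l}$ against $d(o,g^{l}h^{A^{l}}g^{-l}o)\le 2l\,d(o,go)+d(o,h^{A^{l}}o)\asymp 2l\,d(o,go)+|A|^{l}$ via the quasi-isometric embedding, which forces $|A|=|B|$. Some argument of this kind is missing from your proposal.

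A secondary issue: for $[E(h):\langle h\rangle]<\infty$ you need $E(h)\subseteq\langle h\rangle\cdot N(o,r)$ for a \emph{uniform} $r$, but the constant $r$ in the definition (\ref{Ehdefn}) of $E(h)$ depends on $g$, and nothing in your argument removes that dependence. The paper obtains uniformity from the contracting property: two translates of the contracting quasi-geodesic $\langle h\rangle o$ with unbounded coarse intersection lie within a \emph{uniformly} bounded Hausdorff distance of each other (the argument of Lemma \ref{normalizer}). By contrast, your forward inclusion $F\subseteq E(h)$ and your reduction of bounded intersection to ``unbounded coarse intersection implies $g\in E(h)$'' --- including the decomposition $g\langle h\rangle o=\bigcup_{r=0}^{|A|-1}\langle h^{B}\rangle gh^{r}o$ used to absorb the case $|A|\neq|B|$ there --- are correct and in line with the paper's argument.
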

 
\begin{proof}
\textbf{(1)}. Since $n\in \mathbb Z\to h^no\in \mathrm Y$ is a quasi-isometric embedding, the path $\gamma$ obtained by connecting consecutative dots is a quasi-geodesic which is   contracting. Hence, for any $r\gg 0$, the following unbounded intersection
$$
\diam{N_r(\langle h\rangle o) \cap N_r(g\langle h\rangle o)} =\infty  
$$ 
implies that there exists a uniform constant $C$ such that 
$$
 \langle h\rangle o  \subset N_C(g\langle h\rangle o) \;\text{and}\; g\langle h\rangle o  \subset N_C(\langle h\rangle o) 
$$
yielding   $g \in E(h)$.  As a consequence, the constant $r$ can be made uniform in defintion of $E(h)$. So, the assertion ``$[E(h): \langle h \rangle] <\infty$''  follows by a similar argument as in Lemma \ref{normalizer}.  

Furthermore, if $g\notin E(h)$, then $gE(h)o$ and $E(h)o$ have bounded intersection. The proper action also implies the uniformity of bounded intersection for all $g\notin E(h)$. Thus, $E(h)$ is contracting with bounded intersection.

\textbf{(2)}.  The right-hand set is contained in $E(h)$ as a subgroup.  For given $g\in E(h)$, there exists some $r>0$ such that $$g\langle h\rangle o \subset  N_r(\langle h\rangle o).$$ Since $\langle h\rangle o \subset N_D(\langle h\rangle go)$ for $D:=d(o, go)$, we have $g\langle h\rangle o \subset N_{r+D}(\langle h\rangle go)$. There exists a sequence of distinct pairs of  integers $(n_i, m_i)$ such that $d(g h^{n_i}o, h^{m_i}go)\le r+D$. It follows that $h^{-n_i}g^{-1}h^{m_i}g \in N(o, r+D)$, which is a finite set by the proper action. So there exist two distinct  pairs $(n_i, m_i)$ and $(n_j, m_j)$  such that $h^{-n_i}g^{-1}h^{m_i}g=h^{-n_j}g^{-1}h^{m_j}g$ and so $gh^{n_i-n_j}g^{-1}=h^{m_i-m_j}$.  An induction argument then proves
$$
g^l h^{(n_i-n_j)^l} g^{-l} = h^{(m_i-m_j)^l}
$$
for any $l \in \mathbb N$. So if $|n_i-n_j|\ne |m_i-m_j|$, a straightforward calculation gives a contradiction since  the map $n\to h^no$ is a quasi-isometric embedding. Letting $n=n_i-n_j$, we obtain $gh^ng^{-1}=h^{\pm n}$. The description of $E(h)$ thus follows.
\end{proof}


In what follows, the contracting subset 
\begin{equation}\label{axisdefn}
\ax(h)=\{f \cdot o: f\in E(h)\}
\end{equation} shall be called the \textit{axis} of $h$. Two  contracting elements $h_1, h_2\in G$  are called \textit{independent} if the collection $\{g\cdot \ax(h_i): g\in G;\ i=1, 2\}$ is a contracting system with bounded intersection. 



The following result could be thought of as an analog of the well-known fact in $\isom(\mathbb H^2)$ that two hyperbolic isometries in a discrete group  have  either disjoint fixed points or  the same   fixed points.  

\begin{lem}\label{fixedpoints}
Assume that a  group $G$ acts properly on $(\mathrm Y, d)$. For two contracting elements $h_1, h_2$, either $\langle h_1\rangle o$ and $\langle h_2\rangle o$ have bounded intersection, or they have finite Hausdorff distance and $h_1\in E(h_2)$.  

In particular, if $G$ is non-elementary, then there are infinitely many pairwise independent contracting elements.
\end{lem}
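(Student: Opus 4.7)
The plan is to prove the dichotomy by contradiction: assume $\langle h_1\rangle o$ and $\langle h_2\rangle o$ do not have bounded intersection, force finite Hausdorff distance, and then derive $h_1\in E(h_2)$. Because each $h_i$ is contracting, the map $n\mapsto h_i^n o$ is a quasi-isometric embedding; joining consecutive points by geodesics yields a contracting bi-infinite quasi-geodesic $\gamma_i$ (treatable as a trivial admissible path in Proposition~\ref{admissible}). Suppose $N_r(\gamma_1)\cap N_r(\gamma_2)$ is unbounded. Pick $x_n, y_n$ in this intersection with $d(x_n, y_n)\to\infty$, and let $x_n', y_n'\in \gamma_1$ be nearest points, so that $d(x_n', \gamma_2),d(y_n',\gamma_2)\le 2r$. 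The contracting property of $\gamma_2$ keeps the geodesic $[x_n', y_n']$ inside a uniform neighborhood: any excursion of $[x_n', y_n']$ outside $N_C(\gamma_2)$ projects to $\gamma_2$ with diameter $\le C$ (Definition~\ref{ContrDefn}), so its length is bounded by a constant depending only on $r$ and $C$; hence $[x_n', y_n']\subset N_{C''}(\gamma_2)$. Combined with Proposition~\ref{admissible}(3), which places the arc $[x_n', y_n']_{\gamma_1}$ within bounded distance of the geodesic, this yields $[x_n', y_n']_{\gamma_1}\subset N_{C'}(\gamma_2)$. Letting $n\to\infty$ exhausts $\gamma_1$, so $\gamma_1\subset N_{C'}(\gamma_2)$; symmetry gives the reverse inclusion.

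Once finite Hausdorff distance $C'$ is established, the conclusion $h_1\in E(h_2)$ is a one-line calculation, using that $h_1$ is an isometry and $h_1\langle h_1\rangle=\langle h_1\rangle$:
\[
h_1\langle h_2\rangle o\ \subset\ h_1 N_{C'}(\langle h_1\rangle o)\ =\ N_{C'}(\langle h_1\rangle o)\ \subset\ N_{2C'}(\langle h_2\rangle o),
\]
and symmetrically $\langle h_2\rangle o\subset N_{2C'}(h_1\langle h_2\rangle o)$, which is exactly the defining condition~(\ref{Ehdefn}) of $E(h_2)$.

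For the ``in particular'' statement, the plan is to conjugate the given contracting element $a$. By Lemma~\ref{elementarygroup}(1), $E(a)$ is virtually cyclic, so the non-elementary hypothesis forces $[G:E(a)]=\infty$. Choose representatives $g_i$ ($i\in\mathbb{N}$) of pairwise distinct left cosets of $E(a)$, and set $h_i:=g_i a g_i^{-1}$; each is contracting. For a fixed pair $i\ne j$, the axes $\ax(h_i)=g_iE(a)g_i^{-1}o$ and $\ax(h_j)=g_jE(a)g_j^{-1}o$, together with their $G$-translates, lie at Hausdorff distance at most $\max\{d(o,g_io),d(o,g_jo)\}$ from the $G$-translates $g_i\ax(a)$ and $g_j\ax(a)$; absorbing this pair-dependent constant, the collection $\{f\ax(h_i),f\ax(h_j):f\in G\}$ inherits bounded intersection from the uniformly bounded-intersection contracting system $\{f\ax(a):f\in G\}$ supplied by Lemma~\ref{elementarygroup}(1). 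For distinct cosets $g_iE(a)\ne g_jE(a)$, the elementary calculation $gag^{-1}\in E(a)\Leftrightarrow g\in E(a)$ (apply Lemma~\ref{elementarygroup}(2): some power $ga^kg^{-1}\in\langle a\rangle$ must equal $a^{\pm k}$ by the quasi-isometric embedding $n\mapsto a^no$) yields $h_i\notin E(h_j)$, and the first part of the present lemma then produces the required bounded intersection of cyclic orbits.

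The chief obstacle is the first step—upgrading unbounded intersection of $r$-neighborhoods to genuine finite Hausdorff distance—which requires combining the contracting property of $\gamma_2$ with the Morse-type fellow-traveling of contracting quasi-geodesics supplied by Proposition~\ref{admissible}. Everything else is bookkeeping with isometries and the structure of $E(h)$ already established in Lemma~\ref{elementarygroup}.
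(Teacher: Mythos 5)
Your route to the dichotomy is genuinely different from the paper's, and it has a gap at its crucial step. The paper argues algebraically: unbounded intersection of $N_r(\langle h_1\rangle o)$ and $N_r(\langle h_2\rangle o)$ yields infinitely many distinct pairs $(m_i,n_i)$ with $h_1^{-m_i}h_2^{n_i}\in N(o,2r)$; properness makes $N(o,2r)$ finite, so two of these products coincide and $h_1^{a}=h_2^{b}$ for some $a,b\ne 0$. Commensurability of $\langle h_1\rangle$ and $\langle h_2\rangle$ then gives finite Hausdorff distance of the orbits and $h_1\in E(h_2)$ at once. Your argument, by contrast, is purely geometric and never invokes properness --- but properness is essential here. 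The step ``letting $n\to\infty$ exhausts $\gamma_1$'' is unjustified: the segments $[x_n',y_n']_{\gamma_1}$ have lengths tending to infinity, but nothing prevents them from all lying in a single half-ray of $\gamma_1$ (you cannot propagate the inclusion by translating with $h_1$, since $h_1$ does not preserve $\gamma_2$). Indeed, for a non-proper action on a tree, two hyperbolic isometries whose axes share exactly a half-line give contracting elements with unbounded orbit intersection but infinite Hausdorff distance, so no purely geometric argument can close this gap. The repair is exactly the paper's pigeonhole step: from your long segments you get infinitely many pairs of orbit points within uniformly bounded distance, and properness then produces the relation $h_1^{a}=h_2^{b}$, from which finite Hausdorff distance of the \emph{whole} orbits (and your correct one-line deduction of $h_1\in E(h_2)$) follows. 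The front half of your argument (endpoints near $\gamma_2$ forces the geodesic, hence the contracting arc of $\gamma_1$, into a uniform neighborhood of $\gamma_2$) is sound but ends up being unnecessary.

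On the ``in particular'' clause: the paper's own proof stops after the dichotomy, so your attempt is a welcome addition, but be careful with the definition of independence, which requires bounded intersection of the \emph{entire} collection $\{f\ax(h_i), f\ax(h_j): f\in G\}$ of translated axes, not just of the cyclic orbits $\langle h_i\rangle o$ and $\langle h_j\rangle o$ that the dichotomy controls. With $h_i=g_iag_i^{-1}$ one has $\ax(h_i)=g_iE(a)g_i^{-1}o$, and for $f'=fg_ig_j^{-1}$ the sets $f\ax(h_i)=fg_iE(a)g_i^{-1}o$ and $f'\ax(h_j)=fg_iE(a)g_j^{-1}o$ are the same coset of $E(a)$ applied to two different basepoints: they are distinct sets at finite Hausdorff distance, so the collection does not literally have bounded intersection. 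One must either identify such members (equivalently, require $E(h_i)\ne gE(h_j)g^{-1}$ for $i\ne j$, which conjugates of a single element never satisfy) or build the independent elements differently, e.g.\ as suitable products $a^nb^m$ with $b=gag^{-1}$, $g\notin E(a)$, whose elementary closures are pairwise non-conjugate.
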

\begin{proof}
Assume that $\langle h_1\rangle o$ and $\langle h_2\rangle o$ have unbounded intersection: there exists a constant $r>0$ such that 
$$\diam{N_r(\langle h_1\rangle o) \cap N_r(\langle h_2\rangle o)} =\infty.$$
There exists a sequence of distinct pairs of  integers $(n_i, m_i)$ such that $d( h_1^{m_i}o, h_2^{n_i}o)\le 2r$, so $h_1^{-m_i}h_2^{n_i} \in N(o, 2r)$. The set $N(o, 2r)$ is thus finite by properness of the action. Hence, there exist two distinct pairs $(n_i, m_i)$ and $(n_j, m_j)$ such that  $h_1^{n_i-n_j}=h_2^{m_i-m_j}$.  This means that $E(h_1)$ and $E(h_2)$ are \textit{commensurable}, i.e. they have  finite index subgroups which are isomorphic. So $\ax(h_1)$ and $\ax(h_2)$ have finite Hausdorff distance.   The lemma is proved.
\end{proof}

Finally, we record the following elementary well-known fact; a proof is given for completeness.  We refer the reader to  \cite{FMbook} for the classification of \textit{periodic}, \textit{reducible}, and \textit{pseudo-Anosov} elements.  

\begin{lem}\label{pAContr}
In mapping class groups, a contracting element coincides with a pseudo-Anosov element with respect to the Teichm\"{u}ller metric.
\end{lem}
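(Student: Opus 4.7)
The plan is to prove the two directions separately using the Nielsen--Thurston classification into periodic, reducible, and pseudo-Anosov mapping classes.

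For the direction ``pseudo-Anosov implies contracting,'' I would simply invoke Minsky's theorem \cite{Minsky}, which is exactly the statement that the axis $\{h^n o\}$ of a pseudo-Anosov $h$ is a contracting quasi-geodesic in Teichm\"uller space. This was already listed in the introduction as the motivating example, so nothing new is needed here.

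For the converse, suppose $h\in G$ is a contracting element. Since the definition of a contracting element requires $n\mapsto h^n o$ to be a quasi-isometric embedding of $\mathbb Z$, the element $h$ has infinite order, so the periodic case is immediately ruled out. It therefore suffices to rule out the infinite-order reducible case. Suppose for contradiction that $h$ is infinite-order reducible. Then some power $h^N$ fixes componentwise a reducing multicurve $c=c_1\cup\cdots\cup c_k$; picking the Dehn twist $T_{c_i}$ about any component $c_i$, we have $h^N T_{c_i} h^{-N} = T_{h^N(c_i)} = T_{c_i}$ (possibly after passing to $h^{2N}$ to absorb an orientation reversal). By Lemma \ref{elementarygroup}(2) this means $T_{c_i}\in E(h)$.

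The main point, and the main obstacle, is then to derive a contradiction with Lemma \ref{elementarygroup}(1), which asserts $[E(h):\langle h\rangle]<\infty$. For this I would exhibit a rank-$2$ abelian subgroup of $E(h)$ that is not commensurable with $\langle h\rangle$. If $h^N$ is not a power of $T_{c_1}$, then $\langle h^N, T_{c_1}\rangle\cong\mathbb Z^2$ already does the job, since both elements commute and are independent in $G$. If instead $h^N=T_{c_1}^m$, then $h$ is essentially a root of a single Dehn twist; but since the genus is greater than two, there exists a simple closed curve $c'$ disjoint from $c_1$ and not isotopic to any $c_i$, and then $T_{c'}$ commutes with $h^N$, lies in $E(h)$ by Lemma \ref{elementarygroup}(2), and is independent from $T_{c_1}$, so $\langle h, T_{c'}\rangle$ has rank $2$. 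Either way, $E(h)$ fails to be virtually cyclic, contradicting Lemma \ref{elementarygroup}(1). Hence $h$ must be pseudo-Anosov, completing the proof.
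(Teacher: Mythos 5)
Your proof is correct in outline and follows essentially the same route as the paper: Minsky handles the pseudo-Anosov direction, infinite order rules out periodic elements, and the reducible case is eliminated by showing that Dehn twists about the reduction system force the elementary closure of $h$ to be too large. Where the paper appeals directly to the centralizer of $g^n$ and Lemma \ref{normalizer}, you route through $E(h)$ and Lemma \ref{elementarygroup}; this amounts to the same thing, since the centralizer of any power of $h$ is contained in $E(h)$ by Lemma \ref{elementarygroup}(2).

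However, your Case 1 overstates what the hypothesis gives. That $h^N$ is not an integer power of $T_{c_1}$ does not by itself make $h^N$ and $T_{c_1}$ independent, so $\langle h^N, T_{c_1}\rangle$ need not be $\mathbb Z^2$. For instance, if $h$ is a root of a Dehn twist (such roots exist), say $h^5=T_{c_1}$, then $h$ fixes $c_1$, $h$ is not a power of $T_{c_1}$, and yet $\langle h, T_{c_1}\rangle=\langle h\rangle\cong\mathbb Z$. Your Case 2 catches only $h^N$ equal to a power of $T_{c_1}$, not a root of one, so this situation falls between your cases. The fix is painless: split instead on whether $h^N$ and $T_{c_1}$ share a nontrivial common power. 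If not, $\langle h^N, T_{c_1}\rangle\cong\mathbb Z^2\subset E(h)$. If they do, say $h^{Nm}=T_{c_1}^n$ with $n\ne 0$, then for any curve $c'$ disjoint from and non-isotopic to $c_1$ (available since the genus exceeds two), $T_{c'}$ commutes with $h^{Nm}=T_{c_1}^n$, hence $T_{c'}\in E(h)$ by Lemma \ref{elementarygroup}(2), and $\langle T_{c_1}, T_{c'}\rangle\cong\mathbb Z^2\subset E(h)$. Either way $E(h)$ is not virtually cyclic, contradicting Lemma \ref{elementarygroup}(1). To be fair, the paper's own phrase that $g^n$ ``is of infinite index in its centralizer'' skips the same point, so your treatment is no less explicit than the original.
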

\begin{proof}
A pseudo-Anosov element $g$ is contracting by Minsky's theorem \cite{Minsky} that the axis of $g$ is contracting.    A contracting element is of infinite order by definition. So suppose that $g$ is   reducible. By definition, some power $g^n$ of a reducible element fixes a finite set of simple closed curves and thus commutes with Dehn twists around these curves. Hence, $g^n$ is of infinite index in its centralizer. On the other hand, a contracting element has to be of finite index in its centralizer by Lemma \ref{normalizer}. So we got a contradiction, completing the proof of the lemma.   
\end{proof}



\subsection{An extension lemma}\label{extensionlemSec}

 In this subsection, we do not demand that the action of $G$ on $\mathrm Y$ is proper. The only requirement is the existence of   three pairwise independent contracting elements. 

\begin{lem}[Extension Lemma]\label{extend3}
Suppose that a group $G$ acts on a geodesic metric space $(\mathrm Y, d)$ with   contracting elements.  Consider a collection of subsets $$\mathbb F=\{g\cdot \ax(h_i):   g\in G \},$$ where $h_i$ $(1\le i\le 3)$ are three pairwise independent contracting elements.  Then there exist $\epsilon_0, \tau, D>0$ depending only on $\mathbb F$ with the following property.  

Let $F$ be a set consisting of a choice of an element $f_i$ from each $E( h_i)$   such that $d(o, f_io)>D$ $(0\le i\le 3)$.
\begin{enumerate}
\item
For any two elements $g, h \in G$, there exist an element $f \in F$ such that $gfh\ne 1$  and the path labeled by $gfh$ is a $(D, \tau)$-admissible path. 
\item
As a consequence, the following holds
\begin{equation}\label{epsilonclose}
\max\{d(go,\gamma) , d(gfo, \gamma)\} \le \epsilon_0,
\end{equation}
for any geodesic $\gamma:=[o, gfho]$. 	

\end{enumerate}
\end{lem}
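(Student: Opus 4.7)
My plan is to decompose the path $\gamma$ labeled by $gfh$ as $\gamma = \path{g}\cdot\path{f}\cdot\path{h}$, and to regard the middle piece $\path{f} = [go, gfo]$ as the unique contracting segment $p_0$ associated to the set $X:=g\cdot\ax(h_i)$; this is legitimate because $f\in E(h_i)$ implies $f\cdot\ax(h_i)=\ax(h_i)$, so both endpoints $go$ and $gfo$ lie in $X$. The two outer pieces $\path{g}$ and $\path{h}$ play the role of $q_0$ and $q_1$. Condition (LL2) is then vacuous (there is only one contracting subset), and (LL1) will reduce to $d(o,f_io)>D$, which is built into the hypothesis on $F$. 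Thus the whole content of admissibility rests on the bounded projection condition (BP), which after left translation is equivalent to
\begin{equation*}
d\bigl(\pi_{\ax(h_i)}(g^{-1}o),\,o\bigr)\le\tau\quad\text{and}\quad d\bigl(\pi_{\ax(h_i)}(ho),\,o\bigr)\le\tau,
\end{equation*}
using that $o\in\ax(h_i)$, so $\pi_{\ax(h_i)}(o)=o$.

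The core of the argument is thus the following pigeonhole claim, which I would prove first: \emph{for $\tau$ sufficiently large depending on the contraction constant $C$ of $\mathbb F$ and the bounded intersection constant $B$ of $\mathbb F$, and for any point $x\in\mathrm Y$, at most one index $i\in\{1,2,3\}$ satisfies $d(\pi_{\ax(h_i)}(x),o)>\tau$.} I would argue by contradiction: assume $y_1:=\pi_{\ax(h_1)}(x)$ and $y_2:=\pi_{\ax(h_2)}(x)$ are both at distance $>\tau$ from $o$. By Proposition~\ref{Contractions}.\ref{ProjPoint}, since $o\in\ax(h_j)$, each $y_j$ lies within $C$ of the geodesic $[o,x]$; pick $z_j\in[o,x]$ with $d(z_j,y_j)\le C$, and relabel so that $z_1$ precedes $z_2$ on $[o,x]$. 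Because $z_1$ is $C$-close to $\ax(h_1)$ and $\pi_{\ax(h_2)}(\ax(h_1))$ has diameter $\le B$ with $\pi_{\ax(h_2)}(o)=o$, the 1-Lipschitz property of projection (Proposition~\ref{Contractions}.\ref{1Lipschitz}) forces $\pi_{\ax(h_2)}(z_1)$ to lie within $B+O(C)$ of $o$. Now apply the contracting property to the subgeodesic $[z_1,v]$, where $v$ is the entry point of $[z_1,x]$ into $N_C(\ax(h_2))$ (such a $v$ exists since $z_2\in[z_1,x]\cap N_C(\ax(h_2))$): its projection to $\ax(h_2)$ is bounded, hence $\pi_{\ax(h_2)}(v)$ is within $O(C)$ of $\pi_{\ax(h_2)}(z_1)$, and $v$ itself is within $O(C)$ of its projection. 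Combining, $d(v,o)\le B+O(C)$, but $d(v,o)\ge d(z_1,o)>\tau-C$, yielding a contradiction for $\tau>B+O(C)$.

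With this claim in hand, the rest is bookkeeping: apply it to $x=g^{-1}o$ and to $x=ho$; at most one $i$ is ``bad'' for each, so at least one $i_0\in\{1,2,3\}$ is good for both, giving the required (BP) inequalities with constant $\tau$. Setting $D$ large enough to satisfy the hypothesis of Proposition~\ref{admissible} for this $\tau$, the path $\gamma$ is $(D,\tau)$-admissible, so by Proposition~\ref{admissible}(3) every geodesic between its endpoints $\epsilon$-fellow travels $\gamma$ for some $\epsilon=\epsilon(\tau)$; in particular, the two vertices $go$ and $gf_{i_0}o$ are within $\epsilon_0:=\epsilon$ of any geodesic $[o,gf_{i_0}ho]$, which is exactly (\ref{epsilonclose}). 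The condition $gfh\ne 1$ must be handled as a sidebar: if the unique good index $i_0$ happens to give $f_{i_0}=g^{-1}h^{-1}$, we observe that this pins down at most one index, so upon upgrading the pigeonhole budget (e.g.\ by passing to a slightly larger $F$ drawn from the infinitely many pairwise independent contracting elements furnished by Lemma~\ref{fixedpoints}, or by replacing $f_{i_0}$ by a fixed high power) we can always avoid the degenerate case.

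The main obstacle is the projection claim in the second paragraph: one needs both the contracting behavior of each axis and the bounded intersection between distinct translates simultaneously, and one must be careful that the relevant geodesic subpath $[z_1,v]$ truly stays outside $N_C(\ax(h_2))$ up to its endpoint $v$, so that the contracting estimate applies. This is where the hypothesis that the three axes form a contracting system with bounded intersection — not merely that each axis is individually contracting — is decisive.
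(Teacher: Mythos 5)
Your proposal is correct and follows essentially the same route as the paper: your pigeonhole claim is exactly the paper's Lemma \ref{SeparationClaim} (each point projects $\tau$-close to $o$ on at least two of the three axes), proved by an equivalent argument using the bounded-projection constant $B$ in place of the paper's direct use of the $\mathcal R$-bounded-intersection function, and the rest is the same reduction to Proposition \ref{admissible}. The only wobble is your ``sidebar'' on $gfh\ne 1$: no extra device is needed, since once the path labeled by $gfh$ is $(D,\tau)$-admissible it is a $c$-quasi-geodesic containing a segment of length $>D>c$, so $d(o,gfho)>0$ forces $gfh\ne 1$; moreover your proposed fixes (enlarging $F$ or swapping in a power of $f_{i_0}$) would conflict with the requirement that the lemma hold for an arbitrary fixed choice of $F$.
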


\begin{rem}
\begin{enumerate}
\item
We emphasize an arbitrary choice $F$ of three elements would satsify the lemma. We   \textbf{do} allow the possibility of $g$ or $h$ to be the identity. 
\item
The group action is not assumed to be a proper action. Another important source of examples comes from the  acylindrical action in \cite{DGO}, \cite{Osin6}.
\end{enumerate}
\end{rem}

The following simplified version of the extension lemma explains its name.
\begin{cor}[Extension Lemma: simplified version]\label{extend3easy}
Under the same assumption as Lemma \ref{extend3}.  Then for any two $g, h \in G$, there exist $f \in F$ such that the above (\ref{epsilonclose}) holds.
\end{cor}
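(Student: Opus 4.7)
The plan is to reduce the corollary directly to Lemma \ref{extend3}, since the corollary is essentially its Assertion (2) with the nontriviality clause $gfh\ne 1$ suppressed from the statement.

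Concretely, I would first apply Lemma \ref{extend3}(1) to the given $g, h \in G$, extracting some $f \in F$ such that $gfh \ne 1$ and the path $\gamma_0$ labeled by the product $gfh$ --- namely the concatenation $[o, go] \cdot [go, gfo] \cdot [gfo, gfho]$ --- is $(D, \tau)$-admissible. Here the middle segment $[go, gfo]$ is the sole contracting piece: its endpoints lie in the translated axis $g \cdot \ax(h_i)$ (where $f = f_i \in E(h_i)$), and its length exceeds $D$ by the defining requirement $d(o, f_i o) > D$ imposed on the elements of $F$; the two flanking segments $[o, go]$ and $[gfo, gfho]$ play the role of the $q$-pieces of the admissible structure, with the (BP) condition on the middle piece being automatic from the construction in Lemma \ref{extend3}.

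Second, I would apply Proposition \ref{admissible}(3) to $\gamma_0$ together with the geodesic $\gamma = [o, gfho]$, which is a genuine geodesic between distinct endpoints precisely because $gfh \ne 1$. This produces a constant $\epsilon = \epsilon(\tau) > 0$, depending only on $\tau$ (hence only on $\mathbb F$), such that $\gamma$ $\epsilon$-fellow travels $\gamma_0$. Unpacking the fellow-travel property (Definition \ref{Fellow}) at the two endpoints of the unique contracting subpath $[go, gfo]$ yields points on $\gamma$ within distance $\epsilon$ of $go$ and of $gfo$ respectively, which is exactly (\ref{epsilonclose}) upon setting $\epsilon_0 := \epsilon$.

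There is no genuine obstacle here: all of the substance is contained in Lemma \ref{extend3} and Proposition \ref{admissible}. The only subtlety worth flagging is why one must route through Assertion (1) of Lemma \ref{extend3} rather than invoke Assertion (2) as a black box --- the reason is that the fellow-travel conclusion of Proposition \ref{admissible}(3) is meaningful only for a non-degenerate geodesic $\gamma$, and that non-degeneracy is guaranteed by the $gfh \ne 1$ clause built into Assertion (1). With this ingredient the constants $\tau, D$ and the resulting $\epsilon_0$ all depend only on the fixed collection $\mathbb F$ of three pairwise independent contracting axes, which is the qualitative point of the statement.
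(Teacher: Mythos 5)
Your proof is correct and follows essentially the same route as the paper: the corollary is an immediate restatement of Lemma \ref{extend3}(2), and your derivation of that assertion from assertion (1) via the fellow-travel conclusion of Proposition \ref{admissible}(3) is exactly the argument the paper uses (see the proof of Lemma \ref{extendinfty}, where statements (2) and (3) are said to follow from (1) by Proposition \ref{admissible}). Your remark that the clause $gfh\ne 1$ is what makes the geodesic $[o, gfho]$ non-degenerate is a reasonable point of care, though the paper treats it as implicit.
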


We are   going to prove a  more general version which could deal with any number of elements.

\begin{lem}[Extension Lemma: infinite version]\label{extendinfty}
Under the same assumption as Lemma \ref{extend3}, there exist $\epsilon_0, \tau, D, c>0$  with the following property.  

Consider  a (finite or infinite, or bi-infinite) sequence of elements $\{g_i\in G:  |i|\le  \infty\}$. For any $g_i, g_{i+1}$, there exists $f_i \in F$ such that the following holds,
\begin{enumerate}
\item
A path $\gamma$ labeled by $(\cdots, g_i, f_i, g_{i+1}, f_{i+1}, g_{i+2}, \cdots)$ is a $(D, \tau)$-admissible path, and is a $c$-quasi-geodesic. 
\item
The product of any finite sub-sequence of consecutive elements in $$(\cdots, g_i, f_i, g_{i+1}, f_{i+1}, \cdots)$$ is non-trivial in $G$. 
\item
Let $\alpha$ be a geodesic between two vertices in $\gamma$.   Then each vertex in $[\alpha_-, \alpha_+]_\gamma$ has  a distance at most $\epsilon_0$ to $\alpha$.
\item
Consider two finite sequences $\{g_i: 1\le i \le m\}, \{g'_j: 1\le j \le n\}$ with $f_i, f'_i$ provided by the statement (1) satisfying that $g_1f_1g_2\cdots f_{m-1}g_m o=g_1'f_1'g_2'\cdots f'_{n-1}g'_no$. If $g_1=g_1'$, then $f_1=f_1'$.
\end{enumerate}
\end{lem}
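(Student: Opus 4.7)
The plan is to construct the $f_i$ inductively by iterating the basic Lemma~\ref{extend3} along the sequence, and then extract the four conclusions from the admissible-path machinery of Proposition~\ref{admissible} together with the bounded-intersection property of the contracting system $\mathbb F$. I fix $\tau, \epsilon_0$ as provided by Lemma~\ref{extend3} and enlarge $D$ once for all so that it exceeds the threshold of Lemma~\ref{extend3}, the threshold $D(\tau)$ of Proposition~\ref{admissible}, and the quantity $\mathcal R(C + \epsilon_0) + 2\epsilon_0 + 1$, where $C$ is the uniform contraction constant of $\mathbb F$ and $\mathcal R$ is its bounded-intersection function. I then build $f_1, f_2, \dots$ forward (and symmetrically backward for bi-infinite sequences): given $f_{i-1}$, apply Lemma~\ref{extend3} to the pair $(g_i, g_{i+1})$, translated into position by the prefix $g_1 f_1 \cdots f_{i-1}$, to produce $f_i \in F$ making $g_i f_i g_{i+1}$ label a $(D,\tau)$-admissible path. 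Because $F$ consists of three elements drawn from the pairwise independent groups $E(h_1), E(h_2), E(h_3)$, the bounded intersection of $\mathbb F$ leaves enough freedom to additionally demand $f_i \notin E(h_{j_{i-1}})$, so that the axis of $f_i$ differs from (and has bounded intersection with) that of $f_{i-1}$, enforcing (LL2) of Definition~\ref{AdmDef}. Concatenating these local selections yields $\gamma$, whose admissibility is verified segment by segment; Proposition~\ref{admissible}(3) then promotes $\gamma$ to a $c$-quasi-geodesic for some $c = c(\tau)$, giving~(1).

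Assertions~(2) and~(3) fall out of (1). A trivial finite subproduct would produce a closed admissible subpath of strictly positive length (each $f_i$-segment contributes at least $D$), contradicting the quasi-geodesic property just established; this gives (2). For~(3), any subpath of $\gamma$ between two of its vertices is itself $(D,\tau)$-admissible, so Proposition~\ref{admissible}(3) applied to this subpath delivers the uniform $\epsilon_0$-fellow-travelling bound for every intermediate vertex.

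For the uniqueness assertion~(4), which is the main obstacle, suppose for contradiction that $f_1 \ne f_1'$ while $g_1 = g_1'$ and both sequences end at the same orbit point $x$. Since $F$ contains exactly one representative per $E(h_j)$, the translates $X := g_1 \ax(f_1)$ and $X' := g_1 \ax(f_1')$ are two distinct members of $\mathbb F$ and therefore have bounded intersection governed by $\mathcal R$. Let $\alpha$ be a geodesic from $o$ to $x$. Applying~(3) to both admissible paths, $\alpha$ $\epsilon_0$-fellow-travels both near their initial segments; combined with Proposition~\ref{admissible}(2) and the long-local condition~(LL1), this forces $\alpha$ to contain two subsegments, each of length at least $D - 2\epsilon_0$, lying respectively in $N_{C+\epsilon_0}(X)$ and $N_{C+\epsilon_0}(X')$. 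Because both subsegments begin near the common vertex $g_1 o$, they overlap in a sub-segment of length at least $D - 2\epsilon_0$ contained in $N_{C+\epsilon_0}(X) \cap N_{C+\epsilon_0}(X')$, contradicting the intersection bound $\mathcal R(C+\epsilon_0)$ built into the choice of $D$. Hence $f_1 = f_1'$. The main delicacy is therefore the upfront calibration of $D$ to serve all three simultaneous roles (Lemma~\ref{extend3}, Proposition~\ref{admissible}, and bounded intersection of $\mathbb F$); once $D$ is fixed, the remaining verifications are routine.
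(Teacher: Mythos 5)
The proposal's overall skeleton — choose the $f_i$ inductively, invoke Proposition~\ref{admissible} for the quasi-geodesic and fellow-travel claims, derive~(2) from~(1), and prove~(4) by locating a long overlap near two distinct members of~$\mathbb F$ — matches the paper's strategy, and your treatment of~(2),~(3),~(4) is essentially what the paper does. The gap is in the verification of~(\textbf{LL2}), which is the heart of assertion~(1).

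You assert that demanding $f_i \notin E(h_{j_{i-1}})$ (so that $\ax(f_i) \ne \ax(f_{i-1})$) enforces~(\textbf{LL2}). This is not the right condition. Index the segments of the labeled path $\gamma$ so that the long segment $p_i$ (the edge $\path{f_i}$) lies in the contracting set $X_i = g_1f_1\cdots g_i \cdot \ax(h_{k_i})$ where $f_i\in E(h_{k_i})$. Condition~(\textbf{LL2}) requires $X_i \ne X_{i+1}$, which after translating by $(g_1f_1\cdots g_i)^{-1}$ and using $f_i \ax(h_{k_i})=\ax(h_{k_i})$ reduces to $\ax(h_{k_i}) \ne g_{i+1}\ax(h_{k_{i+1}})$. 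The group element $g_{i+1}$ enters essentially here, and $k_i\ne k_{i+1}$ alone does not give the conclusion. In fact the dangerous case is precisely when $k_i = k_{i+1}$ \emph{and} $g_{i+1}\in E(h_{k_i})$, for then $g_{i+1}$ stabilizes $\ax(h_{k_i})$ and $X_i = X_{i+1}$. This is exactly the case the paper isolates and handles.

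Moreover, the claimed ``freedom to additionally demand $f_i \notin E(h_{j_{i-1}})$'' is not available from the black-box statement of Lemma~\ref{extend3}, and it is not available in general even unfolding the proof. The separation claim (Lemma~\ref{SeparationClaim}) guarantees that each of $g_i^{-1}$ and $g_{i+1}$ has $\tau$-bounded projection to at least two of the three axes; the common axis required for~(\textbf{BP}) may therefore be unique, and it may coincide with the axis of $f_{i-1}$, leaving no admissible choice satisfying your extra demand. The paper's way out is the case analysis: if $g_{i+1}\notin E(h_{k_i})$ one keeps whatever axis satisfies~(\textbf{BP}) (and $X_i\ne X_{i+1}$ then holds by the stabilizer property); if $g_{i+1}\in E(h_{k_i})$, then~(\textbf{BP}) at index~$i$ forces $d(o,g_{i+1}o)\le\tau$, so by the Lipschitz property of projections \emph{all} three axes have bounded projection of $g_{i+1}^{\pm 1}$ and one may freely switch to a different axis for~$k_{i+1}$. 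Your proof needs this observation; without it, the inductive construction of $\gamma$ can get stuck. Everything downstream of~(1) is fine once~(1) is repaired along these lines.
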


Assume that $\mathbb F$ is a $C$-contracting system with $\mathcal R$-bounded intersection for some  $C>0$ and $\mathcal R: \mathbb R_{\ge 0}\to \mathbb R_{\ge 0}$. For simplicity we denote $A_k=\ax(h_k)$ below. 
The proof replies on the following observation. 
\begin{lem}\label{SeparationClaim}
There exists a constant $\tau$ such that for any element $g\in G$, the following holds
\begin{equation}\label{bddprojIJ}
\min\{\proj_{A_1}([o,
go]), \proj_{A_2}([o, go])\} \le \tau.
\end{equation}
 
\end{lem}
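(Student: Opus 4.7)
My plan is to argue by contradiction: suppose there exists $g\in G$ with
$\proj_{A_1}([o,go])>\tau$ and $\proj_{A_2}([o,go])>\tau$ for a large constant $\tau$ to be chosen in terms of the contracting constant $C$ and the bounded projection constant $B$ of $\mathbb F$. The goal is to show that this is inconsistent with the bounded intersection property of $A_1,A_2\in\mathbb F$ and the quasi-geodesity of an admissible path constructed through both axes.

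\textbf{Step 1 (Locating long passages).} Since $\mathbb F$ is $C$-contracting and $\proj_{A_i}([o,go])>\tau>C$, the geodesic $[o,go]$ must enter $N_C(A_i)$ for $i=1,2$. Let $u_i\le v_i$ be the first and last points of $[o,go]$ in $N_C(A_i)$. Splitting $[o,go]=[o,u_i]\cdot[u_i,v_i]\cdot[v_i,go]$ and noting that the outer segments lie outside $N_C(A_i)$, the contracting property gives $\proj_{A_i}([o,u_i]),\proj_{A_i}([v_i,go])\le O(C)$. Combined with 1-Lipschitz projection (Proposition \ref{Contractions}(\ref{1Lipschitz})), this forces $d(u_i,v_i)\ge\tau-O(C)$. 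Setting $p_i=\pi_{A_i}(o)$ and $q_i=\pi_{A_i}(go)$, the same estimate yields $d(p_i,u_i),d(q_i,v_i)\le O(C)$, and hence $d(p_i,q_i)\ge\tau-O(C)$.

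\textbf{Step 2 (Relative order and interaction zones).} The bounded intersection of $A_1,A_2$ in $\mathbb F$ forces $\diam{N_C(A_1)\cap N_C(A_2)}\le\mathcal R(C)$, so the two passages $[u_1,v_1]$ and $[u_2,v_2]$ overlap in at most a segment of length $\le\mathcal R(C)$. After swapping $A_1\leftrightarrow A_2$ if necessary, we may assume $v_1\le u_2$. By quasi-convexity (Proposition \ref{Contractions}(\ref{qconvexity})) the passages lie in neighborhoods $N_{C'}(A_i)$ with $C'=O(C)$. The transition segment $[v_1,u_2]$ avoids $N_C(A_1)$ past $v_1$, so contracting and bounded projection give $\pi_{A_1}(u_2)\in N_{O(C)}(\pi_{A_1}(A_2))$; consequently $q_1\approx v_1\approx \pi_{A_1}(v_1)$ lies within $O(B+C)$ of the bounded zone $\pi_{A_1}(A_2)\subset A_1$. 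Symmetrically, $p_2$ lies within $O(B+C)$ of $\pi_{A_2}(A_1)\subset A_2$.

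\textbf{Step 3 (Admissible path and the contradiction).} Form
\[
\beta=[o,p_1]\cdot[p_1,q_1]\cdot[q_1,p_2]\cdot[p_2,q_2]\cdot[q_2,go],
\]
associating $[p_1,q_1]$ with $A_1$ and $[p_2,q_2]$ with $A_2$. The (LL1) condition holds once $\tau-O(C)>D$, (LL2) is the bounded intersection of $A_1\ne A_2$, and the (BP) conditions at the junctions reduce via $\proj_{A_1}(\{q_1,p_2\})=d(q_1,\pi_{A_1}(p_2))$ (and its symmetric) to the inequality $q_1, \pi_{A_1}(p_2)\in N_{O(B+C)}(\pi_{A_1}(A_2))$ established in Step 2. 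Choosing $D>D(\tau')$ for the threshold $\tau'=O(B+C)$, Proposition \ref{admissible} shows $\beta$ is a $c$-quasi-geodesic for a constant $c=c(\tau')$ depending only on $\mathbb F$, and the geodesic $[o,go]$ $\epsilon$-fellow-travels $\beta$ in the sense of Definition \ref{Fellow}. The fellow-traveling delivers four ordered points on $[o,go]$ respectively $\epsilon$-close to $p_1,q_1,p_2,q_2$. Since also the fellow-traveling preserves the near-$A_1$ and near-$A_2$ passages, this forces the direct geodesic to traverse the segment $[q_1,p_2]$ of length $\approx d(\pi_{A_1}(A_2),\pi_{A_2}(A_1))=O(B+C)$ while simultaneously aligning with two independent contracting directions on either side; pushing $\tau$ beyond a threshold determined by $c,B,C$ then violates the quasi-geodesic inequality $\ell(\beta)\le c\cdot d(o,go)+c$ in combination with the near-equality $d(o,go)\le d(o,p_1)+d(p_1,q_1)+d(q_1,go)$ and its symmetric via $A_2$.

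The main obstacle is the final quantitative comparison in Step 3: one must carefully track how the bounded length of the transition $[q_1,p_2]$ interacts with the quasi-geodesic constants to produce a genuine numerical contradiction when $\tau$ is large enough, which is the technical heart of the argument and is what fixes the value of $\tau$ in terms of $\mathbb F$ alone.
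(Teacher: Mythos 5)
There is a genuine gap, and it stems from overlooking the one fact that makes the lemma true: here $A_k=\ax(h_k)=E(h_k)\cdot o$, so the basepoint $o$ lies in \emph{both} $A_1$ and $A_2$. For two arbitrary members of a contracting system the statement is simply false --- a geodesic can perfectly well have large projections to two disjoint contracting sets that it passes near in succession --- so any correct proof must exploit $o\in A_1\cap A_2$. Your argument never does: in Step 2 you treat the two ``passages'' $[u_1,v_1]$ and $[u_2,v_2]$ as essentially disjoint and ordered along $[o,go]$, whereas in fact $u_1=u_2=o$ (the entry point of $[o,go]$ into $N_C(A_i)$ is $o$ itself), which is exactly what collapses the problem.

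Consequently the contradiction you aim for in Step 3 does not materialize. The path $\beta$ you build is an admissible quasi-geodesic from $o$ to $go$, and the geodesic $[o,go]$ fellow-travels it --- but all of this is mutually consistent: $\ell(\beta)\le c\,d(o,go)+c$ together with the near-additivity of the distances along $\beta$ yields no numerical conflict, no matter how large $\tau$ is. You sense this yourself in the closing caveat; the ``technical heart'' you defer is not a computation to be finished but an obstruction. The paper's proof is instead a three-line consequence of $o\in A_1\cap A_2$: a projection of $[o,go]$ to $A_i$ of diameter $>\tau$ forces the exit point $w_i$ of $[o,go]$ from $N_C(A_i)$ to satisfy $d(o,w_i)\ge \tau-2C$ (comparing $w_i$ with the projection $z_i$ of $go$), and quasi-convexity puts the whole initial segment $[o,w_i]$ inside $N_{3C}(A_i)$; taking the shorter of the two initial segments then exhibits a subset of diameter $>\mathcal R(3C)$ inside $N_{3C}(A_1)\cap N_{3C}(A_2)$, contradicting $\mathcal R$-bounded intersection once $\tau=\mathcal R(3C)+2C+1$. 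Your Step 1 estimates are compatible with this, but the argument needs to be rerouted through the common basepoint rather than through an admissible-path construction.
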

\begin{proof} 
By the contracting property, any geodesic segment with two endpoints within $C$-distance   from $A_i$ lies in the $3C$-neighborhood of $A_i$ for $i=1, 2$. We set 
\begin{equation}\label{tauValue}
\tau=\mathcal R(3C) +2C+1.
\end{equation}

Suppose, by contradiction, that there exist  some $g\in G$   such that $$\max\{\proj_{A_1}(\alpha), \proj_{A_2}(\alpha)\} > \tau$$ where $\alpha:=[o,
go]$. Let  $z_1\in A_1$ and $z_2\in A_2$ be the corresponding projection points of $go$  such that $\min\{d(o, z_1), d(o, z_2)\} > \tau$. Consider the exit points  $w_1, w_2$ of $\alpha$ in $N_C(A_1)$ and $N_C(A_2)$ respectively.   By the contracting property, we obtain that $\max\{d(z_1, w_1), d(z_2, w_2)\} \le 2C$. By the choice of $\tau$ (\ref{tauValue}), we have $$\min\{d(o, w_1), d(o, w_2)\} \ge \mathcal R(3C)+1.$$

On the other hand, assume that $d(o, w_1) \le d(o, w_1)$ for concreteness. Since $o, w_2 \in N_C(A_2)$,  by the quasi-convexity of $A_2$ stated above, we see that $[o, w_2]_\gamma$ lies in the $3C$-neighborhood of $A_2$. 
This implies  $$o, w_1 \in N_{3C}(A_1)\cap N_{3C}(A_2).$$  Since $d(o, w_1) > \mathcal R(3C)$, we get a contradiction since the pair  $A_1, A_2$ has $\mathcal R$-bounded intersection. Thus,  (\ref{bddprojIJ}) is proved. 
\end{proof}

We are in a position to give a proof of Lemma \ref{extendinfty}.
\begin{proof}[Proof of Lemma \ref{extendinfty}]
The proof consists in proving the statement (1), from which the statements (2) and (3) follow by Proposition \ref{admissible} in a straightforward way.

Let $\tau$ given by Lemma \ref{SeparationClaim}. First of all, for any $g, h\in G$, there exists at least one $A \in \{A_k: 1\le k\le 3\}$ such that
\begin{equation}\label{bddprojIJ2}
\max\{\proj_{A}([o,
go]), \proj_{A}([o, ho])\} \le \tau.
\end{equation}
Indeed, by Lemma \ref{SeparationClaim}, each $g$ and $h$ has a bounded projection by $\tau$ to at least two sets from $\{A_k: 1\le k\le 3\}$. Thus, there is at least one $A$ in common such that  (\ref{bddprojIJ2}) holds. 

We now make the choice of the constants. For a constant $D=D(\tau)$ given by Proposition \ref{admissible}, choose one element $f_k$ from each $E(h_k)$ such that $d(o, f_k o) >D$. Denote $F=\{f_1, f_2, f_3\}$. 
\\
\paragraph{\textbf{Construction of admissible paths.}} Let $\{g_i \in G\}$ be a (finite, or infinite, or bi-infinite) sequence of elements. The goal is to choose $A_i\in \{A_k: 1\le k\le 3\}$  for each pair  $(g_i, g_{i+1})$ such that
\begin{equation}\label{bddprojIJ3}
\max\{\proj_{A_i}([o,
g_i^{-1}o]), \proj_{A_i}([o, g_{i+1}o])\} \le  \tau.
\end{equation}
and 
\begin{equation}\label{distinctIJEQ}
A_i \ne g_{i+1}A_{i+1}
\end{equation} for verifying the condition  ($\mathbf{LL2}$).

  We start with a fixed pair $(g_i, g_{i+1})$, for which $A_i$ is chosen as above satisfying (\ref{bddprojIJ2}) so (\ref{bddprojIJ3}) is immediate. For subsequent pairs, we need to be careful when applying (\ref{bddprojIJ2}) to obtain $A_i \ne g_{i+1}A_{i+1}$ for the following reason. It is possible that $g_{i+1}$ belong to $E(h_i)$. If it happens, the choice of $A_{i+1}$ by application of (\ref{bddprojIJ2}) may not satisfy $A_i \ne g_{i+1}A_{i+1}$. A simple example to bear in mind is that $G$ splits a direct product of two groups one of which is finite.  The next paragraph thus treats this possibility.
 
If $g_{i+1}$ does not belong to $E(h_i)$, then by (\ref{bddprojIJ2})  there exists $A_{i+1}\in \{A_k: 1\le k\le 3\}$ such that (\ref{bddprojIJ3}) holds for the index ``$i+1$'' and consequently, $A_i \ne g_{i+1} A_{i+1}$. Otherwise, assume that $g_{i+1}$ lies in $E(h_i)$, then by (\ref{bddprojIJ3}), we have $d(o, g_{i+1}o)\le \tau$ so $\proj_{A_{i}}([o, g_{i+1}^{-1}o])\le \tau.$  We choose $A_{i+1}\in \{A_k: 1\le k \le 3\}\setminus A_i$ which exists by the Claim above with the following property 
$$
\proj_{A_{i+1}}([o, g_{i+2}o])\le \tau
$$
which shows  (\ref{bddprojIJ3}) for the case ``$i+1$'' and $A_i \ne g_{i+1}A_{i+1}.$ Hence, the property (\ref{distinctIJEQ}) is fullfilled.

      In this manner, we construct an admissible path $\gamma$  labeled by $(\cdots, g_i, f_i, g_{i+1}, f_{i+1}, \cdots)$, where $f_i \in F$ is the chosen element as above from   $E(h_i)$. Since $A_i \ne g_{i+1}A_{i+1},$ the condition  ($\mathbf{LL2}$) follows from bounded intersection of $\mathbb F$. The condition ($\mathbf{BP}$) follows from (\ref{bddprojIJ3}),  and ($\mathbf{LL1}$) by the choice of $f_i$ with $d(o, f_io)>D$.  Hence, the path $\gamma$ is $(D, \tau)$-admissible with respect to the contracting system $\mathbb F$.

Let $\epsilon_0=\epsilon(\tau), c=c(\tau)>0$ be given by Proposition \ref{admissible}, from which all assertions of this lemma follows as a consequence.

Now, it remains to prove the statement (4). We are looking at their associated admissible paths $\gamma$ and $\gamma'$ with the same endpoints by the hypothesis. If $f_1\ne f_1' \in F$, then by definition of $F$, they belong to different subgroups in $\{E(h_k): 1\le k\le 3\}$. Denote by $X_1, X_1'$ their axis sets so $f_1o\in X_1, f_1'o\in X_1'$. Connect the two endpoints of $\gamma$ (or $\gamma'$) by a geodesic $\alpha$. By the statement (3), we have $d(go, \alpha),  d(gf_1o, \alpha), d(gf_1'o, \alpha)\le \epsilon_0$ for $g:=g_1=g_1'$ by assumption.  We thus obtain a constant $\sigma$ depending on $\epsilon_0$ from the quasi-convexity of $X_1$ and $X_1'$ that a subsegment of $\alpha$ of length at least $\min\{d(o, f_1o), d(o, f_1'o)\}$ is contained in a $\sigma$-neighborhood of both $gX_1$ and $gX_1'$. Hence, it suffices to make $D>\mathcal R(\sigma)$ larger so the bounded intersection of $X_1, X_1'$ would imply a contradiction. The constant $D$ is still uniform, and so the choice of $F$ can be made for the statement (4) as well. Hence all statements are proved. 
\end{proof}

\begin{conv}\label{ConvExtensionLemma}
Choose three pairwise independent contracting elements $f_i$ $(1\le i \le 3)$ in $G$ so that 
$\mathbb F=\{g\ax(f_i):   g\in G \}$ is a contracting system with bounded intersection. Let $F$ be a finite set and  $\epsilon_0, \tau, D>0$ constants supplied by  lemma  \ref{extend3}. 
\end{conv}

\paragraph{\textbf{The extension map.}}  In order to facilitate the use of extension lemmas, it is useful to construct a kind of maps as described as follows.

Given an alphabet set $A$,  denote by $\mathbb W(A)$ the set of all (finite) words over $A$.  Consider an evaluation map $\iota: A \to G$. We are going to define an \textit{extension map} $\Phi: \mathbb W(A) \to G$ as follows: given a word $W=a_1a_2\cdots a_n \in \mathbb W(A)$, set $$\Phi(W)=\iota(a_1) \cdot f_1\cdot\iota(a_2)\cdot f_2\cdot \cdots \cdot \iota(a_{n-1})\cdot f_{n-1} \cdot\iota(a_n) \in G,$$ where $f_i\in F$ is supplied by the extension lemma \ref{extend3} for each pair $(a_i, a_{i+1})$. The product form as above of $\Phi(W)$ labels a $(D, \tau)$-admissible path as follows
$$
\gamma=\path{\iota(a_1)} \cdot\path{f_1}\cdot\path{\iota(a_2)}\cdot \path{f_2}\cdot \cdots \cdot \path{\iota(a_{n-1})}\cdot \path{f_{n-1}} \cdot\path{\iota(a_n)}.
$$

\begin{lem}\label{extensionmap}
For any $\Delta>0$, there exists $R>0$ with the following property.  

Let $W=a_1a_2\cdots a_n$ and $W'=a'_1a'_2\cdots a'_n$ be two words in $\mathbb W(A)$ such  that $\Phi(W)=\Phi(W')$.  If $|d(o, \iota(a_1)o)-d(o, \iota(a'_1)o)|\le \Delta$, then $d(\iota(a_1)o, \iota(a'_1)o)\le R$. 

In particular, if $\iota: A\to G$ is injective such that $\iota(A)o$ is $R$-separated in $A(o, L, \Delta)$ for some $L>0$, then the extension map $\Phi$ defined as above is injective with the image consisting entirely of   contracting elements.
\end{lem}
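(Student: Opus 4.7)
Both assertions will be extracted from the fellow-travelling property of admissible paths (Proposition \ref{admissible}). Suppose $\Phi(W)=\Phi(W')$ for $W=a_1\cdots a_n$ and $W'=a'_1\cdots a'_m$. The definition of $\Phi$ yields two $(D,\tau)$-admissible paths $\gamma,\gamma'$ from $o$ to $\Phi(W)o$, whose first ``long'' pieces begin at $\iota(a_1)o$ and $\iota(a'_1)o$ respectively. Fix a geodesic $\alpha:=[o,\Phi(W)o]$. By Proposition \ref{admissible}, $\alpha$ $\epsilon_0$-fellow-travels both paths, so we obtain $z,z'\in\alpha$ with $d(z,\iota(a_1)o),\,d(z',\iota(a'_1)o)\le\epsilon_0$. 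Because $\alpha$ emanates as a geodesic from $o$, the triangle inequality yields
$$
d(z,z')=\bigl|d(o,z)-d(o,z')\bigr|\le 2\epsilon_0+\bigl|d(o,\iota(a_1)o)-d(o,\iota(a'_1)o)\bigr|\le 2\epsilon_0+\Delta,
$$
so $d(\iota(a_1)o,\iota(a'_1)o)\le R:=4\epsilon_0+\Delta$, which is the main estimate.

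For the injectivity assertion, assume now that $\iota$ is injective and that $\iota(A)o$ is $R$-separated inside $A(o,L,\Delta)$, where $R$ denotes the constant produced by the main estimate with parameter $2\Delta$. Given $\Phi(W)=\Phi(W')$, both $d(o,\iota(a_1)o)$ and $d(o,\iota(a'_1)o)$ lie within $\Delta$ of $L$, so the main estimate forces $d(\iota(a_1)o,\iota(a'_1)o)\le R$. The separation hypothesis then gives $\iota(a_1)o=\iota(a'_1)o$, whence $a_1=a'_1$ by injectivity of $\iota$. Part (4) of Lemma \ref{extendinfty} now forces $f_1=f'_1$; cancelling the common initial factor $\iota(a_1)f_1$ reduces the identity to a pair of shorter words, and an induction on length produces $W=W'$, proving injectivity of $\Phi$.

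For the contracting property of $g:=\Phi(W)$, I would apply the infinite version of the extension lemma (Lemma \ref{extendinfty}) to the bi-infinite sequence obtained by periodically juxtaposing the list $(\iota(a_1),\ldots,\iota(a_n))$. Provided $L$ is chosen large relative to the threshold $D$ produced by Proposition \ref{admissible} (the very reason for formulating the separation hypothesis in terms of a thick annulus $A(o,L,\Delta)$), the connecting $q$-pieces at the junctions between successive copies of $\gamma$ satisfy $(\mathbf{LL1})$ and $(\mathbf{LL2})$, while $(\mathbf{BP})$ is inherited from $\gamma$ via the bounded projection coming from the $f_i$'s. The resulting bi-infinite admissible path is a quasi-geodesic by Proposition \ref{admissible}, supplying the quasi-isometric orbital map $k\mapsto g^k o$, and Proposition \ref{saturation} shows that its saturation, and hence the orbit $\langle g\rangle o$, is contracting. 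The delicate point, and the main obstacle, is verifying $(\mathbf{LL2})$ at precisely those junctions where no $F$-factor separates the terminal $\iota(a_n)$ of one copy from the initial $\iota(a_1)$ of the next; this is handled by arranging $L-\Delta>D$ so that the unavoidable connecting segments between successive long pieces are long enough, at which point admissibility propagates around the entire bi-infinite path.
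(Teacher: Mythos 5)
Your first two steps are correct and coincide with the paper's argument: the distance estimate is exactly the ``standard argument'' via $\epsilon_0$-fellow travelling of the two admissible paths along a common geodesic $\alpha=[o,\Phi(W)o]$ (the paper gets $R=2(2\epsilon_0+\Delta)$ the same way), and the injectivity is deduced, as in the paper, from the separation hypothesis together with part (4) of Lemma \ref{extendinfty} and induction.

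The contracting claim is where your proposal does not close. You have correctly located the delicate point --- the seam between consecutive periods, where $\iota(a_n)$ of one copy abuts $\iota(a_1)$ of the next with no $F$-letter in between --- but the two resolutions you offer do not work. First, applying Lemma \ref{extendinfty} verbatim to the periodic bi-infinite sequence inserts an element of $F$ between \emph{every} consecutive pair, including at the seams; the resulting quasi-geodesic then tracks the powers of $\iota(a_1)f_1\cdots\iota(a_n)f_n$ for some inserted $f_n$, which is a different element from $\Phi(W)=\iota(a_1)f_1\cdots f_{n-1}\iota(a_n)$, so it certifies contraction of the wrong element. Second, if instead you keep the seam bare, the condition that genuinely fails to be automatic there is $(\mathbf{BP})$, not $(\mathbf{LL2})$: for the contracting set $X_{n-1}$ associated to the last $f_{n-1}$ of a period, $(\mathbf{BP})$ demands $\proj_{X_{n-1}}\big((p_{n-1})_+,(p_1^{\mathrm{next}})_-\big)\le\tau$, which amounts to a bound on $\proj_{\ax(h_{n-1})}\big(\{o,\iota(a_n)\iota(a_1)o\}\big)$; the extension lemma's choice of $f_{n-1}$ only controls $\proj_{\ax(h_{n-1})}([o,\iota(a_n)o])$, and the extra factor $\iota(a_1)$ can blow up the projection. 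Arranging $L-\Delta>D$ lengthens the connecting segment and so helps with $(\mathbf{LL1})/(\mathbf{LL2})$, but a projection bound is not restored by making the gap longer --- if anything the opposite. You should either modify $\Phi$ to carry a trailing $F$-letter chosen for the pair $(a_n,a_1)$ (as in Lemma \ref{largesemifree}, where the periodic path genuinely is admissible because of equation (\ref{BothBPEQ})), or supply a separate argument for $(\mathbf{BP})$ at the seams. (For fairness: the paper's own proof is equally terse here, asserting without verification that the powers of $\Phi(W)$ label a $(D,\tau,L,\Delta)$-admissible path; but your proposal, having explicitly raised the difficulty, must actually answer it, and the answer given addresses the wrong condition.)
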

\begin{proof}
Consider the $(D, \tau)$-admissible path labeled by $\Phi(W')$:
$$
\gamma'=\path{\iota(a'_1)} \cdot\path{f'_1}\cdot\path{\iota(a'_2)}\cdot \path{f'_2}\cdot \cdots \cdot \path{\iota(a'_{m-1})}\cdot \path{f'_{m-1}} \cdot\path{\iota(a'_m)}.
$$
By Proposition \ref{admissible}, a common geodesic $\alpha=[o, \Phi(W)o]$ $\epsilon_0$-fellow travels both $\gamma$ and $\gamma'$ so we have $$d(\iota(a_1)o, \alpha),\; d(\iota(a'_1)o, \alpha) \le \epsilon_0.$$ By the hypothesis, it follows that $|d(o, \iota(a_1)o)-d(o, \iota(a'_1)o)|\le \Delta$. A standard argument shows that $d(go, g'o) \le 2(2\epsilon_0+\Delta)$.  Setting $R:=2(2\epsilon_0+\Delta)$ thus completes the proof of the first assertion. 

By (4) of Lemma \ref{extendinfty}, the injectivity part of the last statement follows as a consequence of the first one. Observe that the set of powers $\{\Phi(W)^n: n\in \mathbb Z\}$ labels a $(D, \tau, L, \Delta)$-admissible path and thus is contracting by Proposition \ref{saturation}. By definition,   the element $\Phi(W)$ is contracting, thereby concluding the proof of the lemma.
\end{proof}

The construction of an extension map is by no means unique. Here is another way to construct it. This will be a key ingredient to prove the existence of large free sub-semigroups in \textsection \ref{Section3}. 
\begin{lem}\label{largesemifree}
There exist  a finite set $F$ in $G$ and for any $\Delta>0$, there exists $R=R(\Delta)>0$ with the following property.  

Let $Z$ be an $R$-separated subset in $A(o, L, \Delta)$ for any given $L>0$. Then there exist an   element $f\in F$ and a subset $A\subset Z$ of cardinality greater than $2^{-4}  \sharp Z$ such that the extension map $\Phi: \mathbb W(A)\to G$ given by $$a_1a_2\cdots a_n \to a_1f^{\epsilon_1}a_2f^{\epsilon_2}\cdots a_nf^{\epsilon_n}$$ for any   $\epsilon_i\in \{1,2\}$  is injective with the image consisting entirely of  contracting elements.
\end{lem}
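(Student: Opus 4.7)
The aim is to refine the extension Lemma~\ref{extend3} so that a single element $f\in F$, together with its square $f^2$, replaces the per-pair selection of $f_i\in F$. First I pick three pairwise independent contracting elements $h_1, h_2, h_3$ with axes $A_k = \ax(h_k)$ via Lemma~\ref{fixedpoints}, so that $\mathbb F = \{gA_k : g\in G,\ 1\le k\le 3\}$ is a contracting system with bounded intersection. Let $\tau$ be supplied by (the proof of) Lemma~\ref{SeparationClaim} applied to each pair of axes, and let $D, B, \epsilon_0$ be the constants from Proposition~\ref{admissible}. I then fix $F = \{f_1, f_2, f_3\}$ with $f_k\in E(h_k)$ a power large enough that $d(o, f_ko)$ and $d(o, f_k^2 o)$ both exceed $D_0 := \max\{D,\ 2B + C + 1\}$, where $C$ is the contraction constant of $\mathbb F$. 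Crucially, $D_0$ depends only on $\tau, B, C$ and not on $\Delta$, so $F$ is fixed once and for all.

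\paragraph{Choice of $A$.} Given $\Delta > 0$, set $R = R(\Delta)$ large enough that $R > 2(\tau+2\Delta)$ and $R > 4\epsilon_0 + 2\Delta$. The first bound forces $\sharp Z \le 1$ whenever $L \le \tau+\Delta$, since any $a, b\in Z\subset A(o,L,\Delta)$ would satisfy $d(ao, bo)\le 2(L+\Delta)$; in that degenerate case one takes $A:=Z$ and any $f\in F$ whose axis keeps the lone element outside $E(f)$. Otherwise $L > \tau+\Delta$, so $d(o, ao) > \tau$ for every $a\in Z$. Applying Lemma~\ref{SeparationClaim} to each pair of axes, for each $z\in Z$ both sets
\[
S(z)=\{k:\proj_{A_k}(o, zo)\le\tau\},\qquad S^-(z)=\{k:\proj_{A_k}(o, z^{-1}o)\le\tau\}
\]
have cardinality $\ge 2$ inside $\{1,2,3\}$. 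Since $\{1,2,3\}$ admits exactly $4$ subsets of size $\ge 2$, the pair $(S(z), S^-(z))$ takes at most $4\cdot 4=16$ possible values, and pigeonhole yields $A\subset Z$ with $\sharp A > \sharp Z/16$ sharing a common pair $(S_0, S_0^-)$. Pick any $k\in S_0\cap S_0^-\ne\emptyset$ and set $f:=f_k$.

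\paragraph{Admissibility and contracting images.} Next I verify that the path $\gamma$ labelled by $a_1 f^{\epsilon_1}\cdots a_n f^{\epsilon_n}$ is $(D,\tau)$-admissible with respect to $\mathbb F$, with the long subpaths $p_i = \path{f^{\epsilon_i}}$ contained in $X_i = g_i A_k$, where $g_i = a_1 f^{\epsilon_1}\cdots a_i$. Condition $(\mathbf{LL1})$ is immediate from the lower bound on $d(o, f^{\epsilon_i} o)$. Using $G$-equivariance together with the fact that $f^{\epsilon_i}\in E(f_k)$ stabilises $A_k$, the two $(\mathbf{BP})$ projection inequalities reduce exactly to $\proj_{A_k}(o, a_{i+1}o)\le\tau$ and $\proj_{A_k}(o, a_i^{-1}o)\le\tau$, both granted by the construction of $A$. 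For $(\mathbf{LL2})$ the distinctness $X_i\ne X_{i+1}$ is equivalent to $a_{i+1}\notin E(f_k)$; if instead $a_{i+1}\in E(f_k)$, then $\proj_{A_k}(o, a_{i+1}o) = d(o, a_{i+1}o) > \tau$, contradicting $(\mathbf{BP})$. The contracting property of each image $\Phi(W)$ then follows by applying Proposition~\ref{saturation}(2) to the bi-infinite $(D,\tau)$-admissible path labelled by $\cdots\Phi(W)\cdot\Phi(W)\cdots$, whose entry and exit vertices on each contracting translate lie at distance at most $d(o, f_k^2 o)$.

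\paragraph{Injectivity and the main obstacle.} For injectivity, suppose $\Phi(W) = \Phi(W')$ for two words with exponent choices; the two admissible paths $\gamma, \gamma'$ share endpoints and each $\epsilon_0$-fellow-travel the common geodesic $\alpha:=[o, \Phi(W)o]$ by Proposition~\ref{admissible}. Both $v_1=a_1 o$ and $v'_1=a'_1 o$ lie within $\epsilon_0$ of $\alpha$ at arc-length positions inside $[L-\Delta, L+\Delta]$, so $d(v_1, v'_1)\le 2\epsilon_0+2\Delta < R$, which forces $a_1=a'_1$ by $R$-separation of $Z$. With $a_1$ fixed, the vertices $v_2, v'_2 \in a_1 A_k$ are each within $B$ of a point of $\pi_{a_1 A_k}(\Phi(W)o)$ by Proposition~\ref{admissible}(1), and that projection set has diameter $\le C$ by Proposition~\ref{Contractions}, so $d(v_2, v'_2)\le 2B+C$. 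But $d(v_2, v'_2)= d(o, f_k^{\epsilon'_1-\epsilon_1}o)\ge d(o, f_k o) > 2B+C$ whenever $\epsilon_1\neq\epsilon'_1$; hence $\epsilon_1=\epsilon'_1$, and induction on word length finishes. The main obstacle is ensuring that $F$ can be chosen \emph{independently of} $\Delta$ while retaining enough separation between $d(o, f_k o)$ and $d(o, f_k^2 o)$ to distinguish the two exponents in the injectivity step; the saving grace is that the required separation depends only on $B$ and $C$, hence on the fixed contracting system $\mathbb F$, so replacing each $h_k$ by a sufficiently high power once and for all suffices.
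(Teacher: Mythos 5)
Your proposal is correct and follows essentially the same route as the paper: apply Lemma \ref{SeparationClaim} to $z$ and $z^{-1}$ and pigeonhole to extract $A\subset Z$ with $\sharp A>2^{-4}\sharp Z$ sharing a common axis $A_k$ with $\tau$-bounded projections in both directions, verify $(D,\tau)$-admissibility (with (LL2) coming from $a\notin E(h_k)$), and conclude via Propositions \ref{admissible} and \ref{saturation}; your explicit fellow-travelling argument distinguishing the exponents $f$ vs.\ $f^2$ fills in the injectivity step the paper delegates to Lemma \ref{extensionmap}. The only blemishes are cosmetic: the bound in the first injectivity step should read $d(v_1,v_1')\le 4\epsilon_0+2\Delta$ (harmless, since you set $R>4\epsilon_0+2\Delta$), and the degenerate case $\sharp Z\le 1$ is dispatched a bit hastily.
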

\begin{proof}
Let $\tau>0$ given by Lemma \ref{SeparationClaim}, and   $D=D(\tau)$ given by Proposition \ref{admissible}.    The idea of the proof is to find an appropriate element $f$ such that  for each $\epsilon_i\in \{1,2\}$, the element $a_1f^{\epsilon_1}a_2f^{\epsilon_2}\cdots a_nf^{\epsilon_n}$ labels a $(D, \tau)$-admissible path. The injectivity statement then follows the same line as in Lemma \ref{extensionmap}.

The first observation is follows:  there exist  a pair $(A_i, A_j)$ and a subset $Z'\subset Z$ such that $4\sharp Z'\ge \sharp Z$, and for each $g\in Z'$ we have 
$$
\max\{\proj_{A_i}([o, go]), \proj_{A_j}([o, go])\}\le \tau.
$$
Indeed, this is acheived by applying Lemma \ref{SeparationClaim} twice: for  a fixed pair, say $(A_1, A_2)$, we  first apply it for every $g\in Z$, then there exists half of elements $Z'$ in $Z$ and  one, say $A_i$, of $(A_1, A_2)$ such that $\proj_{A_i}([o, go]) \le \tau$ for all $g\in Z'$. On the second time, applying   Lemma \ref{SeparationClaim}   to $\{A_1, A_2, A_3\}\setminus A_i$, we reduce $Z'$  in half and find another, say $A_j\ne A_i$ such that $\proj_{A_j}([o, go]) \le \tau$ for all $g\in Z'$. This thus completes the proof of the observation.

Repeating the above argument to the pair $(A_i, A_j)$ and all elements $g\in Z'^{-1}$. We eventually find  $A \subset Z'$ and $A_k\in \{A_i, A_j\}$ such that $4\sharp A\ge \sharp Z'$ and 
\begin{equation}\label{BothBPEQ}
\forall g\in A, \; \max\{\proj_{A_k}([o, go]), \proj_{A_k}([o, g^{-1}o])\}\le \tau.
\end{equation}

The Condition (\textbf{BP}) is verified by (\ref{BothBPEQ}). It suffices to show that $g A_k \ne A_k$ for any $g\in A$. Indeed, if $gA_k=A_k$, then (\ref{BothBPEQ}) implies   $d(o, go)\le \tau$. Since $A$ is $R$-separated, we choose $R>\tau$ so that $g A_k \ne A_k$ for any $g\in A$. Choose an element $f_k$ from the corresponding subgroup of $A_k$ such that $d(o, fo)>D$ and $d(o, f^2o)>D$.   Consequently, we have shown that $a_1f^{\epsilon_1}a_2f^{\epsilon_2}\cdots a_nf^{\epsilon_n}$ labels a $(D, \tau)$-admissible path. By Proposition \ref{admissible}, it is thus a quasi-geodesic. Moreover, noting  that $a_i\in A(o, L, \Delta)$, we see that it is $(D, \tau, L, \Delta)$-admissible path by Definition \ref{uniformadmissible}. Hence, the path is  contracting by Proposition \ref{saturation}.
 This shows that $a_1f^{\epsilon_1}a_2f^{\epsilon_2}\cdots a_nf^{\epsilon_n}$ is a contracting element, thereby completing the proof of the lemma.
\end{proof}

\paragraph{\textbf{Positive density of contracting elements}} By the above proof, we can prove the following lemma, which owns its existence to a recent result of M. Cumplido and B. Wiest in \cite[Theorem 2]{CWiest}. Their result was proved for mapping class groups, but our result works for any sufficiently large subgroup as well as any other proper action with a contracting element.
\begin{lem}\label{contractingelemisclose}
Let $G$ be a group acting  properly on a geodesic metric space $(\mathrm Y, d)$ with  a contracting element. Then there exists a finite set of elements $F$ with the following property.  For any $g\in G$ there exists $f\in F$ such that $gf$ is a contracting element.
\end{lem}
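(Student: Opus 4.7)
The plan is to realize $gf$ as the shift of a bi-infinite periodic admissible quasi-geodesic $\cdots g\cdot f\cdot g\cdot f\cdots$; its saturation will be contracting by Proposition \ref{saturation}(2), making $gf$ a contracting element in the sense of Definition \ref{ContrDefn}. I construct $F$ in two parts: a three-element generic part $F_0$ and a finite \emph{ad hoc} part indexed by the finite ball $B:=\{g\in G:d(o,go)\le D\}$, where $D$ is chosen below.

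Since $G$ is non-elementary, Lemma \ref{fixedpoints} yields three pairwise independent contracting elements $h_1,h_2,h_3$ with axes $A_k=\ax(h_k)$, so that $\mathbb F=\{tA_k:t\in G,\,1\le k\le 3\}$ is a contracting system with bounded intersection (Lemma \ref{elementarygroup}(1)). Let $\tau$ be the constant from Lemma \ref{SeparationClaim} (valid uniformly over the three pairs $(A_i,A_j)$), let $D=D(\tau)$ be the constant furnished by Propositions \ref{admissible} and \ref{saturation}, and pick $f_k\in E(h_k)$ with $d(o,f_ko)>D$. Set $F_0=\{f_1,f_2,f_3\}$.

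\emph{Generic case.} For $g\in G$ with $d(o,go)>D$, applying Lemma \ref{SeparationClaim} to each of the three pairs in $\{A_1,A_2,A_3\}$ shows that at most one $A_k$ violates $\proj_{A_k}([o,go])\le\tau$ and at most one violates $\proj_{A_k}([o,g^{-1}o])\le\tau$; I choose $k$ outside both exceptional sets. Consider the bi-infinite labeled path $\gamma=\cdots gf_kgf_kgf_k\cdots$, whose $f_k$-segments lie in translates $X_i=(gf_k)^igA_k\in\mathbb F$. I verify the admissibility conditions of Definition \ref{AdmDef}: \textbf{(LL1)} each $f_k$-segment has length $d(o,f_ko)>D$; \textbf{(BP)} by equivariance and $f_k\cdot A_k=A_k$, each $g$-segment projects to its neighboring $X_i$ with diameter $\proj_{A_k}([o,g^{\pm 1}o])\le\tau$; \textbf{(LL2)} consecutive $f_k$-segments are separated by the $g$-segment of length $d(o,go)>D$, invoking the distance clause. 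Hence $\gamma$ is a $(D,\tau,d(o,go),0)$-admissible path whose entry–exit distance in every $X\in\mathbb F(\gamma)$ equals the constant $d(o,f_ko)$. Proposition \ref{saturation}(2) gives that $\gamma$ is contracting, and Proposition \ref{admissible}(3) that $\gamma$ is a quasi-geodesic. The orbit $\{(gf_k)^no:n\in\mathbb Z\}$ is therefore contained in a contracting quasi-geodesic, so $gf_k$ is a contracting element.

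\emph{Small case and conclusion.} The ball $B$ is finite by properness of the action. For each $g\in B$, set $f_g:=g^{-1}h_1$, so that $g\cdot f_g=h_1$ is contracting. Then $F:=F_0\cup\{f_g:g\in B\}$ is finite and satisfies the lemma. The principal subtlety lies in LL2 for the single-axis periodic path $\gamma$: requiring distinct axes $X_i\ne X_{i+1}$ would force $g\notin E(h_k)$, a condition that can fail, but the distance clause of LL2 (available precisely when $d(o,go)>D$) rescues the generic case, while the finite ball $B$ is handled trivially via $f_g=g^{-1}h_1$.
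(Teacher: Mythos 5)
Your argument is correct and its engine is the same as the paper's — namely, realizing $gf$ as the period of a bi-infinite admissible quasi-geodesic $\cdots g\,f_k\,g\,f_k\cdots$ built from Lemma~\ref{SeparationClaim} and verified via Propositions~\ref{admissible} and~\ref{saturation} — but your case decomposition is genuinely different. The paper finds, for every $g\in G$, an axis $A_k$ satisfying (\ref{BothBPEQ}); when $g\notin E(h_k)$ the periodic construction applies, and when $g\in E(h_k)$ it falls back on the structure of the virtually cyclic group $E(h_k)$ (Lemma~\ref{elementarygroup}: $\langle h_k\rangle$ has finite index), observing that a finite set of corrections then works throughout $E(h_k)$. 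You instead split on length: for $d(o,go)\le D$ the ball is finite by properness and you handle it trivially via $f_g=g^{-1}h_1$, while for $d(o,go)>D$ you run the periodic construction. Your route avoids having to make precise the paper's terse claim that ``the conclusion is already satisfied for $E(h_k)$,'' at the harmless cost of an \emph{ad hoc} indexing over a finite ball; the paper's route is uniform over $E(h_k)$ but requires unwinding the coset structure there.

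One remark: your appeal to the distance clause of (\textbf{LL2}) is actually redundant in the generic regime, and the ``principal subtlety'' you flag does not arise. Once $d(o,go)>D\ge\tau$ and $A_k$ is chosen so that (\ref{BothBPEQ}) holds, $g$ cannot lie in $E(h_k)$: otherwise $o,go\in A_k$, so $\proj_{A_k}([o,go])\ge d(o,go)>\tau$, contradicting the projection bound. Hence the translates $X_i=(gf_k)^igA_k$ are automatically pairwise distinct, and the bounded-intersection clause of (\textbf{LL2}) already applies; the distance clause is a safe but unnecessary safeguard for the $k$ you selected.
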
 
\begin{proof}
Following the same line as in Lemma \ref{largesemifree}, we prove that for any $g \in G$, there exists $A_k\in \{A_1, A_2, A_3\}$ such that (\ref{BothBPEQ}) holds. If   $g\notin E(h_k)$, then the condition (\textbf{BP})  is satisfied: $gA_k\ne A_k$.  Thus, $af$ is a contracting element for some $f\in E(h_k)$.  Now we consider the case $g\in E(h_k)$. By Lemma \ref{elementarygroup},    the contracting subgroup $\langle h_k\rangle$ is of finite index in $E(h_k)$, so the conclusion is already satisfied for $E(h_k)$. Therefore, the proof is complete.
\end{proof}

With Lemma \ref{contractingelemisclose}, the following result follows by an argument as in \cite[Corollary]{CWiest}. Simutaneously, this is also obtained independently by M. Cumplido \cite{Cumplido} for  Artin-Tits groups on certain hyperbolic spaces. 
 
\begin{prop}[Positive density of contracting elements]\label{Positivedensity}
Under the hypothesis as Lemma \ref{contractingelemisclose}. If $G$ is generated by a finite set, then for $R\gg 0$,
$$
\frac{\sharp \{g \in N(1, R): \text{g is contracting}\}}{\sharp N(1, R)}>0, 
$$ where the ball $N(1, R)$ is defined using the corresponding word metric.
\end{prop}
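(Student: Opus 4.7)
The plan is to reduce directly to Lemma \ref{contractingelemisclose}, which gives a finite set $F \subset G$ such that every $g \in G$ admits some $f \in F$ with $gf$ contracting. Fix a finite generating set $S$ of $G$, let $|\cdot|_S$ be the corresponding word length (so that $N(1,R) = \{g : |g|_S \le R\}$), and set $D := \max_{f\in F} |f|_S$, a constant independent of $R$.

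First I would define a correction map $\Psi : N(1, R-D) \to N(1, R)$ by choosing, for each $g$, some $f_g \in F$ with $gf_g$ contracting and setting $\Psi(g) := g f_g$. The triangle inequality $|gf_g|_S \le |g|_S + D \le R$ shows that the image lies inside the set of contracting elements in $N(1, R)$. Next I would bound the fibres of $\Psi$: if $\Psi(g) = h$, then $g = h f_g^{-1}$, so $g$ is determined by $h$ together with the choice of $f_g \in F$; hence each fibre has size at most $|F|$. Consequently
$$
\sharp\{g \in N(1,R) : g \text{ is contracting}\} \;\ge\; \frac{\sharp N(1, R-D)}{|F|}.
$$

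Finally I would use a standard ``bounded doubling'' estimate in the word metric: any $g \in N(1, R)$ can be written as $g = uv$ with $u \in N(1, R-D)$ and $v \in N(1, D)$, by taking $u$ to be a vertex on an $S$-geodesic from $1$ to $g$ at word-distance $\max(|g|_S - D, 0)$, so that $|v|_S = |u^{-1} g|_S \le D$. This yields $\sharp N(1, R) \le \sharp N(1, D) \cdot \sharp N(1, R-D)$, and combining with the previous inequality gives the uniform lower bound
$$
\frac{\sharp\{g \in N(1, R) : g \text{ is contracting}\}}{\sharp N(1, R)} \;\ge\; \frac{1}{|F| \cdot \sharp N(1, D)} \;>\; 0,
$$
which is independent of $R$. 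There is no substantive obstacle: the whole argument is a double-counting once Lemma \ref{contractingelemisclose} is in place, and the only mild point is the bounded doubling of balls in a finitely generated group, which is essentially automatic in the word metric.
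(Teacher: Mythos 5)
Your proof is correct and is essentially the argument the paper has in mind: it invokes Lemma \ref{contractingelemisclose} and then defers to the counting argument of Cumplido--Wiest, which is exactly the double counting you carry out (bounded fibres of $g\mapsto gf_g$ plus the submultiplicative bound $\sharp N(1,R)\le \sharp N(1,D)\cdot\sharp N(1,R-D)$). No gaps; your write-up just makes explicit the details the paper leaves to the cited reference.
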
 
\begin{rem}
This strengthens a similar result in a previous version of this paper, stating that the growth rate of contracting element equals to $\e G$ computed using word metric. 

 We remark that a similar statement holds for loxodromic elements in a group $G$ acting on a hyperbolic space with WPD loxodromic elements. 
 Indeed, a loxodromic WPD element in a (not necessarily proper) group action gives rise to a contracting subgroup with bounded intersection (see Theorem 6.8 in \cite{DGO}). This is the only ingredient of the above two lemmas so their proofs show the positive density of loxodromic elements for any word metric. 
\end{rem}

\paragraph{\textbf{Finiteness of dead-end   depth}}
To conclude this subsection, we mention a straghtforward application of the  extension lemma to dead-ends.  In Cayley graphs, the dead-depth of a  vertex roughly describes  the ``detour'' cost to get around a dead-end (i.e.  an endpoint of a maximal unexendable geodesic).  In this context, the finiteness of dead-end depth was known in hyperbolic groups \cite{Bog}, and in relative case \cite{YANG6}.  

Let $(\mathrm Y, d)$ be a proper geodesic metric space with a basepoint $o$. The \textit{dead-end depth} of a point $x$ in $\mathrm Y$ is the infimum of a non-negative real number $r\ge 0$ such that there exists a geodesic ray $\gamma$ satisfying $\gamma_-=o$ and $\gamma \cap B(x, r) \ne \emptyset$.   If the dead-end depth is non-zero, then $g$ is called a \textit{dead end} (i.e. the geodesic $[o, x]$ could not be further extended). Such elements exist, for instance, in the Cayley
graph of $G \times \mathbb Z_2$ with respect to a particular generating set. See \cite{GriH} for a brief discussion and
references therein.

The first, straightforward  consequence of the extension lemma is the following. 

\begin{prop}\label{deadend}
Let $o$ be a fixed basepoint. If $G$ acts properly on $\mathrm Y$ with a contracting element, then the dead-end depth of all points in $Go$ is uniformly finite.
\end{prop}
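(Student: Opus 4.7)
The plan is to extend any orbital point $go$ into an infinite admissible quasi-geodesic starting at $o$ via the extension lemma, and then take a limit of initial geodesic segments to extract a genuine geodesic ray from $o$ that stays uniformly close to $go$.

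Fix a contracting element $h \in G$ and the finite set $F$ from Convention \ref{ConvExtensionLemma}. Given $g \in G$, apply Lemma \ref{extendinfty} to the infinite sequence $(g, h, h, h, \ldots)$ to obtain a choice of elements $f_1, f_2, \ldots \in F$ for which the path $\gamma$ labeled by $(g, f_1, h, f_2, h, f_3, \ldots)$ is a $(D, \tau)$-admissible $c$-quasi-geodesic starting at $o$. Its vertex set contains $x_0 = o$ and $x_1 = go$, and the vertices $x_n$ escape to infinity since $\gamma$ is a quasi-geodesic.

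For each $n \geq 1$, choose a geodesic $\alpha_n = [o, x_n]$ parametrized by arc length. By assertion (3) of Lemma \ref{extendinfty}, the vertex $go$ lies within distance $\epsilon_0$ of $\alpha_n$, so one finds $t_n \in [0, d(o, go) + \epsilon_0]$ with $p_n := \alpha_n(t_n)$ satisfying $d(p_n, go) \leq \epsilon_0$. Because $(\mathrm Y, d)$ is proper and $d(o, x_n) \to \infty$, the Arzel\`a--Ascoli theorem yields a subsequence along which $\alpha_n$ converges locally uniformly on $[0, \infty)$ to a geodesic ray $\alpha$ issued from $o$. Passing to a further subsequence we may assume $t_n \to t$, and hence $p_n = \alpha_n(t_n) \to \alpha(t)$ with $d(\alpha(t), go) \leq \epsilon_0$.

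Thus $\alpha$ is a geodesic ray from $o$ meeting $B(go, r)$ for every $r > \epsilon_0$, so the dead-end depth of $go$ is at most $\epsilon_0$. Since $\epsilon_0$ depends only on the contracting system $\mathbb F$ and is independent of $g$, this yields a uniform bound over all of $Go$. The only non-trivial ingredient is the infinite version of the extension lemma, which is already in hand; no serious obstacle is anticipated.
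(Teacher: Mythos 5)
Your argument is correct and is precisely the ``straightforward consequence of the extension lemma'' the paper has in mind (no explicit proof is given there). You apply the infinite extension lemma to $(g,h,h,\ldots)$ to produce an admissible quasi-geodesic through $o$ and $go$ whose vertices escape to infinity, use assertion (3) of Lemma \ref{extendinfty} to place $go$ within $\epsilon_0$ of each $[o,x_n]$, and extract a geodesic ray by Arzel\`a--Ascoli using properness of $\mathrm Y$; the resulting ray from $o$ meets $B(go,\epsilon_0)$, giving the uniform bound $\epsilon_0$ depending only on $\mathbb F$.
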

As a corollary,   the dead-end depth of an element in $\mathrm{Gr}'(1/6)$-labeled graphical small cancellation group as described   in \ref{Gr1/6} is uniformly finite.   This result appears to be not recorded in the literature.

It is worth to point out that the conclusion of the extension lemma  holds, as long as there exist at least two independent contracting elements. In particular, the lemma applies to the action of $\mathbb {Mod}$ on a \textit{curve graph} (\cite{MinM}\cite{MinM2}) associated to orientable surfaces with negative Euler number. Consequently, the dead-depth of vertices in curve graph is uniformly finite. This result was proved by  Schleimer \cite{Slmer}, and by  Birman and   Menasco \cite{BirmanMen} with a   more precise description. Here our arguments are completely general, without appeal to the   construction of curve graphs.

\subsection{A critical gap criterion}
A proper action of $G$ on a metric space $(\mathrm Y, d)$ naturally induces a proper left-invariant pseudo-metric $d_G$ on $G$ as follows: 
$$
\forall g, h\in G: \;d_G(g, h)=d(go, ho)$$
for a basepoint $o\in \mathrm Y$. Using the pseudo-metric $d_G$, we can define the ball set $N(o, n)$, the critical exponent $\e \Gamma$ of a subset $\Gamma$ equivalently. To emphasize the metric, we use the notation $\omega_{d_G}(\Gamma)$ only in this subsection. In other places, the metric should be clear in context.

To present the criterion,  one needs to introduce the  Poincar\'e series 
$$\PS{\Gamma, d_G} = \sum\limits_{g \in \Gamma} \exp(-s\cdot d_G(1, g)), \; s \ge 0,$$
which clearly  diverges for $s<\omega_{d_G}(\Gamma)$ and converges for $s>  \omega_{d_G}(\Gamma)$.

Dalbo et al. \cite{DPPS} presented a very useful criterion to differentiate the critical exponents of two Poincar\'e series, which is the key tool to establish growth-tightness. We formulate it with   purpose to exploit the extension map.


Let $A, B$ be two sets in $G$. Denote by $\mathbb W(A, B)$ the set of all words over the alphabet set $A\sqcup B$ with letters alternating in $A$ and $B$. We consider a left-invariant pseudo-metric $d_G$ on $G$ (which might not be coming from the pullback via a proper action). 
\begin{lem}\label{degrowth}
Assume that there exists an injective  map $\iota:  \mathbb W(A)\to  \mathbb W(A, B)$ such that the evaluation map $\Phi: \mathbb W(A, B) \to G$ is injective on the subset $\iota(\mathbb W(A))$ as well. Denote $X:=\Phi(\iota(\mathbb W(A)))\subset G$. If $B$ is finite, then $\PS{A, d_G}$ converges at $s=\omega_{d_G}(X)$. In particular,   we have the critical gap:
  $$\omega_{d_G}(X)
> \omega_{d_G}(A)$$ provided that $\PS{A, d_G}$ diverges at $s=\omega_{d_G}(A)$ 
\end{lem}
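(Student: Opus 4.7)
The plan is to bound $\PS{A,d_G}$ by exploiting the injection $\Phi\circ\iota\colon \mathbb W(A)\to X$ together with the triangle inequality, using the alternating structure of words in $\mathbb W(A,B)$ to keep the $B$-cost linear in word length. Set $C:=\max_{b\in B} d_G(1,b)$, which is finite because $B$ is finite. For a word $W=a_1 a_2\cdots a_n\in \mathbb W(A)$, the image $\iota(W)\in\mathbb W(A,B)$ is an alternating word whose $A$-letters are those of $W$ with at most $n+1$ letters from $B$ inserted between them; the triangle inequality for $d_G$ then gives
$$d_G(1,\Phi(\iota(W)))\ \le\ \sum_{i=1}^n d_G(1,a_i)\ +\ (n+1)C.$$

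Next I would expand the $n$-th power of the Poincar\'e series as
$$\PS{A,d_G}(s)^n\ =\ \sum_{(a_1,\ldots,a_n)\in A^n}\exp\!\Big(-s\sum_{i=1}^n d_G(1,a_i)\Big),$$
apply the displayed inequality term by term, and use injectivity of $W\mapsto \Phi(\iota(W))$ to reindex the resulting sum by $x=\Phi(\iota(W))\in X$. This yields the key estimate
$$\PS{A,d_G}(s)^n\ \le\ e^{s(n+1)C}\,\PS{X,d_G}(s).$$

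Now I would fix any $s>\omega_{d_G}(X)$, so that $\PS{X,d_G}(s)<\infty$, take $n$-th roots, and let $n\to\infty$: the right-hand side collapses to $e^{sC}$, giving $\PS{A,d_G}(s)\le e^{sC}$ throughout $(\omega_{d_G}(X),\infty)$. Monotone convergence as $s\searrow \omega_{d_G}(X)$ then transports this bound to the critical value itself, producing
$$\PS{A,d_G}(\omega_{d_G}(X))\ \le\ e^{\omega_{d_G}(X)\cdot C}\ <\ \infty,$$
which is exactly the asserted convergence. The strict gap $\omega_{d_G}(X)>\omega_{d_G}(A)$ follows immediately: if $\PS{A,d_G}$ diverges at $s=\omega_{d_G}(A)$, then $\omega_{d_G}(X)=\omega_{d_G}(A)$ would contradict the convergence just established, so one must have strict inequality.

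The only step that demands care is ensuring that the $B$-correction in the triangle inequality stays linear in $n$; without the $(n+1)C$ bound, the factor $e^{s(n+1)C}$ would not survive the $n$-th root limit and the argument would collapse. This is precisely where the alternation of $\mathbb W(A,B)$ is used (forcing at most $n+1$ $B$-insertions around $n$ $A$-letters) and where the finiteness of $B$ enters (to make $C$ finite and uniform). Everything else is a routine rearrangement of the Poincar\'e sums combined with the two injectivity hypotheses.
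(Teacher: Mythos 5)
Your proof is correct, but it takes a genuinely different route from the paper. The paper argues by contradiction: summing over all word lengths $n$, it bounds $\PS{X,d_G}(s)$ from below by the geometric series $\sum_{n\ge 1}\bigl(\PS{A,d_G}(s)\,e^{-sD}\bigr)^n$; if $\PS{A,d_G}$ diverged at $s=\omega_{d_G}(X)$, then by monotonicity there would be some $s_0>\omega_{d_G}(X)$ with $\PS{A,d_G}(s_0)\ge e^{s_0 D}$, forcing the geometric series—and hence $\PS{X,d_G}$—to diverge at $s_0$, contradicting $s_0>\omega_{d_G}(X)$. You instead fix $n$, expand $\PS{A,d_G}(s)^n$, re-index by $X$ via injectivity to get $\PS{A,d_G}(s)^n\le e^{s(n+1)C}\PS{X,d_G}(s)$, and then extract an $n$-th root and let $n\to\infty$, obtaining the direct quantitative bound $\PS{A,d_G}(s)\le e^{sC}$ on $(\omega_{d_G}(X),\infty)$ that passes to the critical value by monotone convergence. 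Both arguments rest on the same triangle-inequality estimate and the same injectivity hypothesis, and both implicitly use the standard form of $\iota$ (that $\iota(a_1\cdots a_n)$ keeps the $A$-letters and interleaves a bounded number of $B$-letters), which you correctly flag as the place where alternation and finiteness of $B$ enter. Your version has the modest advantage of being a direct proof that produces an explicit bound $\PS{A,d_G}(\omega_{d_G}(X))\le e^{\omega_{d_G}(X)C}$, whereas the paper's geometric-series contradiction is closer in spirit to the original criterion of Dal'bo et al.
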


\begin{proof}
Since $\Phi: \mathbb W(A, B) \to G$ is injective, each element in the image $X$ has a unique  alternating product form over $A\sqcup B$. For a word $W=a_1b_1a_2\cdots a_nb_n \in \mathbb W(A, B)$, we have $$
d_G(1, a_1b_1\cdots a_nb_n) \le \sum_{1\le i\le n} \big (d_G(1, a_i)+D\big),
$$ 
where $D:=\max \{d_G(1, b): {b\in B}\}<\infty$ since $B$ is a finite set.
As a consequence, we estimate the Poincar\'e series  $\PS{X, d_G}$  of $X$ as follows:
$$
\begin{array}{rl}
&\sum\limits_{g \in X} \exp(-s\cdot d_G(1, g)) \\
\ge&  \sum\limits_{n =1}^{\infty} \left(\sum\limits_{a_1, \ldots, a_n \in A} \exp(-s\cdot d_G(1, a_1 b_1 \cdots a_n  b_n)) \right)\\
\ge &   \sum\limits_{n =1}^{\infty} \left(\sum\limits_{a \in A} \exp(-s\cdot d_G (1, a))\right)^n\cdot   \exp(-sD)^n.
\end{array}
$$

By way of contradiction, assume that $\PS{A, d_G}$ diverges at $s=\omega_{d_G}(X)$ so   there exists some $s_0 >
\omega_{d_G}(X)$ such that $\sum\limits_{a \in A} \exp(-s_0\cdot d_G (1,
a))> 1/D$. Consequently, the above  series $\PS{X, d_G}$ diverges at $s=s_0$, so  implies that  $\omega_{d_G}(X) > s_0$. This  contradiction concludes the proof of lemma. 
\end{proof}
\begin{rem}
From now on, we shall always consider the metric $d_G$ as the pullback of the metric $d$ on $\mathrm Y$ via the proper action. Hence, the subindex $d_G$ is omited for simplicity. 
\end{rem}

Finally, the following lemma will be frequently used, cf. \cite[Lemma 3.6]{PYANG}. Recall that a metric space $(X, d)$ is \textit{$C$-separated} if $d(x, x')>C$ for any  two $x\ne x'\in X$.
\begin{lem}\label{sepnet}
Let $(\mathrm Y, d)$ be a proper metric space on which a group $G \subset \isom(\mathrm Y)$ acts properly. For any orbit $Go$ ($o\in \mathrm Y$) and $R>0$ there exists a constant $\theta=\theta(Go, R)
>0$ with the following property.

For  any finite set $X$ in $Go$, there exists an $R$-separated 
subset $Z \subset X$ such that $\sharp Z > \theta \cdot \sharp X$.
\end{lem}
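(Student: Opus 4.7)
The plan is to reduce the statement to a uniform upper bound on the number of orbit points in any ball of radius $R$, then apply a standard greedy selection. The key observation is that since $G$ acts properly by isometries on $\mathrm Y$ and $Go$ is a single $G$-orbit, the cardinality
$$
N := \sharp\bigl(Go \cap B(o, R)\bigr)
$$
is finite and, by isometric homogeneity of the orbit, bounds $\sharp\bigl(Go \cap B(x, R)\bigr)$ for every $x \in Go$. Indeed, if $x = go$, then $x' \in Go\cap B(x,R)$ iff $g^{-1}x' \in Go\cap B(o,R)$, and $g^{-1}$ is a bijection. So $N$ depends only on $Go$ and $R$.

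Given a finite $X \subset Go$, I would run the following greedy procedure. Set $X_0 := X$ and $Z_0 := \emptyset$; at step $i$, pick an arbitrary $x_i \in X_{i-1}$, set $Z_i := Z_{i-1} \cup \{x_i\}$, and let $X_i := X_{i-1} \setminus B(x_i, R)$. Stop when $X_k = \emptyset$. By construction, $Z := Z_k$ is $R$-separated (any two elements of $Z$ lie outside each other's open $R$-ball). At each step, the set $X_{i-1} \cap B(x_i, R)$ that we remove has cardinality at most $N$, since it is contained in $Go \cap B(x_i, R)$. Hence $\sharp X \le N \cdot \sharp Z$, so that $\sharp Z \ge \sharp X / N$.

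Setting $\theta := 1/(2N)$ (strict inequality) yields the required $R$-separated subset $Z \subset X$ with $\sharp Z > \theta \cdot \sharp X$, and $\theta$ depends only on the orbit $Go$ and on $R$. There is no real obstacle here: the only point that requires the hypotheses is the finiteness and uniformity of $N$, which is immediate from properness of the action (the stabilizer-filtered preimage of $B(o,R)$ in $G$ is finite, and the $G$-equivariance of the orbit transfers this bound to balls centered at every point of $Go$).
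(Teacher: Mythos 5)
Your argument is correct and is exactly the standard packing/greedy argument; the paper itself gives no proof of this lemma and simply cites \cite[Lemma 3.6]{PYANG}, where the same reduction to the uniform bound $N=\sharp\bigl(Go\cap \bar B(o,R)\bigr)<\infty$ (properness plus equivariance) is used. The only adjustment needed is cosmetic: since the paper defines $R$-separated by the strict inequality $d(x,x')>R$, you should delete the \emph{closed} balls $\bar B(x_i,R)$ at each greedy step (the surviving points then satisfy $d(\cdot,x_i)>R$), and the uniform bound on $\sharp\bigl(Go\cap\bar B(x,R)\bigr)$ still follows from properness.
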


\section{Large free sub-semigroups}\label{Section3}
This section is devoted to the proof of   \ref{LargeFreeThm}.

Let $(\mathbb F, F, \epsilon_0, \tau, D)$ be given by Convention \ref{ConvExtensionLemma}.  

Recall  $$\limsup_{n\to \infty}\frac{\log \sharp A(o, n, \Delta)}{n} = \e G.$$

We first prove the existence of large free semigroups. 

\begin{lem}\label{largeAtree}
For any $0<\omega <\e G$, there exists a free sub-semigroup $\Gamma$ such that  
$\e {\Gamma}>\omega$.  
\end{lem}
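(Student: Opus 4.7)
The plan is to apply Lemma \ref{largesemifree} to a near-maximal piece of an annulus $A(o,n,\Delta)$ and read off a free semigroup whose generators all have orbit-displacement $\asymp n$. The exponential growth inside this annulus forces the generating set to be enormous, which in turn pushes the critical exponent of the resulting free semigroup as close to $\e G$ as desired.

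First I would fix $\omega' \in (\omega, \e G)$ and some auxiliary $\Delta > 0$. The definition of $\e G$ in terms of annuli gives infinitely many $n$ with $\sharp A(o,n,\Delta) \ge \exp(\omega' n)$. Let $R = R(\Delta)$ be the separation constant from Lemma \ref{largesemifree}, and let $\theta = \theta(Go,R)$ come from Lemma \ref{sepnet}. Passing from $A(o,n,\Delta)$ to an $R$-separated subset costs a uniform factor $\theta$, and then Lemma \ref{largesemifree} extracts a further subset $A$ of size at least $2^{-4}\theta \sharp A(o,n,\Delta) \ge 2^{-4}\theta\exp(\omega' n)$, together with an element $f \in F$ (from the fixed finite set supplied by the extension machinery), such that the map $\Phi \colon a_1\cdots a_k \mapsto a_1 f a_2 f \cdots a_k f$ is injective on $\mathbb W(A)$ and lands in contracting elements.

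Setting $S := \{af : a \in A\}$ and $\Gamma := \langle S\rangle^{+}$, the injectivity of $\Phi$ is exactly the statement that distinct words in $S$ represent distinct elements of $G$, so $\Gamma$ is a free semigroup on $S$. For any word of length $k$ in $S$, the triangle inequality gives $d(o, a_1 f\cdots a_k f\cdot o) \le k(n+\Delta+D_F)$ where $D_F := \max_{f'\in F} d(o,f'o)$. Hence
$$\sharp\bigl(N(o,k(n+\Delta+D_F))\cap \Gamma\bigr) \;\ge\; (\sharp S)^k,$$
and so
$$\e \Gamma \;\ge\; \frac{\log \sharp A}{\,n+\Delta+D_F\,} \;\ge\; \frac{\omega' n + \log(2^{-4}\theta)}{\,n+\Delta+D_F\,}.$$
The right-hand side tends to $\omega'$ as $n \to \infty$, so choosing $n$ large enough gives $\e\Gamma > \omega$.

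There is not really a hard obstacle here: Lemma \ref{largesemifree} already does the nontrivial work of packaging the extension lemma plus Proposition \ref{saturation} into an injective, contracting-element-valued map on free words. The only thing to be slightly careful about is that the loss factor $2^{-4}\theta$ is independent of $n$, which is why dividing by $n + O(1)$ recovers the full exponent $\omega'$ in the limit. The strict inequality $\e\Gamma < \e G$ claimed in Theorem \ref{LargeFreeThm}(1) is not addressed by this lemma and will presumably be provided later by growth-tightness (Theorem \ref{GrowthTightThm}); here one only needs the approximation from below.
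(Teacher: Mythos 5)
Your proposal is correct and follows essentially the same route as the paper: pass to an $R$-separated subset of a near-optimal annulus via Lemma \ref{sepnet}, apply Lemma \ref{largesemifree} to extract the generating set $A$ and the injective extension map, and then count length-$k$ words inside balls of radius $\asymp kn$ to drive $\e\Gamma$ up to $\omega'$. The only (cosmetic) difference is that you let $n\to\infty$ with an explicit error term $\log(2^{-4}\theta)/(n+\Delta+D_F)$, while the paper fixes one sufficiently large radius $k_\omega$ chosen in advance to satisfy the analogous inequality; your inclusion of $D_F$ in the radius bound is in fact slightly more careful than the paper's statement.
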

\begin{proof}
Fix $\Delta>0$. Let $R=R(\Delta)>0$ given by Lemma \ref{largesemifree} and $\theta=\theta(R)$   obtained by Lemma \ref{sepnet}.  For given $\omega <\e G$, choose a large number $k_\omega>4$ and then a constant $\omega\le \delta<\e G$ such that 
\begin{equation}\label{defdelta}
\delta\ge \frac{(k_\omega+\Delta)\omega- 2^{-k_\omega}\theta}{k_\omega} \ge \omega,
\end{equation}
and at the same time, the following is true:
$$\sharp A(o, k_\omega, \Delta)>\exp(  k_\omega\delta).$$
Indeed, when $k_\omega$ is sufficiently large, the fraction in (\ref{defdelta}) lies between $\omega$ and $\e G$. 

By Lemma \ref{sepnet}, there exists $Z\subset A(o, k_\omega, \Delta)$ such that $Zo$ is $R$-separated set and  
\begin{equation}\label{defZset}
\sharp Z\ge \theta\cdot  \exp(  k_\omega\delta).
\end{equation}
Furthermore, the lemma \ref{largesemifree} gives a subset $A$ of $Z$ with $2^4\sharp A\ge \theta \sharp Z$ and an element $f\in G$ such that $$\Phi: \mathbb W(A)\to G$$ defined by $$a_1a_2\cdots a_n \to a_1fa_2f\cdots a_nf$$ is injective. Setting $S=A\cdot f$, the injectivity of $\Phi$ is amount to saying that the semigroup $\Gamma$ generated by $S$ is free with base $S$.

To complete the proof, it suffices to estimate the critical exponent. Note that the ball $N(o, n \cdot (k_\omega+\Delta))$ contains at least a set of elements of $\Phi(\mathbb W_n)$ where $\mathbb W_n$ is the set of words of length $n$ in $\mathbb W$. By the injectivity of $\Phi$, we have $\sharp \Phi(\mathbb W_n)\ge  (\sharp A)^n$. So by (\ref{defZset}) we obtain: 
\begin{equation}\label{cardTn}
\sharp \big(N(o, n \cdot (k_\omega+\Delta))\cap \Gamma\big) \ge  \sharp \mathbb W_n \ge  (\sharp A)^n \ge  ( {2^{-4}\theta})^n\exp(n \cdot k_\omega \delta ).
\end{equation}
By definition of $\delta$ (\ref{defdelta}), the critical exponent can be estimated below:
$$
\e {\mathcal T} \ge \limsup_{n\to \infty}\frac{\log \sharp  \big(N(o,n \cdot (k_\omega+\Delta))\cap \mathcal T\big) }{n\cdot (k_\omega+\Delta)} \ge  \frac{2^{-4}\theta+ k_\omega\delta}{k_\omega+\Delta}  \ge \omega.
$$
The proof is complete. 
\end{proof}

\paragraph{\textbf{Quasi-geodesic contracting tree}} The Cayley graph of a semigroup $G$ with respect to a generating set $S$ can be defined in the same way as the case of groups.     The vertex set is $G$, and two vertices $g_1, g_2 \in G$ are connected by an oriented edge labeled by $s\in S$ if $g_2=g_1s$. Consider the orbital map
$$
\Psi: \Gx \to \mathrm Y
$$
which sends vertices $g$ to $go$ and the edges $[1,s]$ to a geodesic $[o, so]$ and other edges by translations. If the semigroup $G$ is freely generated by $S$, then the Cayley graph is a tree.  Moreover, the image $\Psi(\Gx)$ is a quasi-geodesic rooted tree so that each branch is contracting. This is just a re-interpretation of Lemma \ref{largesemifree}. Indeed, each branch was proved to be a $(D, \tau, L, \Delta)$-admissible path, so it is contracting by Proposition \ref{saturation}. And the quasi-geodesic statement follows from Proposition \ref{admissible} for a $(D, \tau, L, \Delta)$-admissible path.

To complete the proof of  \ref{LargeFreeThm}, it suffices to verify the following.
\begin{lem}
The free semigroup $A:=\Gamma_n$  is growth-tight.
\end{lem}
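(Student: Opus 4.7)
The plan is to show $\e{\Gamma_n}<\e G$ by invoking the critical gap criterion (Lemma \ref{degrowth}) with $A=\Gamma_n$ and $B=F$, where $F$ is the finite set of contracting elements from Convention \ref{ConvExtensionLemma}. The strategy is to use the extension lemma to inject words in $\Gamma_n$ into $G$ by interleaving them with elements of $F$, producing a set in $G$ whose critical exponent strictly dominates that of $\Gamma_n$.

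Concretely, for each word $(g_1,\dots,g_k)\in\mathbb W(\Gamma_n)$, the infinite extension lemma (Lemma \ref{extendinfty}) supplies elements $f_i\in F$, chosen for each consecutive pair $(g_i,g_{i+1})$, so that the alternating word $g_1 f_1 g_2 f_2\cdots f_{k-1} g_k$ labels a $(D,\tau)$-admissible path, hence a quasi-geodesic by Proposition \ref{admissible}. Setting $\iota(g_1,\dots,g_k):=(g_1,f_1,g_2,\dots,f_{k-1},g_k)\in\mathbb W(\Gamma_n,F)$, part (4) of Lemma \ref{extendinfty} gives that the evaluation map $\Phi$ is injective on $\iota(\mathbb W(\Gamma_n))$. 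Writing $X:=\Phi(\iota(\mathbb W(\Gamma_n)))\subset G$, Lemma \ref{degrowth} then yields $\e X>\e{\Gamma_n}$ provided $\PS{\Gamma_n}$ diverges at $s=\e{\Gamma_n}$; since $X\subset G$, this establishes $\e G\ge\e X>\e{\Gamma_n}$.

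It remains to verify that $\PS{\Gamma_n}$ diverges at $s=\e{\Gamma_n}$. Since $\Gamma_n$ is freely generated by $S$ with $m:=|S|$ and every generator $s\in S$ satisfies $d(o,so)\le L_{\max}:=k_\omega+\Delta+d(o,fo)$, sub-additivity yields $\sharp\bigl(N(o,kL_{\max})\cap \Gamma_n\bigr)\ge m^k$ for every $k$, so
$$
\PS{\Gamma_n}\;\ge\;\sum_{k\ge 0} m^k e^{-s k L_{\max}},
$$
which diverges at $s=(\log m)/L_{\max}$. A matching upper bound $\e{\Gamma_n}\le(\log m)/L_{\max}$ would follow from the quasi-isometric embedding of the Cayley tree of $\Gamma_n$ as a $(D,\tau,L,\Delta)$-admissible quasi-geodesic tree in $\mathrm Y$, combined with the concentration of $d(o,g\cdot o)$ around $k L_{\max}$ for words of word-length $k$. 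Establishing this concentration is the main technical obstacle: it requires carefully controlling the cumulative additive distortion along the tree branches against the exponential count $m^k$, exploiting the bounded spread $\Delta$ of generator lengths and the asymptotics $k_\omega\to\infty$ from the construction in Lemma \ref{largeAtree}.
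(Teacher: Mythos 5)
Your overall plan --- the gap criterion of Lemma \ref{degrowth} applied to $\Gamma_n$ together with a finite interpolating set --- is the paper's, but the step you flag as the ``main technical obstacle'' is a genuine gap, and the proposed route to fill it cannot work. You want to pin $\e{\Gamma_n}=(\log m)/L_{\max}$ and deduce divergence of $\PS{\Gamma_n}$ at the critical exponent; the lower bound is fine, but the matching upper bound is false in general. The generators of $\Gamma_n$ have displacements spread across an interval of width roughly $2\Delta$, so a word of combinatorial length $k$ has displacement in an interval of width $O(k)$, not $O(\Delta)$: there is no concentration near $kL_{\max}$, and the critical exponent of a free semigroup with unequal generator lengths $\ell_s$ is governed by $\sum_s e^{-\omega\ell_s}\approx 1$, which strictly exceeds $(\log m)/L_{\max}$ unless every $\ell_s=L_{\max}$. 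The paper bypasses computing $\e{\Gamma_n}$ altogether: since every branch of the tree associated to $\Gamma_n$ is a $(D,\tau,L,\Delta)$-admissible, hence uniformly contracting, quasi-geodesic, a geodesic $[o,go]$ $\epsilon_0$-fellow travels the branch and one can split $g$ at a tree vertex near distance $n$ from $o$, exactly as in Case 1 of Lemma \ref{SubMulSet}; this yields the sub-multiplicative inequality $\sharp S_{n+m,\Delta}\le\sharp S_{n,\Delta}\cdot\sharp S_{m,\Delta}$ for $S_{n,\Delta}:=\Gamma_n\cap A(o,n,\Delta)$, and Fekete's lemma gives $\sharp S_{n,\Delta}\ge\exp(n\cdot\e{\Gamma_n})$ for all $n$, which is precisely the divergence you need.

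There is also a secondary gap in the injectivity step. Part (4) of Lemma \ref{extendinfty} is conditional --- it yields $f_1=f_1'$ only under the hypothesis $g_1=g_1'$, which is part of what must be established; and since the elements of $\Gamma_n$ are not confined to a single annulus, you cannot fall back on the separation hypothesis of Lemma \ref{extensionmap}. The paper's choice $B=\{f\}$ (rather than $B=F$) resolves this cleanly: every element of $\Phi(\mathbb W(\Gamma_n,\{f\}))$ then has the canonical form $a_1f^{\epsilon_1}a_2f^{\epsilon_2}\cdots$ with $\epsilon_i\in\{1,2\}$, so injectivity follows directly from Lemma \ref{largesemifree}, and the letter breaks of the word in $\mathbb W(\Gamma_n,\{f\})$ are read off from the positions where $\epsilon_i=2$.
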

\begin{proof}
To prove growth-tightness of $A$, we  shall apply the criterion \ref{degrowth} to $A$ and $B:=\{f\}$.  Consider the set $\mathbb W(A, B)$ of words with letters alternating in $A$ and $B$.  Hence, each word in $\mathbb W(A, B)$ has the form $a_1f^{\epsilon_1}a_2f^{\epsilon_2}\cdots a_nf^{\epsilon_n}$ for $\epsilon_i\in \{1, 2\}.$
As in the proof of Lemma \ref{largeAtree},  we obtain by Lemma \ref{largesemifree}    that $\mathbb W(A, B)$ is sent into $G$ injectively. 

Hence, it suffices to verify that the Poincar\'e series $\PS{A}$ of $A$ diverges at $s=\e {A}$. In turn, we shall prove that $\sharp S_{n, \Delta} \ge \exp(n \cdot \e{A})$, where $S_{n, \Delta}:=A\cap A(o, n, \Delta)$. Apparently, this implies the divergence of $\PS{A}$ at $s=\e A$.  

Since each branch of the tree associated to $\Gamma_n=A$ is contracting with a uniform contraction constant, it is thus a   quasi-geodesic path. A similiar argument as in the Case 1 of proof of Lemma \ref{SubMulSet}   shows  that 
$\sharp S_{n+m, \Delta} \le \sharp S_{n, \Delta} \cdot \sharp S_{m, \Delta}$ for some $\Delta>0$.  By Feketa's Lemma, we have $\sharp S_{n, \Delta} \ge \exp(n \cdot \e{A})$, thereby completing the proof of the result.
\end{proof}

So  all the statements in  \ref{LargeFreeThm} are proved.

\section{Growth-tightness theorem}\label{Section4}


Recall that a  subset $X$ in $G$ is \textit{growth-tight} if $\e X<\e G$. The union of two growth-tight sets is  growth-tight.  
The main result,   \ref{GrowthTightThm}, of this section shall provide a class of  growth-tight sets. These growth-tight sets are closely related to a notion of a barrier we are going to introduce now.  





\begin{defn}\label{barriers}
Fix constants $\epsilon, M>0$ and a set $P$ in $G$. \begin{enumerate}
\item(Barrier/Barrier-free geodesic)
Given $\epsilon>0$ and $f\in P$, we say that a geodesic $\gamma$ contains an \textit{$(\epsilon, f)$-barrier}   if there exists    a element $h \in G$ so that 
\begin{equation}\label{barrierEQ}
\max\{d(h\cdot o, \gamma), \; d(h\cdot fo, \gamma)\}\le \epsilon.
\end{equation}
If 
 no such $h \in G$ exists so that (\ref{barrierEQ}) holds, then $\gamma$ is called \textit{$(\epsilon, f)$-barrier-free}.  

 Generally, we say $\gamma$ is \textit{$(\epsilon, P)$-barrier-free} if it is $(\epsilon,f)$-barrier-free for some $f\in P$. An obvious fact is that  any subsegment of $\gamma$ is also $(\epsilon, P)$-barrierr-free.
\item(Barrier-free element)
An element $g\in G$ is \textit{$(\epsilon, M, P)$-barrier-free} if there exists  an $(\epsilon, P)$-barrier-free geodesic between $B(o, M)$ and $B(go, M)$. 
\end{enumerate}
\end{defn}

The definition \ref{StatConvex} of statistically convex-cocompact actions in Introduction replies on the formulation of a concave region. Let us repeat it here for convenience. For  constants $M_1, M_2\ge 0$, let $\mathcal O_{M_1, M_2}$ be the set  of elements $g\in G$ such that there exists some geodesic $\gamma$ between $B(o, M_2)$ and $B(go, M_2)$ with the property that  the interior of $\gamma$ lies outside $N_{M_1}(Go)$.

In applications, since $\mathcal O_{M_2, M_2} \subset \mathcal O_{M_1, M_2}$, we can assume that $M_1=M_2$ and henceforth, denote $\mathcal O_M:=\mathcal O_{M, M}$ for easy notations.

\subsection{Sub-multiplicative inequality}
In this subsection, we establish a variant of sub-multiplicative inequality for SCC actions. This idea was first introduced by F. Dal'bo et al. \cite{DPPS}. We here adapt their argument in our context.

For a constant $\Delta>0$  and a subset $P\subset G$, consider the annulus sets as follows $$\mathcal V_{\epsilon, M, P}(n, \Delta):=\mathcal V_{\epsilon, M, P}\cap A(o, n, \Delta),$$ 
and 
$$\mathcal O_{M}(n, \Delta):=\mathcal O_{M}\cap A(o, n, \Delta) \cup \{1\}$$
where $1\in \mathcal O_{M}(n, \Delta)$ is used in the proof of the following lemma.

\begin{lem}\label{SubMulSet}
There exists $\Delta>0$ such that the following holds  
\begin{equation}\label{threeunion}
\sharp \mathcal V_{\epsilon, M, P}(n+m, \Delta) \le \sum_{1\le k \le n; 1\le j \le m} \sharp \mathcal V_{\epsilon, M, P}(k, \Delta) \cdot \sharp \mathcal O_{M}(n+m-k-j, 2\Delta)  \cdot   \sharp \mathcal V_{\epsilon, M, P}(j, \Delta) 
\end{equation}
for any $n, m\ge 0$. 
\end{lem}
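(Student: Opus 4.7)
Following the decomposition idea of Dal'bo, Peign\'e, Picaud and Sambusetti \cite{DPPS}, I would cut each element $g \in \mathcal V_{\epsilon, M, P}(n+m, \Delta)$ into three pieces along an $(\epsilon, P)$-barrier-free geodesic witnessing $g$, so that the two outer pieces remain barrier-free and the (possibly trivial) middle piece lies in the concave region $\mathcal O_M$. The key observation driving this is that any subsegment of an $(\epsilon, P)$-barrier-free geodesic is itself $(\epsilon, P)$-barrier-free, and that the barrier-free property is $G$-invariant, so translates of a barrier-free subsegment continue to witness the barrier-free property of the corresponding group element.

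Concretely, I would fix $g \in \mathcal V_{\epsilon, M, P}(n+m, \Delta)$ and choose an $(\epsilon, P)$-barrier-free geodesic $\gamma$ from some $p_1 \in B(o, M)$ to $p_2 \in B(go, M)$. Using arc-length parametrization, let $x \in \gamma$ be the point with $d(p_1, x) = n$, and split into two cases. If $x \in N_M(Go)$, pick $h_0 \in G$ with $d(x, h_0 o) \le M$; then $[p_1, x]_\gamma$ is an $(\epsilon, P)$-barrier-free geodesic between $B(o, M)$ and $B(h_0 o, M)$, so $h_0 \in \mathcal V_{\epsilon, M, P}$, and translating $[x, p_2]_\gamma$ by $h_0^{-1}$ shows $h_0^{-1} g \in \mathcal V_{\epsilon, M, P}$; I would write $g = h_0 \cdot 1 \cdot (h_0^{-1} g)$, exploiting the convention that $1 \in \mathcal O_M(0, 2\Delta)$. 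Otherwise $x \notin N_M(Go)$; let $[u, v] \subset \gamma$ be the maximal subarc containing $x$ whose interior is disjoint from $N_M(Go)$ (this exists because the endpoints of $\gamma$ lie in $N_M(Go)$). Choose $h_1, h_2 \in G$ with $d(u, h_1 o), d(v, h_2 o) \le M$ and set $g = h_1 \cdot (h_1^{-1} h_2) \cdot (h_2^{-1} g)$: the outer pieces are barrier-free by the subsegment argument above, and the translate of $[u, v]_\gamma$ by $h_1^{-1}$ is a geodesic between $B(o, M)$ and $B(h_1^{-1} h_2 o, M)$ whose interior lies outside $N_M(Go)$ (by $G$-invariance of $N_M(Go)$), showing $h_1^{-1} h_2 \in \mathcal O_M$.

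Next I would quantify the lengths. Setting $k := \lfloor d(p_1, u) \rfloor$ and $j := \lfloor d(v, p_2) \rfloor$ in Case~2 (and $k := n, j := m$ in Case~1), the triangle inequality combined with the bounds $d(o, p_1), d(p_2, go), d(u, h_1 o), d(v, h_2 o) \le M$ and $|d(o, go) - (n+m)| \le \Delta$ gives
\[
|d(o, h_1 o) - k| \le 2M + 1, \quad |d(o, h_2^{-1} g o) - j| \le 2M + 1,
\]
\[
|d(o, h_1^{-1} h_2 o) - (n+m-k-j)| \le 2M + \Delta + 2,
\]
so by choosing $\Delta$ sufficiently large (depending only on $M$) all three pieces belong respectively to $\mathcal V_{\epsilon, M, P}(k, \Delta)$, $\mathcal O_M(n+m-k-j, 2\Delta)$ and $\mathcal V_{\epsilon, M, P}(j, \Delta)$. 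By further absorbing boundary effects into $\Delta$ one can arrange $k \in [1, n]$ and $j \in [1, m]$.

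Finally, the assignment $g \mapsto (g_1, g_m, g_3) := (h_1, h_1^{-1} h_2, h_2^{-1} g)$ is injective because $g = g_1 g_m g_3$ is reconstructible from the triple; hence summing the number of admissible triples over all $k \in [1, n]$ and $j \in [1, m]$ yields the claimed estimate \eqref{threeunion}. The main obstacle is the careful bookkeeping in Case~2 — verifying that the intermediate vertices $u, v$ produce legitimate barrier-free witnesses for the outer pieces and a concave witness for the middle piece, while the lengths still match the indices in the triple sum; both points reduce to the stability of the barrier-free and concave-region properties under taking subsegments and under the $G$-action, which are immediate from Definitions \ref{barriers} and \ref{StatConvex}.
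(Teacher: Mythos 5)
Your proposal is correct and follows essentially the same route as the paper's own proof: the identical two-case decomposition at a point along a barrier-free witnessing geodesic (either the cut point lies in $N_M(Go)$, giving a trivial middle factor via the convention $1\in\mathcal O_M(0,2\Delta)$, or one takes the maximal subarc outside $N_M(Go)$ to produce the concave middle factor), with the same use of $G$-invariance and stability of barrier-freeness under subsegments. The only differences are cosmetic bookkeeping of where the cut point is placed and how the constants are absorbed into $\Delta$.
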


\begin{proof}
By definition, an element $g$ belongs to $\mathcal V_{\epsilon, M, P}$, if and only if, there exists a geodesic $\gamma=[x, y]$ for some  $x\in B(o, M)$ and $y\in B(go, M)$ such that $\gamma$ is $(\epsilon, M, P)$-barrier-free. Note that any subpath of $\gamma$ is $(\epsilon, P)$-barrier-free as well.

Set $\Delta=4M$. Let $g\in\mathcal V_{\epsilon, M, P}(n+m, \Delta)$, so there exists a geodesic $\gamma=[x, y]$ with properties stated as above. Since $|d(o, go)-n-m|\le \Delta$ and thus $|d(x, y)-n-m|\le \Delta+2M=6M$, we can write $$d(x, y) = m+n + 2\Delta_1,$$ for some $|\Delta_1| \le 3M$. Consider a point $z$ in $[x, y]$ such that $d(o, z)=n+\Delta_1$. 

\textbf{Case 1.} Assume that $z\in N_M(Go)$ so there exists $h\in G$ such that $d(z, ho)\le M$. Thus,
$$\max\{|d(o, ho)-n|,\; |d(ho, go)-m|\}  \le 4M\le\Delta.$$
Note that $[x, z]_\gamma$ and $[z, w]_\gamma$ as subsegments of $\gamma$ are $(\epsilon, P)$-barrier-free, so we obtain that $h\in \mathcal V_{\epsilon, M, P}(n, \Delta)$ and $h^{-1}g\in  \mathcal V_{\epsilon, M, P}(m, \Delta)$.  Since we included $1\in \mathcal O_M(0, 2\Delta)$ by definition,   the element $g$  belongs to the union set in the right-hand of (\ref{threeunion}).

\textbf{Case 2.} Otherwise, consider the maximal open segment $(z_1, z_2)$ of $[x, y]$ which contains $z$ but lies outside $N_M(Go)$. Hence, there exist $g_1, g_2\in G$ such that $d(z_i, g_io)\le M$ for $i=1, 2$.  By definition,  we have $t:=g_1^{-1}g_2 \in \mathcal O_M$. Similarly as above, we see that $$g_1 \in \mathcal V_{\epsilon, P}(k, \Delta), \; g_2^{-1}g \in\mathcal V_{\epsilon, M, P}(l, \Delta)$$ where $k:=d(o, z_1)$ and $l:=d(z_2, go)$. On the other hand, 
$$
\begin{array}{ll}
d(z_1,z_2)&= d(x, y) -d(x, z_1)-d(z_2, y) \\
&\simeq_{2M} d(x, y) -d(o, z_1)-d(z_2, go)\\
&\simeq_{2M+\Delta_1} n+m-k-l.
\end{array}
$$
It follows from   $|d(g_1o,g_2o)- d(z_1,z_2)|\le 2M$ that 
$$
|d(o, to)-n-m+k+l|\le 2\Delta.
$$

That is, $t\in O_M(n+m-k-j, 2\Delta)$. Writing $g=g_1 \cdot t \cdot (g_2^{-1}g)$ completes the proof of (\ref{threeunion}) in the Case (2). The lemma is proved.
\end{proof}

For $\omega>0$, we define $$a^\omega(n, \Delta)= \exp(-\omega n) \cdot \sharp \mathcal V_{\epsilon, M, P}(n, \Delta).$$

\begin{lem}\label{SubMulInQ}
Assume that   $\e {\mathcal V_{\epsilon, M, P}} > \e{\mathcal O_M}$.   Then given any $\e {\mathcal V_{\epsilon, M, P}} > \omega> \e{\mathcal O_M}$, there exist $\Delta, c_0>0$ such that the following holds 
\begin{equation}\label{SubMulEQ}
a^\omega(n+m,\Delta) \le c_0 \left( \sum_{1\le k\le n} a^\omega(k, \Delta) \right) \cdot \left(\sum_{1\le j \le m} a^\omega(j,\Delta) \right),
\end{equation}
for any $n, m \ge 0$. Moreover,   the Poincar\'e series $\PS{\mathcal V_{\epsilon, P}}$  diverges at $s= \e{\mathcal V_{\epsilon, P}}$.
\end{lem}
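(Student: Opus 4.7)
The proof splits into two steps.

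\emph{Step 1: the sub-multiplicative inequality \eqref{SubMulEQ}.} The plan is to start from the tripartite bound of Lemma \ref{SubMulSet} and multiply both sides by $e^{-\omega(n+m)}$. Redistributing the exponential as $e^{-\omega k}\cdot e^{-\omega(n+m-k-j)}\cdot e^{-\omega j}$ and attaching each piece to the corresponding factor of the summand, the two outer factors become exactly $a^\omega(k,\Delta)$ and $a^\omega(j,\Delta)$, while the middle piece reads $e^{-\omega(n+m-k-j)}\,\sharp\mathcal O_M(n+m-k-j,2\Delta)$. The hypothesis $\omega > \e{\mathcal O_M}$ makes the Poincar\'e-type series for $\mathcal O_M$ convergent at $\omega$, so in particular
\[
c_0 := \sup_{l\ge 0}\, e^{-\omega l}\,\sharp\mathcal O_M(l,2\Delta)
\]
is finite. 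Replacing the middle factor by $c_0$ decouples the $k$- and $j$-sums and delivers \eqref{SubMulEQ} at once.

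\emph{Step 2: divergence at the critical exponent.} Write $\omega^* := \e{\mathcal V_{\epsilon, P}}$ and argue by contradiction: assume the Poincar\'e series $\PS{\mathcal V_{\epsilon, P}}$ evaluated at $s = \omega^*$ converges to some finite $S$. Set $b_n := a^{\omega^*}(n,\Delta)$ and $B_n := \sum_{k\le n} b_k \le S$. Since all ingredients of \eqref{SubMulEQ} depend continuously on $\omega$ and the constant $c_0$ stays finite as $\omega\uparrow\omega^*$ (using $\omega^* > \e{\mathcal O_M}$), the inequality $b_{n+m}\le c_0 B_n B_m$ persists at $\omega=\omega^*$, immediately giving the uniform bound $b_n\le c_0 S^2$. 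The plan is then to iterate: the specialization $m=1$ yields the Gr\"onwall-type recurrence $B_{n+1}\le (1+c_0 b_1)B_n$, and pairing this with the sub-multiplicative bound at successive scales progressively improves the estimate on $b_n$, forcing it eventually to decay strictly exponentially. This contradicts the defining identity $\omega^* = \limsup_n (\log\sharp\mathcal V_{\epsilon,M,P}(n,\Delta))/n$, so $S$ must be infinite.

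\emph{Main obstacle.} The delicate part is the bootstrap in Step 2: converting the qualitative input $\sum b_n < \infty$ into quantitative strict exponential decay of $b_n$. The sub-multiplicative structure $b_{n+m}\le c_0 B_n B_m$ is the only lever, and the strict inequality $\omega^* > \e{\mathcal O_M}$ is crucial both for keeping $c_0$ finite throughout the iteration and for encoding the ``divergent type'' character of SCC actions, as first exploited in \cite{DPPS}. Everything else is a routine computation once the exponential weights are properly distributed across the convolution.
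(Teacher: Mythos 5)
Your Step~1 is correct and reproduces the paper's argument exactly: redistribute $e^{-\omega(n+m)}$ across the three factors in Lemma~\ref{SubMulSet}, and use $\omega>\e{\mathcal O_M}$ to bound $e^{-\omega l}\,\sharp\mathcal O_M(l,2\Delta)$ by a finite $c_0$ uniformly in $l$. That is precisely what the paper does (it phrases it as $\sharp\mathcal O_M(i,2\Delta)\le c_0 e^{i\omega}$ and then rearranges as in \cite[Proposition~4.1]{DOP}).

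Step~2, however, does not give a valid proof, and the mechanism you describe cannot work as stated. The specialization $m=1$ of $b_{n+m}\le c_0 B_n B_m$ yields $B_{n+1}\le(1+c_0 b_1)B_n$, which is an \emph{upper} bound giving at most exponential \emph{growth}; it cannot be combined with the sub-multiplicative bound to ``force $b_n$ to decay strictly exponentially.'' In fact the abstract statement you are implicitly trying to prove --- that positivity, $b_{n+m}\le c_0 B_n B_m$, and $\limsup_n \frac{\log b_n}{n}=0$ together force $\sum b_n=\infty$ --- is simply false: take $b_n=1/n^2$. Then $B_n\nearrow\pi^2/6$, the inequality $b_{n+m}\le B_n B_m$ holds trivially (with $c_0=1$, since $B_n B_m\ge B_1^2=1\ge b_{n+m}$), and $\limsup_n \frac{\log b_n}{n}=0$, yet $\sum b_n<\infty$. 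So no amount of bootstrapping from the sub-multiplicative inequality at $\omega=\omega^*$ alone can yield the ``moreover'' claim. The paper does not attempt such an argument; it simply invokes \cite[Lemma~4.3]{DOP}, whose proof uses more structure than the bare sub-multiplicative convolution estimate. If you want a self-contained argument you will need to reproduce the actual DOP argument, not a bootstrap from \eqref{SubMulEQ}.
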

\begin{proof}
Note that there exists $c_0>0$ such that $\sharp \mathcal O_M(i, 2\Delta)\le c_0 \exp(i \cdot \omega )$ for any $i\ge 1$.  Consequently, a re-arrangement of (\ref{threeunion}) gives rise to the form of   (\ref{SubMulEQ})  (see \cite[Proposition 4.1]{DOP} for instance). The ``moreover" statement follows by \cite[Lemma 4.3]{DOP}.
\end{proof}
\begin{rem}
The above proof of Lemma \ref{SubMulSet}    still works for the set $\mathcal V_{\epsilon, P}(n, \Delta)$ replaced by $A(o, n, \Delta)$: in fact, the proof gets greatly simplified.   So the inequality (\ref{SubMulEQ}) holds also for $a^\omega(n, \Delta):= \exp(-\omega n) \cdot \sharp A(o, n, \Delta)$. See  the proof of Theorem \ref{purexpgrowth} below. 
\end{rem}

\subsection{The main construction}
For given $g\in G$ and  $\epsilon>0$, denote by $\mathcal V_{\epsilon, M, g}$ the set of $(\epsilon, M, g)$-barrier-free elements in $Go$ from $o$.  Denote by  $\mathbb W(\mathcal A)$  the free monoid over the alphabet set $\mathcal A:=\mathcal V_{\epsilon,M,  g}$.  

To be clear, we fix some constants at the beginning (the reader is encourage to read the proof first and return here until the constant appears). 
\\
\paragraph{\textbf{Setup}}
Let $\mathbb F$ be a contracting system satisfying Convention \ref{ConvExtensionLemma}. 
\begin{enumerate}
\item
We denote by $C>0$ the contraction constant for the contracting system  $\mathbb F$. For easy notations, we assume that $C$  satisfies  Proposition \ref{Contractions} as well.
\item
Constants $\tau, D  >0$ in admissible paths are given by Lemma \ref{extend3}.  
\item
Let $B>0$ be the constant by Proposition \ref{admissible} for $(D, \tau)$-admissible paths relative to the contracting system $\mathbb F$. 
\item
Choose by Lemma \ref{extend3} a finite set $F$ such that 
\begin{equation}\label{BigF}
d(o, ho)>2B+C+M
\end{equation}  for each $h\in F$. 
\\
\end{enumerate}
 
\paragraph{\textbf{The extension map} $\Phi: \mathbb W(\mathcal A)\to G$}
To  each word $W=w_1 w_2 \cdots w_n \in \mathbb W(\mathcal A)$, we associate an element $\Phi(W)\in G$ defined as follows: 
$$\Phi(w_1  w_2 \cdots  w_n)=w_1 \cdot (f_1 g h_1)\cdot w_2\cdot (f_2 g h_2)\cdot \cdots \cdot (f_{n} g h_n) \cdot w_n$$
where $f_i, h_i\in F$ are chosen by the extension lemma \ref{extend3}.

As before, consider  the  path labeled by $\Phi(W)$ which is  a $(D, \tau)$-admissible path:
$$\gamma = \path{w_1} \cdot (\path {f_1}\cdot \path{g_1} \cdot\path{h_1})\cdot \path {w_2}\cdot (\path{f_2}\cdot \path{g_2}\cdot \path{h_2})\cdot \cdots \cdot (\path {f_{n}}\cdot \path{g_n} \cdot \path{h_n}) \cdot \path{w_n},$$
  Note that the geodesics $\path{g_i}$ are all labeled by the same element $g$.

\begin{lem} \label{injective}
There exist   constants $\epsilon=\epsilon(\mathbb F, M), R = R(\mathbb F, M) > 0$ with the following property.

Let $W=w_1 w_2 \cdots w_n, W'=w'_1 w'_2 \cdots w'_m$  in $\mathbb W(\mathcal A)$ such that
$\Phi(W) = \Phi(W')$. Then  $d(w_1  o, w_1'  o) \le R$.
\end{lem}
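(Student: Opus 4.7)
The plan is to prove this by contradiction: if $d(w_1 o, w_1' o)$ is too large, then we will exhibit an $(\epsilon, g)$-barrier on a specific geodesic between $B(o, M)$ and $B(w_1' o, M)$, contradicting $w_1' \in \mathcal V_{\epsilon, M, g}$. Since $\Phi(W) = \Phi(W')$, the two $(D, \tau)$-admissible paths $\gamma$ and $\gamma'$ share both endpoints. Take the geodesic $\alpha := [o, \Phi(W) o]$. By Proposition \ref{admissible}(3), $\alpha$ $\epsilon_0$-fellow-travels both $\gamma$ and $\gamma'$, yielding shadow points $z, z' \in \alpha$ with $d(z, w_1 o) \le \epsilon_0$ and $d(z', w_1' o) \le \epsilon_0$. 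Without loss of generality, assume $d(o, z) \le d(o, z')$.

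Next, let $p, q \in \alpha$ be the shadows on $\alpha$ of the two vertices $w_1 f_1 o$ and $w_1 f_1 g o$ of $\gamma$ bounding the first $g$-subpath $\path{g_1}$. The linear order of vertices along $\gamma$ transfers to $\alpha$ by the fellow-travel property, so $d(o, z) \le d(o, p) \le d(o, q)$, with $d(z, p) \le d(o, f_1 o) + 2\epsilon_0 \le \max_{h \in F} d(o, h o) + 2\epsilon_0$ and $d(p, q) \le d(o, go) + 2\epsilon_0$. The key step is then: if $d(o, q) \le d(o, z')$, so that both $p$ and $q$ lie in the initial segment $[o, z']_\alpha$, the element $h := w_1 f_1$ witnesses an $(\epsilon_0, g)$-barrier on this geodesic, namely $d(h o, [o, z']_\alpha) \le \epsilon_0$ and $d(h g o, [o, z']_\alpha) \le \epsilon_0$. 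Since $[o, z']_\alpha$ is a geodesic from $o \in B(o, M)$ to $z' \in B(w_1' o, M)$ (using $\epsilon_0 \le M$), this contradicts the barrier-free hypothesis on $w_1'$. Therefore $d(o, q) > d(o, z')$, which chained with the inequalities above gives $d(o, z') - d(o, z) \le d(o, f_1 o) + d(o, g o) + 2\epsilon_0$. Since $z, z'$ both lie on the geodesic $\alpha$, we obtain $d(w_1 o, w_1' o) \le 2\epsilon_0 + (d(o, z') - d(o, z)) \le R$ for an appropriate constant $R$.

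The principal obstacle I foresee is to reconcile the specific geodesic $[o, z']_\alpha$ produced by the above argument with the existential clause in the definition of $\mathcal V_{\epsilon, M, g}$: the definition asserts only that \emph{some} geodesic between $B(o, M)$ and $B(w_1' o, M)$ is barrier-free, not every one. One expects to resolve this either by interpreting the definition in a uniform way in this contracting setting, or by exploiting the quasi-convexity and bounded-intersection properties of $\mathbb F$ from Proposition \ref{Contractions} to transfer the barrier between nearby candidate geodesics. A secondary observation is that the constant $R$ produced depends on $d(o, go)$; since $g$ is fixed at the outset of \S\ref{Section4}, this dependence is absorbed into the constants of the setup governing the extension map $\Phi$.
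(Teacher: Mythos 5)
There is a genuine gap in the step you yourself flagged, and the remedies you suggest do not actually close it. You produce a barrier on the particular geodesic $[o, z']_\alpha$, which is a subsegment of the fixed geodesic $\alpha = [o, \Phi(W)o]$. But $w_1' \in \mathcal V_{\epsilon, M, g}$ only asserts the \emph{existence} of some $(\epsilon, g)$-barrier-free geodesic between $B(o, M)$ and $B(w_1' o, M)$; exhibiting a barrier on one particular geodesic from $o$ to $z'$ does not produce the needed contradiction. Your two proposed fixes (``interpret the definition in a uniform way'' or ``transfer the barrier between nearby geodesics'') are not substantiated, and the transfer idea in particular is delicate: two geodesics between the same pair of balls need not be uniformly close pointwise in a non-hyperbolic space, so a barrier on one does not automatically appear on the other with the same $\epsilon$.

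The paper's proof is structured precisely to avoid this issue. It starts from an \emph{arbitrary} geodesic $\beta = [x, y]$ with $x \in B(o, M)$ and $y \in B(w_1' o, M)$, and uses the contracting property of the set $X$ associated to the segment $\path{h_1}$ (together with the bounded-projection estimates from Proposition \ref{admissible}) to force $\beta \cap N_C(X) \ne \emptyset$, and hence to show that both endpoints of $\path{g_1}$ are within $\epsilon$ of $\beta$. The constant $\epsilon = B + M + 3C$ is deliberately much larger than $\epsilon_0$; this slack is exactly what allows the argument to apply to every candidate geodesic $\beta$ rather than only to the distinguished $\alpha$. It also makes essential use of $h_1 \in F$ being long enough (the choice $d(o, h_1 o) > 2B + C + M$ in (\ref{BigF})) so that $X$ cannot be bypassed by $\beta$. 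Your argument, which works exclusively on $\alpha$, has no counterpart to this last step and cannot rule out that the barrier-free geodesic certifying $w_1' \in \mathcal V_{\epsilon, M, g}$ simply avoids the neighborhood of $w_1 f_1 o$ and $w_1 f_1 g o$. So the approach as written does not prove the lemma.
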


\begin{proof}
let us look at their labeled paths:
$$\gamma = \path{w_1} \cdot (\path {f_1}\cdot \path{g_1} \cdot\path{h_1})\cdot \path {w_2}\cdot (\path{f_2}\cdot \path{g_1}\cdot \path{h_2})\cdot \cdots \cdot (\path {f_{n}}\cdot \path{g_n} \cdot \path{h_n}) \cdot \path{w_n},$$
and
$$\gamma' =\path{w'_1} \cdot (\path {f'_1}\cdot \path{g_1'} \cdot\path{h'_1})\cdot \path {w'_2}\cdot (\path{f'_2}\cdot \path{g_2'}\cdot \path{h'_2})\cdot \cdots \cdot (\path {f'_{m}}\cdot \path{g_n'} \cdot \path{h'_m}) \cdot \path{w'_m}.$$ Fix a geodesic $\alpha$ between $o$ and $\Phi(W) o$, where
$\Phi(W)=\Phi(W')$. It is possible that $m\neq n$.  

First, set $$R_0:=2\max\{d(o, fo): f\in F\}+d(o, go)$$ which implies   
\begin{equation}\label{EQ1}
d(o, w_1f_1gh_1o) \le d(o, w_1o)+R_0.
\end{equation}
Without loss of generality, assume that $d(o, w_1'o)\ge d(o, w_1o)$. 

Set $R:=R_0+3(2C+B)+M$. Our goal is to prove that 
\begin{equation}\label{Hypothesis}
d(o, w_1'o)\le d(o, w_1o)+R,
\end{equation} 
which clearly concludes the proof of the lemma. 

By way of contradiction, let us assume that (\ref{Hypothesis}) fails, and go to prove that $w_1'$ contains an $(\epsilon, g)$-barrier. To this end, consider a geodesic $\beta=[x, y]$ where $x\in B(o, M)$ and $y\in B(w_1'o, M)$. 

Since $\gamma$ is $(D, \tau)$-admissible, by Proposition \ref{admissible}, there exists $B>0$ such that 
\begin{equation}\label{BETA12}
\max\{\proj_X(\beta_1), \proj_X(\beta_2)\} \le B
\end{equation} 
where  $\beta_1:=[o, (\path{h_1})_-]_\gamma$ and $\beta_2:=[(\path{h_1})_+, \gamma_+]_\gamma$. See Figure \ref{fig:lem47}.

\begin{figure}[htb] 
\centering \scalebox{0.8}{
\includegraphics{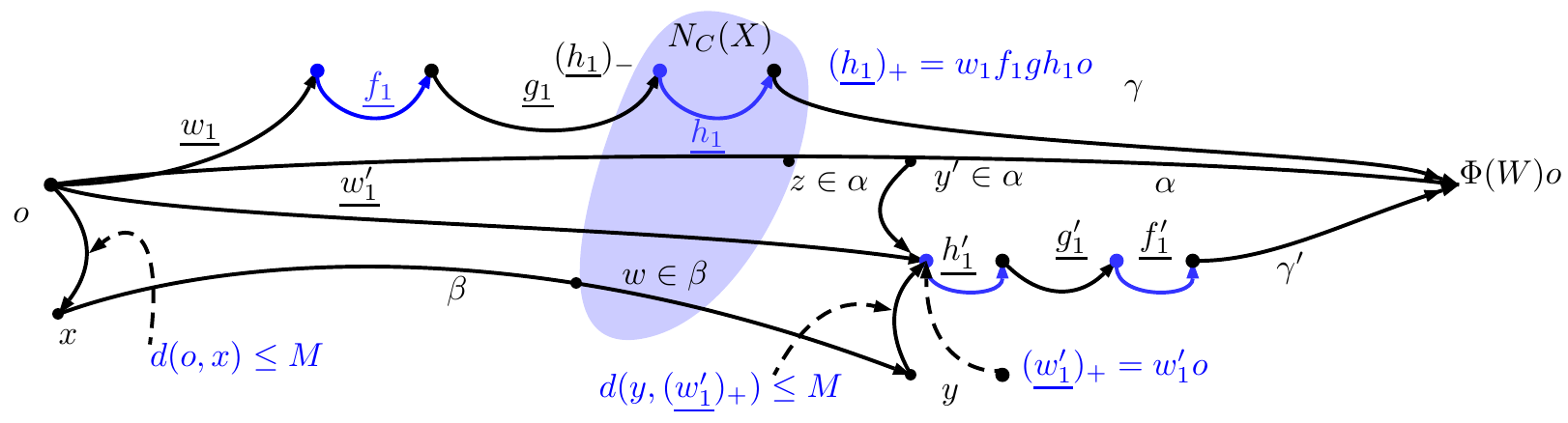} 
} \caption{Proof of Lemma \ref{injective}} \label{fig:lem47}
\end{figure}

Denote by $X$ the contracting set associated to $\path{h_1}$.  Let $z$ be the corresponding exit point of $\alpha$ in $N_{C}(X)$, which exists by Proposition \ref{admissible}. By contracting property and (\ref{BETA12}) we obtain 
\begin{equation}\label{EQ2}
d(z, (\path{h_1})_+)\le \proj_X(\beta_2)+\proj_X([\alpha_+, z]_\alpha)+d(z, X)\le 2C+B.
\end{equation}
Note that $(\path{h_1})_+=w_1f_1gh_1o$. So by (\ref{EQ1}) and (\ref{EQ2}), we have 
\begin{equation}\label{EQ3}
d(o, z)\le    d(o, w_1o)+R_0+2C+B.
\end{equation}

We claim that 
\begin{claim}
$\beta \cap N_C(X)\ne \emptyset$. 
\end{claim}
\begin{proof}[Proof of the Claim]
We consider the contracting set $Y$ associated to $\path{h_1'}$ and the entry point $y'$ of $\alpha$ in $N_C(Y)$. A similar estimate as (\ref{EQ2}) shows $$d((\path{w'_1})_+, y')\le 2C+B.$$
Noticing that $(\path{w'_1})_+=w_1'o$ and  (\ref{Hypothesis}) was assumed to be false, we get the following   from (\ref{EQ3}) and (\ref{Hypothesis}): 
$$
\begin{array}{ll}
d(o, y')-d(o, z)&\ge d(o, w_1'o) -d(o, w_1o) - R_0-2(2C+B)\\
&> 2C+B+M>0,
\end{array}
$$
implying $y'\in [z, \alpha_+]_\alpha$. Since $d(y', y) \le d(y, (\path{w'_1})_+)+d((\path{w'_1})_+, y)\le 2C+B+M$, we have $d(y', z)>d(y', y)$. This shows that the geodesic $[y, y']$ is disjoint with $N_{C}(X)$: indeed, since $z$ is the exit point of $\alpha$ in $N_{C}(X)$, we would obtain $d(y', z)\le d(y', y)$, a contradiction. The contracting property thus implies 
\begin{equation}\label{yyprimeEQ}
 \proj_X([y, y']) \le C.
\end{equation} 

We are now ready to prove the claim: $\beta \cap   N_C(X)\ne \emptyset$. Indeed, if it is false, then it follows $\proj_X(\beta)\le C$ by contracting property. Moreover, since $d(o, x)\le M$, we have $$\proj_X([o, x]) \le M+C,$$  by Proposition \ref{Contractions}.\ref{1Lipschitz}. We now estimate by projection: 
$$
\begin{array}{ll}
\len(\path{h_1})& \le \proj_X(\beta_1)+\proj_X([o, x])+\proj_X(\beta)+\proj_X([y, y'])+\proj_X([y', \alpha_+]_\alpha)+\proj_X(\beta_2)\\
&\le 2B+3C+M,
\end{array}
$$
where (\ref{BETA12}) and (\ref{yyprimeEQ}) are used.
This inequality contradicts to the choice of $h_1\in F$ satisfying (\ref{BigF}). So the claim is proved. 
\end{proof}
 
Let us return to the proof of the lemma. Consider the entry point $w$ of $\beta$ in $N_C(X)$, which exists by the above claim.   So 
$$
\begin{array}{ll}
d((\path{g_1})_+,  w)&\le \proj_X(\beta_1)+\proj_X([o, x]) + \proj_X([x, w]_\beta)+d(w, X) \\
&\le B+M+3C.
\end{array}
$$
 
Similarly, we proceed the above analysis for the contracting set associated to $\path{f_1}$ and  we can prove that $d((\path{g_1})_+, \beta)\le B+M+3C$. 

By setting $$\epsilon=B+M+3C,$$ we have proved that the two endpoints of $\path{g_1}$ lie within at most an $\epsilon$-distance to $\beta$. By definition of barriers, we have that $\beta$ contains an  $(\epsilon, g)$-barrier.   This contradicts to the choice of $w'_1 \in \mathcal A$, where $\mathcal A=\mathcal V_{\epsilon, M, g}$.

In conclusion, we have showed that (\ref{Hypothesis}) is true: $d(w_1o, w_1'o)\le R$, completing the proof of lemma. 
\end{proof}

\subsection{Proof of \ref{GrowthTightThm}}

By definition of a SCC action, there exist a constant $M>0$ such that $\e {\mathcal O_{M}} <\e G$.

For the set $\mathcal A:=\mathcal V_{\epsilon, M, g}$, there exist  $\epsilon=\epsilon(\mathbb F, M)$ and $R=R(\mathbb F, M)$ satisfying the conclusion of Lemma \ref{injective}. 

Without loss of generality, assume  that $\e {\mathcal A}>\e {\mathcal O_M}$ and we shall prove that $\e {\mathcal A} <\e G$.

Let $\mathcal B$ be a \textit{maximal $R$-separated} subset in $\mathcal A$ so that 
\begin{itemize}
\item
for any distinct $a, a'
\in \mathcal B$,  
$d(a o,   a' o)
> R$, and
\item
for any $x \in \mathcal V_{\epsilon, M, g}$, there
exists $a \in \mathcal B$ such that $d(xo,  ao) \le R$. 
\end{itemize}
Taking Lemma \ref{extendinfty}.(4) into account, the map $\Phi: \mathbb W(\mathcal B) \to
G$ defined before Lemma
\ref{injective} is   injective.

On the other hand, an elementary argument shows that $\PS{\mathcal B} \asymp \PS{\mathcal A}$, whenever they are finite, and so $$\e {\mathcal B} = \e {\mathcal A}.$$ By
Lemma \ref{SubMulInQ},  $\PS{\mathcal A}$ and thus $\PS{\mathcal B}$
are divergent at $s = \e {\mathcal A}$.

Consider, $X:=\Phi(\mathbb W(\mathcal B))$, the image of $\mathbb W(\mathcal B)$ under the map $\Phi$ in $G$. The criterion \ref{degrowth} implies $\e {X} > \e {\mathcal B}$ and so $\e{G} \ge
\e {X} > \e {\mathcal A}$. Thus,  the barrier set $\mathcal V_{\epsilon, M, g}$  is growth-tight, thereby concluding the proof of theorem.
\\

For a proper action, the above proof actually shows the following general fact. We consider a weaker notion of \textit{growth-negligible} subsets $X$ in $G$ are proven to be useful in a further study:$$\frac{\sharp (N(o, n)\cap X)}{\exp( \e G n)}\to 0$$
as $n\to \infty$.  

\begin{cor}
Suppose that a group $G$ acts properly on a geodesic space $(\mathrm Y, d)$. Then, under the same quantifiers as \ref{GrowthTightThm}, $\mathcal V_{\epsilon, M, g}$ is a growth-negligible set.
\end{cor}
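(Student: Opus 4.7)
The plan is to split into two cases based on the value of $\e{\mathcal V}$ relative to $\e G$, where I abbreviate $\mathcal V := \mathcal V_{\epsilon,M,g}$. If $\e{\mathcal V} < \e G$, the conclusion is immediate: choosing $\varepsilon>0$ with $\e{\mathcal V}+\varepsilon<\e G$, the definition of critical exponent yields $\sharp(N(o,n)\cap\mathcal V) \le C_\varepsilon \exp((\e{\mathcal V}+\varepsilon)n)$ for all $n$, whence $\sharp(N(o,n)\cap\mathcal V)/\exp(\e G n)$ decays exponentially. So the substantial case is $\e{\mathcal V}=\e G$.

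In that case I will show that the Poincar\'e series $\PS{\mathcal V}$ converges at $s=\e G$. I adopt the whole setup of the proof of \ref{GrowthTightThm}: pick a maximally $R$-separated subset $\mathcal B \subset \mathcal V$ with $R=R(\mathbb F,M)$ from Lemma \ref{injective}. Lemma \ref{injective} together with assertion (4) of Lemma \ref{extendinfty} makes the extension map $\Phi:\mathbb W(\mathcal B)\to G$ injective, and maximality of $\mathcal B$ combined with properness of the action gives $\PS{\mathcal B}\asymp\PS{\mathcal V}$, so the two critical exponents coincide. Now I invoke Lemma \ref{degrowth} contrapositively with $A=\mathcal B$ and the finite alphabet $B \subset F\cdot g\cdot F$: were $\PS{\mathcal V}$ (equivalently $\PS{\mathcal B}$) to diverge at $s=\e{\mathcal V}=\e G$, the criterion would yield $\omega(X) > \e{\mathcal B} = \e G$ for $X := \Phi(\mathbb W(\mathcal B))\subset G$, contradicting $\omega(X)\le\e G$. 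Hence $\PS{\mathcal V}$ converges at $s=\e G$.

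Finally, I will derive growth-negligibility from this convergence by a routine tail estimate. Setting $b_k := \sharp\{g\in\mathcal V : k-1<d(o,go)\le k\}$, one has $\sum_k b_k e^{-\e G k} < \infty$; for any $\varepsilon>0$, fix $N$ large enough that $\sum_{k>N} b_k e^{-\e G k} < \varepsilon$. Then for $n > N$, using $e^{-\e G n}\le e^{-\e G k}$ whenever $k\le n$,
\[
\sharp(N(o,n)\cap\mathcal V)\,e^{-\e G n} \;\le\; \sharp(N(o,N)\cap\mathcal V)\,e^{-\e G n} \;+\; \sum_{k=N+1}^{n} b_k e^{-\e G k} \;\le\; \sharp(N(o,N)\cap\mathcal V)\,e^{-\e G n} + \varepsilon.
\]
The first summand tends to $0$ as $n\to\infty$, so $\limsup_n \sharp(N(o,n)\cap\mathcal V)\,e^{-\e G n}\le\varepsilon$; letting $\varepsilon\to 0$ concludes the proof.

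The only substantive difficulty is the critical case $\e{\mathcal V}=\e G$: in the SCC setting the divergence of $\PS{\mathcal V}$ at $s=\e{\mathcal V}$ was produced by the submultiplicative inequality (Lemma \ref{SubMulInQ}, which requires $\e{\mathcal V} > \e{\mathcal O_M}$). The key observation is that this obstacle can be bypassed by reading Lemma \ref{degrowth} as a contrapositive, which extracts convergence at $s=\e G$ for free precisely in the borderline situation where the submultiplicative route is unavailable.
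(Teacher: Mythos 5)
Your proof is correct and follows essentially the same route as the paper: the injectivity input (Lemma \ref{injective} plus Lemma \ref{extendinfty}(4)) needs only properness, and Lemma \ref{degrowth} then forces $\PS{\mathcal V_{\epsilon,M,g}}$ to converge at $s=\e G$, from which growth-negligibility follows. Your case split is not needed---the first assertion of Lemma \ref{degrowth} already gives convergence of $\PS{\mathcal B}$ at $s=\omega(X)\le \e G$ unconditionally---but your explicit tail estimate usefully fills in a step the paper leaves implicit.
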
 
\begin{proof}
Indeed, the assumption of SCC actions is used by Lemma \ref{SubMulInQ} to guarantee the divergence of $\PS{\mathcal A}$. Except this place, the proper action suffices to prove Lemma \ref{injective}. So the criterion \ref{degrowth} shows that  $\mathcal V_{\epsilon, M, g}$ is a growth-negligible set.
\end{proof}


\subsection{Some applications}
In this subsection, we collect some sample applications of \ref{GrowthTightThm} by demonstrating some ways to embed interesting subsets into a barrier-free set. More applications shall be presented in the paper \cite{YANG11}.

Firstly, \ref{GrowthTightThm} generalizes the growth-tightness for groups  introduced by Grigorchuk and de la Harpe in \cite{GriH}. 

\begin{cor}\label{growthtightcor}
Under the same assumption as \ref{GrowthTightThm}, we have 
$$
\e {\bar G} < \e G,
$$
for any quotient $\bar G$ of $G$ by an infinite normal subgroup $N$. Here $\e {\bar G}$ is computed with respect to the proper action of $\bar G$ on $\mathrm Y/N$.
\end{cor}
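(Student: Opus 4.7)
The plan is to reduce the statement to Theorem \ref{GrowthTightThm} by designating a single barrier element $f \in N$ and arguing that every $\bar g \in \bar G$ admits a minimal lift $\tilde g \in G$---one realizing $\bar d(No, N\tilde g o) = d(o, \tilde g o)$---that is automatically $(\epsilon, M, f)$-barrier-free. With the constants $\epsilon, M$ furnished by Theorem \ref{GrowthTightThm} in hand, I would use properness of the $G$-action together with the infiniteness of $N$ (which makes the orbit $No$ unbounded) to choose $f \in N$ with $d(o, fo)$ exceeding a threshold of order $\epsilon$; the precise constant will emerge from the shortening step below.

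The core of the proof is this shortening step. Suppose for contradiction that a minimal lift $\tilde g$ fails to be $(\epsilon, M, f)$-barrier-free. Since $[o, \tilde g o]$ qualifies as a geodesic between $B(o, M)$ and $B(\tilde g o, M)$, it must carry an $(\epsilon, f)$-barrier: there exist $h \in G$ and points $p, p' \in [o, \tilde g o]$ with $d(ho, p), d(hfo, p') \le \epsilon$. After possibly swapping $(h, f)$ for $(hf, f^{-1})$---which keeps $f^{-1}$ in $N$ and preserves $d(o, fo)$---one may arrange $p$ to precede $p'$ on the geodesic, so that $d(o, p') \ge d(o, ho) + d(o, fo) - 3\epsilon$. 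Normality of $N$ gives $hfh^{-1} \in N$, so $\tilde g' := (hfh^{-1})^{-1}\tilde g$ is still a lift of $\bar g$; applying the isometry $hfh^{-1}$ and two triangle inequalities (traveling $o \to p' \to hfo$ and then exploiting the identity $d(hfh^{-1}o, hfo) = d(o, ho)$) produces
$$d(o, \tilde g' o) = d(hfh^{-1} o, \tilde g o) \le d(o, \tilde g o) - d(o, fo) + 4\epsilon.$$
Enforcing $d(o, fo) > 4\epsilon$ at the outset then contradicts the minimality of $\tilde g$.

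Granted the claim, choosing one minimal lift per $\bar g$ defines an injection $\bar N(o, n) \hookrightarrow \mathcal V_{\epsilon, M, f} \cap N(o, n)$ for each $n$, and Theorem \ref{GrowthTightThm} immediately yields $\e{\bar G} \le \e{\mathcal V_{\epsilon, M, f}} < \e G$. The main obstacle is the shortening step: one must convert the geometric data of the barrier (two nearby $G$-translates of $o$ along $\gamma$) into a coset representative of strictly smaller displacement using the conjugate $hfh^{-1} \in N$, and this is where both the normality of $N$ and the quantitative choice of $f$ with sufficiently large translation length are indispensable.
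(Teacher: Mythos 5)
Your proposal is correct and follows essentially the same route as the paper: take shortest coset representatives, pick $f\in N$ with $d(o,fo)>4\epsilon$, and use normality of $N$ to shorten a representative whenever its geodesic carries an $(\epsilon,f)$-barrier, so that all representatives land in $\mathcal V_{\epsilon,M,f}$ and Theorem \ref{GrowthTightThm} applies. Your version is in fact slightly more careful than the paper's, since you explicitly handle the ordering of the two barrier points along the geodesic before running the triangle-inequality estimate.
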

\begin{proof}
Indeed, choose a \textit{shortest} representative $h$ in $G$ for each  element $\bar h=Nh$ in a quotient group: $d(o, ho)=d(o, Nh\cdot o)$. It is clear that the set $\Gamma$ of these representatives has growth rate (computed with metric $d$) equal to $\e {\bar G}$.  It suffices to see that $\Gamma$  is contained in a set of $(\epsilon, M, g)$-barrier-free elements for a fixed ``long'' element $g$ in the kernel $N$. We can first find  an   element $g$ in $N$ such that $d(o, go)>4\epsilon+1$, since $N$ is infinite. 

We claim that $\Gamma \subset \mathcal V_{\epsilon, M, g}$. If not, then the geodesic $\gamma=[o, ho]$ contains an $(\epsilon, g)$-barrier $t\in G$: $d(t\cdot o, \gamma), d(t\cdot ho, \gamma)\le \epsilon$.  Since $N$ is normal, we have $tg=\hat g t$ for some $\hat g\in N$. So 
$$\begin{array}{rl}
d(o, \hat g^{-1}h\cdot o) & \le d(o, to)+d(to, \hat g^{-1} ho)\\
&\le d(o, to)+d(tg\cdot o, ho) \\
\text{(triangle inequality)}&\le d(o, ho)-d(o, go)+2d(t\cdot o, \gamma)+2d(t\cdot ho, \gamma)\\
&\le d(o, ho)-d(o, go)+4\epsilon<d(o, ho),
\end{array}$$this contradicts to the minimal choice of $h$. So the claim is proved and the corollary follows from \ref{GrowthTightThm}. 
\end{proof}

Notice that the set $\mathcal O_M$ is barrier-free. Informally, \ref{GrowthTightThm} can be interpreted as follows: $\mathcal O_M$ is growth-tight if and only if any barrier-free set is growth-tight. On the other hand,  we could produce a barrier-free set without growth-tightness in a geometrically finite group, when  its parabolic gap property fails. In this sense,   \ref{GrowthTightThm} is best possible.

\begin{prop}\label{growthtightsharp}
There exists a divergent group action of $G$ on a geodesic space $(\mathrm Y, d)$ with a contracting element such that for any $\epsilon, M>0$  we have $$
\e {\mathcal V_{\epsilon, M, g}} = \e G
$$ for some $g \in G$.
\end{prop}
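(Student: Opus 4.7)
The plan is to construct the required example from a geometrically finite Kleinian group violating the parabolic gap property. By work of Dal'bo--Otal--Peign\'e \cite{DOP} (see also \cite{DPPS}), there exists a non-uniform geometrically finite subgroup $G\le\isom(\mathbb{H}^n)$ together with a maximal parabolic subgroup $P<G$ such that $\e P=\e G$ and the action of $G$ on $\mathbb{H}^n$ is divergent at $s=\e G$. Since $G$ is relatively hyperbolic with respect to its maximal parabolic subgroups, any hyperbolic element of $G$ is contracting (Example \ref{examples}), so this group and action satisfy the hypotheses of the proposition. The strategy is then, for any $\epsilon, M>0$, to choose a contracting element $g\in G$ such that $P\subset\mathcal V_{\epsilon,M,g}$; this gives $\e G=\e P\le\e{\mathcal V_{\epsilon,M,g}}\le\e G$.

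Fix $\epsilon,M>0$. Let $\sigma=\sigma(1)$ be the quasi-convexity constant for $Po$ from Proposition \ref{Contractions}.\ref{qconvexity}, and let $\mathcal R$ be the bounded-intersection function for the contracting system
\[
\mathbb F=\{qPo : q\in G\}\cup\{q\ax(h') : q\in G\},
\]
where $h'$ is a fixed hyperbolic element of $G$. Bounded intersection of $\mathbb F$ is standard in the relatively hyperbolic setting, since any two distinct parabolic cosets, as well as any hyperbolic axis and any parabolic coset, meet in bounded neighborhoods in a bounded set (cf.~\cite{GePo4,DruSapir}). I then take $g=(h')^k$ with $k$ large enough that
\[
d(o,go)>\mathcal R(\epsilon+\sigma),
\]
which yields a contracting element $g$ with $\ax(g)=\ax(h')$.

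I now claim $P\subset\mathcal V_{\epsilon,M,g}$. Given $p\in P$, consider the geodesic $\gamma=[o,po]$; quasi-convexity gives $\gamma\subset N_\sigma(Po)$. Suppose, towards a contradiction, that some $h\in G$ is an $(\epsilon,g)$-barrier for $\gamma$. Then both $ho$ and $hgo$ lie in $N_\epsilon(\gamma)\subset N_{\epsilon+\sigma}(Po)$; on the other hand they both lie in the translate $h\cdot\ax(g)\in\mathbb F$, which is distinct from $Po$ because $g$ is hyperbolic and not parabolic. Hence by the bounded-intersection property of $\mathbb F$,
\[
d(o,go)=d(ho,hgo)\le\mathcal R(\epsilon+\sigma),
\]
contradicting the choice of $g$. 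So $\gamma$ is $(\epsilon,g)$-barrier-free, and since $o\in B(o,M)$ and $po\in B(po,M)$ we conclude $p\in\mathcal V_{\epsilon,M,g}$, proving the claim and the proposition.

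The main obstacle is really the input ingredient: producing a \emph{divergent} geometrically finite action for which the parabolic gap fails is delicate and relies on the explicit constructions of Peign\'e and Dal'bo--Otal--Peign\'e. Once such a group is in hand, the rest of the argument is a clean application of bounded intersection of the contracting system, together with the observation that a hyperbolic axis-coset and a parabolic coset are always distinct members of this system.
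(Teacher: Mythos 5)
Your proof is correct in its main geometric idea, and it takes a genuinely different and arguably cleaner route than the paper's. The paper's sketch invokes the Dehn filling machinery of \cite{DGO} together with \cite[Proposition 1.5]{YANG7}: it kills a long hyperbolic element $g$ to produce a quotient $\bar G$ in which the parabolic $P$ survives with $\e{\bar P}=\e P$, and then observes that the shortest lifts of $\bar G$-elements all lie in $\mathcal V_{\epsilon,M,g}$, exactly as in Corollary \ref{growthtightcor}. Your argument bypasses the entire filling step by proving directly that $P\subset \mathcal V_{\epsilon,M,g}$ once $g$ is a sufficiently high power of a fixed hyperbolic element. The mechanism — a barrier for a geodesic inside $N_\sigma(Po)$ would force two far-apart points into the intersection $N_{\epsilon+\sigma}(Po)\cap N_{\epsilon+\sigma}(h\ax(g))$, contradicting bounded intersection — is valid, provided one checks that the enlarged system $\{qPo\}\cup\{q\ax(h')\}$ really has uniform bounded intersection, including across the two families. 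This is indeed standard for relatively/geometrically finite actions (it is implicit in the treatment of fully quasi-convex subsets in \cite{GePo4}), but it is the one piece of your proof that deserves to be made explicit since it is what the paper's Dehn filling route sidesteps. Your approach buys elementarity and independence from the small-cancellation/filling technology; the paper's approach buys consistency with the quotient-based techniques used throughout the rest of the paper.

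One inaccuracy in the ingredient: the divergent geometrically finite example without the parabolic gap is due to Peign\'e (\cite{Peigne}, cited as \cite{Peigne2} in the paper), and it lives on a Cartan--Hadamard manifold of pinched \emph{variable} negative curvature; Dal'bo--Otal--Peign\'e \cite{DOP} constructed only a \emph{convergent}-type example, and in constant curvature $\mathbb H^n$ no such divergent-without-gap example is known. Your argument goes through unchanged on Peign\'e's manifold — parabolic orbits are bounded Hausdorff distance from horospheres and hence contracting, and axes of loxodromic elements are contracting geodesics — so the only fix needed is the citation and the ambient space.
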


\begin{rem}
These groups were constructed by Peign\'e in \cite{Peigne}:  an exotic Schottky group $G$ acts on Cartan--Hadamard manifolds such that $G$ is of divergent type and has no parabolic gap property. A convergent-type case without the parabolic gap property was constructed earlier by Dal'bo et al. \cite{DOP}. 
\end{rem}

\begin{proof}[Sketch of the proof]
Let $G$ be an exotic Schottky group constructed by   Peign\'e in \cite{Peigne2}. It admits  a geometrically finite action of divergent type on a Cartan-Hardamard manifold  and $G$ has no parabolic gap property: there exists a maximal parabolic subgroup $P$ such that $\e P=\e G$.

We draw on a result from \cite[Proposition 1.5]{YANG7}. For $M, \epsilon>0$ fixed, we can invoke the Dehn filling in \cite{DGO} to kill a ``long'' hyperbolic element so that $P$ is ``almostly'' preserved   in the quotient $\bar G$: it projects to be a maximal parabolic subgroup $\bar P$  with $\e {P}=\e{\bar P}$.  Hence, $\e {\bar G}=\e G$. Lift all the elements from $\bar G$ to their shortest representatives in $G$. By the same argument as in Corollary \ref{growthtightcor}, they are contained in $\mathcal V_{\epsilon, M, g}$ so that $\e {\mathcal V_{\epsilon, M, g}} = \e G$. This concludes the proof.
\end{proof}

Recall that a subset $X$ in $\mathrm Y$ is called \textit{weakly $M$-quasi-convex} for a constant $M>0$ if for any two points $x,y$ in $X$ there exists a geodesic $\gamma$ between $x$ and $y$ such that $\gamma \subset N_M(X)$.
\begin{thm}\label{wqcGTight}
Suppose $G$ admits a SCC action on $(\mathrm Y,d)$ with a contracting element. Then every infinite index weakly quasi-convex subgroup $\Gamma$ of $G$ has the property $\e \Gamma <\e G$. 
\end{thm}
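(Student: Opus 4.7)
The plan is to apply the growth-tightness theorem \ref{GrowthTightThm} by exhibiting an element $g \in G$ for which the entire subgroup $\Gamma$ is contained in the barrier-free set $\mathcal V_{\epsilon, M, g}$. The heuristic is that weak quasi-convexity forces every geodesic between orbit points of $\Gamma$ to track $\Gamma\cdot o$ closely, so if we choose $g$ in a position that cannot be shadowed by $\Gamma$ within the prescribed tolerance, then no $(\epsilon, g)$-barrier can appear on such geodesics, and \ref{GrowthTightThm} applies directly.

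First I would fix constants $\epsilon, M>0$ provided by \ref{GrowthTightThm}, and set $F := N(o, \epsilon + M)$, which is finite by properness of the action; after enlarging if necessary, assume $F = F^{-1}$. The crucial selection step is to produce $g \in G \setminus F\Gamma F$. Writing $F\Gamma F = \bigcup_{f,f'\in F}(f\Gamma f^{-1})\cdot ff'$ displays this set as a finite union of left cosets of conjugates of $\Gamma$. Each such conjugate has the same infinite index as $\Gamma$, so by the B.H. Neumann covering lemma, $F\Gamma F$ cannot exhaust $G$, and the desired $g$ exists. (Since $1 \in F\Gamma F$ automatically, this forces $g \ne 1$.)

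Next I would verify $\Gamma \subset \mathcal V_{\epsilon, M, g}$. Given $\gamma \in \Gamma$, weak $M$-quasi-convexity provides a geodesic $\beta = [o,\gamma\cdot o]$ contained in $N_M(\Gamma\cdot o)$, with endpoints in $B(o,M)$ and $B(\gamma\cdot o, M)$ respectively, so $\beta$ is a legitimate candidate in the definition of $\mathcal V_{\epsilon, M, g}$. If $\beta$ admitted an $(\epsilon, g)$-barrier $h \in G$, then both $h\cdot o$ and $hg\cdot o$ would lie within $M+\epsilon$ of $\Gamma\cdot o$, so there would exist $\gamma_1,\gamma_2 \in \Gamma$ with $\gamma_1^{-1}h$ and $g^{-1}h^{-1}\gamma_2$ in $F$. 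Eliminating $h$ from these two relations yields $g \in F^{-1}\Gamma F^{-1} = F\Gamma F$, contradicting the choice of $g$. Hence $\beta$ is $(\epsilon,g)$-barrier-free, so $\gamma \in \mathcal V_{\epsilon, M, g}$, and applying \ref{GrowthTightThm} gives $\e\Gamma \le \e{\mathcal V_{\epsilon, M, g}} < \e G$.

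The main point requiring care is the coset-avoidance step $F\Gamma F \ne G$; once the correct conjugate decomposition is isolated, the Neumann lemma finishes it cleanly, and if one prefers a hands-on substitute, one can iterate the fact that $\Gamma F$ is a finite union of right cosets of $\Gamma$ inside $G$. All other pieces are direct unwindings of the definitions of weak quasi-convexity, the barrier condition, and the set $\mathcal V_{\epsilon, M, g}$.
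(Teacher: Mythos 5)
Your proposal is correct and follows essentially the same route as the paper's proof: you pick $F=N(o,\epsilon+M)$, invoke B.H.\ Neumann's covering lemma (via the decomposition $F\Gamma F=\bigcup_{f,f'\in F}(f\Gamma f^{-1})ff'$) to find $g\notin F\Gamma F$, and then show $\Gamma\subset\mathcal V_{\epsilon,M,g}$ by using weak quasi-convexity to produce an $(\epsilon,g)$-barrier-free geodesic $[o,\gamma o]\subset N_M(\Gamma o)$ before applying Theorem~\ref{GrowthTightThm}. The only cosmetic differences are that the paper first argues $G\setminus F H F$ is actually infinite (you only need nonempty, which suffices) and phrases the barrier step contrapositively; also the cosets $(f\Gamma f^{-1})\cdot ff'$ in your decomposition are right cosets rather than left, though Neumann's lemma applies equally to either.
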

\begin{proof}
Let $\epsilon, M$ be the  constants in \ref{GrowthTightThm}, and $M$ is also the quasi-convexity constant of $H$. The idea of proof is to find an element $g \in G$ such that every element $h\in H$ is $(\epsilon, M, g)$-barrier-free. The existence of such $g$ is guaranteed by the following claim.

We claim that for every finite set $F$, the set $G\setminus F\cdot H\cdot F$ is infinite.   Suppose to the contrary that $G\setminus F\cdot H \cdot F$  is finite for some finite $F$. By enlarging $F$ by a finite set,  we can assume that $G=F\cdot H\cdot F$ so $G\subset (\cup_{f\in F} fHf^{-1})F$. This contradicts to a result of  Neumann \cite{Neumann} states that a group $G$ cannot be a finite union of right cosets of infinite index subgroups. Our claim thus follows.

Let $F$ be the set of elements $f\in G$ such that $d(fo, o) \le M+\epsilon$, where  $M$ is the qusiconvexity constant of $H$. Since $G\setminus F\cdot H\cdot F$ is infinite, let us choose one element $g\notin F\cdot H\cdot F$.  In the remainder, we prove  $H \subset \mathcal V_{\epsilon, M, g}$. 

Indeed, suppose to contrary that there exists some $h$ in $H$ which is not  $(\epsilon, M,  g)$-barrier-free. By weak quasi-convexity, there exists a geodesic $\gamma=[o, ho]$ such that $\gamma \subset N_M(Ho)$.  By definition of barriers,  any geodesic between $B_{M}(o)$ and $B_{M}(ho)$ contains an $(\epsilon, g)$-barrier, so for the geodesic $\gamma \subset N_M(Ho)$, there  exist $b\in G$ and $h_1, h_2 \in H$ such that $d(b\cdot o, h_1o), d(b\cdot g o, h_2o)  \le \epsilon+M$.  By definition of $F$, we have $b^{-1}h_1, \; h_2^{-1} b g \in F$. Note that  $g\in b^{-1}h_2F=b^{-1}h_1\cdot h_1^{-1}h_2 F$ and so $g \in F\cdot H\cdot F$ gives a contradiction. The subgroup $H$ is therefore contained in a growth-tight $\mathcal V_{\epsilon, M,g}$, which allows \ref{GrowthTightThm}   to conclude the proof. 
\end{proof}
\begin{rem}
In the proof, we can  choose $g$ to be a contracting element by Lemma \ref{Positivedensity}. 
\end{rem}

We first give a corollary in mapping class groups.
\begin{prop}\label{FMtight}
Any convex-cocompact subgroup $\Gamma$ in  $G\in \mathbb {Mod}$ is growth-tight: $\e \Gamma < \e G.$ 
\end{prop}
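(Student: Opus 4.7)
The plan is to apply Theorem \ref{wqcGTight} directly to $\Gamma$, so it suffices to verify its three hypotheses: the ambient action is SCC, it admits a contracting element, and $\Gamma$ is a weakly quasi-convex subgroup of infinite index.

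First I would recall from the introduction that the action of $G \in \mathbb{Mod}$ on Teichm\"uller space is SCC (via \cite[Theorem 1.7]{EMR} as observed in \cite[Section 10]{ACTao}), and that any pseudo-Anosov element is contracting by Minsky's theorem (Lemma \ref{pAContr}). This settles the hypotheses on the ambient action.

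Next I would check quasi-convexity. By Farb--Mosher's definition \cite{FarbMosher}, a convex-cocompact $\Gamma$ acts cocompactly on some $\Gamma$-invariant quasi-convex subset of Teichm\"uller space; hence the orbit $\Gamma\cdot o$ lies at finite Hausdorff distance from this set and is itself quasi-convex, and therefore weakly quasi-convex in the sense required by Theorem \ref{wqcGTight}.

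For the infinite-index condition, I would use the fact that convex-cocompact subgroups in $\mathbb{Mod}$ are purely pseudo-Anosov (a standard consequence of Farb--Mosher's definition), while $G$ itself contains reducible elements, e.g.\ Dehn twists. Thus $\Gamma$ cannot contain any finite-index subgroup of $G$ and so has infinite index; Theorem \ref{wqcGTight} then yields $\e \Gamma < \e G$. There is essentially no obstacle: the bulk of the work has already been done in \ref{wqcGTight}, and the only minor point worth highlighting is the appeal to the pure pseudo-Anosov property to rule out finite index.
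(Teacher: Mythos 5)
Your proposal is correct and follows essentially the same route as the paper: both reduce to Theorem~\ref{wqcGTight} by citing Farb--Mosher for weak quasi-convexity of the $\Gamma$-orbit and the pure pseudo-Anosov property (to conclude infinite index), and \cite{ACTao} for the SCC property of the action on Teichm\"uller space. Your extra step of explicitly noting that pseudo-Anosov elements supply the required contracting element is implicit in the paper but entirely harmless.
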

\begin{proof}
By \cite[Proposition 3.1]{FarbMosher},  a convex-cocompact subgroup $\Gamma$ in  $G\in \mathbb {Mod}$ is purely pseudo-Anosov and thus is of infinite index: otherwise  any element in $G$ would have some power being pseudo-Anosov. Any orbit of $\Gamma$ on Teichm\"{u}ller space is weakly quasi-convex by \cite[Theorem 1.1]{FarbMosher}.  Since the action of $G$   on the Teichm\"{u}ller space is statistically convex-cocompact \cite{ACTao}, the result thus follows from Theorem \ref{wqcGTight}.
\end{proof}

Here is another corollary for cubulated groups.
\begin{prop}\label{cubconvex}
Suppose that a group   $G$ acts properly and   cocompactly on a CAT(0) cube complex $\mathrm Y$ so that $\mathrm Y$ does not decompose as a product. Then any weakly quasi-convex subgroup of infinite index in $G$ is growth-tight. In particular, any  cubically convex subgroup   is growth-tight, if it is of infinite index.
\end{prop}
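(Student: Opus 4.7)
The plan is to reduce to Theorem \ref{wqcGTight} by verifying its two hypotheses in this cubical setting: that the action is statistically convex-cocompact (SCC) and that $G$ contains a contracting element.

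For the SCC property, I would observe that a proper and cocompact action is automatically SCC. Indeed, cocompactness gives a constant $K$ such that every point of $\mathrm Y$ lies in $N_K(Go)$. Then for $M\ge K$, any geodesic between $B(o,M)$ and $B(go,M)$ whose interior avoids $N_M(Go)$ must have length bounded in terms of $M$ alone, so $\mathcal O_M$ is contained in a ball of bounded radius around $o$, hence is finite. Thus $\e{\mathcal O_M}=0<\e G$ (note $G$ is non-elementary once we exhibit a contracting element below), so the action is SCC.

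For the contracting element, I would invoke the rank rigidity theorem of Caprace--Sageev for CAT(0) cube complexes: a group acting properly cocompactly on a CAT(0) cube complex that does not decompose as a product admits a rank-1 isometry (after passing to the essential core, which does not change the splitting situation). A rank-1 element in a CAT(0) space is contracting in the sense of Bestvina--Fujiwara, hence contracting in our sense by Example~\ref{examples}(4). With SCC and a contracting element in hand, Theorem~\ref{wqcGTight} applies to give $\e\Gamma<\e G$ for every infinite-index weakly quasi-convex subgroup $\Gamma$.

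For the ``in particular'' clause about cubically convex subgroups: if $\Gamma$ acts cocompactly on a convex subcomplex $C\subset \mathrm Y$, pick $o\in C$. Because $C$ is convex in the CAT(0) metric, any CAT(0)-geodesic between two points of $\Gamma\cdot o\subset C$ stays inside $C$. Cocompactness of the $\Gamma$-action on $C$ provides $M>0$ such that $C\subset N_M(\Gamma\cdot o)$, so every such geodesic lies in $N_M(\Gamma\cdot o)$. Hence $\Gamma\cdot o$ is weakly $M$-quasi-convex, and the first part of the proposition gives growth-tightness when $[\G:\Gamma]=\infty$. The main obstacle I anticipate is purely bookkeeping: making sure that the Caprace--Sageev rank-1 conclusion applies directly under the stated no-product hypothesis (possibly reducing to the essential core, where the non-product assumption is preserved), since everything else follows formally from Theorem~\ref{wqcGTight}.
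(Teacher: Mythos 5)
Your proposal is correct and follows essentially the same route as the paper: cite Caprace--Sageev rank rigidity to obtain a contracting (rank-1) element, note the cocompact action is SCC (the paper lists this as Example (1) after the SCC definition, with $\mathcal O_M=\emptyset$), and apply Theorem~\ref{wqcGTight}. The paper's proof is terser and omits the bookkeeping you spell out (the SCC verification, the reduction of cubical convexity to weak quasi-convexity, and the essential-core remark), but the logical skeleton is identical.
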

\begin{proof}
In \cite[Theorem A]{CapSag}, Caprace and Sageev showed that if $\mathrm Y$ does not decompose as a product, then $G$ contains a rank-1 element which is contracting in our sense. The conclusion therefore follows   from Theorem \ref{wqcGTight}. 
\end{proof}

\section{Purely exponential growth}\label{Section5}
In this section, we first give a proof of \ref{StatisticThm}, and then furnish more details on purely exponential growth of the class of groups listed in Theorem \ref{PEGrowthExamp}.  
\subsection{Proof of   \ref{StatisticThm}} 
We remark that the elementary approach presented here is greatly inspired by the notes of  Peign\`e \cite{Peigne2}. We first recall an elementary lemma. 
\begin{lem}\label{mpeigne}\cite[Fait 1.0.4]{Peigne2}
Given $k>1$, let $a_n$ be a sequence of positive real numbers such that $a_n a_m \le \sum_{|j|\le k} a_{n+m-j}$. Then the following limit   $$\omega:=\lim\limits_{n\to \infty}\frac{\log (a_1+a_2+\cdots+ a_n)}{n}$$ exists and $a_n \prec \exp(n\cdot \omega)$ for $n\ge 1$.
\end{lem}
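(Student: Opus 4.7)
The plan is to write $S_n := a_1 + a_2 + \cdots + a_n$, use the hypothesis to extract an approximate super-multiplicative relation for $S_n$, apply a Fekete-type argument to obtain the limit $\omega=\lim_n \log S_n/n$, and then iterate the diagonal case $n=m$ of the hypothesis to derive the uniform pointwise bound on $a_n$.

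For the first step, I would sum the inequality $a_n a_m \le \sum_{|j|\le k} a_{n+m-j}$ over $1\le n\le N$, $1\le m\le M$. On the right, each $a_l$ arises from triples $(n,m,j)$ with $n+m-j=l$; for each $j$ with $|j|\le k$, the admissible pairs $(n,m)$ are at most $\min(N,M)$ in number, and $l$ is confined to $[1,N+M+k]$. This gives the key inequality
$$S_N S_M \;\le\; (2k+1)\min(N,M)\, S_{N+M+k}.$$
Specializing $N=M=m$ and iterating yields by induction $S_{jm+(j-1)k}\ge S_m^{j}/[(2k+1)m]^{j-1}$. Taking logs, dividing by $jm+(j-1)k$, letting $j\to\infty$ with $m$ fixed, and using monotonicity of $S_n$ for intermediate indices, one gets
$$\liminf_{n\to\infty}\frac{\log S_n}{n}\;\ge\;\frac{\log S_m - \log[(2k+1)m]}{m+k}.$$
Choosing $m$ along a subsequence realizing $\omega := \limsup_n \log S_n/n$ pushes the right side to $\omega$, so the limit $\omega = \lim_n \log S_n/n$ exists.

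For the pointwise bound, I would put $m=n$ in the hypothesis and pick $|j_1^\ast|\le k$ maximizing $a_{2n-j}$, giving $a_{2n-j_1^\ast}\ge a_n^{2}/(2k+1)$. Iterating produces a sequence $n_0=n$, $n_{p+1}=2n_p-j_{p+1}^\ast$ satisfying by an easy induction
$$|n_p-2^p n|\le (2^p-1)k,\qquad a_{n_p}\;\ge\;\frac{a_n^{2^p}}{(2k+1)^{2^p-1}}.$$
Since $n_p\le 2^p(n+k)$, this yields the uniform lower bound $\log a_{n_p}/n_p \ge (\log a_n - \log(2k+1))/(n+k)$ for every $p$. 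Because $\{n_p\}_p$ is a strictly increasing subsequence going to infinity (assuming $n>k$; the finitely many small $n$ are absorbed into the constant), taking the limit superior and invoking $a_q\le S_q$ to get $\omega_\ast := \limsup_q \log a_q/q \le \omega$ forces $(\log a_n-\log(2k+1))/(n+k)\le \omega_\ast\le \omega$. Rearranging gives $a_n \le (2k+1)\, e^{k\omega}\exp(n\omega)$, i.e.\ $a_n \prec \exp(n\omega)$.

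The main thing to watch is the doubling iteration: the additive drift $(2^p-1)k$ must stay negligible compared to $2^p n$, so that the polynomial loss factor $(2k+1)^{2^p-1}$ vanishes upon dividing by $n_p$ in the log. Once this is in place, the recursion $a_{n_p}\gtrsim a_n^{2^p}$ translates directly into a linear-exponential ceiling on $a_n$; everything else is a routine Fekete-style manipulation.
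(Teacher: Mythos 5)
The paper itself does not prove this lemma; it is recalled from Peign\'e's notes \cite[Fait 1.0.4]{Peigne2}, so your proposal supplies a self-contained argument where the paper delegates to a reference. Your overall strategy --- sum the hypothesis to obtain $S_NS_M\le(2k+1)\min(N,M)S_{N+M+k}$, run a Fekete-type argument to get the limit, then iterate the diagonal case to bound $a_n$ --- is sound, and the first half (existence of the limit) is correct as written.

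There is one genuine gap in the pointwise-bound step. After establishing $\log a_{n_p}\ge 2^p\log a_n-(2^p-1)\log(2k+1)$ and $n_p\le 2^p(n+k)$, you assert the uniform lower bound
$$\frac{\log a_{n_p}}{n_p}\;\ge\;\frac{\log a_n-\log(2k+1)}{n+k}\qquad\text{for every }p,$$
but this only follows when $\log a_n-\log(2k+1)\ge 0$: if that quantity is negative, bounding $n_p$ from \emph{above} by $2^p(n+k)$ pushes the fraction the wrong way (you would instead need the lower bound $n_p\ge 2^p(n-k)$ and would land on a weaker denominator $n-k$). The aside about ``finitely many small $n$ absorbed into the constant'' addresses the restriction $n>k$, not this sign issue, which can occur for arbitrarily large $n$. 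The fix is cheap but should be stated: since $S_n\ge a_1>0$ for all $n$, one has $\omega=\lim_n\log S_n/n\ge 0$, so whenever $a_n<2k+1$ the target inequality $a_n\le(2k+1)e^{k\omega}e^{n\omega}$ holds trivially, and your doubling argument is needed only for those $n$ with $a_n\ge 2k+1$, where it is valid. You should add the observation $\omega\ge 0$ and split into these two cases; with that amendment the proof is complete.
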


We now prove the  upper bound for any proper action.
\begin{prop} [Upper bound]\label{expUBD}
Assume that $G$ acts properly on a geodesic metric space $(\mathrm Y,d)$ with a contracting element.  Then the following hold for given $\Delta>0$, 
\begin{enumerate}
\item

$$\e G=\lim\limits_{n\to \infty} \frac{\log \sharp N(o, n, \Delta)}{n}$$
\item
$$
\sharp A(o, n, \Delta)  \prec_\Delta \exp( \e G n)
$$ 
for any $n\ge 1$.
\end{enumerate}
\end{prop}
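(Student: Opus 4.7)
The plan is to apply Lemma \ref{mpeigne} to the annulus counting function $a_n := \sharp A(o, n, \Delta)$ after establishing a sub-multiplicative inequality of the required shape. The key input will be the extension lemma, together with a simple multiplicity bound that uses properness of the action.

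Given $g \in A(o, n, \Delta)$ and $h \in A(o, m, \Delta)$, I would invoke Lemma \ref{extend3} to find some $f \in F$ such that the path $\path{g} \cdot \path{f} \cdot \path{h}$ is $(D, \tau)$-admissible, hence a $c$-quasi-geodesic by Proposition \ref{admissible}. Moreover, the geodesic $\gamma = [o, gfho]$ contains points within $\epsilon_0$ of both $go$ and $gfo$. Projecting $go$ and $gfo$ to $\gamma$ and using the triangle inequality, one obtains $|d(o, gfho) - (n + m)| \le \Delta'$ for $\Delta' := 2\Delta + 4\epsilon_0 + \max_{f \in F} d(o, fo)$, so that $gfh \in A(o, n+m, \Delta')$.

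Next I would bound the multiplicity of the assignment $(g, f, h) \mapsto gfh$. If $(g, f, h)$ and $(g', f', h')$ produce the same element $x$, then both $go$ and $g'o$ lie within $\epsilon_0$ of the geodesic $[o, x]$ at distance within $\Delta + \epsilon_0$ of the same arc of $\gamma$; consequently $d(go, g'o) \le 2\Delta + 4\epsilon_0$ and $g^{-1}g' \in N(o, 2\Delta + 4\epsilon_0)$. By properness, this ball has bounded cardinality $K = K(\Delta)$. Once $g$ is fixed, $fh = g^{-1}x$ is determined and $f$ ranges over the finite set $F$, so the total multiplicity is at most $K \cdot |F|$. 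Decomposing the wider annulus into at most $2\Delta' + 1$ translates of $A(o, \cdot, \Delta)$ yields
\[a_n\, a_m \;\le\; K\,|F|\cdot \sharp A(o, n+m, \Delta') \;\le\; K\,|F| \sum_{|j|\le \Delta'} a_{n+m-j}.\]

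Setting $b_n := a_n / (K |F|)$ rescales this to $b_n b_m \le \sum_{|j| \le \Delta'} b_{n+m-j}$, which is exactly the hypothesis of Lemma \ref{mpeigne} (for all sufficiently large indices, where $a_n > 0$ by the existence of a contracting element). The conclusion gives a true limit $\omega := \lim_n \log\left(\sum_{k=1}^n b_k\right)/n$ together with $b_n \prec \exp(n\omega)$, and unwinding the rescaling establishes assertion (2). Since each element of $N(o,n)$ is counted in precisely $(2\Delta + 1)$ of the annuli $A(o, k, \Delta)$ with $k \le n + \Delta$, one has $\sum_{k=1}^n a_k \asymp (2\Delta+1)\cdot \sharp N(o, n)$, and therefore $\omega$ coincides with $\e G$, giving assertion (1). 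The main point that needs careful verification is the multiplicity estimate — specifically, that the constant $K$ is independent of $n$ and $m$ — but this is an immediate consequence of properness once the fellow-traveling conclusion of the extension lemma is in hand.
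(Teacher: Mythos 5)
Your proof is correct and follows essentially the same strategy as the paper: use the extension lemma to build a super-multiplicative inequality for the annulus counting function, then apply Lemma~\ref{mpeigne}. The only difference is one of implementation: the paper first passes to $R$-separated subsets $B_1\subset A(o,n,\Delta)$, $B_2\subset A(o,m,\Delta)$ via Lemma~\ref{sepnet} (losing a uniform factor $\theta$) so that the extension map $\Phi(b_1,b_2)=b_1fb_2$ is honestly injective by Lemma~\ref{extensionmap}, whereas you work on the full annuli and bound the multiplicity of $(g,h)\mapsto gfh$ directly by $K\cdot|F|$ using the fellow-traveling estimate plus properness. These are the same argument — your multiplicity computation is precisely the content packaged in Lemma~\ref{extensionmap} — so the two routes buy nothing different. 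One small caveat worth tightening: Lemma~\ref{mpeigne} requires positivity of the sequence for all $n$, and your parenthetical about "sufficiently large indices" should be handled (e.g. by taking $\Delta$ large enough that the quasi-geodesic orbit of the contracting element meets every annulus, or by shifting indices), but this is a minor technicality that the paper also elides.
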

\begin{proof}
Let $R=R(\epsilon, \Delta)$ be  a constant given by Lemma \ref{extensionmap}. By Lemma \ref{sepnet}, there exist a constant $\theta=\theta(R)>0$ and two subsets $B_1\subset A(n, \Delta)$ and $B_2\subset A(m, \Delta)$ such that  $B_io$ are both $R$-separated and 
\begin{equation}\label{Cseparated}
\theta \cdot \sharp B_1 \ge \sharp A(o, n, \Delta),\; \theta \cdot \sharp B_2 \ge \sharp A(o, m, \Delta).
\end{equation}

The proof proceeds by establishing the following variant of a super-multiplicative inequality: there exists an integer $k>0$ such that 
\begin{equation}\label{supmultipl}
\theta \sharp A(o, n, \Delta) \cdot \theta \sharp A(o, m, \Delta) \le  \theta   \sum_{|j|\le k}\sharp A(o, n+m+j, \Delta).
\end{equation}

For this purpose, we now define a map $\Phi: B_1\times B_2\to G$. Given $W=(b_1, b_2)$, define $\Phi(W)=b_1 fb_2$ for some $f\in F$ provided by the extension lemma \ref{extend3}. Thus by Lemma \ref{extensionmap},  the map $\Phi$ is injective.   Moreover, any geodesic $[o, b_1fb_2o]$ $\epsilon_0$-fellow travels the path labeled by $b_1fb_2$, so   $$d(o, b_1o) +d(o, b_2o) + L\ge d(o, b_1fb_2o)\ge d(o, b_1o) +d(o, b_2o)-2\epsilon_0$$ where $L:=\max\{d(o, fo): f\in F\}<\infty.$ Noting that $B_1\subset A(n, \Delta)$ and $B_2\subset A(m, \Delta)$, we have $|d(o, b_1o)-n|\le \Delta$ and $|d(o, b_2o)-m|\le \Delta$. Setting $k:=\Delta+L+2\epsilon_0$, it then follows $$b_1fb_2 \in A(o, n+m, \Delta+k).$$  Since $\Phi$ is injective, we obtain 
$$
\sharp B_1\cdot \sharp B_2\le \sum_{|j|\le k} \sharp A(o, n+m, \Delta+j),
$$  
with    (\ref{Cseparated}), which establishes  the  inequality (\ref{supmultipl}). Denoting $a_n=\theta\sharp A(o, n, \Delta)$, the proposition then follows from Lemma \ref{mpeigne}.
\end{proof}


We now prove the last claim of   \ref{StatisticThm}.
\begin{thm}[Purely exponential growth]\label{purexpgrowth}
Assume that $G$ admits a SCC action on $\mathrm Y$ with a contracting element. Then there exists $\Delta>0$ such that 
$$
\sharp A(o, n, \Delta)\asymp \exp(\e G n)
$$
for $n\ge 1$. 
\end{thm}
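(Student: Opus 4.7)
The upper bound $\sharp A(o,n,\Delta) \prec \exp(\e G n)$ is the content of Proposition \ref{expUBD}, so the issue is to obtain a matching lower bound for some $\Delta > 0$. My plan is first to upgrade the SCC hypothesis into divergence of the Poincar\'e series $\PS{G}$ at $s = \e G$, and then to combine that divergence with the super-multiplicative inequality which already powered the upper bound.

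For the divergence, I would invoke the remark following Lemma \ref{SubMulInQ}: the derivation of Lemma \ref{SubMulSet} goes through verbatim with the barrier-free annulus $\mathcal V_{\epsilon, M, P}(n,\Delta)$ replaced by the whole annulus $A(o,n,\Delta)$, yielding
\begin{equation*}
\sharp A(o, n+m, \Delta) \le \sum_{k,j} \sharp A(o, k, \Delta) \cdot \sharp \mathcal O_M(n+m-k-j, 2\Delta) \cdot \sharp A(o, j, \Delta).
\end{equation*}
The SCC hypothesis supplies $\e {\mathcal O_M} < \e G$, so the conclusion of Lemma \ref{SubMulInQ} applied with $\omega = \e G$ gives that $\PS{G}$ diverges at its critical value $s = \e G$. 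Meanwhile, the proof of Proposition \ref{expUBD} extracts from the extension lemma a super-multiplicative inequality which, after dividing by $\exp(\e G (n+m))$ and setting $c_n := \exp(-\e G n)\sharp A(o,n,\Delta)$, reads
\begin{equation*}
c_n \, c_m \le C' \max_{|j| \le k} c_{n+m+j}
\end{equation*}
for some constants $C', k$. The same proposition also furnishes the uniform upper bound $c_n \le C$.

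The conclusion $\sharp A(o,n,\Delta') \succ \exp(\e G n)$ for some $\Delta' > \Delta$ is equivalent to showing that the thickened quantity $b_n := \sum_{|j|\le K} c_{n+j}$ is uniformly bounded below for some $K$. I would try to derive this by combining the three facts above in a Peign\'e-style argument: divergence together with the uniform ceiling $c_n \le C$ should force the good set $I_\epsilon := \{n : c_n \ge \epsilon\}$ to be non-negligible for some $\epsilon > 0$, while the super-multiplicative inequality produces a sum-set inclusion of the form $I_\epsilon + I_\epsilon \subset I_{\epsilon^2/C'}^{+k}$, spreading lower bounds from $I_\epsilon$ to all sufficiently long windows of integers. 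The genuine obstacle is precisely this last step: neither divergence nor super-multiplicativity alone is enough (divergence allows $c_n \to 0$ slowly, and super-multiplicativity from a single seed $c_{n_0} \ge \epsilon$ only produces geometric subsequences with geometrically decaying constants), so making their combination work uniformly in $n$ requires a careful density-propagation argument, and this is where I expect the real work to lie.
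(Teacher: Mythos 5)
The gap you flag at the end is genuine, and it is actually worse than you suggest: the three abstract facts you collect --- the super-multiplicative inequality $c_n c_m \le C' \max_{|j|\le k} c_{n+m+j}$, the uniform ceiling $c_n \le C$, and divergence of $\PS{G}$ at $\e G$ (equivalently $\sum_n c_n = \infty$) --- do \emph{not} imply the conclusion. The sequence $c_n = 1/n$ satisfies all three (since $\frac{1}{nm} \le \frac{2}{n+m-k}$ for $n,m\ge 1$, the harmonic series diverges, and $1/n\le 1$), yet $c_n \to 0$, so the set $I_\epsilon = \{n : c_n \ge \epsilon\}$ is finite for every $\epsilon>0$ and no window $\max_{|j|\le K}c_{n+j}$ can carry a uniform lower bound. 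In particular your first step --- that divergence plus the ceiling ``should force $I_\epsilon$ to be non-negligible'' --- already fails, and this is precisely the informational loss caused by routing the argument through divergence. It is no coincidence that the paper records divergence as Corollary \ref{StatConvexDiv}, \emph{after} Theorem \ref{purexpgrowth}: divergence is a consequence of purely exponential growth, not a lever that produces it.

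The paper's actual route does not invoke the extension-lemma super-multiplicativity at this stage and never meets this obstruction, because it works with a quantity that grows \emph{exponentially} rather than one that should be bounded away from zero. Fix $\omega$ strictly between $\e{\mathcal O_M}$ and $\e G$ (possible by SCC) and set $b^\omega(n,\Delta)=\exp(-\omega n)\sharp A(o,n,\Delta)$. Applying the geodesic decomposition of Lemma \ref{SubMulSet} to the full annuli, one obtains Claim \ref{DOPsubmul}:
\begin{equation*}
b^\omega(n+m,\Delta) \le c_0\, V_n\, V_m, \qquad V_k := \sum_{1\le i\le k} b^\omega(i,\Delta),
\end{equation*}
where the restriction $\omega>\e{\mathcal O_M}$ is what allows the $\mathcal O_M$-contribution to be absorbed into $c_0$. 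The DOP device then turns this into genuine sub-multiplicativity $\tilde V_{n+m}\le\tilde V_n\tilde V_m$ for the doubly-cumulated sums $\tilde V_n = c_0\sum_{k\le n}V_k$; Fekete's lemma yields $\tilde V_n \ge \exp(Ln)$ with $L=\e G -\omega>0$, while the upper bound of Proposition \ref{expUBD} gives the reverse $\tilde V_n \prec \exp(Ln)$. From the two-sided exponential control on $\tilde V_n$, a two-stage telescoping argument (first $\tilde V_{n+K}-\tilde V_n \succ \exp(Ln)$ gives $V_n\succ\exp(Ln)$, then $V_n - V_{n-K}\succ\exp(Ln)$ gives $b^\omega(n,\Delta)\succ\exp(Ln)$ after enlarging $\Delta$) extracts the pointwise lower bound, which becomes $\sharp A(o,n,\Delta)\succ\exp(\e G n)$ after multiplying back by $\exp(\omega n)$. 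The decisive choice is to take $\omega<\e G$ strictly, so that the cumulated sums genuinely grow, making the Fekete inequality into a true lower bound at every $n$; that structural move is what your plan is missing.
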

\begin{proof}
By Proposition \ref{expUBD}, it suffices to prove the lower bound. For $\omega>0$, we define: $$b^\omega(n, \Delta)= \exp(-\omega n) \cdot \sharp A(o, n, \Delta).$$ 
The following claim follows by a simpler argument as in the proof of Lemma \ref{SubMulSet}. 
 
\begin{claim}\label{DOPsubmul}
Given $\e G > \omega> \e {\mathcal O_R}$, there exist $\Delta, c_0>0$ such that the following holds 
$$
b^\omega(n+m,\Delta) \le c_0 \left( \sum_{1\le k\le n} b^\omega(k, \Delta) \right) \cdot \left(\sum_{1\le j \le m} b^\omega(j,\Delta) \right),
$$
for any $n, m \ge 0$. 
\end{claim}
 
Denote $V_k:= \sum_{1\le i\le k} b^\omega(i, \Delta)$ and $\tilde V_n:=c_0\cdot \sum_{1\le k\le n} V_k$. By \cite[Lemma 4.3]{DOP}, it is proved that 
$$
\tilde V_{n+m} \le \tilde V_n \tilde V_m
$$
for $n, m\ge 1$. By Feketa's Lemma, it follows that 
$$
\limsup_{n\to \infty} \frac{\log \tilde V_n}{n}=\inf_{n\ge 1} \{\frac{\log \tilde V_n}{n}\}=L
$$
for some $L\in \mathbb R\cup\{-\infty\}$. Take into account the elementary fact   $$\limsup_{n\to \infty} \frac{\log \tilde V_n}{n}=\limsup_{n\to \infty} \frac{\log V_n}{n}=\limsup_{n\to \infty} \frac{\log  b^\omega(n, \Delta)}{n},$$ which  implies that $L=\e { G}-\omega>0$. Hence, $\tilde V_n \ge \exp(Ln)$. 

On the other hand, by Proposition \ref{expUBD}, it follows that $b^\omega(n, \Delta)\prec \exp(Ln)$. From definition of $V_n$ and $\tilde V_n$, it implies that $V_n \prec  \exp(Ln)$ and then $\tilde V_n \prec \exp(Ln)$
for $n\ge 1$. Since $\tilde V_n \ge \exp(Ln)$,   an elementary argument produces a constant $K>0$ such that
$$
\sum_{0\le k <  K} V_{n+k}=\tilde V_{n+K}-\tilde V_{n} \succ \exp(Ln),
$$ 
which yields $V_{n}\succ  \exp(Ln),$ due to $V_n \le V_{n+1}$. We repeat the argument   for $V_{n-K}$ by making use of $V_n \prec  \exp(Ln)$, and show that $  b^\omega(n, \Delta)\succ  \exp(Ln)$. By the fact $L=\e { G}-\omega$, we have $\sharp A(o, n, \Delta) \succ \exp(n\cdot \e G)$, completing the proof.
\end{proof}

Let us record the following useful corollary.
\begin{cor}\label{StatConvexDiv}
Any SCC action with a contracting element is of divergent type: $\PS G$ diverges at $s=\e G$.
\end{cor}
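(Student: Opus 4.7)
The plan is to derive divergence at $s=\e G$ directly from the lower bound of Theorem \ref{purexpgrowth}. By Theorem \ref{purexpgrowth}, there exists $\Delta>0$ and $c>0$ such that $\sharp A(o,n,\Delta)\ge c\exp(\e G\, n)$ for all $n\ge 1$. The idea is to pick a family of pairwise disjoint annuli whose centers are spaced far enough apart that each element of $G$ is counted at most once, and to bound $\PS G$ from below by the contribution of these annuli.

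Concretely, I would fix some $M>2\Delta$ (for instance $M=2\Delta+1$) and look at the annuli $A(o,Mn,\Delta)$ for $n\ge 1$. Since consecutive centers differ by $M>2\Delta$, these annuli are pairwise disjoint subsets of $G$. Writing
\[
\PS{G}=\sum_{g\in G}\exp(-s\,d(o,go))
\ge \sum_{n\ge 1}\sum_{g\in A(o,Mn,\Delta)}\exp(-s\,d(o,go))
\ge \sum_{n\ge 1}\sharp A(o,Mn,\Delta)\cdot\exp\bigl(-s(Mn+\Delta)\bigr),
\]
and then applying the lower bound from Theorem \ref{purexpgrowth}, we obtain at $s=\e G$ the estimate
\[
\PS{G}\bigg|_{s=\e G}\;\ge\; c\,\exp(-\e G\,\Delta)\sum_{n\ge 1} \exp(\e G\, Mn)\cdot\exp(-\e G\, Mn)
\;=\;c\,\exp(-\e G\,\Delta)\sum_{n\ge 1} 1\;=\;\infty.
\]

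That is all that is needed; the main substance has already been done in Theorem \ref{purexpgrowth}, so there is no real obstacle to overcome here. The only minor care is that the lower bound $\sharp A(o,Mn,\Delta)\succ \exp(\e G\, Mn)$ must hold for the value of $\Delta$ produced by the theorem, which is automatic by choosing $M$ in terms of that same $\Delta$. Thus the Poincaré series diverges at the critical exponent, and the action is of divergent type.
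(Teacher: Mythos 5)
Your proof is correct and is essentially the argument the paper intends: Corollary \ref{StatConvexDiv} is recorded immediately after Theorem \ref{purexpgrowth} with no written proof precisely because the lower bound $\sharp A(o,n,\Delta)\succ\exp(\e G\,n)$ makes divergence of $\PS G$ at $s=\e G$ immediate upon summing over (a disjoint subfamily of) annuli. The only cosmetic point is to take $M$ to be an integer exceeding $2\Delta$ so that the theorem's bound applies verbatim at radius $Mn$.
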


\subsection{Proof of Theorem \ref{PEGrowthExamp}}\label{ProofPEGrowthExamp} We explain in details the proof of each assertion in Theorem \ref{PEGrowthExamp}.

The class of graphical small cancellation groups was  introduced by Gromov \cite{Gro4}  for building exotic  groups (the ``Gromov monster''). In \cite{ACGH}, Arzhantseva et al. have made a careful study of contracting phenomena in  a $\mathrm{Gr}'(1/6)$-labeled graphical small cancellation group $G$.  Such a group $G$ is given by a presentation obtained from a labeled graph $\mathcal G$  such that under a certain small cancellation hypothesis, the graph $\mathcal G$ embeds into the Cayley graph of $G$.  It is proved that if  $\mathcal G$ has only finitely many components  labeled by a finite set $S$, then the action of $G$  on  Cayley graphs contains   a contracting element. Therefore, the following holds by \ref{StatisticThm}: 

\begin{thm}\label{Gr1/6}
A $\mathrm{Gr}'(1/6)$-labeled graphical small cancellation group $G$ with finite components  labeled by a finite set $S$ has purely exponential growth for the corresponding action.
\end{thm}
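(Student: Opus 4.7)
The plan is to reduce this statement directly to the third assertion of \ref{StatisticThm}. Since $S$ is finite, the Cayley graph $(\mathrm Y, d)$ of $G$ with respect to $S$ is a proper geodesic metric space, and the left action of $G$ on $\mathrm Y$ is by isometries. This action is evidently proper (point stabilizers of vertices are trivial, and balls are finite because $\mathrm Y$ is locally finite) and cocompact (the fundamental domain for the vertex action is a single vertex, and each edge has length one). Hence the first hypothesis of \ref{StatisticThm} is immediate.

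Next, I would invoke the result of Arzhantseva, Cashen, Gruber and Hume \cite{ACGH} recalled in the introduction: in a $\mathrm{Gr}'(1/6)$-labeled graphical small cancellation group $G$ whose defining graph has only finitely many components labeled by a finite set $S$, there exists an infinite order element $g \in G$ whose axis is contracting with respect to the word metric coming from $S$. This furnishes a contracting element for the action of $G$ on $\mathrm Y$, so the second hypothesis of \ref{StatisticThm} is satisfied.

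It remains to verify that the action is SCC, and this is trivial for any cocompact action: if $M$ is chosen larger than the diameter of a compact fundamental domain for $G \act \mathrm Y$, then every point of $\mathrm Y$ lies in $N_M(Go)$, so the interior of a geodesic between $B(o,M)$ and $B(go,M)$ cannot avoid $N_M(Go)$ unless it is empty. Consequently $\mathcal O_{M}$ is empty (or at least bounded), and in particular $\e{\mathcal O_M} < \e G$ provided $\e G > 0$, which holds since $G$ contains a contracting element (hence a rank-one loxodromic subgroup, hence exponential growth). Thus the action is SCC in the sense of Definition \ref{StatConvex}.

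With all three hypotheses in place, assertion (3) of \ref{StatisticThm}, proved as Theorem \ref{purexpgrowth}, gives a constant $\Delta > 0$ such that $\sharp A(o, n, \Delta) \asymp \exp(\e G \cdot n)$ for all $n \ge 1$, which is exactly the claimed purely exponential growth. No genuine obstacle is expected: the only nontrivial input is the existence of a contracting element from \cite{ACGH}, and everything else is a direct appeal to the general machinery already established.
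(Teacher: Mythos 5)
Your proposal is correct and follows essentially the same route as the paper: the Cayley graph action is proper and cocompact (so $\mathcal O_M$ is empty or finite and the action is SCC, exactly as in the paper's first example of SCC actions), the contracting element is imported from \cite{ACGH}, and purely exponential growth then follows from assertion (3) of \ref{StatisticThm}. The only detail worth keeping, which you already handle, is that $\mathcal O_M$ may contain the finitely many $g$ with $d(o,go)\le 2M$ (empty interior of $\gamma$), but this finite set still has critical exponent strictly below $\e G>0$.
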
 

The    class of CAT(0) groups with rank-1 elements    admits a geometric (and thus SCC) action with a contracting element.  In particular,  consider the class of a right-angled Artin group (RAAG) whose presentation is obtained from a finite simplicial graph $\Gamma$ as follows:
\begin{equation}\label{RAAG}
G=\langle V(\Gamma)|v_1v_2=v_2v_1 \Leftrightarrow (v_1, v_2)\in E(\Gamma)  \rangle
\end{equation}
See \cite{Kob} for a reference on RAAGs. It is known that an RAAG acts properly and cocompactly on a non-positively curved  cube complex called the \textit{Salvetti complex}.  

 It is known that the defining graph is a join if and only if the RAAG is a direct product of non-trivial groups.  In \cite[Theorem 5.2]{BehC}, Behrstock and Charney proved that any subgroup of an  RAAG $G$   that is not conjugated into a \textit{join subgroup} (i.e., obtained from a join subgraph)  contains a  contracting element.  Therefore, we obtain the following:

\begin{thm}[$\mathbb {RAAG}$]
Right-angled Artin groups that are not direct products have purely exponential growth for the action on their Salvetti complex. 
\end{thm}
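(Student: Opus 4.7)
The plan is to verify the hypotheses of Theorem B (the exponential growth theorem, StatisticThm) for the action of $G$ on the Salvetti complex $\mathrm Y$ and then quote it directly. Recall that Theorem B asserts that any SCC action with a contracting element has purely exponential growth, so the task reduces to producing a contracting element and checking SCC.

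First I would recall that the Salvetti complex $\mathrm Y$ associated with the presentation (\ref{RAAG}) is a CAT(0) cube complex, and that the standard action of the RAAG $G$ on $\mathrm Y$ is proper and cocompact. Proper cocompact actions are the very first class of examples listed in \textsection\ref{SSSCC}: the concave region $\mathcal O_M$ is empty for $M$ larger than the diameter of a fundamental domain, so $\e{\mathcal O_M} = -\infty < \e G$ trivially, and the action is SCC.

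Next I would exhibit a contracting element. Under the hypothesis that $G$ is not a direct product of non-trivial factors, the defining graph $\Gamma$ is not a join; in particular $G$ is itself not contained (up to conjugation) in any proper join subgroup. The Behrstock--Charney theorem cited above (\cite[Theorem 5.2]{BehC}) then produces an element $g \in G$ whose axis on the Salvetti complex is contracting in the sense of Definition \ref{ContrDefn}. Together with the quasi-isometric embedding of $\langle g \rangle$ into $\mathrm Y$ furnished by the cocompact action, this yields the contracting element required by the setup of Theorem B.

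Having checked both hypotheses, I would simply invoke Theorem B (or equivalently Theorem \ref{purexpgrowth}) to conclude the existence of $\Delta > 0$ with $\sharp A(o,n,\Delta) \asymp \exp(\e G \cdot n)$ for all $n \ge 1$. The only genuine input beyond previously established work in this paper is the Behrstock--Charney contracting element, so there is no real obstacle; the statement is essentially a specialization of Theorem B combined with an existence result for contracting elements in RAAGs.
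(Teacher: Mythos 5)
Your proposal is correct and follows essentially the same route as the paper: cocompactness of the action on the Salvetti complex gives SCC (empty concave region), the Behrstock--Charney theorem supplies the contracting element since a non-join defining graph means $G$ is not conjugate into a proper join subgroup, and Theorem \ref{StatisticThm} then yields purely exponential growth. No gaps to report.
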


The class of right-angled Coxeter groups (RACGs) can be defined as in (\ref{RAAG})   with additional relations $v^2=1$ for each $v\in V(\Gamma)$. An RACG also acts properly and cocompactly on a CAT(0) cube complex called the Davis complex (which is equal to the Salvetti complex of the corresponding RAAG). In \cite[Proposition 2.11]{BHS}, Berhstock et al. characterized   an RACG $G$ of linear divergence as virtually a direct product of groups.   By \cite[Theorem 2.14]{ChaSul}, Charney and Sultan proved that the existence of rank-1 elements is equivalent to a superlinear divergence.  Hence, we have the following.
\begin{thm}[$\mathbb {RACG}$]\label{Behrstock} 
If a right-angled Coxeter group is not virtually a direct product of non-trivial groups, then it has purely exponential growth for the action on the Davis complex. 
\end{thm}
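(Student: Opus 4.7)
The plan is to verify the hypotheses of Theorem \ref{StatisticThm}(3): a statistically convex-cocompact action with a contracting element. Since $G$ acts properly and cocompactly on the Davis complex $\mathrm{Y}$, the action is automatically SCC---indeed, once $M$ exceeds the diameter of a fundamental domain, the set $\mathcal{O}_M$ is empty. Hence the only real content is to exhibit a contracting element in $G$.

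First I would apply the characterization due to Behrstock, Hagen and Sisto \cite{BHS}: a right-angled Coxeter group has linear divergence if and only if it is virtually a non-trivial direct product. Since by hypothesis $G$ is not virtually such a product, the divergence function of the Davis complex must be superlinear. Next I would invoke the Charney--Sultan dichotomy \cite{ChaSul} for proper cocompact CAT(0) actions, which equates superlinear divergence with the existence of a rank-$1$ isometry. This yields a rank-$1$ element $g \in G$ acting on $\mathrm{Y}$.

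Finally, rank-$1$ elements in CAT(0) spaces are contracting in the sense of Definition \ref{ContrDefn}, as recorded in Example \ref{examples}(4) via the work of Bestvina and Fujiwara. With a contracting element together with an SCC action now in hand, Theorem \ref{StatisticThm}(3) applies and delivers purely exponential growth for the $G$-action on $\mathrm{Y}$. There is no real obstacle here---the argument is simply the assembly of the Behrstock--Hagen--Sisto and Charney--Sultan characterizations with the main counting theorem of the paper; all the technical work is encapsulated in those three ingredients.
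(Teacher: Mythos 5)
Your proposal is correct and follows essentially the same route as the paper: cocompactness makes the action SCC with $\mathcal O_M$ empty, Behrstock--Hagen--Sisto rules out linear divergence, Charney--Sultan then produces a rank-$1$ (hence contracting) element, and Theorem \ref{StatisticThm}(3) closes the argument. The paper's proof cites exactly the same two external results (\cite[Proposition 2.11]{BHS} and \cite[Theorem 2.14]{ChaSul}) in the same chain.
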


\section{Constructing SCC actions}\label{Section6}
In this   section, we shall present a simple method to produce a statistically convex-cocompact action. The main result is constructing examples in $\mathbb {Mod}$ of irreducible subgroups with a  SCC action on Teichm\"{u}ller space but which are not convex-cocompact.
\subsection{Independence of basepoints for SCC actions}
In this subsection we show that SCC actions with a contracting element do not depend on the choice of basepoints. The ingredient of the proof is the growth-tightness   \ref{GrowthTightThm}. It is not clear whether the assumption of the existence of a contracting element is removable.  
\begin{lem}\label{concaveRegion}
If $\e {\mathcal O_{M}}<\e G$ for some $M>0$, then  $\e {\mathcal O_{M_1, M_2}}<\e G$ for any $M_2\ge  M_1 \gg M$.
\end{lem}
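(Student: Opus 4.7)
My plan is to deduce this from Theorem \ref{GrowthTightThm} by showing that, up to a finite exceptional set, $\mathcal O_{M_1,M_2}$ embeds into some growth-tight barrier-free set $\mathcal V_{\epsilon, M, f}$ built with the original SCC constant $M$.

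First I fix the ingredients. Let $\mathbb F$ be the contracting system from Convention \ref{ConvExtensionLemma}, and let $\epsilon=\epsilon(\mathbb F,M)>0$ be the constant supplied by Theorem \ref{GrowthTightThm} applied with the SCC constant $M$ from the hypothesis. Since $M_1\gg M$, in particular $M_1>\epsilon$. Pick a contracting element $f\in G$ (replacing a given contracting element by a sufficiently high power) so that
$$
d(o,fo)\;>\;2\epsilon+4M_2.
$$

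Next I analyze barriers on witness geodesics. Given $g\in\mathcal O_{M_1,M_2}$ with witness geodesic $\gamma=[x,y]$ (with $x\in B(o,M_2)$, $y\in B(go,M_2)$, and interior outside $N_{M_1}(Go)$), suppose $h\in G$ is an $(\epsilon,f)$-barrier of $\gamma$, i.e.\ $\max\{d(ho,\gamma),d(hfo,\gamma)\}\le \epsilon$. Because $\epsilon<M_1$ and every interior point of $\gamma$ lies at distance greater than $M_1$ from $Go$, the orbit point $ho$ cannot project to the interior of $\gamma$: it must lie within $\epsilon$ of $x$ or of $y$. Hence $h\in N(o,\epsilon+M_2)$ or $g^{-1}h\in N(o,\epsilon+M_2)$, a finite set by properness. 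The same argument for $hfo$ constrains $hf$ to the same union. A short case analysis (ruling out $ho$ and $hfo$ both near the same endpoint by the lower bound on $d(o,fo)$) then shows that either $\gamma$ has no $(\epsilon,f)$-barrier, or $g$ lies in the finite set
$$
\mathfrak F\;:=\;N(o,\epsilon+M_2)\cdot\{f,f^{-1}\}\cdot N(o,\epsilon+M_2).
$$

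Next I transport barrier-freeness of $\gamma$ to a genuine witness for $\mathcal V_{\epsilon,M,f}$. If $\gamma$ is $(\epsilon,f)$-barrier-free, I claim any geodesic $\beta=[o,go]$ is also $(\epsilon,f)$-barrier-free, which by definition (and since $\{o\}\subset B(o,M)$, $\{go\}\subset B(go,M)$) places $g$ in $\mathcal V_{\epsilon,M,f}$. The argument uses the contracting system $\mathbb F$: the endpoints of $\gamma$ are within $M_2$ of the endpoints of $\beta$, so any $(\epsilon,f)$-barrier of $\beta$ yields, by projecting onto the contracting sets determined by $f$ and applying Proposition \ref{Contractions}, a barrier of $\gamma$ up to bounded error, contradicting the barrier-freeness of $\gamma$ (after possibly enlarging $f$ by an extra power to absorb the bounded error).

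Finally I combine. Theorem \ref{GrowthTightThm} gives $\e{\mathcal V_{\epsilon,M,f}}<\e G$, and $\mathfrak F$ is finite with $\e{\mathfrak F}=0$. Since $\mathcal O_{M_1,M_2}\subseteq \mathcal V_{\epsilon,M,f}\cup\mathfrak F$, we conclude $\e{\mathcal O_{M_1,M_2}}<\e G$.

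The main obstacle is the transport step: bridging from the witness $\gamma$ (whose endpoints sit in $B(o,M_2)$ and $B(go,M_2)$) to the geodesic $[o,go]$ whose endpoints sit in $B(o,M)$ and $B(go,M)$. In hyperbolic or CAT(0) settings a thin-quadrilateral argument suffices, but in the general metric setting here the argument must be made via the contracting system $\mathbb F$ and the strong separation $M_1\gg M$, which ensures that any candidate barrier of $[o,go]$ cannot slip into the $N_{M_1}$-deep desert carved out by $\gamma$ without being detected as a (transported) barrier of $\gamma$ itself.
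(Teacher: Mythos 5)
Your overall strategy coincides with the paper's: both proofs reduce Lemma \ref{concaveRegion} to \ref{GrowthTightThm} by showing that $\mathcal O_{M_1,M_2}$ sits inside a barrier-free set $\mathcal V_{\epsilon,M,f^m}$ for a high power of a contracting element, and both use the contracting property of $\ax(f)$ to bridge between the witness geodesic $\gamma$ (endpoints in the $M_2$-balls) and a geodesic $\beta$ between the $M$-balls. The difference is the direction and the target of the bridge. The paper argues directly: if every $\beta$ carries an $(\epsilon,f^m)$-barrier $b$, then $\beta$ fellow-travels $b\ax(f)$ over length about $d(o,f^mo)-2\epsilon$; since $d(\gamma_\pm,\beta_\pm)\le M+M_2$, the contracting property forces $\gamma\cap N_C(b\ax(f))\ne\emptyset$ once $d(o,f^mo)$ is large compared with $M+M_2$, and since $b\ax(f)\subset Go$ and the meeting happens deep inside $\gamma$, this already contradicts the interior of $\gamma$ lying outside $N_{M_1}(Go)$ for $M_1>C$. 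No classification of barriers on $\gamma$, and no finite exceptional set, is ever needed.

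The weak point of your version is precisely the transport step you flag. You want a barrier of $\beta$ to produce a literal $(\epsilon,f)$-barrier of $\gamma$ with the \emph{same} $\epsilon$ and the \emph{same} $f$, so that your step 2 applies. What the contracting argument actually yields is that $\gamma$ passes within the contraction constant $C$ of a long stretch of $h\ax(f)$; turning this into an $(\epsilon,f^{m'})$-barrier of $\gamma$ costs a bounded loss in the power ($m'<m$) and changes the proximity constant. Your proposed fix, ``enlarge $f$ by an extra power,'' does not close the quantifiers: the enlarged element must then be used consistently in the definition of the growth-tight set $\mathcal V_{\epsilon,M,\cdot}$, in step 2's case analysis, and in the finite set $\mathfrak F$, and the barrier you obtain on $\gamma$ is for a \emph{different} (smaller) power than the one you started from on $\beta$. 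This is repairable, but the repair amounts to abandoning the intermediate ``barrier on $\gamma$'' and observing, as the paper does, that $\gamma$ entering $N_C(Go)$ in its interior is already the contradiction. Your step 2 (barriers on $\gamma$ are confined to its endpoints, whence the finite set $\mathfrak F$) is correct as far as it goes, but it is an artifact of the detour rather than a needed ingredient.
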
 
\begin{proof}
Fix a contracting element $f\in G$. By \ref{GrowthTightThm}, the set $\mathcal V_{\epsilon, M, f^m}$ is growth-tight for any $m>0$. The proof consists in verifying $\mathcal O_{M_1, M_2}\subset \mathcal V_{\epsilon, M, f}$ for appropriate constants $M_1, M_2$. 

Let $g\in \mathcal O_{M_1, M_2}$ so there exists a geodesic $\gamma$ between $B(o, M_2)$ and $B(go, M_2)$ such that the interior of $\gamma$ lies outside $N_{M_1}(Go)$. Assume, to the contrary, that $g\notin \mathcal V_{\epsilon, M, f^m}$, then any geodesic $\beta$ between $B(o, M)$ and $B(go, M)$ has an $(\epsilon ,f^m)$-barrier so there exists $b\in G$ such that $$\diam{b\ax(f)\cap \beta}\ge d(o, f^mo)-2\epsilon$$ where the right side tends to $\infty$ as $m\to \infty$. Denote by $C$ the contraction constant of $\ax(f)$. Noticing that $d(\gamma_-, \beta_-), d(\gamma_+, \beta_+)\le M+M_2,$ a contracting argument implies that if $d(o,f^{m}o)$ is sufficiently large comparable with $ M+M_2$, then  $\gamma\cap N_C(b\ax(f))\ne \emptyset.$ By setting $M_1>C$, we obtain   $\gamma\cap N_{M_1}(Go)\ne \emptyset$,  a contradiction with the assumption of $\mathring{\gamma}$ outside $N_{M_1}(Go)$. Hence,  it is proved that $\mathcal O_{M_1, M_2}\subset \mathcal V_{\epsilon, M, f^m}$ for large $m$: the proof is done.
\end{proof}

\begin{lem}\label{SCCBasepoint}
The definition of a SCC action with a contracting element is independent of the choice of basepoints. 
\end{lem}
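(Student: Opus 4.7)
The plan is to reduce the change-of-basepoint assertion to Lemma \ref{concaveRegion}, which permits us to enlarge the parameters $M_1, M_2$ at will. Fix two basepoints $o, o' \in \mathrm Y$ and set $d_0 := d(o, o')$; write $\mathcal O_M(o)$, $\mathcal O_{M_1,M_2}(o)$ etc.\ to emphasize the basepoint used in the definition of the concave region. Assume that the action is SCC with respect to $o$, so that $\e{\mathcal O_M(o)} < \e G$ for some $M > 0$. I aim to show that SCC also holds with respect to $o'$.

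The key geometric observation is that the orbits $Go$ and $Go'$ have Hausdorff distance at most $d_0$, since $d(go, go') = d_0$ for every $g \in G$; consequently
$$
N_{M' - d_0}(Go) \subset N_{M'}(Go') \qquad (M' > d_0).
$$
Likewise, the triangle inequality gives $B(o', M') \subset B(o, M' + d_0)$ and $B(go', M') \subset B(go, M' + d_0)$.

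The first step is to establish the inclusion
$$
\mathcal O_{M'}(o') \;\subset\; \mathcal O_{M' - d_0,\, M' + d_0}(o) \qquad (M' > d_0).
$$
Indeed, given $g \in \mathcal O_{M'}(o')$ with a witnessing geodesic $\gamma$ from $B(o', M')$ to $B(go', M')$ whose interior avoids $N_{M'}(Go')$, the inclusions above show that $\gamma$ joins $B(o, M'+d_0)$ to $B(go, M'+d_0)$, and that the interior of $\gamma$ avoids $N_{M'-d_0}(Go)$. Next I would invoke Lemma \ref{concaveRegion} for the basepoint $o$: since the hypothesis provides $\e{\mathcal O_M(o)} < \e G$ and a contracting element, the lemma supplies $M_0 \gg M$ such that $\e{\mathcal O_{M_1, M_2}(o)} < \e G$ whenever $M_2 \ge M_1 \ge M_0$. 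Taking $M' \ge M_0 + d_0$, the above inclusion yields
$$
\e{\mathcal O_{M'}(o')} \;\le\; \e{\mathcal O_{M'-d_0,\, M'+d_0}(o)} \;<\; \e G,
$$
which is precisely SCC with respect to $o'$; recall that the critical exponent of a subset of $G$ computed via (\ref{criticalexpo}) is independent of the reference basepoint.

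I do not foresee a serious obstacle here; the content of the argument is purely a bookkeeping exercise in the parameters, and all the substance is hidden inside Lemma \ref{concaveRegion}, whose proof in turn relies on the growth-tightness Theorem \ref{GrowthTightThm}. The mild subtlety is to keep careful track of which inclusions between neighborhoods go in which direction, but once the two-parameter formulation $\mathcal O_{M_1, M_2}$ is used one can absorb the shift $d_0$ symmetrically into $M_1$ and $M_2$.
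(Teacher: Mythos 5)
Your argument is correct and is essentially the paper's own proof: both fix the two basepoints, use the fact that $Go$ and $Go'$ are at Hausdorff distance $d_0 = d(o,o')$ to get an inclusion of concave regions that shifts the two parameters by $\pm d_0$, and then invoke Lemma \ref{concaveRegion} to guarantee the growth-tightness bound survives that shift. The only difference is cosmetic bookkeeping: you start from the single-parameter region $\mathcal O_{M'}(o')$ and push it into $\mathcal O_{M'-d_0, M'+d_0}(o)$, whereas the paper starts from a two-parameter region for $o$ already prepared by Lemma \ref{concaveRegion} and shows it contains $\mathcal O'_{M_1',M_2'}$ for the shifted parameters.
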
 
\begin{proof}
Consider different basepoints $o, o' \in \mathrm Y$. We choose two constants $M_1\le M_2$ by Lemma \ref{concaveRegion}  such that $M_2>M_1+d(o,o')$ and $\e {\mathcal O_{M_1, M_2}}<\e G$. Define   $M_1'=M_1+d(o, o')$ and $M_2'=M_2-d(o, o')$. We claim that $\mathcal O'_{M_1', M_2'}\subset \mathcal O_{M_1, M_2}$, where $\mathcal O'_{M_1', M_2'}$ is the concave region defined using the basepoint $o'$.  

Indeed, let $g \in   \mathcal O'_{M_1', M_2'}$ so some geodesic $[x',y']$ between $B(o', M_2')$ and $B(go', M_2')$  has the interior disjoint with $N_{M_1'}(Go')$. Since $M_1'= M_1+d(o, o')$, we have $N_{M_1}(Go) \subset N_{M_1'}(Go')$ so the interior of $[x', y']$   lies outside $N_{M_1}(Go)$ as well.  By the choice of $M_2'=M_2-d(o, o')$, the geodesic $[x', y']$   lies between  $B(o, M_2)$ and $B(go, M_2)$. Thus,   $g \in \mathcal O_{M_1, M_2}$ so the claim thus follows. Thus,  $\e {\mathcal O_{M'}} <\e G$ by Lemma \ref{concaveRegion},  concluding the proof of lemma.
\end{proof}

\subsection{Free product combination}
 With the critical gap criteron \ref{degrowth}, the following combination result is not surprising.

\begin{prop}\label{FreeproductComb}
Assume that $G$ acts properly on a geodesic metric space $(\mathrm Y, d)$. Consider two  subgroups $H, K$ such that $H$ is a residually finite, contracting subgroup and $\proj_{Ho}(Ko)<\infty$ for a basepoint $o \in \mathrm Y$.

If  either $K$ is residually finite, or $\{k H\cdot o: k\in K\}$ has bounded intersection, then there exist finite index subgroups $\hat H$, $\hat K$  of $H, K$ respectively such that the subgroup $\Gamma$ generated by $\hat H, \hat K$ is isomorphic to $\hat H\star \hat K$.   

In addition, if $\e H\ge\e K$ or $\e H=\e K=0$, then $\Gamma$ admits a SCC action on $\mathrm Y$.
\end{prop}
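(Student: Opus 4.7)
The proof has two parts: first build the free product by ping--pong, then analyse the concave region for the $\Gamma$-action to establish SCC. Throughout we work with the contracting system $\mathbb{X} = \{gHo : g \in G\}$ and fix constants $D, \tau, \epsilon_0$ from Proposition~\ref{admissible} together with the contraction constant $C$ of $Ho$.

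\textbf{Step 1 (constructing $\hat H, \hat K$).} The hypothesis $\proj_{Ho}(Ko) < \infty$ supplies the bounded projection condition $(\mathbf{BP})$ for any alternating product: if $\gamma$ is labelled by $h_1 k_1 h_2 k_2 \cdots$, then the projections of consecutive $k$-segments to the surrounding translates of $Ho$ are uniformly bounded by a constant $\tau_0$. Using residual finiteness of $H$, I would pass to a finite index $\hat H \le H$ such that every nontrivial element satisfies $d(o, ho) > D$; this enforces $(\mathbf{LL1})$. For $(\mathbf{LL2})$, I split into two cases following the hypothesis: if $\{kHo : k \in K\}$ already has bounded intersection, then distinct cosets $kHo$ and $k'Ho$ have bounded intersection automatically, so $(\mathbf{LL2})$ holds for any finite-index $\hat K$. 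Otherwise, residual finiteness of $K$ lets me choose $\hat K \le K$ of finite index such that every nontrivial $k \in \hat K$ produces a ``gap'' $d((p_i)_+, (p_{i+1})_-) > D$ between consecutive geodesic segments; here I also need to be careful that $\hat H \cap \hat K$ is trivial, which can be arranged by further refinement using residual finiteness.

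\textbf{Step 2 (free product structure).} With these choices, any nontrivial reduced word $h_1 k_1 h_2 k_2 \cdots$ labels a $(D, \tau)$-admissible path relative to $\mathbb{X}$. By Proposition~\ref{admissible}, such a path is a quasi-geodesic, so in particular it is not a loop. This proves that the natural surjection $\hat H \star \hat K \twoheadrightarrow \Gamma := \langle \hat H, \hat K \rangle$ is injective, giving $\Gamma \cong \hat H \star \hat K$.

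\textbf{Step 3 (bounding the concave region).} For SCC, let $\mathcal{O}_{M_1, M_2}^\Gamma$ denote the concave region with respect to the $\Gamma$-orbit $\Gamma o$. Given $g \in \Gamma$ with alternating decomposition $g = a_1 a_2 \cdots a_n$ (factors in $\hat H \sqcup \hat K$), the labelled admissible path has vertices $o, a_1 o, a_1 a_2 o, \ldots, g o$ all lying in $\Gamma o$. By Proposition~\ref{admissible}.(3), every geodesic between endpoints of an admissible subpath $\epsilon_0$-fellow-travels the path; applied to any geodesic $\gamma$ between $B(o, M_2)$ and $B(go, M_2)$, combined with Proposition~\ref{Contractions}.\ref{1Lipschitz}, one shows that $\gamma$ must enter $N_{M_1}(\Gamma o)$ near each internal vertex, provided $M_1 \gg M_2 + \epsilon_0$ and provided the corresponding vertex is sufficiently deep in the middle of $\gamma$. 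Consequently $\mathcal{O}_{M_1, M_2}^\Gamma$ is contained, up to a finite error, in the union of sets of the form $\hat H \cdot \hat K \cdot \hat H \cdots$ with bounded alternating depth $L = L(M_1, M_2)$. Such bounded-depth products have growth rate at most $\max(\e H, \e K)$.

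\textbf{Step 4 (growth gap).} It remains to show $\e \Gamma > \max(\e H, \e K)$. In the case $\e H \ge \e K$, I would apply the critical gap criterion (Lemma~\ref{degrowth}) with $A = \hat H$ and $B = \{k_0\}$ for a fixed $k_0 \in \hat K$, taking $X = \Phi(\iota(\mathbb{W}(\hat H))) \subset \Gamma$ to be the image of $h_1 \cdots h_n \mapsto h_1 k_0 \cdots h_n k_0$; injectivity of the evaluation map follows from Step~2. The criterion then yields $\e X > \e{\hat H} = \e H$, hence $\e \Gamma > \e H \ge \e K$. In the degenerate case $\e H = \e K = 0$, it suffices to show $\Gamma$ has exponential growth, which follows from the free product structure once we ensure the factors are not both of order two (again by refining $\hat H$ or $\hat K$ if necessary). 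Combining this with Step~3 gives $\e{\mathcal{O}_M^\Gamma} \le \max(\e H, \e K) < \e \Gamma$, so the $\Gamma$-action on $\mathrm{Y}$ is SCC.

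\textbf{Main obstacle.} The delicate point is Step~4: the critical gap criterion requires divergence of $\PS{\hat H}$ at $s = \e H$, which is not automatic from $H$ being merely contracting. I would address this by either reducing to a situation where divergence can be imported (for example by passing to a finite-index subgroup whose Poincar\'e series can be arranged to diverge at its critical exponent via the first-moment/subset trick used in Lemma~\ref{SubMulInQ}), or by a direct estimate that exploits admissibility of arbitrarily long alternating words. A secondary subtlety in Step~1 is ensuring the conditions $(\mathbf{LL1})$--$(\mathbf{LL2})$ simultaneously under whichever of the two hypotheses on $K$ is available; the bounded-intersection case is essentially immediate, while the residually-finite case requires a more careful quantitative choice of $\hat K$.
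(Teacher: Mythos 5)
Your Steps 1, 2 and 4 follow the paper's argument essentially verbatim: pass to finite-index subgroups so that every alternating word labels a $(D,\tau)$-admissible path (using bounded intersection of $\{kHo: k\in K\}$ or residual finiteness of $K$ for (\textbf{LL2})), conclude the free product from Proposition \ref{admissible}, and obtain the growth gap from Lemma \ref{degrowth} with $A=\hat H$ and $B$ a single element of $\hat K$. Your Step 3 is a coarser version of the paper's: the paper shows directly that for $g\in\Gamma\setminus\hat K$ some long $\hat H$-segment of the admissible path forces any geodesic between $B(o,C)$ and $B(go,C)$ to enter the $C$-neighborhood of the corresponding translate of $Ho$, so that $\mathcal O_C\subset\hat K$; your bounded-alternating-depth containment would also suffice, provided you actually prove the estimate $\e{\hat H\cdot\hat K\cdots}\le\max(\e H,\e K)$ for bounded-depth products (this does follow from the quasi-geodesic concatenation and a comparison of Poincar\'e series, but it is an extra step the paper avoids).

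The genuine gap is exactly the one you flag and do not close: Lemma \ref{degrowth} yields the strict inequality $\e{\Gamma}>\e{\hat H}$ only if $\PS{\hat H}$ diverges at $s=\e{\hat H}$, and neither of your two suggested fixes is carried out. The paper's resolution is short and concrete: since $H$ is a contracting subgroup, the orbit $\hat Ho$ is quasi-convex by Proposition \ref{Contractions}, hence the action of $\hat H$ on $\mathrm Y$ has empty concave region and is itself SCC; Corollary \ref{StatConvexDiv} (any SCC action with a contracting element is of divergent type, a consequence of the purely exponential growth theorem) then gives divergence of $\PS{\hat H}$ at its critical exponent. Without this ingredient your Step 4 only gives $\e{\Gamma}\ge\e{\hat H}$, which does not separate $\e{\Gamma}$ from $\e{\mathcal O}$ when $\e H=\e K$. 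In the degenerate case $\e H=\e K=0$ the paper argues instead that $\Gamma$, being a nontrivial free product with an infinite factor, contains a non-abelian free subgroup, so $\e{\Gamma}>0$; your remark about avoiding $\mathbb Z_2\star\mathbb Z_2$ is the same point.
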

\begin{proof}
By hypothesis, $\diam{\pi_{Ho}(Ko)} \le \tau$ for some constant $\tau>0$. Let $C$ be the contraction constant of the subset $Ho$. Choose a large constant $D>\max\{D(\tau),  c(\tau)\}$ where $D(\tau), c(\tau)$ are constants given by Proposition \ref{admissible}. Since $H$ is residually finite and the action is proper,  there exists a finite index subgroup $\hat H$ such that $\hat H\cap   K=\{1\}$ and $d(o, ho)>D$ for any $1\ne h\in \hat H$. In the second case of the assumptions on $K$, we just let $\hat K=K$, otherwise, since $K$ is residually finite,   we choose a finite index subgroup $\hat K$ such that $d(o, ko)>D$ for any $1\ne k\in \hat K$. We are now going to prove that $\langle \hat H, \hat K\rangle = \hat H\star \hat K$. 

Indeed, it suffices to establish that each non-trivial word $W$ with letters alternating in $\hat H$ and $\hat K$ labels a $(D, \tau)$-admissible path so a $c(\tau)$-quasi-geodesic. To be precise, write $W=h_1k_1\cdots h_ik_i \cdots h_nk_n$ where $h_i\in \hat H, k_i\in \hat K$ and $h_1, k_n$ may be trivial. Let $\gamma$ be the path labeled by $W$ for which the system of contracting sets is given by $Ho, k_1Ho, \cdots, h_1\cdots k_{n-1} Ho$.
Note first that  the conditions (\textbf{LL1})  and (\textbf{BP}) are clear by the choice of $\hat H$ as above. Two possibilities in the condition (\textbf{LL2}) correspond to the two facts that either $\mathbb X=\{k H\cdot o: k\in K\}$ has bounded intersection, or $d(o, ko)>D$ for $k\in \hat K$.   So $\gamma$ is a $(D, \tau)$-admissible path   of length at least $D> c(\tau)$ and, since it is  a $c(\tau)$-quasi-geodesic, we obtain by computation that $h_1k_1\cdots h_ik_i \cdots h_nk_n\ne 1 \in \langle \hat H,\hat K\rangle$. This proves that $\langle \hat H, \hat K\rangle = \hat H\star \hat K$.

For sufficiently large $D$, the goal is to prove that the concave region $\mathcal O_{C}$ belongs to $\hat K$. Consider an element $g=g=h_1k_1\cdots h_ik_i \cdots h_nk_n \in \Gamma\setminus \hat K$, which labels an admissible path $\gamma$.   Since $g\in \Gamma\setminus\hat K$, there must exist a contracting set $X$ from $\mathbb X$ associated to a subpath $p$ labeling an element $h_i$   such that, for a constant $B=B(\tau)$ given by Proposition \ref{admissible}, we have $\proj_X(\gamma_1 \cup \gamma_2)\le 2B$ where $\gamma_1:=[\gamma_-, p_-]_\gamma, \gamma_2:=[p_+, \gamma_+]_\gamma$ are subpaths of $\gamma$ before and after $X$.

Let $\alpha$ be a geodesic between $B(o, C)$ and $B(go, C)$, so $d(\gamma_-, \alpha_-), d(\gamma_+, \alpha_+)\le C$.  By Proposition \ref{Contractions}.\ref{1Lipschitz}, we have $\max\{\proj_X(\gamma_-, \alpha_-), \proj_X(\gamma_+, \alpha_+)\}\le 2C.$
Choose further $D>7C+2B,$ we see that $N_C(X)\cap \alpha \ne \emptyset.$ Indeed, if $N_C(X)\cap \alpha = \emptyset$ so $\proj_X(\alpha)\le C$, we obtain by a projection argument that
$$
\begin{array}{lll}
\len(p) &\le d(p_-, X)+\proj_X(\gamma_1 \cup \gamma_2)+d(p_+, X)+ \proj_X(\alpha)+\proj_X(\gamma_-, \alpha_-)+ \proj_X(\gamma_+, \alpha_+)\\
& \le C+2B+C+2C+C+2C\le 7C+2B,
\end{array}
$$
which is a contradiction to the choice of $D>7C+2B$ for $\len(p)\ge D$. Thus,  $N_C(X)\cap \alpha \ne \emptyset$ is proved. Since $X$ is a translate of $Ho$ so $X\subset Go$,  it follows that $\alpha\cap N_C(Go)\ne \emptyset:$  $g$ does not belong to $\mathcal O_C$. Hence, $\mathcal O_C \subset \hat K$ is proved. 

We next show that the action is SCC, provided that $\e H>\e K$ or $\e H=\e K=0$. 
For a contracting subgroup, any orbit is quasi-convex by Propoistion \ref{Contractions} so $\hat H$ acts by a SCC action: its Poincar\'e series $\PS {\hat H}$ diverges at $s=\e {\hat H}$ by Corollary \ref{StatConvexDiv}. Consider the case   $\e H \ge \e K$. Since the growth rate remains the same for finite index subgroups so $\e H=\e {\hat H}$ and $\e K= \e {\hat K}$, it follows by Lemma \ref{degrowth} that $\e \Gamma\ge\e {\hat H}>\e {\hat K}$. Hence, $\Gamma$ admits a SCC action. For the case $\e H= \e K=0$, since $\Gamma$ contains non-abelian free subgroups so $\e \Gamma>0$,   the action of $\Gamma$   is SCC as well. The proof is thus complete.
\end{proof}

\subsection{Mapping class groups}
In mapping class groups, we use a boundary separation argument to fullfill the criterion of Proposition \ref{FreeproductComb}. This idea is well-known in the setting of (relatively) hyperbolic groups.  In what follows, we explain how to implement it in $\mathbb {Mod}$ using Thurston boundary. The references are \cite{FLP}, \cite{FMbook} and in particular, the theory of limit sets in mapping class groups developped in \cite{McPapa}.

Recall that Teichm\"{u}ller space $(\mathrm Y, d)$ of a closed orientable surface $\Sigma$ with negative Euler characteristic is compactified by the space $\mathcal{PMF}$ of \textit{projective measured laminations} on $\Sigma$, which is the  quotient of \textit{measured laminations}  $\mathcal {MF}$ by positive reals. The proper action on $\mathrm Y$ of the mapping class group $G$    extends to $\mathcal{PMF}$  by homeomorphisms, which is called \textit{Thurston boundary} of $\mathrm Y$. 

There exists non-zero, symmetric, $G$-invariant,  bi-homogeneous intersection function $$i: \mathcal {MF}\times \mathcal {MF} \to \mathbb R_{\ge 0}.$$ 

Let $\Lambda$ be a subset    in $\mathcal{PMF}$. The \textit{intersection completion} $Z(\Lambda)$ consists of projective measured laminations $[G]\in \mathcal{PMF}$ such that $i(F, G)=0$ for some $[F]\in \Lambda$.  A \textit{uniquely ergodic} point $[F]$ in $\mathcal{PMF}$ has the property that if $i(G, F)=0$ then $[G]= [F]$. 
 
According to \cite{McPapa},  the limit set of a group action on a topological space is the set of accumulation points of  all orbits. In this regard, the \textit{enlarged limit set} $\Lambda^e(\Gamma)$ (resp. \textit{limit set} $\Lambda(\Gamma)$) of a subgroup $\Gamma$ is the set of accumulation points of  all $\Gamma$-orbits in $\mathcal{PMF}\cup \mathrm Y$ (resp. $\mathcal{PMF}$). By Proposition 8.1 in \cite{McPapa}, it follows that $\Lambda^e(\Gamma) \subset \Lambda(\Gamma)$.

An infinite \textit{reducible} subgroup $H$ preserves   a finite family of disjoint simple closed curves called a \textit{reduction system} on $\Sigma$.  The following elementary fact will be useful later on.
\begin{lem}\label{disjointlimitsets}
The enlarged limit set of an infinite reducible subgroup $H$ is disjoint with that of the subgroup $P$ generated by a pseudo-Anosov element $p$.
\end{lem}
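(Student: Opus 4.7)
The plan is to contrast the two enlarged limit sets via the intersection pairing $i$ on $\mathcal{MF}$. First I would identify $\Lambda^e(P)$. By Thurston's classification the pseudo-Anosov element $p$ has stable and unstable projective measured laminations $[F^+],[F^-]\in\mathcal{PMF}$; both are uniquely ergodic and \emph{fill} $\Sigma$, so $i(F^\pm,c)>0$ for every simple closed curve $c$. The north--south dynamics of $p$ on $\mathcal{PMF}$, combined with the standard fact that the Teichm\"uller iterates $p^n z$ of any point $z\in \mathrm Y$ converge in $\mathcal{PMF}$ to $[F^+]$ as $n\to+\infty$ and to $[F^-]$ as $n\to-\infty$, gives $\Lambda^e(P)=\{[F^+],[F^-]\}$.

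Next I would analyze $\Lambda^e(H)$. Let $\sigma=\{c_1,\dots,c_k\}$ be a reduction system preserved setwise by $H$. Take an arbitrary $[F]\in\Lambda^e(H)$, written as $[F]=\lim_n h_n\cdot z$ with $h_n\in H$ and $z\in \mathcal{PMF}\cup \mathrm Y$. Since $H$ permutes the finite set $\sigma$, I would pass to a subsequence so that $h_n^{-1}c_1=c_j$ is constant. Thurston's characterization of convergence to a point of $\mathcal{PMF}$ provides scalars $t_n\to 0$ such that for every simple closed curve $\gamma$ one has $t_n\cdot\ell_{h_n z}(\gamma)\to i(F,\gamma)$ when $z\in \mathrm Y$, respectively $t_n\cdot i(h_n z,\gamma)\to i(F,\gamma)$ when $z\in\mathcal{PMF}$. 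Specializing $\gamma=c_1$ and using $G$-invariance of length and of $i$, one has $\ell_{h_n z}(c_1)=\ell_z(c_j)$ (or $i(h_n z,c_1)=i(z,c_j)$), which is bounded in $n$; together with $t_n\to 0$ this forces $i(F,c_1)=0$.

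Therefore every element of $\Lambda^e(H)$ has zero intersection with some curve of the reduction system $\sigma$, whereas every element of $\Lambda^e(P)=\{[F^\pm]\}$ has positive intersection with \emph{every} simple closed curve on $\Sigma$. These two conditions are mutually exclusive, yielding $\Lambda^e(H)\cap\Lambda^e(P)=\emptyset$, as required.

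The main technical obstacle is choosing the right form of Thurston's compactification theorem for the two possible types of basepoint $z$: when $z\in \mathrm Y$ one must invoke the extremal-length (or hyperbolic-length) description of convergence to $\mathcal{PMF}$, whereas for $z\in\mathcal{PMF}$ a direct intersection-number version applies. I would appeal to \cite{FLP} for the classical statements and to the limit-set machinery of \cite{McPapa} (in particular the inclusion $\Lambda^e(\Gamma)\subset\Lambda(\Gamma)$ recalled just above) to package the two cases into a single uniform argument.
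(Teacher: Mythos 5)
Your route is genuinely different from the paper's, although both arguments ultimately rest on the same dichotomy: every point of $\Lambda^e(P)$ is filling (indeed uniquely ergodic), while no point of $\Lambda^e(H)$ is. The paper obtains the second half by quotation: McCarthy--Papadopoulos (Section 7.1) describe $\Lambda(H)$ explicitly as the essential reduction system together with the pseudo-Anosov laminations of the complementary pieces, hence non-filling, and their Proposition 8.1 bounds $\Lambda^e(H)$ by the intersection completion $Z(\Lambda(H))$, which unique ergodicity of $[F^\pm]$ keeps disjoint from $\Lambda(P)$. You instead compute directly: equivariance of length and intersection number under the finite permutation of the reduction system, combined with Thurston's convergence criterion, forces $i(F,c)=0$ for some reduction curve $c$ and every $[F]\in\Lambda^e(H)$. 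This is more self-contained, and for the only case the paper actually uses downstream (accumulation of orbits of points of $\mathrm Y$, in Lemma \ref{ModBP}) it is complete and correct.

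There is, however, one unjustified step in your treatment of basepoints $z\in\mathcal{PMF}$: the assertion that the normalizing scalars satisfy $t_n\to 0$. For $z\in\mathrm Y$ this is part of Thurston's theorem (if $t_n\not\to 0$ then the lengths of a filling family of curves stay bounded and $h_nz$ remains in a compact subset of $\mathrm Y$, contradicting convergence to the boundary). For $z\in\mathcal{PMF}$, convergence $h_nz\to[F]$ only provides \emph{some} scalars $t_n>0$ with $t_n\cdot(h_nz)\to F$ in $\mathcal{MF}$, and nothing a priori forces $t_n\to 0$; if $t_n\to t_\infty>0$ your displayed limit yields $i(F,c_1)=t_\infty\, i(z,c_j)$, which need not vanish. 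To close this you would need an extra argument --- e.g.\ ruling out bounded infinite orbits in $\mathcal{MF}$ via the twist/partial-pseudo-Anosov decomposition of elements of $H$ --- or you could simply fall back on McCarthy--Papadopoulos's Proposition 8.1 ($\Lambda^e(H)\subset Z(\Lambda(H))$) for boundary basepoints, which is exactly what the paper's citation is doing for it.
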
 
\begin{proof}
By \cite[Section 7.1]{McPapa}, the limit set $\Lambda(H)$ of $H$ is the union of an essential  reduction system $A$ with projective measured laminations on pseudo-Anosov components obtained by cutting $A$ on $\Sigma$.   As a consequence of this description, all points in $\Lambda(H)$ are non-filling. On the other hand, the limit set $\Lambda(P)$ of $P$ consists of two filling uniquely ergodic points so $Z(\Lambda(P))=\Lambda(P)$.  Therefore, $\Lambda(H) \cap \Lambda(P)=\emptyset$ and thus $Z(\Lambda(H)) \cap Z(\Lambda(P))=\emptyset$.
\end{proof}

The proof of the next result makes use of the following fact  in \cite[Lemma   1.4.2]{KaMasur}. 
Suppose that $x_n \in \mathrm Y$ is a sequence of points converging to a uniquely ergodic point $\xi \in \mathcal {PMF}$. If $$d(o,y_n)-d(x_n,y_n)\to +\infty$$ for a sequence $y_n \in \mathrm Y$, then $y_n \to \xi$.

\begin{lem}\label{ModBP}
  For any basepoint $o\in \mathrm Y$, there exists a constant $D>0$ such that $Ko$ have a $D$-bounded projection to $Ho$.
\end{lem}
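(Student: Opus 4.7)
The plan is to argue by contradiction, combining the contracting property of the pseudo-Anosov axis, the uniquely ergodic endpoints of that axis, the Kaimanovich--Masur convergence criterion quoted just before the lemma, and the disjointness from Lemma~\ref{disjointlimitsets}. Write $K=\langle p\rangle$ and let $\alpha=\ax(p)$ be the contracting axis of $p$, whose endpoints are the uniquely ergodic laminations $[F^\pm]$ that make up $\Lambda(K)$; by Lemma~\ref{elementarygroup}, $Ko$ lies within finite Hausdorff distance of $\alpha$. Fix $o_\alpha:=\pi_\alpha(o)$. The heart of the argument will be an auxiliary claim: $\pi_\alpha(Ho)$ is bounded.

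To establish this auxiliary claim, I would assume for contradiction that there exist $h_n\in H$ with $a_n:=\pi_\alpha(h_no)$ satisfying $d(o_\alpha,a_n)\to\infty$; a short triangle estimate using $d(h_no,a_n)\le d(h_no,o_\alpha)$ shows $d(o,h_no)\to\infty$ as well. The contracting property of $\alpha$ (Definition~\ref{ContrDefn}) forces the geodesic $[o,h_no]$ to enter $N_C(\alpha)$ and produces a point $c_n\in[o,h_no]\cap N_C(\alpha)$, chosen as the exit point, whose projection $\pi_\alpha(c_n)$ is within a uniform bound of $a_n$; in particular $d(c_n,a_n)$ is bounded and $d(o,c_n)\to\infty$. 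After passing to a subsequence, $a_n$ converges in $\mathcal{PMF}$ to some $\xi\in\{[F^+],[F^-]\}$, which is uniquely ergodic. Two successive applications of the Kaimanovich--Masur criterion---first to $(a_n,c_n)$, valid because $d(o,c_n)-d(a_n,c_n)\to\infty$; then to $(c_n,h_no)$, valid because the colinearity $c_n\in[o,h_no]$ gives $d(o,h_no)-d(c_n,h_no)=d(o,c_n)\to\infty$---yield $h_no\to\xi$. But $h_no\in Ho$ forces $\xi\in\Lambda(H)$, contradicting Lemma~\ref{disjointlimitsets}.

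For the lemma itself, assume $\pi_{Ho}(Ko)$ is unbounded and pick $k_n\in K$ and $h_n\in H$ with $h_no\in\pi_{Ho}(k_no)$ and $d(o,h_no)\to\infty$. Since $o\in Ho$, the nearest-point bound $d(k_no,h_no)\le d(o,k_no)$ gives $d(o,k_no)\ge d(o,h_no)/2\to\infty$, so after a subsequence $k_no\to\xi\in\{[F^\pm]\}$. If $d(k_no,h_no)$ stays bounded, then $h_no$ lies in a bounded neighborhood of $\alpha$; combined with the auxiliary claim, this confines $h_no$ to a bounded subset of $\mathrm Y$, contradicting $d(o,h_no)\to\infty$. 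Otherwise $d(k_no,h_no)\to\infty$, and the contracting property applied to $[h_no,k_no]$---whose endpoint projections to $\alpha$ are bounded and unbounded respectively---produces $c_n$ on this geodesic with $\pi_\alpha(c_n)$ close to the bounded point $\pi_\alpha(h_no)$, so $d(o,c_n)$ is uniformly bounded; then
\[
d(h_no,k_no)=d(h_no,c_n)+d(c_n,k_no)\ge d(o,h_no)+d(o,k_no)-2d(o,c_n),
\]
which together with $d(h_no,k_no)\le d(o,k_no)$ forces $d(o,h_no)$ to be bounded, a contradiction. The main obstacle is the auxiliary step: one must carefully track $c_n$ on the geodesic so that the contracting property of $\alpha$ secures the identity $d(o,h_no)=d(o,c_n)+d(c_n,h_no)$ with $d(o,c_n)\to\infty$, which is precisely what Kaimanovich--Masur needs to transport the uniquely ergodic limit $\xi$ from $a_n$ all the way to $h_no$; once that is done, the concluding argument is a routine triangle-inequality computation.
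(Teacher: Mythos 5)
Your argument is correct in substance, but there is a labeling confusion that makes you do more work than the lemma requires. In the paper's usage (see the proof of Proposition~\ref{ModSubSCC}), $H=E(p)$ is the elementary closure of a pseudo-Anosov $p$, so $Ho$ is the contracting set (within finite Hausdorff distance of the axis $\alpha$), and $K$ is the infinite torsion-free reducible subgroup. The lemma's claim $\proj_{Ho}(Ko)<\infty$ therefore means: the projection of the \emph{reducible} orbit $Ko$ onto the pseudo-Anosov axis is bounded. With your swapped letters ($K=\langle p\rangle$, $H$ reducible), this is exactly your ``auxiliary claim'' that $\pi_\alpha(Ho)$ is bounded. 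In other words, the auxiliary claim is not a helper---it \emph{is} the lemma, and your entire second paragraph (``For the lemma itself\dots'') proves a different, unneeded statement (bounding the projection of the pseudo-Anosov orbit onto the reducible orbit), which the paper never asserts and never uses.

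Within the auxiliary claim, your proof is essentially the paper's, with one small variation. The paper applies Kaimanovich--Masur once, directly to $x_n=\pi_\alpha(k_no)$ and $y_n=k_no$: the coarse-projection estimate of Proposition~\ref{Contractions}.\ref{CoarseProj} gives $\bigl|\,d(o,x_n)-\bigl(d(o,y_n)-d(x_n,y_n)\bigr)\,\bigr|\le C$, so $d(o,y_n)-d(x_n,y_n)\to\infty$ at once, and the criterion transports the uniquely ergodic limit from $x_n$ to $y_n$ in a single step. You instead insert the exit point $c_n$ of $[o,h_no]$ from $N_C(\alpha)$ and run Kaimanovich--Masur twice, first $a_n\to c_n$ and then $c_n\to h_no$. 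Both routes are valid and close in spirit; the paper's is cleaner because it avoids tracking the intermediate point, but yours makes the colinearity identity $d(o,h_no)-d(c_n,h_no)=d(o,c_n)$ that drives the second application completely explicit, which some readers may find more transparent. One small precision: at the end you should invoke the \emph{enlarged} limit set $\Lambda^e(H)$, since Lemma~\ref{disjointlimitsets} is phrased for enlarged limit sets; this is harmless because $h_no\to\xi$ places $\xi$ directly in $\Lambda^e(H)$ and $\Lambda^e\subset\Lambda$ in any case.
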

\begin{proof}
Suppose to the contrary that there exists $k_n\in K$ such that $\proj_{Ho}([o, k_no])\to \infty$.  Let $x_n \in Ko$ be a projection point of $y_n:=k_no$ to $Po$. Since $d(o, x_n)\to \infty$, up to passage to subsequence, it converges to a limit point in $Z(\Lambda(P))=\Lambda(P)$. 

On the other hand, the contracting property shows that $d(o, y_n)-d(x_n,y_n)$ differs from $d(o, x_n)$ up to a uniform bounded amount. Hence, by Lemma   1.4.2 in \cite{KaMasur}, one  concludes that $y_n$ and $x_n$ converge to the same limit point, giving a contradiction to Lemma \ref{disjointlimitsets}. The proof is thus complete.
\end{proof}

We are ready to prove the following main result of this section.
\begin{prop}\label{ModSubSCC}
Let $p$ be a pseudo-Anosov element and $K$ be an infinite torsion-free reducible subgroup in a mapping class group. There exists $n>0$ such that $\langle p^n, K\rangle$ is a free product of $\langle p^n\rangle$ and $K$ acting on the Teichm\"{u}ller space via a SCC action.
\end{prop}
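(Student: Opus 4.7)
The plan is to invoke the free-product combination Proposition \ref{FreeproductComb} with $H := \langle p\rangle$ in the role of the contracting residually finite subgroup and $K$ as the second factor. Three of the standing hypotheses are immediate: $H \cong \mathbb Z$ is trivially residually finite; $p$ being pseudo-Anosov makes $H$ contracting by Lemma \ref{pAContr}; and the bounded projection $\proj_{Ho}(Ko) < \infty$ is exactly Lemma \ref{ModBP}. To keep $K$ intact as a free-product factor---rather than pass to a proper finite-index subgroup via the residual-finiteness route used in Proposition \ref{FreeproductComb}---I would verify the alternative hypothesis that the collection $\{k H \cdot o : k \in K\}$ has bounded intersection.

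The pivotal claim is $K \cap E(p) = \{1\}$. By Lemma \ref{elementarygroup}.(2), every nontrivial $g \in E(p)$ satisfies $g p^m g^{-1} = p^{\pm m}$ for some $m > 0$; thus $g^2$ centralises the pseudo-Anosov $p^m$, and since the centraliser of a pseudo-Anosov is virtually cyclic containing $p$, a nontrivial power $g^{2k}$ equals some $p^j$. If $j \ne 0$ this would force $g$ to be pseudo-Anosov, contradicting $g \in K$ reducible; if $j = 0$ then $g$ is torsion, contradicting $K$ torsion-free. Given the triviality of $K \cap E(p)$, Lemma \ref{elementarygroup}.(1) supplies uniform bounded intersection of the system $\{k E(p) \cdot o : k \in K\}$, and since $\langle p\rangle$ has finite index in $E(p)$, bounded intersection of $\{k H \cdot o : k \in K\}$ follows. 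Proposition \ref{FreeproductComb} now produces $n > 0$ with $\Gamma := \langle p^n, K \rangle \cong \langle p^n \rangle \star K$, and its proof shows simultaneously that the concave region $\mathcal O_C$ of this $\Gamma$-action lies inside $K$ for some $C > 0$.

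Upgrading to a SCC action therefore amounts to verifying the critical gap $\e K < \e \Gamma$. Since $\e H = \e{\langle p^n \rangle} = 0$ (the cyclic orbit is a quasi-geodesic) while a reducible $K$ may well have $\e K > 0$ in the Teichm\"{u}ller metric, the sufficient condition ``$\e H \ge \e K$ or $\e H = \e K = 0$'' of Proposition \ref{FreeproductComb} is not automatic. I would instead apply the critical gap criterion Lemma \ref{degrowth} directly, with $A$ a maximal $R$-separated subset of $Ko$ (so $\e A = \e K$) and $B := \{p^n, p^{2n}\}$; the extension map $a_1 \cdots a_m \mapsto a_1 p^{n\epsilon_1} a_2 p^{n\epsilon_2} \cdots a_m p^{n\epsilon_m}$ with $\epsilon_i \in \{1, 2\}$ is injective in $\Gamma$ by the free-product normal form, so the criterion delivers $\e \Gamma > \e K$ provided $\PS K$ diverges at $s = \e K$. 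The main obstacle is precisely this divergence for arbitrary reducible $K$: I would secure it either by arguing $\e K = 0$ via a word-length versus Teichm\"{u}ller-length comparison on torsion-free reducible subgroups---whereupon the ``both zero'' clause of Proposition \ref{FreeproductComb} applies---or by exhibiting divergence-type behaviour of $K$ from its own action on the lower-complexity Teichm\"{u}ller spaces of the cut surfaces, in the spirit of Minsky's product regions combined with Corollary \ref{StatConvexDiv}. Once $\e \Gamma > \e K$ is established, the inclusion $\mathcal O_C \subset K$ immediately gives SCC.
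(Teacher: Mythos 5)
Your free-product step follows the paper's route almost verbatim: the paper also feeds Proposition \ref{FreeproductComb} with the bounded projection of Lemma \ref{ModBP} and with the observation that $E(p)\cap K=\{1\}$ (the paper phrases this as ``the only reducible elements in $E(p)$ are torsion'', which your centraliser argument via Lemma \ref{elementarygroup}.(2) correctly expands), the only cosmetic difference being that the paper takes $H=E(p)$ where you take $H=\langle p\rangle$; since $\langle p\rangle$ has finite index in $E(p)$ the two choices are interchangeable. Up to and including the conclusion $\Gamma\cong\langle p^n\rangle\star K$ with $\mathcal O_C\subset K$, your argument is complete and matches the paper.

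The divergence is at the SCC step, and here your scrutiny is justified but your proposal does not close the issue. You are right that the clause ``$\e H\ge \e K$ or $\e H=\e K=0$'' of Proposition \ref{FreeproductComb} is not automatic: $\e{E(p)}=0$ because $n\mapsto p^n o$ is a quasi-isometric embedding, whereas for the paper's own motivating example $K=\langle T_{\alpha_1},\dots,T_{\alpha_k}\rangle$ one has $d(o,T_\alpha^m o)\asymp \log|m|$ in the Teichm\"uller metric, so $\e K>0$. (The paper's one-line proof simply cites Proposition \ref{FreeproductComb} without checking this hypothesis, so it is terse on exactly the same point.) Of your two proposed repairs, the first is untenable: $\e K=0$ is \emph{false} for Dehn-twist subgroups, precisely because of the logarithmic distance growth just mentioned, so no word-length comparison can rescue it. The second is the right target --- if you show $\PS{K}$ diverges at $s=\e K$, then Lemma \ref{degrowth} with $B=\{p^n\}$ gives $\e\Gamma>\e K\ge \e{\mathcal O_C}$ and SCC follows --- but you only gesture at how to prove this divergence (via product regions / the cut-surface actions) rather than proving it. As written, your argument establishes the free product but leaves the SCC assertion conditional on an unproven divergence statement for general infinite torsion-free reducible $K$; you have, however, correctly isolated the one thing that remains to be shown.
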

\begin{proof}
We apply Proposition  \ref{FreeproductComb} to $H=E(p)$ and $K$, where $E(p)$ defined in (\ref{Ehdefn}) is an elementary contracting subgroup with bounded intersection by Lemma \ref{elementarygroup}. The only reducible elements in $E(p)$ are torsions so $E(p)\cap K=\{1\}.$ Hence, the collection $\{k E(p)\cdot o: k\in K\}$ has bounded intersection. By Proposition  \ref{FreeproductComb}, the conclusion thus follows from Lemma \ref{ModBP}.
\end{proof}

Two examples deriving from Proposition \ref{ModSubSCC} are as follows:
\begin{enumerate}
\item
Let $p$ be pseudo-Anosov and $k$   an infinite reducible element in $\mathbb {Mod}$. Then $\langle p^n, k\rangle$ admits a SCC action for $n\gg 0$.
\item
Let $p$ be pseudo-Anosov and $K$ be an abelian group generated by Dehn twists around disjoint essential simple closed curves. Then $\langle p^n, K\rangle$ admits a SCC action for $n\gg 0$.
\end{enumerate}

\bibliographystyle{amsplain}
 \bibliography{bibliography}

\end{document}